\documentclass[11pt,reqno]{amsart}

\usepackage[a4paper,margin=1in]{geometry}
\usepackage{amsmath,amssymb,amsfonts,amsthm,mathtools}
\usepackage{mathrsfs}
\usepackage[expansion=false]{microtype}
\usepackage{bm}
\usepackage{enumitem}
\usepackage[nocompress]{cite}
\usepackage[
  colorlinks=true,
  linkcolor=blue,
  citecolor=blue,
  urlcolor=blue,
  hypertexnames=false
]{hyperref}
\usepackage[capitalize,nameinlink]{cleveref}
\usepackage{aliascnt}
\usepackage{placeins}
\usepackage{graphicx}
\usepackage{tikz}
\usepackage{float}
\usepackage[mathlines,right]{lineno}
\usetikzlibrary{arrows.meta,calc,positioning}

\newif\ifshowlinenumbers
\showlinenumbersfalse
\ifshowlinenumbers
  \modulolinenumbers[1]
\fi

\newtheorem{theorem}{Theorem}[section]

\newaliascnt{proposition}{theorem}
\newtheorem{proposition}[proposition]{Proposition}
\aliascntresetthe{proposition}

\newaliascnt{lemma}{theorem}
\newtheorem{lemma}[lemma]{Lemma}
\aliascntresetthe{lemma}

\newaliascnt{corollary}{theorem}
\newtheorem{corollary}[corollary]{Corollary}
\aliascntresetthe{corollary}

\theoremstyle{definition}

\newaliascnt{definition}{theorem}
\newtheorem{definition}[definition]{Definition}
\aliascntresetthe{definition}

\newaliascnt{assumption}{theorem}

\aliascntresetthe{assumption}

\theoremstyle{remark}

\newaliascnt{remark}{theorem}
\newtheorem{remark}[remark]{Remark}
\aliascntresetthe{remark}

\crefname{theorem}{Theorem}{Theorems}
\Crefname{theorem}{Theorem}{Theorems}
\crefname{proposition}{Proposition}{Propositions}
\Crefname{proposition}{Proposition}{Propositions}
\crefname{lemma}{Lemma}{Lemmas}
\Crefname{lemma}{Lemma}{Lemmas}
\crefname{corollary}{Corollary}{Corollaries}
\Crefname{corollary}{Corollary}{Corollaries}
\crefname{definition}{Definition}{Definitions}
\Crefname{definition}{Definition}{Definitions}
\crefname{assumption}{Assumption}{Assumptions}
\Crefname{assumption}{Assumption}{Assumptions}
\crefname{remark}{Remark}{Remarks}
\Crefname{remark}{Remark}{Remarks}

\numberwithin{equation}{section}
\allowdisplaybreaks

\title[Two-phase harmonic map heat flow with a prescribed interface]
{Short-time existence and uniqueness for two-phase harmonic map heat flow with a prescribed moving interface}
\author{Xingyu Wang}
\address{School of Mathematical Sciences, Shanghai Jiao Tong University, Shanghai, China}
\email{xingyuwang@sjtu.edu.cn}
\date{}
\subjclass[2020]{35K51, 35R37, 58E20}
\keywords{harmonic map heat flow; prescribed interface; target free-boundary condition; matrix-valued Allen--Cahn equation}

\hypersetup{
  pdftitle={Short-time existence and uniqueness for two-phase harmonic map heat flow with a prescribed moving interface},
  pdfauthor={Xingyu Wang},
  pdfsubject={Two-phase harmonic map heat flow with a prescribed moving interface},
  pdfkeywords={harmonic map heat flow, prescribed interface, target free-boundary condition, matrix-valued Allen--Cahn equation}
}

\begin{document}

\ifshowlinenumbers
  \linenumbers
\fi

\begin{abstract}
Let $N^+$ and $N^-$ be disjoint compact smooth submanifolds of a Euclidean
space, and let $\Lambda\subset N^+\times N^-$ be a compact smooth embedded
submanifold prescribing admissible pairs of one-sided traces.  Given a smooth family of separating hypersurfaces
$\Gamma_t\subset\mathbb T^d$, we study harmonic map heat flows into $N^\pm$
in the two moving phases, subject to
\[
    (u^+,u^-)\in\Lambda,
    \qquad
    \bigl(-\partial_{\nu_t}u^+,\partial_{\nu_t}u^-\bigr)
    \perp T_{(u^+,u^-)}\Lambda
    \quad\text{on }\Gamma_t.
\]
For $q>d+2$, initial data in $W_q^{2-2/q}$ satisfying these conditions at
$t=0$ generate a unique short-time strong solution in $W_q^{2,1}$, which is
smooth up to the moving interface for every positive time; smooth initial
data satisfying the compatibility conditions of every order yield solutions
that are smooth up to the initial corner.  We double the paired map
across $\Lambda$ by normal reflection, which turns the interface conditions
into a Dirichlet problem near the interface, and match this problem with the two
bulk phases by an overlapping space--time Schwarz map inside a Schauder
fixed-point argument.  When the interface evolves independently by mean
curvature, we also obtain a blow-up alternative in terms of its curvature and
the two phase gradients.  As an application, we prove local smooth
solvability for the periodic matrix-valued sharp-interface system appearing in
the convergence theory of Fei, Lin, Wang, and Zhang
(\emph{Invent. Math.} 233 (2023), 1--80).
\end{abstract}

\maketitle

\section{Introduction}
\label{sec:introduction}

The Keller--Rubinstein--Sternberg problem concerns reaction--diffusion systems
\[
    \partial_tu^\varepsilon=\Delta u^\varepsilon-\varepsilon^{-2}\nabla F(u^\varepsilon)
\]
whose potential $F\ge0$ vanishes on several positive-dimensional wells.  By a
formal asymptotic expansion, Rubinstein, Sternberg, and Keller
\cite{RubinsteinSternbergKeller1989Curve,RubinsteinSternbergKeller1989HarmonicMaps}
found that, as $\varepsilon\to0$, the interface between the wells moves by mean
curvature, while away from the interface $u^\varepsilon$ tends to a harmonic map
heat flow into the wells.  In the two-well case the wells are disjoint compact
smooth submanifolds $N^\pm\subset\mathbb R^L$ with $F^{-1}(0)=N^+\cup N^-$, and the
limit is expected to be a sharp-interface system.  We write it in the abstract
form: a moving hypersurface $\Gamma_t$ separates phase regions $\Omega_t^\pm$, and
\begin{equation}\label{eq:intro-sharp-interface-system}
\begin{cases}
    \partial_tu^\pm=P_{T_{u^\pm}N^\pm}\Delta u^\pm
    &\text{in }\Omega_t^\pm,\\[0.3em]
    (u^+,u^-)\in\Lambda,\quad
    (-\partial_{\nu_t}u^+,\partial_{\nu_t}u^-)\perp T_{(u^+,u^-)}\Lambda
    &\text{on }\Gamma_t.
\end{cases}
\end{equation}
Here $P_{T_zN^\pm}$ is the orthogonal projection onto $T_zN^\pm$, $\nu_t$ is the
unit normal of $\Gamma_t$ pointing from $\Omega_t^-$ into $\Omega_t^+$, and
$\Lambda\subset N^+\times N^-$ is a submanifold of admissible endpoint pairs.  The
two phases are coupled only through the interface conditions in
\eqref{eq:intro-sharp-interface-system}.  In the models of
\cite{BronsardStoth1998,DongWang2025,FeiLinWangZhang2023,FeiWangZhangZhang2018}
discussed below, the interface moves by mean curvature (in
\cite{FeiWangZhangZhang2018}, up to a constant factor),
\begin{equation}\label{eq:intro-mean-curvature-flow}
    \mathcal V_t=\mathcal H_t,
    \qquad
    \mathcal H_t:=-\operatorname{div}_{\Gamma_t}\nu_t,
\end{equation}
where $\mathcal V_t$ is the normal velocity of $\Gamma_t$ in the direction
$\nu_t$.  This law does not involve the phase fields, so the analysis splits
into a classical geometric flow and a bulk problem on a prescribed moving domain.

The interface conditions have a variational origin.  For minimizers of the
static energy, Lin--Pan--Wang \cite{LinPanWang2012} showed that the two
sharp-interface traces are the endpoints of a minimal connecting orbit; this
endpoint relation is the trace constraint $(u^+,u^-)\in\Lambda$ with
$\Lambda=\Lambda_F$, the set of minimal pairs
(\Cref{subsec:minimal-pair-model-geometries}).  The conormal condition is the
natural boundary condition for the sum of the two Dirichlet energies under this
constraint; when the trace relation has the form $u^-=\Phi^+(u^+)$ for a
diffeomorphism $\Phi^+$ between submanifolds $M^\pm\subset N^\pm$, it is the
Euler--Lagrange condition derived by Lin--Wang for minimizing maps
\cite[Section~2.1]{LinWang2019}.  A rigorous derivation of the dynamical limit
for general wells remains open.  Bronsard--Stoth \cite{BronsardStoth1998}
proved, for radially symmetric solutions with two concentric circular wells,
that the limit phases are coupled across the interface by a jump condition in
the gradient, which the formal expansion of
\cite{RubinsteinSternbergKeller1989Curve,RubinsteinSternbergKeller1989HarmonicMaps}
had not revealed; Dong--Wang \cite{DongWang2025} extended this to concentric
spherical wells in every dimension without radial symmetry, using matched
asymptotic expansions.  For the matrix-valued system, Fei--Lin--Wang--Zhang
derived the normal-derivative matching condition by a matched asymptotic
expansion \cite[Appendix~A.1]{FeiLinWangZhang2023}, and the isotropic--nematic
transition with isotropic elasticity was treated in
\cite{FeiWangZhangZhang2018}.  In all these models the
interface conditions obtained are those of
\eqref{eq:intro-sharp-interface-system} for the corresponding $\Lambda$, and,
except in the radially symmetric case of \cite{BronsardStoth1998}, the
convergence proofs assume a smooth solution of the limiting system.  For general wells, Liu \cite{Liu2025} showed, assuming
that the interface moves by a smooth mean curvature flow, that the limit
consists of weak harmonic map heat flows in the two phases satisfying the
minimal pair condition on the interface.

In their convergence analysis of a matrix-valued Allen--Cahn equation,
Fei--Lin--Wang--Zhang construct approximate solutions from a smooth solution of
the limiting sharp-interface system \cite[Theorem~1.1]{FeiLinWangZhang2023}.  The
wells of that system are $\mathrm O^+(n)$ and $\mathrm O^-(n)$, and the admissible
endpoint pairs form the manifold $\Lambda_{\mathrm{mat}}$ of pairs $(P,PR_\xi)$,
where $P\in\mathrm O^+(n)$ and $R_\xi$ is the reflection across $\xi^\perp$ for a
unit vector $\xi$ (\Cref{subsec:matrix-valued-application}).  Since
$\Lambda_{\mathrm{mat}}$ is diffeomorphic to $\mathrm O^+(n)\times\mathbb{RP}^{n-1}$,
both endpoint projections have fibers diffeomorphic to $\mathbb{RP}^{n-1}$, so
for every $n\ge2$ neither trace determines the other.  For $n\ge3$, moreover,
$\Lambda_{\mathrm{mat}}$ is a proper subset of $\mathrm O^+(n)\times\mathrm O^-(n)$,
that is, the matrix potential is only partially minimally paired
\cite[Definition~3.5 and Lemmas~3.6--3.7]{FeiLinWangZhang2023}.

In \cite[Remark~1.2]{FeiLinWangZhang2023}, the existence of smooth solutions of
the limiting system is described as a nontrivial issue in general.  For the
matrix-valued system, weak solutions with a
prescribed interface were constructed in \cite{WangWangZhang2026}, and weak
solutions were obtained through the sharp-interface limit in
\cite{Wang2026SharpInterface}.  For smooth solutions, the local existence
theorem \cite[Theorem~3.2]{LinWang2019} concerns trace
relations $u^-=\Phi^+(u^+)$ given by a diffeomorphism $\Phi^+$ between
submanifolds $M^\pm\subset N^\pm$.  Since $\Lambda_{\mathrm{mat}}$ is not of
this form, the matrix-valued system does not seem to fall directly within that
framework; here
we give a self-contained proof for general admissible trace manifolds
$\Lambda$.

This paper treats the bulk problem, in which $\Gamma_t$ is an arbitrary
prescribed smooth family of separating hypersurfaces, not necessarily a mean
curvature flow; its precise formulation is
\eqref{eq:original-moving-interface-problem} in \Cref{sec:main-results}.  We
study it for two reasons.  First, by the splitting above, it is the analytic
core of the local existence problem for
\eqref{eq:intro-sharp-interface-system}--\eqref{eq:intro-mean-curvature-flow};
in particular, it gives local smooth solutions of the kind assumed in these
convergence results.  Second, the problem has its own analytic
difficulties.  The interface conditions couple a nonlinear constraint on the pair
of one-sided traces with a conormal condition, so neither phase can be solved
with Dirichlet or Neumann data alone.  Normal reflection across $\Lambda$ in the
target encodes both interface conditions as a single reflection symmetry,
without parametrizing one trace by the other, and turns them into a Dirichlet
problem; but the resulting doubled collar is attached to two different bulk
domains, so the single-phase target free-boundary method does not apply directly
(\Cref{rem:single-phase-comparison}).

\subsection*{Summary of results}
For any nonempty compact smooth submanifold $\Lambda\subset N^+\times N^-$ and
$q>d+2$, initial data in $W_q^{2-2/q}$ satisfying the two interface conditions
in \eqref{eq:original-moving-interface-problem} on $\Gamma_0$ generate a strong
solution in $W_q^{2,1}$, unique in the full class of such solutions, which is
smooth up to the moving interface for every positive time
(\Cref{thm:main-short-time-solution}).  Under compatibility conditions of
every order, smooth data yield solutions that are smooth up to the initial
corner $\Gamma_0\times\{0\}$ (\Cref{cor:smooth-short-time-solution}).
Since $W_q^{2-2/q}$ is the natural initial trace space for maximal
$L^q$-regularity, the existence theorem can be restarted from the terminal
trace of a solution.  Hence, when the prescribed interface extends smoothly
past a finite time $T_*$, bounded gradients of the phase fields imply that the
solution continues (\Cref{prop:prescribed-interface-continuation}).  For an
interface moving by mean curvature this gives a blow-up alternative: at a finite
maximal time, either the curvature of $\Gamma_t$ or one of the phase gradients
becomes unbounded (\Cref{cor:mean-curvature-continuation}).  Applied to
$\Lambda_{\mathrm{mat}}$, the main theorem gives local smooth solutions of the
matrix-valued system for any prescribed smooth separating interface family and
smooth initial data satisfying the compatibility conditions
(\Cref{cor:matrix-valued-short-time-solution}); taking for $\Gamma_t$ the mean
curvature flow starting from $\Gamma_0$ on its short existence interval gives
local smooth solutions of the full system
\eqref{eq:intro-sharp-interface-system}--\eqref{eq:intro-mean-curvature-flow}.  The same argument covers the
isotropic--nematic and concentric-sphere geometries
(\Cref{subsec:minimal-pair-model-geometries}).

\subsection*{Ideas of the proof}
Since in general neither phase trace determines the other, we treat the pair
$\mathbf u=(u^+,u^-)$ as a single unknown near the interface.  The key ingredient
is the combination of a normal reflection across $\Lambda$ in the target with an
overlapping space--time Schwarz map.  The argument has three parts.
\begin{enumerate}[label=\emph{(\arabic*)},leftmargin=2.3em]
\item \emph{Doubling across $\Lambda$.}  After an ambient isotopy adapted to
      Fermi coordinates, we fold the two one-sided collars onto a common
      half-collar and extend the paired map by the normal reflection
      $\mathcal R_\Lambda=2\Pi_\Lambda-\mathrm{Id}$ in the target
      ($\Pi_\Lambda$ is the nearest-point projection onto $\Lambda$),
      following the target free-boundary method of Struwe and Chen--Lin
      \cite{Struwe1991,ChenLin1998}.  The interface conditions then say that
      the extension is reflection-symmetric, and we are left with a Dirichlet
      problem on a doubled collar (\Cref{lem:reflection-covariance}).  At the
      regularity $W_q^{2-2/q}$, the reflected initial datum must again lie in
      $W_q^{2-2/q}$ across the interface; this is where the first-order
      compatibility condition at $\Gamma_0$ enters.
\item \emph{An overlapping space--time Schwarz map.}  The doubled collar and
      the two bulk domains overlap.  Given boundary data at the outer edge of
      the collar, we solve in the collar and take its trace on an inner
      hypersurface as Dirichlet data for the bulk problems; the bulk traces are
      then passed back to the collar.  The two transfer hypersurfaces are a
      positive distance apart, so the bulk step gains a factor $T^{1/2}$
      (\Cref{lem:off-diagonal-trace}) and the trace map is a contraction for
      short times, as in parabolic domain decomposition
      \cite{Lions1988Schwarz,GanderStuart1998,GiladiKeller2002,Gander2008Schwarz}.
      The nonlinear coupled system is then solved by Schauder's theorem in the
      frozen coefficients, with the admissible traces centered at a parabolic
      lifting of the initial trace.
\item \emph{Returning to the geometric problem.}  Uniqueness for the nonlinear
      doubled problem implies the reflection symmetry, and evaluating this
      symmetry and its normal derivative at the interface gives both interface
      conditions.  A maximum-principle argument for the distance to the target
      shows that the solution takes values in $N^\pm$ for arbitrary compact
      smooth wells (\Cref{prop:target-preservation}).  Uniqueness in the full
      $W_q^{2,1}$ class comes from a moving-domain energy estimate that uses
      the chord--normal estimate
      $|(\mathbf p-\mathbf q)\cdot\boldsymbol\eta|\le C|\mathbf p-\mathbf q|^2|\boldsymbol\eta|$
      for $\mathbf p,\mathbf q\in\Lambda$ and $\boldsymbol\eta$ normal to
      $\Lambda$ at $\mathbf p$
      (\Cref{prop:physical-solution-uniqueness}); the smoothness statements
      follow from mixed Dirichlet--Neumann Schauder estimates in adapted target
      coordinates.
\end{enumerate}

\subsection*{Organization}
\Cref{sec:main-results} states the results and applications, and
\Cref{sec:geometry,sec:fixed-coordinates} develop the geometry of the trace
constraint and the fixed-domain equations.  The Schwarz construction and the
nonlinear fixed-point argument are in \Cref{sec:schwarz,sec:schauder};
\Cref{sec:continuation} treats continuation.  The appendices give the
compatibility hierarchy and uniform short-time parabolic estimates.

\section{Main results and applications}
\label{sec:main-results}

\subsection{The prescribed-interface problem and main theorem}

Fix integers $d\ge2$ and $L\ge1$.  Let $N^+$ and $N^-$ be disjoint compact smooth
embedded submanifolds of $\mathbb R^L$, without boundary.  We identify the
product Euclidean space $\mathbb R^L\times\mathbb R^L$ isometrically with
$\mathbb R^{2L}$ and set
\begin{equation}\label{eq:ambient-product-spaces}
    \mathbf N:=N^+\times N^-\subset\mathbb R^{2L}.
\end{equation}

Let the admissible trace manifold $\Lambda$ be a nonempty compact smooth embedded
submanifold without boundary of $\mathbf N$.  Neither endpoint projection
$\operatorname{pr}_\pm:\Lambda\to N^\pm$ is assumed to be injective or
surjective.

Boldface letters always denote paired quantities:
\[
    \mathbf z=(z^+,z^-),
    \qquad
    \mathbf v=(v^+,v^-),
    \qquad
    \mathbf h=(h^+,h^-).
\]
For $\mathbf p\in\Lambda$, we write
\begin{equation}\label{eq:ambient-relative-normal-spaces}
    T_{\mathbf p}^{\perp,\mathrm{amb}}\Lambda
    :=
    \bigl(T_{\mathbf p}\Lambda\bigr)^{\perp_{\mathbb R^{2L}}},
    \qquad
    T_{\mathbf p}^{\perp,\mathbf N}\Lambda
    :=
    T_{\mathbf p}\mathbf N
    \cap
    T_{\mathbf p}^{\perp,\mathrm{amb}}\Lambda.
\end{equation}

Let $\mathbb T^d:=\mathbb R^d/\mathbb Z^d$ carry its standard flat metric; this
matches the periodic setting of \cite{FeiLinWangZhang2023} and avoids an outer
boundary.  For $0\le t\le T_0$, let
$\Gamma_t\subset\mathbb T^d$ be a prescribed smooth family of compact closed
embedded hypersurfaces.  By a smooth family we mean that there is a smooth map
\[
    X:\Gamma_0\times[0,T_0]\longrightarrow\mathbb T^d
\]
such that each $X_t:=X(\cdot,t)$ is an embedding with image $\Gamma_t$.
We assume that every $\Gamma_t$ separates the torus
into two smooth nonempty domains:
\begin{equation}\label{eq:torus-phase-decomposition}
    \mathbb T^d\setminus\Gamma_t
    =
    \Omega_t^+\mathbin{\dot\cup}\Omega_t^-,
    \qquad
    \partial\Omega_t^+=\partial\Omega_t^-=\Gamma_t.
\end{equation}
Let $\nu_t$ be the unit normal along $\Gamma_t$ pointing from $\Omega_t^-$
into $\Omega_t^+$.  Thus the outward unit normals of $\Omega_t^+$ and
$\Omega_t^-$ are $-\nu_t$ and $\nu_t$, respectively.

For $\sigma\in\{+,-\}$, the unknown is
\[
    u^\sigma:
    \bigcup_{0\le t\le T_0}\Omega_t^\sigma\times\{t\}
    \longrightarrow N^\sigma.
\]
The prescribed-interface problem is
\begin{equation}\label{eq:original-moving-interface-problem}
\begin{cases}
    \partial_tu^\sigma
    =
    P_{T_{u^\sigma}N^\sigma}\Delta u^\sigma,
    & x\in\Omega_t^\sigma,
      \quad 0<t<T_0,
      \quad \sigma=\pm,\\[0.5em]
    (u^+,u^-)(x,t)\in\Lambda,
    & x\in\Gamma_t,
      \quad 0<t<T_0,\\[0.4em]
    \bigl(-\partial_{\nu_t}u^+,
          \partial_{\nu_t}u^-\bigr)(x,t)
    \perp_{\mathbb R^{2L}}
    T_{(u^+,u^-)(x,t)}\Lambda,
    & x\in\Gamma_t,
      \quad 0<t<T_0,\\[0.4em]
    u^\sigma(x,0)=u_0^\sigma(x),
    & x\in\Omega_0^\sigma,
      \quad \sigma=\pm.
\end{cases}
\end{equation}
For
$\Lambda_{\mathrm{mat}}$, the conormal condition is equivalent to the matching of
normal derivatives used in \cite{FeiLinWangZhang2023}
(\Cref{prop:matrix-normal-derivative-matching-conormality}).

Let $\mathrm{II}^\sigma$ denote the second fundamental form of
$N^\sigma\subset\mathbb R^L$, with the sign convention for which the first
equation in \eqref{eq:original-moving-interface-problem} has the equivalent
ambient form
\begin{equation}\label{eq:ambient-harmonic-map-heat-flow}
    \partial_tu^\sigma-\Delta u^\sigma
    =
    \sum_{i=1}^d
    \mathrm{II}^\sigma_{u^\sigma}
    \bigl(\partial_i u^\sigma,\partial_i u^\sigma\bigr),
    \qquad
    x\in\Omega_t^\sigma,
    \quad 0<t<T_0,
    \quad \sigma=\pm.
\end{equation}
Since $\partial_{\nu_t}u^\sigma\in T_{u^\sigma}N^\sigma$, the conormal condition
is equivalently
\[
    \bigl(-\partial_{\nu_t}u^+,
          \partial_{\nu_t}u^-\bigr)
    \in
    T_{(u^+,u^-)}^{\perp,\mathbf N}\Lambda,
    \qquad \mathbf N=N^+\times N^-.
\]

For $0<T\le T_0$ and $\sigma=\pm$, set
\[
    Q_T^\sigma
    :=
    \bigcup_{0<t<T}\bigl(\Omega_t^\sigma\times\{t\}\bigr).
\]
Function spaces on $Q_T^\sigma$ are defined by pulling back to the fixed
cylinder $\Omega_0^\sigma\times(0,T)$ with the ambient isotopy $\Psi_t$ of
\Cref{lem:fermi-compatible-parametrization}: for a function space $X$ on
$\Omega_0^\sigma\times(0,T)$, such as $W_q^{2,1}$ or
$C^{1+\alpha,(1+\alpha)/2}$, we write $u\in X(Q_T^\sigma)$ if
$(y,t)\mapsto u(\Psi_t(y),t)$ belongs to $X$.  Likewise,
$u\in C([0,T];W_q^{s}(\Omega_t^\sigma))$ means that
$t\mapsto u(\Psi_t(\cdot),t)$ belongs to
$C([0,T];W_q^{s}(\Omega_0^\sigma))$.  Any two smooth isotopies $\Psi_t$ with
$\Psi_t(\Omega_0^\sigma)=\Omega_t^\sigma$ differ by a smooth time-dependent
diffeomorphism of $\Omega_0^\sigma$, which preserves these spaces; hence the
spaces do not depend on the choice of $\Psi_t$.

\begin{definition}[Strong solution]
\label{def:strong-solution}
Let $q>d+2$ and $0<T\le T_0$.  A \emph{strong solution} of
\eqref{eq:original-moving-interface-problem} on $[0,T]$ is a pair
$(u^+,u^-)$ with $u^\sigma\in W_q^{2,1}(Q_T^\sigma;\mathbb R^L)$ and values in
$N^\sigma$, such that the phase equations hold almost everywhere in
$Q_T^\sigma$, the initial condition holds in $\Omega_0^\sigma$, and the two
interface conditions hold on $\Gamma_t$, $0<t\le T$, in the sense of traces.
By the parabolic Sobolev--Morrey embedding,
$u^\sigma\in C^{1+\alpha,(1+\alpha)/2}(\overline{Q_T^\sigma};\mathbb R^L)$ for
$0<\alpha<1-(d+2)/q$, so $u^\sigma$ and $\nabla u^\sigma$ have continuous
traces on the interface and at $t=0$.
\end{definition}

The initial interface conditions in the next theorem are meaningful at the
natural regularity.  Indeed, for $u_0^\sigma\in
W_q^{2-2/q}(\Omega_0^\sigma;N^\sigma)$,
\[
    u_0^\sigma|_{\Gamma_0}
    \in W_q^{2-3/q}(\Gamma_0),
    \qquad
    \partial_{\nu_0}u_0^\sigma|_{\Gamma_0}
    \in W_q^{1-3/q}(\Gamma_0).
\]
Because $q>d+2$, these spaces embed respectively into
$C^{1+\alpha}(\Gamma_0)$ and $C^\alpha(\Gamma_0)$ for every
$0<\alpha<1-(d+2)/q$.  Thus the tangent space and the orthogonality relation
at $t=0$ have pointwise continuous representatives.

\begin{theorem}[Finite-regularity short-time existence and uniqueness]
\label{thm:main-short-time-solution}
Assume the geometric setting above, fix $q>d+2$, and let
\[
    u_0^\sigma
    \in
    W_q^{2-2/q}(\Omega_0^\sigma;N^\sigma),
    \qquad \sigma=\pm.
\]
Suppose that, on $\Gamma_0$ in the Sobolev trace sense,
\begin{equation}\label{eq:zeroth-compatibility}
    (u_0^+,u_0^-)\in\Lambda,
    \qquad
    \bigl(-\partial_{\nu_0}u_0^+,
          \partial_{\nu_0}u_0^-\bigr)
    \perp_{\mathbb R^{2L}}
    T_{(u_0^+,u_0^-)}\Lambda.
\end{equation}
Then there exists $T\in(0,T_0]$ and a unique strong solution $(u^+,u^-)$ of
\eqref{eq:original-moving-interface-problem} on $[0,T]$ in the sense of
\Cref{def:strong-solution}; in particular
\[
    u^\sigma
    \in W_q^{2,1}(Q_T^\sigma;\mathbb R^L),
    \qquad \sigma=\pm.
\]
The initial and interface conditions are attained in their natural trace
spaces.

Moreover, for every $0<\tau<T'<T$, the solution is smooth up to the moving
interface on the closed time strip $\tau\le t\le T'$.
\end{theorem}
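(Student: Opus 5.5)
The proof follows the three-part strategy outlined in the introduction, assembled from the lemmas announced there.

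\textbf{Step 1: fixed domain and doubling.} First, I use the Fermi-compatible isotopy $\Psi_t$ of \Cref{lem:fermi-compatible-parametrization} to pull the problem back to the fixed cylinders $\Omega_0^\pm\times(0,T)$. There the phase equations become quasilinear parabolic systems with smooth, time-dependent coefficients. In a tubular collar $\{|s|<2\delta\}$ of $\Gamma_0$, I fold the $\Omega^-$-side onto the $\Omega^+$-side by $s\mapsto -s$. This gives a paired map $\mathbf u=(u^+,u^-)$ on a half-collar $\{0\le s<2\delta\}$. I extend it to $\{-2\delta<s<2\delta\}$ by
\[
  \mathbf u(y,-s,t):=\mathcal R_\Lambda\bigl(\mathbf u(y,s,t)\bigr),\qquad \mathcal R_\Lambda=2\Pi_\Lambda-\mathrm{Id}.
\]
By \Cref{lem:reflection-covariance}, the two interface conditions are equivalent to this extension being $W^{2,1}_q$ across $s=0$. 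The extension then solves a doubled quasilinear system, and that system is covariant under $(s,\mathbf z)\mapsto(-s,\mathcal R_\Lambda\mathbf z)$. The same reasoning at $t=0$ shows that \eqref{eq:zeroth-compatibility} is exactly what puts the reflected initial datum in $W_q^{2-2/q}$ across $\Gamma_0$.

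\textbf{Step 2: Schwarz map and fixed point.} I choose transfer hypersurfaces $\{|s|=\delta\}$ and $\{|s|=2\delta\}$. For frozen coefficients (a $C^{1+\alpha,(1+\alpha)/2}$ map $\mathbf w$ close to a parabolic lifting of the initial data), the coupled linear problem is solved iteratively:
\begin{itemize}
\item Given Dirichlet data $g$ on $\{|s|=2\delta\}$, solve the doubled collar problem.
\item Restrict the collar solution to $\{|s|=\delta\}$ and use it as Dirichlet data for the two bulk problems on $\Omega_0^\pm\setminus\{|s|<\delta\}$.
\item Take the bulk traces on $\{|s|=2\delta\}$ as the new $g$.
\end{itemize}
\Cref{lem:off-diagonal-trace} gives a factor $T^{1/2}$ for the bulk-to-collar trace, because the two hypersurfaces are a positive distance apart. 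Hence this map is a contraction for small $T$. It has a unique fixed point, with $W_q^{2,1}$ bounds that are uniform in $T$ by the short-time estimates of the appendix, working with data of zero initial trace relative to the lifting.

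Next I define the nonlinear map $\mathbf w\mapsto$ (solution with coefficients frozen at $\mathbf w$). It sends a closed convex ball of $C^{1+\alpha,(1+\alpha)/2}$ into itself for small $T$, using the Hölder gain $T^{\epsilon}$ from the $W^{2,1}_q\hookrightarrow C^{1+\alpha'}$ embedding with $\alpha'>\alpha$. The map is compact and continuous, so Schauder's theorem gives a solution.

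\textbf{Step 3: back to the geometric problem.} I have to recover three things: the interface conditions, the target constraint, and uniqueness.
\begin{itemize}
\item \emph{Interface conditions.} The collar and bulk solutions agree on the overlap, by uniqueness of the linear problems with the same coefficients. Uniqueness for the doubled nonlinear problem with reflected data implies $\mathbf u(-s)=\mathcal R_\Lambda\mathbf u(s)$. Evaluating this identity and its $s$-derivative at $s=0$ gives $\mathbf u\in\Lambda$ and the conormal condition.
\item \emph{Target constraint.} \Cref{prop:target-preservation} shows that $u^\sigma$ takes values in $N^\sigma$.
\item \emph{Uniqueness.} Uniqueness in the full strong class is \Cref{prop:physical-solution-uniqueness}. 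It is a moving-domain energy estimate for the difference of two solutions. The interface term is controlled by the chord--normal estimate, and Gronwall's inequality closes the argument.
\end{itemize}

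For interior-in-time smoothness, I fix $\tau>0$. The solution is $C^{1+\alpha}$ up to the interface, so in adapted target coordinates near $\Lambda$ (tangential Dirichlet-type and normal Neumann-type components) the system is a mixed Dirichlet--Neumann parabolic problem. Schauder estimates with time cutoffs vanishing near $t=\tau/2$ then bootstrap to $C^\infty$ on $[\tau,T']$.

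\textbf{Main obstacle.} I expect the hardest step to be the symmetry/uniqueness argument in Step 3, together with keeping the fixed point in a set where $\Pi_\Lambda$ is defined and the reflected system stays uniformly parabolic. Here I would center the ball at the parabolic lifting of the reflected initial datum. Continuity in time then keeps $\mathbf u$ within the tubular neighbourhood of $\Lambda$ near $s=0$ for small $T$.
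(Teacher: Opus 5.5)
Your proposal follows the paper's proof essentially step for step: Fermi folding with normal reflection across $\Lambda$, the overlapping Schwarz trace map made contractive by the off-diagonal $T^{1/2}$ estimate, a Schauder fixed point in the frozen state, reflection symmetry from short-time uniqueness of the nonlinear doubled problem, target preservation, the chord--normal energy estimate for uniqueness, and a mixed-boundary bootstrap for positive-time smoothness. Two small corrections: first, in the adapted chart the Dirichlet condition falls on the normal components ($\zeta|_{r=0}=0$) and the Neumann condition on the tangential ones ($\partial_r\vartheta|_{r=0}=0$), which is the reverse of what you wrote; second, the frozen-level overlap agreement needs the frozen forcings, not just the coefficients, to coincide, which is why the paper builds overlap matching into the state space $\mathcal X_T^{\mathrm{match}}$.
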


By the parabolic trace theorem, in addition
\[
    u^\sigma
    \in
    C([0,T];W_q^{2-2/q}(\Omega_t^\sigma;\mathbb R^L)),
    \qquad \sigma=\pm.
\]

\begin{remark}[On the hypotheses of \Cref{thm:main-short-time-solution}]
\label{rem:hypotheses-main-theorem}
The assumption $q>d+2$ gives bounded gradients, by the embedding in
\Cref{def:strong-solution}.  It is
used for the compactness in the fixed-point argument
(\Cref{prop:state-map-compact-continuous}), for the preservation of the target
manifolds $N^\pm$ (\Cref{prop:target-preservation}), and for the energy
estimate proving uniqueness (\Cref{prop:physical-solution-uniqueness}).  The
theorem gives existence on some interval $[0,T]$ and asserts no lower
bound for $T$ in terms of norms of $u_0^\pm$; \Cref{sec:continuation} does not
need one, since it restarts the solution from its terminal trace.
\end{remark}

\begin{corollary}[Smooth solution up to the initial time]
\label{cor:smooth-short-time-solution}
Under the hypotheses of \Cref{thm:main-short-time-solution}, suppose in
addition that
\[
    u_0^\sigma
    \in
    C^\infty(\overline{\Omega_0^\sigma};N^\sigma),
    \qquad \sigma=\pm,
\]
and that the initial data and prescribed interface satisfy the compatibility
conditions of every order at $t=0$ in the sense of
\Cref{subsec:compatibility-conditions}.  Then the solution given by
\Cref{thm:main-short-time-solution} is smooth on the
closed moving phase cylinders:
\[
    u^\sigma
    \in
    C^\infty(\overline{Q_T^\sigma};N^\sigma),
    \qquad \sigma=\pm.
\]
\end{corollary}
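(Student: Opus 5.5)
We need to prove Corollary~\ref{cor:smooth-short-time-solution}: with smooth data satisfying the compatibility conditions of every order, the solution is smooth up to $t=0$.

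We pass to the doubled, fixed-domain formulation. Pull back both phases to the fixed cylinders $\Omega_0^\sigma\times(0,T)$ with the isotopy $\Psi_t$ of \Cref{lem:fermi-compatible-parametrization}. Near $\Gamma_0$, use the normal reflection $\mathcal R_\Lambda$ to rewrite the interface conditions as a reflection-symmetric Dirichlet problem on the doubled collar (\Cref{lem:reflection-covariance}). Away from the interface, the bulk equations are the ambient system \eqref{eq:ambient-harmonic-map-heat-flow} with smooth coefficients in the pulled-back metric. Near the interface, we use adapted target coordinates in which $\Lambda$ is flat. In those coordinates the conditions become a mixed problem:
\begin{itemize}
\item Dirichlet conditions on the normal components, $\mathbf u^\perp=0$ in local coordinates on $\Lambda$;
\item nonlinear Neumann conditions on the tangential components.
\end{itemize}
Both hold on a fixed boundary, for a quasilinear system whose principal part is the Laplacian of the pulled-back metric. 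The solution $\mathbf u$ from \Cref{thm:main-short-time-solution} already lies in $C^{1+\alpha,(1+\alpha)/2}$ up to $t=0$. The compatibility conditions of every order (\Cref{subsec:compatibility-conditions}) are exactly the conditions that the time derivatives $\partial_t^k\mathbf u(\cdot,0)$, computed formally from the equation, satisfy the $k$-th order boundary relations on $\Gamma_0$.

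The core of the proof is a bootstrap in parabolic Hölder spaces up to the corner $\Gamma_0\times\{0\}$, proving $\mathbf u\in C^{2k+\alpha,k+\alpha/2}$ for every $k$.

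For the base step, freeze the coefficients at the known $C^{1+\alpha,(1+\alpha)/2}$ solution. The lower-order terms $\mathrm{II}(\nabla u,\nabla u)$ then lie in $C^{\alpha,\alpha/2}$. The linear mixed Dirichlet--Neumann problem with $C^\infty$ initial data satisfying zeroth- and first-order compatibility has a solution in $C^{2+\alpha,1+\alpha/2}$ by the classical Schauder theory (Ladyzhenskaya--Solonnikov--Ural'tseva, or Solonnikov for systems satisfying the complementing condition). The complementing condition holds because the Dirichlet and Neumann parts act on complementary orthogonal components. Uniqueness of the linear problem in $W_q^{2,1}$ identifies this solution with $\mathbf u$.

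For the inductive step, differentiate the system tangentially to $\Gamma_0$ and in $t$. Equivalently, apply the Schauder estimate to $\partial_t^j\mathbf u$ and to the tangential derivatives of $\mathbf u$, recovering normal derivatives from the equation itself. The differentiated functions solve linear problems of the same type. Their data involve only lower-order derivatives, already controlled, and the order-$k$ compatibility condition is exactly what those problems need at the corner.

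Interior regularity and regularity away from $t=0$ are already provided by \Cref{thm:main-short-time-solution}, so only a neighbourhood of $\Gamma_0\times\{0\}$ and of $\overline{\Omega_0^\sigma}\times\{0\}$ needs this argument. Away from the interface, only smooth initial data on a torus region is involved, which is standard. Finally, smoothness transfers back to $\overline{Q_T^\sigma}$ through the smooth isotopy $\Psi_t$, and the values stay in $N^\sigma$ by \Cref{prop:target-preservation}.

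The main obstacle is matching the compatibility hierarchy of \Cref{subsec:compatibility-conditions}, formulated for the original moving-interface problem, with the compatibility required by the linear Schauder theory for the flattened mixed problem at each order. The pullback, the folding and the target coordinate change all mix time and space derivatives, and the boundary operators are nonlinear. I would first check that the hierarchy is invariant under smooth changes of variables in domain and target: the $k$-th order condition says that some formal Taylor polynomial in $t$ satisfies the boundary relations to order $k$, and this property is preserved by such changes. With that in hand, the bootstrap reduces to a routine induction.
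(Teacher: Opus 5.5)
Your proposal is correct and follows essentially the paper's own route (\Cref{prop:initial-corner-smoothness}). Both arguments run a localized mixed Dirichlet--Neumann Schauder bootstrap on the physical half-collar, in target coordinates that flatten $\Lambda$, transport the all-order compatibility hierarchy to the adapted chart as in \Cref{lem:adapted-compatibility}, and recover higher normal derivatives by solving the equation for $\partial_{rr}y$.

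The differences are minor. The reflection/doubling step you mention plays no role in the regularity argument: across $r=0$ the doubled principal coefficients are only Lipschitz and the first-order ones jump, which is why the paper bootstraps on the half-collar only. After folding, the principal part is a direct sum of two different metric operators rather than a single pulled-back Laplacian; it becomes scalar only at $r=0$, where the one-sided Fermi metrics coincide, and this is how the paper checks Lopatinskii--Shapiro (\Cref{lem:adapted-boundary-normal-form}). Away from $\Gamma_0$ you use local Cauchy-problem regularity, whereas the paper solves the bulk Dirichlet problems with the already smooth collar trace at $r=\delta$.
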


Let $\mathcal V_t$ denote the scalar normal velocity of $\Gamma_t$ in the
direction $\nu_t$ and $\mathcal H_t:=-\operatorname{div}_{\Gamma_t}\nu_t$ its
scalar mean curvature, so that $\mathcal V_t=\mathcal H_t$ is mean curvature
flow, as in \eqref{eq:intro-mean-curvature-flow}.  Write $A_{\Gamma_t}$ for the second fundamental form.

\begin{corollary}[Continuation criterion under mean curvature flow]
\label{cor:mean-curvature-continuation}
Fix $q>d+2$ and $T_*<\infty$.  Let $(\Gamma_t)_{0\le t<T_*}$ be a smooth
compact mean curvature flow, and let $(u^+,u^-)$ be a strong solution of
\eqref{eq:original-moving-interface-problem} such that
\[
    u^\sigma\in W_q^{2,1}(Q_S^\sigma;\mathbb R^L)
    \qquad
    \text{for every }S<T_*,\quad \sigma=\pm.
\]
If
\begin{equation}\label{eq:mean-curvature-continuation-bound}
    \sup_{0\le t<T_*}
    \left(
       \|A_{\Gamma_t}\|_{L^\infty(\Gamma_t)}
       +\|\nabla u^+(t)\|_{L^\infty(\Omega_t^+)}
       +\|\nabla u^-(t)\|_{L^\infty(\Omega_t^-)}
    \right)<\infty,
\end{equation}
then $\Gamma_t$ extends smoothly past $T_*$, and $(u^+,u^-)$ extends uniquely
as a strong solution to $[0,T_*+\varepsilon)$ for some $\varepsilon>0$.
The extended phase fields are smooth on every closed positive-time strip.  Let
$T_{\max}$ denote the maximal time up to which the interface is smooth and the
phase fields form a strong solution.  If $T_{\max}<\infty$, then
\[
    \limsup_{t\uparrow T_{\max}}
    \left(
       \|A_{\Gamma_t}\|_{L^\infty}
       +\|\nabla u^+(t)\|_{L^\infty}
       +\|\nabla u^-(t)\|_{L^\infty}
    \right)=\infty.
\]
\end{corollary}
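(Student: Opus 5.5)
The plan is to derive the corollary from \Cref{thm:main-short-time-solution} together with the prescribed-interface continuation result \Cref{prop:prescribed-interface-continuation} announced in the introduction. That proposition says: if the prescribed interface extends smoothly past $T_*$ and the phase gradients stay bounded, the strong solution continues. So the work splits into two steps. First show that the interface extends. Then show that bounded gradients give a terminal trace in $W_q^{2-2/q}$ at $T_*$ satisfying \eqref{eq:zeroth-compatibility}, from which the solution is restarted.

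\emph{Step 1: the interface.} Assume $\sup_t\|A_{\Gamma_t}\|_{L^\infty}<\infty$. I would invoke the classical mean curvature flow regularity theory (Huisken's derivative estimates; Ecker--Huisken interior estimates on the torus). A curvature bound on $[0,T_*)$ gives bounds on all $\nabla^kA$ on $[T_*/2,T_*)$, and the normal velocity is bounded. Hence $\Gamma_t$ converges smoothly to a smooth embedded hypersurface $\Gamma_{T_*}$.

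Embeddedness needs an argument. On a time interval of length $T_*$, a uniform curvature bound plus smooth convergence of the parametrizations $X_t$ prevents the distance between distinct sheets from collapsing, by the avoidance principle for MCF. The limit $\Gamma_{T_*}$ still separates $\mathbb T^d$, because separation is preserved under isotopy. Restarting the flow from $\Gamma_{T_*}$ by short-time existence gives a smooth family on $[0,T_*+\varepsilon_1)$, smooth across $T_*$ because the limit is attained in $C^\infty$.

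\emph{Step 2: the phase fields.} Pull back by the isotopy $\Psi_t$ of \Cref{lem:fermi-compatible-parametrization}, now defined on the extended interval. The gradient bound makes the right-hand side of \eqref{eq:ambient-harmonic-map-heat-flow} bounded in $L^\infty$. Maximal $L^q$-regularity on each phase, with the interface conditions treated through the doubled Dirichlet formulation, should then give uniform $W_q^{2,1}$ bounds on windows $[S-\delta,S]$ with $\delta$ independent of $S$. I take this to be exactly the content of \Cref{prop:prescribed-interface-continuation}, and I would cite it rather than redo it.

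The consequence is that $u^\sigma(t)$ converges in $W_q^{2-2/q}$ as $t\uparrow T_*$. The limit takes values in $N^\sigma$ and satisfies \eqref{eq:zeroth-compatibility} on $\Gamma_{T_*}$, by continuity of the $C^{1+\alpha}$ traces. \Cref{thm:main-short-time-solution}, applied from time $T_*$ with the time-translated interface, gives a continuation. Gluing the two pieces yields a $W_q^{2,1}$ solution, since traces match at $T_*$. Uniqueness of the extension follows from the uniqueness part of the theorem, restarted at each time. Smoothness on closed positive-time strips follows from the theorem's last assertion, applied to restarts from times $\tau/2$.

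\emph{Step 3: blow-up alternative.} Suppose $T_{\max}<\infty$ and the $\limsup$ were finite. Then the quantity is bounded near $T_{\max}$. On compact subintervals it is finite anyway: curvature by smoothness of the flow, gradients by the embedding in \Cref{def:strong-solution}. So the supremum over $[0,T_{\max})$ is finite, and Steps 1--2 extend past $T_{\max}$, a contradiction.

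The main obstacle is Step 1's embeddedness and separation at $T_*$. A curvature bound alone does not obviously stop two sheets from touching in finite time. I would first try the avoidance principle, which gives a positive lower bound on the sheet distance. If that is delicate, I would use the fact that the paper's notion of a smooth separating family is preserved under smooth limits with bounded geometry.
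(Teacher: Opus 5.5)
Your architecture matches the paper's proof. Curvature bounds give derivative bounds and a smooth extension of the flow past $T_*$. \Cref{prop:prescribed-interface-continuation} then continues the phase fields, interior-time regularity gives smoothness on positive-time strips, and the blow-up alternative follows by contradiction. Your remark that the quantity is automatically finite on $[0,S]$ for $S<T_{\max}$ is a detail the paper leaves implicit.

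The one step that is not yet an argument is embeddedness of $\Gamma_{T_*}$. You need it because \Cref{prop:prescribed-interface-continuation} requires a smooth family of embedded separating hypersurfaces on $[0,T_*+\varepsilon_0]$. The avoidance principle, as usually stated, compares two disjoint compact flows. Two sheets of one hypersurface are not of that form, so it cannot simply be cited. Your fallback is false: smooth convergence with bounded geometry does not preserve embeddedness. Two arcs of embedded curves can approach each other and touch tangentially in a $C^\infty$ limit while the curvature stays bounded throughout; only the flow equation excludes this.

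The missing idea is a localized strong maximum principle, which is what the paper uses.
\begin{itemize}
\item If two sheets of $\Gamma_{T_*}$ meet, the contact must be tangential. A transversal intersection would persist for $t$ slightly below $T_*$ by smooth convergence, contradicting embeddedness of $\Gamma_t$.
\item Near the contact point, for $t$ close to $T_*$, both sheets are graphs $f_1,f_2$ over the common tangent plane on a fixed ball. Each solves the graphical mean curvature flow equation, so $w:=f_1-f_2$ solves a linear uniformly parabolic equation.
\item For $t<T_*$, $w$ has a fixed sign (say $w>0$), because the sheets are disjoint and the domain is connected.
\item At $t=T_*$, $w$ vanishes at an interior point, which contradicts the strong maximum principle.
\end{itemize}
A correct global form of your avoidance idea is a two-point maximum principle for the minimal chord length between points at intrinsic distance at least $\epsilon$. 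There the curvature bound controls pairs on the constraint boundary; this has to be carried out, not just cited. Once embeddedness holds on the closed interval $[0,T_*]$, your isotopy argument for separation is fine. The paper uses invariance of the mod $2$ homology class instead.
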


The same argument, without the curvature term, applies whenever a prescribed
interface family extends smoothly past $T_*$
(\Cref{prop:prescribed-interface-continuation}).

\begin{remark}[Comparison with the single-phase problem]
\label{rem:single-phase-comparison}
For a single map on a compact manifold $M$ with the free-boundary condition
$u(\partial M)\subset\Sigma$, $\partial_\nu u\perp T\Sigma$, the localized
reflection method of \cite{ChenLin1998} adds a reflected collar to $M$, solves a
Dirichlet problem on the resulting manifold with boundary, and obtains the
Dirichlet data on its outer boundary by reflection from the previous iterate.
Here the two phases have different bulk domains, and the doubled collar does not
lie in a single manifold on which one Dirichlet problem can be solved; the collar
problem and the two bulk problems are therefore solved separately and matched on
an overlap by the Schwarz map of \Cref{sec:schwarz}.  We also work at the
natural initial regularity $W_q^{2-2/q}$, whereas \cite{ChenLin1998} takes the
initial map in $W_p^2$.  This requires that the reflected initial datum stay in
$W_q^{2-2/q}$ (\Cref{subsec:reflected-doubled-coefficients}) and uses a
parabolic lifting of the initial trace (\Cref{subsec:state-trace-spaces}).
\end{remark}

\begin{remark}[Scope]
\label{rem:scope-of-setting}
The interface family is prescribed and is not coupled to $(u^+,u^-)$; models in
which the interface velocity depends on the traces of $u^\pm$ are not covered.
\end{remark}

\subsection{Minimal-pair origin and model geometries}
\label{subsec:minimal-pair-model-geometries}

In phase-transition models the trace manifold is selected by minimizing
connecting orbits.  Let $F\in C^\infty(\mathbb R^L;[0,\infty))$ satisfy
$F^{-1}(0)=N^+\cup N^-$.  For a finite-energy connecting curve
$\gamma:\mathbb R\to\mathbb R^L$, set
\[
    E_F(\gamma)
    :=\int_{\mathbb R}
       \left(\frac12|\gamma'|^2+F(\gamma)\right)\,ds,
    \qquad
    c_F:=\inf_{\gamma\in\mathscr H_F}E_F(\gamma),
\]
where $\mathscr H_F$ is the class of $H^1_{\mathrm{loc}}$ curves with
$E_F(\gamma)<\infty$, $\gamma(-\infty)\in N^-$, and
$\gamma(+\infty)\in N^+$.  The associated
minimal-pair set is
\[
    \Lambda_F
    :=\bigl\{(p^+,p^-)\in N^+\times N^-:
       E_F(\gamma)=c_F\text{ for some }\gamma\in\mathscr H_F,
       \ \gamma(\pm\infty)=p^\pm\bigr\}.
\]
Thus $\Lambda_F$ consists of the endpoint pairs of connecting curves that attain
$c_F$.  For positive-dimensional wells, the infimum over curves with prescribed
endpoints may equal $c_F$ without being attained, and such endpoint pairs do not
belong to $\Lambda_F$.
Following \cite[Definition~3.5]{FeiLinWangZhang2023}, we call $F$ fully
minimally paired if every pair in $N^+\times N^-$ is a minimal pair, and
partially minimally paired otherwise.  Whenever
$\Lambda_F$ is a nonempty compact smooth embedded submanifold without boundary, the
main theorem applies with $\Lambda=\Lambda_F$.  The proof uses only the
geometry of $N^\pm$ and $\Lambda_F$; it does not need the minimizing connection
to be unique or nondegenerate.

Besides the matrix geometry of \Cref{subsec:matrix-valued-application}, we
mention three standard examples.  At the isotropic--nematic transition,
\[
    N^-:=\{0\},
    \qquad
    N^+:=\mathcal N
    :=\left\{s_+\left(n\otimes n-\frac13I_3\right):n\in\mathbb S^2\right\}
    \cong\mathbb{RP}^2,
\]
and $\Lambda_{\mathrm{NI}}=\mathcal N\times\{0\}$, where $s_+>0$ is the constant
determined by the potential.  For isotropic elasticity, the conormal condition
is the one-sided Neumann condition $\partial_{\nu_t}n=0$ appearing in
\cite{FeiWangZhangZhang2018}; with anisotropic elasticity the formal limit
instead has the strong anchoring condition $n=\nu_t$ on the interface (see the
discussion in \cite[Section~1.1]{FeiWangZhangZhang2018}), which is not of the form considered here.  For concentric spherical wells
$N^-=a\mathbb S^{n-1}$ and $N^+=b\mathbb S^{n-1}$, $0<a<b$, consider a radial
potential $F(u)=G(|u|)$ that vanishes exactly on these two spheres, such as
$F(u)=\frac14(|u|^2-a^2)^2(|u|^2-b^2)^2$ in \cite{DongWang2025}.  Writing a
connecting curve as $\gamma=\rho\,\omega$ with $\rho=|\gamma|$ and
$\omega\in\mathbb S^{n-1}$, one has $|\gamma'|^2=|\rho'|^2+\rho^2|\omega'|^2$, so
$E_F(\gamma)$ is at least the energy of the radial curve with the same modulus,
with equality only if $\omega$ is constant.  Hence minimizing connecting orbits
are radial, and $\Lambda_F=\Lambda_{\mathrm{sph}}$, where
\[
    \Lambda_{\mathrm{sph}}
    :=\{(b\omega,a\omega):\omega\in\mathbb S^{n-1}\}.
\]
Writing $u^+=b\omega^+$ and $u^-=a\omega^-$ with
$\omega^\pm$ taking values in $\mathbb S^{n-1}$, the interface conditions
become
\[
    \omega^+=\omega^-,
    \qquad
    b^2\partial_{\nu_t}\omega^+
    =a^2\partial_{\nu_t}\omega^-,
\]
as in \cite{BronsardStoth1998,DongWang2025}.  At the fully minimally paired
extreme,
$N^-:=\mathbb S^k\times\{0\}$ and
$N^+:=\{0\}\times\mathbb S^\ell$ admit a potential for which every endpoint
pair is minimal \cite[Section~3.2]{FeiLinWangZhang2023}; hence
$\Lambda=N^+\times N^-$ and the two sides satisfy separate homogeneous
Neumann conditions.  In the first example one well is a point; the other two
give a graph relation and the full product relation, respectively.  For
$n\ge3$, the matrix potential of \Cref{subsec:matrix-valued-application} is
partially minimally paired.

\subsection{Application to the matrix-valued sharp-interface system}
\label{subsec:matrix-valued-application}

We next apply the abstract theorem to the periodic matrix-valued
sharp-interface problem.  Fix $n\ge2$ and let
\[
    \mathbb M_n:=\mathbb R^{n\times n}
\]
be equipped with the Frobenius inner product
\[
    X:Y:=\operatorname{tr}(X^\top Y),
\]
and identify $\mathbb M_n$ isometrically with $\mathbb R^{n^2}$.  Set
\[
    N^+:=\mathrm O^+(n):=\{A\in\mathrm O(n):\det A=1\},
    \qquad
    N^-:=\mathrm O^-(n):=\{A\in\mathrm O(n):\det A=-1\}.
\]
For $\xi\in\mathbb S^{n-1}$, define
\[
    R_\xi:=I_n-2\xi\xi^\top,
\]
and let
\begin{equation}\label{eq:matrix-minimal-pair-manifold}
    \Lambda_{\mathrm{mat}}
    :=
    \left\{
        (P,PR_\xi):
        P\in\mathrm O^+(n),\
        \xi\in\mathbb S^{n-1}
    \right\}
    \subset
    \mathrm O^+(n)\times\mathrm O^-(n).
\end{equation}
Since $R_\xi=R_{-\xi}$, the map
\[
    \iota_{\mathrm{mat}}:
    \mathrm O^+(n)\times\mathbb{RP}^{n-1}
    \longrightarrow
    \mathbb M_n\times \mathbb M_n,
    \qquad
    \iota_{\mathrm{mat}}(P,[\xi])=(P,PR_\xi),
\]
is well-defined and smooth.  It is injective because its first component
determines $P$, while $R_\xi$ determines the line $[\xi]$ as its
$(-1)$-eigenspace.  To check immersion, represent a tangent vector to
$\mathbb{RP}^{n-1}$ by $\eta\in\xi^\perp$.  If the differential annihilates
$(PX,\eta)$, then its first component gives $X=0$, and its second component
gives
\[
    \dot R_\xi=-2(\eta\xi^\top+\xi\eta^\top)=0.
\]
Applying this identity to $\xi$ yields $\eta=0$, so
$\iota_{\mathrm{mat}}$ is an injective immersion; since its domain is compact,
it is a smooth embedding, and $\Lambda_{\mathrm{mat}}$ is a compact smooth
embedded submanifold of
$\mathrm O^+(n)\times\mathrm O^-(n)$.  For the matrix-valued potential
\[
    F_{\mathrm{mat}}(A)
    :=
    \frac14\|A^\top A-I_n\|_{\mathrm F}^2,
\]
where $\|\cdot\|_{\mathrm F}$ is the Frobenius norm, the endpoint pairs of
globally minimizing connecting orbits are precisely the elements of
$\Lambda_{\mathrm{mat}}$ \cite[Lemmas~3.6 and~3.7]{FeiLinWangZhang2023};
that is, $\Lambda_{F_{\mathrm{mat}}}=\Lambda_{\mathrm{mat}}$ in the notation
of \Cref{subsec:minimal-pair-model-geometries}.

\begin{proposition}[Normal-derivative matching as conormality]
\label{prop:matrix-normal-derivative-matching-conormality}
Let
\[
    \mathbf P=(P,PR_\xi)\in\Lambda_{\mathrm{mat}},
\]
and suppose that
\[
    V_+\in T_P\mathrm O^+(n),
    \qquad
    V_-\in T_{PR_\xi}\mathrm O^-(n).
\]
Then
\begin{equation}\label{eq:matrix-normal-derivative-matching}
    V_+=V_-
\end{equation}
if and only if
\begin{equation}\label{eq:matrix-conormal-condition}
    (-V_+,V_-)
    \perp_{\mathbb R^{2n^2}}
    T_{\mathbf P}\Lambda_{\mathrm{mat}}.
\end{equation}
\end{proposition}

\begin{proof}
Write $R:=R_\xi$.  Differentiating
\[
    s\longmapsto
    \bigl(P\exp(sX),P\exp(sX)R_{\xi(s)}\bigr),
    \qquad
    \xi'(0)=\eta\in\xi^\perp,
\]
gives
\begin{equation}\label{eq:matrix-Lambda-tangent-space}
\begin{aligned}
    T_{(P,PR)}\Lambda_{\mathrm{mat}}
    =
    \Bigl\{
        \bigl(PX,PXR+P\dot R\bigr):\quad&
        X\in\mathfrak{so}(n),\\
        &\dot R=-2(\eta\xi^\top+\xi\eta^\top),
        \quad \eta\in\xi^\perp
    \Bigr\}.
\end{aligned}
\end{equation}

Assume first that \eqref{eq:matrix-conormal-condition} holds.  Write
\[
    V_+=PX_+,
    \qquad
    V_-=PRX_-,
    \qquad
    X_\pm\in\mathfrak{so}(n).
\]
Testing against tangent vectors of the form $(PX,PXR)$ gives
\[
    -X_+:X+(RX_-R):X=0
    \qquad
    \text{for every }X\in\mathfrak{so}(n),
\]
and therefore
\begin{equation}\label{eq:matrix-skew-relation}
    X_+=RX_-R.
\end{equation}
Testing against tangent vectors of the form $(0,P\dot R)$ gives
\[
    X_-:R\dot R=0
    \qquad
    \text{for every }\eta\in\xi^\perp.
\]
Since
\[
    R\dot R
    =2(\xi\eta^\top-\eta\xi^\top),
\]
it follows that $X_-\xi$ is orthogonal to $\xi^\perp$.  Skew-symmetry also
gives $\xi^\top X_-\xi=0$, so
\[
    X_-\xi=0.
\]
Consequently,
\[
    RX_-R=X_-,
    \qquad
    RX_-=X_-.
\]
Together with \eqref{eq:matrix-skew-relation}, this yields $X_+=X_-$ and
hence
\[
    V_-=PRX_-=PX_-=PX_+=V_+.
\]

Conversely, assume $V_+=V_-=:V$.  Write
\[
    V=PX,
    \qquad
    X\in\mathfrak{so}(n).
\]
Because $V\in T_{PR}\mathrm O^-(n)$, the matrix $RX$ is skew-symmetric,
and hence
\[
    RX=XR.
\]
Thus $X$ preserves the $(-1)$-eigenspace $\operatorname{span}\{\xi\}$ of
$R$.  Skew-symmetry then gives $X\xi=0$, and therefore
\[
    RX=XR=X.
\]
For an arbitrary tangent vector in
\eqref{eq:matrix-Lambda-tangent-space},
\[
    (-V,V):\bigl(PX_0,PX_0R+P\dot R\bigr)
    =-X:X_0+X:X_0R+X:\dot R
    =0.
\]
Indeed, the first two terms cancel because $XR=X$, while the last term
vanishes because $X$ is skew-symmetric and $\dot R$ is symmetric.  This proves
\eqref{eq:matrix-conormal-condition}.
\end{proof}

The harmonic map heat flow into the orthogonal group has the familiar matrix
form.  For $A\in\mathrm O(n)$, the orthogonal projection
onto
\[
    T_A\mathrm O(n)=\{A\Omega:\Omega\in\mathfrak{so}(n)\}
\]
is
\begin{equation}\label{eq:orthogonal-group-tangent-projection}
    P_{T_A\mathrm O(n)}Z
    =\frac12\bigl(Z-AZ^\top A\bigr).
\end{equation}
Differentiating $A^\top A=I_n$ twice and using
\eqref{eq:orthogonal-group-tangent-projection} gives
\begin{equation}\label{eq:orthogonal-group-harmonic-map-operator}
    P_{T_A\mathrm O(n)}\Delta A
    =
    \Delta A
    -\sum_{j=1}^d
      \partial_{x_j}A\,A^\top\partial_{x_j}A.
\end{equation}

\begin{corollary}[Local smooth solvability for the matrix-valued system]
\label{cor:matrix-valued-short-time-solution}
Let $\{\Gamma_t\}_{0\le t\le T_0}$ be a prescribed smooth family of
separating hypersurfaces in $\mathbb T^d$, and fix $q>d+2$.  Let
\[
    A_0^\sigma
    \in C^\infty(\overline{\Omega_0^\sigma};\mathrm O^\sigma(n)),
    \qquad \sigma=\pm,
\]
and assume on $\Gamma_0$, in the trace sense, that
\[
    (A_0^+,A_0^-)\in\Lambda_{\mathrm{mat}},
    \qquad
    \partial_{\nu_0}A_0^+
    =
    \partial_{\nu_0}A_0^-.
\]
Assume also that the initial data and the prescribed interface satisfy the
compatibility conditions of every order at $t=0$, in the sense of
\Cref{subsec:compatibility-conditions} with $\Lambda=\Lambda_{\mathrm{mat}}$,
for the system
\begin{equation}\label{eq:matrix-valued-flat-torus-system}
\begin{cases}
    \displaystyle
    \partial_tA^\sigma
    =
    \Delta A^\sigma
    -\displaystyle\sum_{j=1}^d
      \partial_{x_j}A^\sigma
      (A^\sigma)^\top
      \partial_{x_j}A^\sigma,
    &x\in\Omega_t^\sigma,\quad \sigma=\pm,\\[0.8em]
    (A^+,A^-)\in\Lambda_{\mathrm{mat}},
    &x\in\Gamma_t,\\[0.4em]
    \partial_{\nu_t}A^+
    =
    \partial_{\nu_t}A^-,
    &x\in\Gamma_t,\\[0.4em]
    A^\sigma(\cdot,0)=A_0^\sigma,
    &x\in\Omega_0^\sigma,\quad \sigma=\pm.
\end{cases}
\end{equation}
Then, for some $T\in(0,T_0]$, the system
\eqref{eq:matrix-valued-flat-torus-system} has a unique strong solution
$(A^+,A^-)$, and this solution satisfies
\[
    A^\sigma
    \in C^\infty(\overline{Q_T^\sigma};\mathrm O^\sigma(n)),
    \qquad \sigma=\pm.
\]
\end{corollary}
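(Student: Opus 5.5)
The plan is to deduce \Cref{cor:matrix-valued-short-time-solution} from \Cref{thm:main-short-time-solution} and \Cref{cor:smooth-short-time-solution} with the choices $L=n^2$, $N^\pm=\mathrm O^\pm(n)\subset\mathbb M_n\cong\mathbb R^{n^2}$ and $\Lambda=\Lambda_{\mathrm{mat}}$. First I check the geometric hypotheses. $\mathrm O^+(n)$ and $\mathrm O^-(n)$ are disjoint, because they have different determinants, and each is a compact smooth embedded submanifold without boundary, being a component of $\mathrm O(n)$. The discussion before \Cref{prop:matrix-normal-derivative-matching-conormality} shows that $\Lambda_{\mathrm{mat}}$ is a nonempty compact smooth embedded submanifold of $\mathrm O^+(n)\times\mathrm O^-(n)$: it is the image of the embedding $\iota_{\mathrm{mat}}$ of the compact manifold $\mathrm O^+(n)\times\mathbb{RP}^{n-1}$.

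Next I show that \eqref{eq:matrix-valued-flat-torus-system} is exactly \eqref{eq:original-moving-interface-problem} for these data.
\begin{itemize}
\item \emph{Bulk equations.} By \eqref{eq:orthogonal-group-harmonic-map-operator}, the phase equations coincide with $\partial_tA^\sigma=P_{T_{A^\sigma}\mathrm O^\sigma(n)}\Delta A^\sigma$, since $T_A\mathrm O^\sigma(n)=T_A\mathrm O(n)$ for $A\in\mathrm O^\sigma(n)$. This holds for any map with values in $\mathrm O^\sigma(n)$.
\item \emph{Interface condition.} Any strong solution takes values in $N^\sigma$, so $\partial_{\nu_t}A^+\in T_{A^+}\mathrm O^+(n)$ and $\partial_{\nu_t}A^-\in T_{A^-}\mathrm O^-(n)$ pointwise on $\Gamma_t$. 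Here the traces are continuous by \Cref{def:strong-solution}. Given $(A^+,A^-)\in\Lambda_{\mathrm{mat}}$, \Cref{prop:matrix-normal-derivative-matching-conormality} applied with $V_\pm=\partial_{\nu_t}A^\pm$ shows that $\partial_{\nu_t}A^+=\partial_{\nu_t}A^-$ holds if and only if the conormal condition $(-\partial_{\nu_t}A^+,\partial_{\nu_t}A^-)\perp T_{(A^+,A^-)}\Lambda_{\mathrm{mat}}$ holds.
\item \emph{Initial condition.} The same proposition, applied on $\Gamma_0$ to the traces of the smooth data $A_0^\pm$, shows that the assumed matching condition is the zeroth-order compatibility \eqref{eq:zeroth-compatibility}.
\end{itemize}
Hence strong solutions of the two systems coincide, with identical solution classes.

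Smooth data on the compact closures $\overline{\Omega_0^\sigma}$ lie in $W_q^{2-2/q}(\Omega_0^\sigma;\mathrm O^\sigma(n))$. \Cref{thm:main-short-time-solution} therefore gives $T\in(0,T_0]$ and a unique strong solution. Its uniqueness transfers to \eqref{eq:matrix-valued-flat-torus-system} by the equivalence of the two systems. The compatibility conditions of every order are assumed in the sense of \Cref{subsec:compatibility-conditions} with $\Lambda=\Lambda_{\mathrm{mat}}$. So \Cref{cor:smooth-short-time-solution} yields $A^\sigma\in C^\infty(\overline{Q_T^\sigma};\mathrm O^\sigma(n))$.

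The only step needing care is the interface equivalence, which must be applied pointwise. Two facts make it legitimate: traces lie in the correct tangent spaces because the solution is manifold-valued, and the compatibility hierarchy for the matrix system is formulated with the conormal form of the condition. The second point matters only if the hierarchy were phrased through the matching form; the corollary states it directly in the sense of \Cref{subsec:compatibility-conditions}, so no translation is needed. Otherwise the argument is a direct verification.
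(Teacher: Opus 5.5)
Your proposal is correct and follows essentially the same route as the paper. You verify the geometric hypotheses for $\mathrm O^\pm(n)$ and $\Lambda_{\mathrm{mat}}$, identify the bulk equation via \eqref{eq:orthogonal-group-harmonic-map-operator}, and identify the interface and initial conditions via \Cref{prop:matrix-normal-derivative-matching-conormality}; you then invoke \Cref{thm:main-short-time-solution} and the initial-corner bootstrap, which you reach through \Cref{cor:smooth-short-time-solution}. Your remark that the compatibility hierarchy needs no translation into the matching form is also accurate, since the hypothesis is already stated in the sense of \Cref{subsec:compatibility-conditions}; the paper's sentence identifying the two hierarchies order by order is a side remark rather than a required step.
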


\begin{proof}
The wells $\mathrm O^\pm(n)\subset \mathbb M_n$ and the manifold
$\Lambda_{\mathrm{mat}}$ satisfy the geometric hypotheses of
\Cref{thm:main-short-time-solution}.  Identity
\eqref{eq:orthogonal-group-harmonic-map-operator} identifies the abstract bulk
equation with the first equation in
\eqref{eq:matrix-valued-flat-torus-system}.  By
\Cref{prop:matrix-normal-derivative-matching-conormality}, the matrix
normal-derivative matching condition is equivalent to the abstract conormal
interface condition in
\eqref{eq:original-moving-interface-problem}; applied order by order to the
time jets, it also identifies the compatibility conditions of
\Cref{subsec:compatibility-conditions} with those of the matching form.  The
conclusion therefore
follows from \Cref{thm:main-short-time-solution}.  The all-order compatibility
conditions then allow the bootstrap through the initial corner by
\Cref{prop:initial-corner-smoothness}, yielding smoothness on the closed
moving phase cylinders.
\end{proof}

\subsection*{Notation guide}
The following list collects the notation most easily confused. Precise
definitions are given where each symbol first appears.
\begin{itemize}[leftmargin=2.3em,itemsep=0pt,topsep=3pt]
\item $\sigma\in\{+,-\}$: phase index; $\nu_t$: unit normal pointing from
      $\Omega_t^-$ into $\Omega_t^+$.
\item $\Pi_\Lambda$, $\mathcal R_\Lambda$: nearest-point projection onto
      $\Lambda$ and normal reflection across $\Lambda$.
\item $\Phi_t^\sigma$: one-sided Fermi map for phase $\sigma$;
      $\Psi_t$: ambient isotopy adapted to the Fermi maps.
\item $\mathcal C$, $\mathcal C_{\mathrm{dbl}}$: positive and doubled collar
      coordinate domains.
\item $\mathcal D^\sigma$: fixed bulk domain for phase $\sigma$;
      $\Sigma_\rho^\sigma$: offset hypersurface at $r=\rho$.
\item Subscripts $c$, $b$: collar and bulk coefficients and operators;
      subscripts $\mathrm{ref}$, $\mathrm{dbl}$: coefficients conjugated by the
      target reflection and coefficients on the doubled collar.
\item $\mathcal T_\rho$: two-phase trace at $r=\rho$.
\item $\mathscr L_{\mathbf z}$: doubled linear operator frozen at the state
      $\mathbf z$.
\item $\mathcal Y_T$: parabolic Dirichlet trace space.
\item $\mathcal X_T^{\mathrm{match}}$: matched H\"older state space.
\item $\mathcal A_T$, $\mathcal K_T$: admissible frozen-state set and
      admissible Schwarz trace set.
\end{itemize}

\section{Geometry of the trace constraint}
\label{sec:geometry}

\medskip
\noindent\textit{Convention on constants.}
Unless otherwise stated, $C$ denotes a positive constant that may change from
line to line.  It may depend on the fixed torus, the prescribed family
$\{\Gamma_t\}_{0\le t\le T_0}$, the wells $N^\pm$, the submanifold $\Lambda$,
the fixed coordinate maps, the collar parameters, the initial data, and the
fixed exponents.  It is independent of the short-time length
$T\in(0,T_0]$, the frozen state, and the admissible Schwarz trace.  Dependence
on additional quantities is indicated explicitly.

\subsection{Nearest-point projection and normal reflection}

Following the normal-reflection strategy of Struwe and Chen--Lin
\cite{Struwe1991,ChenLin1998}, we use reflection across $\Lambda$ in the
product ambient space $\mathbb R^{2L}$.

\begin{definition}[Tubular neighborhood and normal reflection]
\label{def:reflection-map}
For $\rho>0$, set
\[
    \operatorname{Tub}_\rho(\Lambda)
    :=\{\mathbf z\in\mathbb R^{2L}:
       \operatorname{dist}_{\mathbb R^{2L}}(\mathbf z,\Lambda)<\rho\}.
\]
Choose $\rho_\Lambda>0$ so small that the normal exponential map
\[
    \Theta_\Lambda:
    \{(\mathbf p,\boldsymbol\eta):
      \mathbf p\in\Lambda,
      \boldsymbol\eta\in T_{\mathbf p}^{\perp,\mathrm{amb}}\Lambda,
      |\boldsymbol\eta|<2\rho_\Lambda\}
    \longrightarrow\operatorname{Tub}_{2\rho_\Lambda}(\Lambda),
\]
\[
    \Theta_\Lambda(\mathbf p,\boldsymbol\eta)
    =\mathbf p+\boldsymbol\eta,
\]
is a diffeomorphism.  Let
\[
    \Pi_\Lambda:\operatorname{Tub}_{2\rho_\Lambda}(\Lambda)\longrightarrow\Lambda
\]
be the nearest-point projection and define
\begin{equation}\label{eq:reflection-map}
    \mathcal R_\Lambda(\mathbf z)
    :=2\Pi_\Lambda(\mathbf z)-\mathbf z.
\end{equation}
\end{definition}

\begin{lemma}[Differential properties of the reflection]
\label{lem:reflection-properties}
The map $\mathcal R_\Lambda$ is a smooth involution of
$\operatorname{Tub}_{2\rho_\Lambda}(\Lambda)$ and
\begin{equation}\label{eq:reflection-involution}
    \mathcal R_\Lambda\circ\mathcal R_\Lambda
    =\operatorname{Id},
    \qquad
    [D\mathcal R_\Lambda(\mathbf z)]^{-1}
    =D\mathcal R_\Lambda(\mathcal R_\Lambda(\mathbf z)).
\end{equation}
At every $\mathbf p\in\Lambda$,
\begin{equation}\label{eq:reflection-differential}
    D\mathcal R_\Lambda(\mathbf p)
    =\operatorname{Id}
    \quad\text{on }T_{\mathbf p}\Lambda,
    \qquad
    D\mathcal R_\Lambda(\mathbf p)
    =-\operatorname{Id}
    \quad\text{on }T_{\mathbf p}^{\perp,\mathrm{amb}}\Lambda.
\end{equation}
The fixed-point set of $\mathcal R_\Lambda$ is exactly $\Lambda$.  Moreover,
\begin{equation}\label{eq:reflection-preserves-distance}
    \operatorname{dist}\bigl(\mathcal R_\Lambda(\mathbf z),\Lambda\bigr)
    =\operatorname{dist}(\mathbf z,\Lambda),
\end{equation}
and hence, for every $0<\rho<2\rho_\Lambda$,
\begin{equation}\label{eq:reflection-invariant-subtube}
    \mathcal R_\Lambda\bigl(\operatorname{Tub}_\rho(\Lambda)\bigr)
    =\operatorname{Tub}_\rho(\Lambda),
    \qquad
    \mathcal R_\Lambda\bigl(\overline{\operatorname{Tub}_\rho(\Lambda)}\bigr)
    =\overline{\operatorname{Tub}_\rho(\Lambda)}.
\end{equation}
\end{lemma}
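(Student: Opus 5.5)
The plan is to work entirely in the normal coordinates supplied by the diffeomorphism $\Theta_\Lambda$ of \Cref{def:reflection-map}. On $\operatorname{Tub}_{2\rho_\Lambda}(\Lambda)$, every $\mathbf z$ has a unique representation $\mathbf z=\mathbf p+\boldsymbol\eta$ with $\mathbf p\in\Lambda$, $\boldsymbol\eta\in T_{\mathbf p}^{\perp,\mathrm{amb}}\Lambda$ and $|\boldsymbol\eta|<2\rho_\Lambda$. By the standard tubular neighbourhood argument, the nearest-point projection is $\Pi_\Lambda(\mathbf p+\boldsymbol\eta)=\mathbf p$, i.e.\ $\Pi_\Lambda=\mathrm{pr}_1\circ\Theta_\Lambda^{-1}$, and $\operatorname{dist}(\mathbf z,\Lambda)=|\boldsymbol\eta|$. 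I expect verifying this identification to be the only genuinely delicate step. A closest point $\mathbf p'$ to $\mathbf z$ exists by compactness of $\Lambda$, and first-order optimality gives $\mathbf z-\mathbf p'\perp T_{\mathbf p'}\Lambda$. Since $|\mathbf z-\mathbf p'|\le|\boldsymbol\eta|<2\rho_\Lambda$, the pair $(\mathbf p',\mathbf z-\mathbf p')$ lies in the domain of $\Theta_\Lambda$, and injectivity of $\Theta_\Lambda$ forces $\mathbf p'=\mathbf p$. Consequently $\Pi_\Lambda$ is smooth, being a composition of smooth maps, and so is $\mathcal R_\Lambda$.

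In these coordinates the reflection reads $\mathcal R_\Lambda(\mathbf p+\boldsymbol\eta)=2\mathbf p-(\mathbf p+\boldsymbol\eta)=\mathbf p-\boldsymbol\eta$, that is, $\mathcal R_\Lambda=\Theta_\Lambda\circ(\mathbf p,\boldsymbol\eta)\mapsto(\mathbf p,-\boldsymbol\eta)\circ\Theta_\Lambda^{-1}$. Because $-\boldsymbol\eta$ is again normal at $\mathbf p$ with the same length, $\mathcal R_\Lambda$ maps the tube into itself. Applying it twice returns $(\mathbf p,\boldsymbol\eta)$, so $\mathcal R_\Lambda\circ\mathcal R_\Lambda=\operatorname{Id}$. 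The chain rule applied to this identity at $\mathbf z$ gives $D\mathcal R_\Lambda(\mathcal R_\Lambda(\mathbf z))\,D\mathcal R_\Lambda(\mathbf z)=\operatorname{Id}$, which is \eqref{eq:reflection-involution}. The same formula shows that the fixed points are exactly those with $\boldsymbol\eta=-\boldsymbol\eta$, i.e.\ $\boldsymbol\eta=0$, which is $\Lambda$. It also gives $\operatorname{dist}(\mathcal R_\Lambda\mathbf z,\Lambda)=|-\boldsymbol\eta|=|\boldsymbol\eta|=\operatorname{dist}(\mathbf z,\Lambda)$, which is \eqref{eq:reflection-preserves-distance}. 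This distance identity shows $\mathcal R_\Lambda(\operatorname{Tub}_\rho)\subset\operatorname{Tub}_\rho$ and the analogous inclusion for the closed sublevel sets $\{\operatorname{dist}\le\rho\}$, which equal the closures of $\operatorname{Tub}_\rho(\Lambda)$ since $\rho<2\rho_\Lambda$. Applying $\mathcal R_\Lambda$ once more and using the involution property upgrades both inclusions to the equalities in \eqref{eq:reflection-invariant-subtube}.

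For \eqref{eq:reflection-differential}, I would differentiate at $\mathbf p\in\Lambda$ along curves. For $v\in T_{\mathbf p}\Lambda$, take a curve $\gamma$ in $\Lambda$ with $\gamma'(0)=v$. Then $\mathcal R_\Lambda\circ\gamma=\gamma$, since $\Lambda$ is fixed, so $D\mathcal R_\Lambda(\mathbf p)v=v$. For $\boldsymbol\eta\in T_{\mathbf p}^{\perp,\mathrm{amb}}\Lambda$, take the curve $s\mapsto\mathbf p+s\boldsymbol\eta$. By the first paragraph its image under $\mathcal R_\Lambda$ is $\mathbf p-s\boldsymbol\eta$, so $D\mathcal R_\Lambda(\mathbf p)\boldsymbol\eta=-\boldsymbol\eta$. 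Since $T_{\mathbf p}\Lambda\oplus T_{\mathbf p}^{\perp,\mathrm{amb}}\Lambda=\mathbb R^{2L}$, these two cases determine $D\mathcal R_\Lambda(\mathbf p)$ completely.
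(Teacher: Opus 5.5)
Your proof is correct and takes the same route as the paper. Both work in the normal coordinates $\mathbf z=\mathbf p+\boldsymbol\eta$ supplied by $\Theta_\Lambda$, read off $\mathcal R_\Lambda(\mathbf p+\boldsymbol\eta)=\mathbf p-\boldsymbol\eta$, and derive from that formula the involution, fixed-point, distance, invariance and differential statements; your closest-point and injectivity argument identifying $\Pi_\Lambda$ with $\mathrm{pr}_1\circ\Theta_\Lambda^{-1}$ fills in a step the paper takes for granted.
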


\begin{proof}
In tubular coordinates write
\[
    \mathbf z=\mathbf p+\boldsymbol\eta,
    \qquad
    \mathbf p\in\Lambda,
    \quad
    \boldsymbol\eta\in T_{\mathbf p}^{\perp,\mathrm{amb}}\Lambda.
\]
Then
\[
    \Pi_\Lambda(\mathbf p+\boldsymbol\eta)=\mathbf p,
    \qquad
    \mathcal R_\Lambda(\mathbf p+\boldsymbol\eta)
    =\mathbf p-\boldsymbol\eta.
\]
In particular,
\[
    \operatorname{dist}(\mathbf p-\boldsymbol\eta,\Lambda)
    =|\boldsymbol\eta|
    =\operatorname{dist}(\mathbf p+\boldsymbol\eta,\Lambda),
\]
which proves \eqref{eq:reflection-preserves-distance} and
\eqref{eq:reflection-invariant-subtube}.  The same formulas immediately give
smoothness, the involution identity, and the fixed-point statement.  At $\boldsymbol\eta=0$, a tangent variation changes $\mathbf p$
and is unchanged by reflection, while a normal variation changes
$\boldsymbol\eta$ and changes sign.  This proves
\eqref{eq:reflection-differential}.  Differentiating the involution identity
gives the inverse formula in \eqref{eq:reflection-involution}.
\end{proof}

We reflect in the full product space $\mathbb R^{2L}$, where the doubled problem
is solved.  After the target constraint is recovered, the ambient and relative
conormal formulations agree.

\section{Reflection and the doubled collar problem}
\label{sec:fixed-coordinates}

\subsection{An ambient isotopy adapted to Fermi coordinates}
\label{subsec:global-parametrization}

Let
\[
    X_t:\Gamma_0\longrightarrow\Gamma_t,
    \qquad 0\le t\le T_0,
\]
be a smooth parametrization and let $\nu_t$ point into $\Omega_t^+$.  Using the
canonical flat structure of $\mathbb T^d$, for $r_*>0$ sufficiently small, define
the signed Fermi map by
\begin{equation}\label{eq:signed-Fermi-map}
    \Phi_t(p,s)
    :=
    X_t(p)+s\nu_t(X_t(p))
    \quad\text{in }\mathbb T^d,
    \qquad
    p\in\Gamma_0,
    \quad |s|<r_*.
\end{equation}
After decreasing $r_*$, the maps $\Phi_t$ are smooth diffeomorphisms onto a
uniform tubular neighborhood of $\Gamma_t$.

For $\sigma\in\{+,-\}$ and $r\ge0$, write
\begin{equation}\label{eq:one-sided-Fermi-map}
    \Phi_t^\sigma(p,r)
    :=
    \Phi_t(p,\sigma r).
\end{equation}
Then $\Phi_t^\sigma$ parametrizes the one-sided collar
\[
    \mathcal C_t^\sigma(\rho)
    :=
    \Phi_t^\sigma
    \bigl(\Gamma_0\times(0,\rho)\bigr)
    \subset\Omega_t^\sigma.
\]

\begin{lemma}[Ambient isotopy adapted to Fermi coordinates]
\label{lem:fermi-compatible-parametrization}
After decreasing $r_*>0$, there exists a smooth family of diffeomorphisms
\[
    \Psi_t:\mathbb T^d\longrightarrow\mathbb T^d,
    \qquad 0\le t\le T_0,
\]
such that
\[
    \Psi_0=\operatorname{Id}_{\mathbb T^d},
    \qquad
    \Psi_t(\Gamma_0)=\Gamma_t,
    \qquad
    \Psi_t(\Omega_0^\sigma)=\Omega_t^\sigma,
\]
and
\begin{equation}\label{eq:parametrization-compatibility}
    \Psi_t(\Phi_0^\sigma(p,r))
    =
    \Phi_t^\sigma(p,r),
    \qquad
    0\le r\le r_*,
    \quad \sigma=\pm.
\end{equation}
\end{lemma}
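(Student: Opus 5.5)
We first define $\Psi_t$ on a tubular neighborhood of $\Gamma_0$ by the formula forced by \eqref{eq:parametrization-compatibility}, and then extend it to the rest of $\mathbb T^d$ by integrating a time-dependent vector field. Concretely, on $U_0:=\Phi_0(\Gamma_0\times(-r_*,r_*))$ we set
\[
    \Psi_t^{\mathrm{loc}}:=\Phi_t\circ\Phi_0^{-1}:U_0\longrightarrow\Phi_t\bigl(\Gamma_0\times(-r_*,r_*)\bigr).
\]
This map is smooth in $(x,t)$, equals the identity at $t=0$, and is a diffeomorphism onto its image. It maps $\Gamma_0=\Phi_0(\Gamma_0\times\{0\})$ onto $\Gamma_t$. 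By \eqref{eq:one-sided-Fermi-map} it satisfies $\Psi^{\mathrm{loc}}_t(\Phi_0^\sigma(p,r))=\Phi_t^\sigma(p,r)$ for $0\le r<r_*$. Since $\nu_t$ points into $\Omega_t^+$ and the construction is continuous in $t$ from $t=0$, it sends the $\sigma$-side of the collar into $\Omega_t^\sigma$. We take $r_*$ smaller than the uniform radius $r_0$ at which all $\Phi_t$ are diffeomorphisms, say $r_*<r_0/2$. This leaves room to work on the larger tube of radius $2r_*$.

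To globalize, we differentiate. Let $W_t$ be the time-dependent vector field on $\Phi_t(\Gamma_0\times(-2r_*,2r_*))$ defined by
\[
    W_t(\Psi_t^{\mathrm{loc}}(x))=\partial_t\Psi_t^{\mathrm{loc}}(x).
\]
It is smooth on the space-time tube $\bigcup_t\Phi_t(\Gamma_0\times(-2r_*,2r_*))\times\{t\}$. Choose a smooth cutoff $\chi(x,t)$ that equals $1$ on $\Phi_t(\Gamma_0\times[-r_*,r_*])$ and is supported in $\Phi_t(\Gamma_0\times(-\tfrac32r_*,\tfrac32r_*))$; for instance, take $\chi=\beta(s)$ in the Fermi coordinate $s$ of $\Phi_t$. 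Then $\widetilde W_t:=\chi W_t$, extended by zero, is a smooth time-dependent vector field on the compact manifold $\mathbb T^d$. Its flow $\Psi_t$, with $\Psi_0=\mathrm{Id}$, exists for all $t\in[0,T_0]$ and consists of diffeomorphisms depending smoothly on $t$.

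The key step, and the only point needing care, is to check that $\Psi_t=\Psi_t^{\mathrm{loc}}$ on $\Phi_0(\Gamma_0\times[-r_*,r_*])$. Fix $x$ in this set. The curve $t\mapsto\Psi^{\mathrm{loc}}_t(x)=\Phi_t(p,s)$ with $|s|\le r_*$ stays in the region where $\chi=1$, so it is an integral curve of $\widetilde W_t$. By uniqueness of ODE solutions it coincides with $t\mapsto\Psi_t(x)$. This gives \eqref{eq:parametrization-compatibility} on the closed range $0\le r\le r_*$, and in particular $\Psi_t(\Gamma_0)=\Gamma_t$. Finally, $\Psi_t$ is a diffeomorphism of $\mathbb T^d$ carrying $\Gamma_0$ onto $\Gamma_t$, so it permutes the two components of $\mathbb T^d\setminus\Gamma_t$. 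Points just on the $\sigma$-side of $\Gamma_0$ are sent to points on the $\sigma$-side of $\Gamma_t$, by the collar identity and the orientation of $\nu_t$. Hence $\Psi_t(\Omega_0^\sigma)=\Omega_t^\sigma$ by connectedness, and this holds in any case by continuity in $t$ starting from $\Psi_0=\mathrm{Id}$.

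The main obstacle is essentially bookkeeping rather than analysis. One must choose the cutoff in the moving Fermi coordinates so that it is smooth in space-time, which holds because $(p,s,t)\mapsto(\Phi_t(p,s),t)$ is a space-time diffeomorphism onto the moving tube. One must also ensure that the uniform tubular radius holds on all of $[0,T_0]$, which follows from compactness of $\Gamma_0\times[0,T_0]$ and smoothness of $X$.
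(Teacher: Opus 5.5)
Your proposal is correct and follows essentially the same route as the paper. The paper likewise defines the vector field $\partial_t\Phi_t(p,s)$ at the point $\Phi_t(p,s)$ on a tube of radius $2r_1$, multiplies it by a cutoff $\chi(s)$ equal to one on $[-r_1,r_1]$, takes the nonautonomous flow on the torus, uses ODE uniqueness to get $\Psi_t\circ\Phi_0=\Phi_t$ on the inner tube, and deduces $\Psi_t(\Omega_0^\sigma)=\Omega_t^\sigma$ from preservation of the signed normal coordinate.
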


\begin{proof}
Choose $r_1>0$ such that
\[
    \Phi_t:\Gamma_0\times(-2r_1,2r_1)\longrightarrow\mathbb T^d
\]
is a smooth diffeomorphism onto its image, uniformly in $t$.  On this tubular
neighborhood define
\[
    V_0(\Phi_t(p,s),t):=\partial_t\Phi_t(p,s).
\]
Choose $\chi\in C_c^\infty((-2r_1,2r_1))$ with
$\chi=1$ on $[-r_1,r_1]$ and set
\[
    V(\Phi_t(p,s),t):=\chi(s)V_0(\Phi_t(p,s),t).
\]
Since $V$ vanishes near the boundary of the tubular neighborhood, it extends
smoothly by zero to a time-dependent vector field on the compact torus.  Let
$\Psi_t$ be its nonautonomous flow.  For $|s|\le r_1$, the curves
$t\mapsto\Psi_t(\Phi_0(p,s))$ and $t\mapsto\Phi_t(p,s)$ solve the same ODE with
the same initial value.  Hence
\[
    \Psi_t(\Phi_0(p,s))=\Phi_t(p,s).
\]
The remaining assertions follow from preservation of the signed normal
coordinate and the fact that $\Psi_t$ is a diffeomorphism.  Relabel $r_1$ as
$r_*$.
\end{proof}

After decreasing $r_*>0$ if necessary, the map
\[
    (p,s,t)
    \longmapsto
    \bigl(\Phi_t(p,s),t\bigr)
\]
is a smooth diffeomorphism onto its image.  On every closed smaller collar,
$\Phi_t$, $\Phi_t^{-1}$, and all their derivatives are bounded uniformly for
$t\in[0,T_0]$.

After the initial data have been fixed, choose $\delta>0$ so small that
\begin{equation}\label{eq:delta-choice}
    0<2\delta<r_*,
\end{equation}
the two bulk domains defined below are nonempty, and
\begin{equation}\label{eq:initial-paired-collar-tube}
\begin{aligned}
    \Bigl\{
       \bigl(
          u_0^+(\Phi_0^+(p,r)),
          u_0^-(\Phi_0^-(p,r))
       \bigr):
       p\in\Gamma_0,\ 0\le r\le2\delta
    \Bigr\}
    \Subset
    \operatorname{Tub}_{\rho_\Lambda}(\Lambda).
\end{aligned}
\end{equation}
The last condition can be met because the paired trace at $r=0$ lies in
$\Lambda$ and the initial maps are continuous.  Set
\begin{equation}\label{eq:collars}
    \mathcal C:=\Gamma_0\times(0,2\delta),
    \qquad
    \mathcal C_{\mathrm{dbl}}
    :=\Gamma_0\times(-2\delta,2\delta).
\end{equation}
The fixed bulk domains are
\begin{equation}\label{eq:bulk-domains}
    \mathcal D^\sigma
    :=
    \Omega_0^\sigma
    \setminus
    \overline{\mathcal C_0^\sigma(\delta)},
    \qquad \sigma=\pm.
\end{equation}
Each $\mathcal D^\sigma$ is a smooth relatively compact domain and
$\overline{\mathcal D^\sigma}$ is a compact manifold with boundary.  Since the ambient
space has no boundary,
\begin{equation}\label{eq:bulk-boundaries}
    \partial \mathcal D^\sigma=\Sigma_\delta^\sigma,
\end{equation}
where, for $0<\rho\le2\delta$,
\[
    \Sigma_\rho:=\Gamma_0\times\{\rho\},
    \qquad
    \Sigma_\rho^\sigma:=\Phi_0^\sigma(\Sigma_\rho).
\]
The hypersurface $\Sigma_\delta^\sigma$ is the boundary of $\mathcal D^\sigma$, while
$\Sigma_\rho^\sigma$ is an interior hypersurface of $\mathcal D^\sigma$ for
$\delta<\rho\le2\delta$.  The collars and the bulk domains are shown in
\Cref{fig:torus-domain-decomposition}.

\begin{definition}[Two-phase trace]
\label{def:two-phase-trace}
For fields $w^\sigma$ defined near $\Sigma_\rho^\sigma$, define
\begin{equation}\label{eq:two-phase-trace}
    \mathcal T_\rho(w^+,w^-)(p,t)
    :=
    \Bigl(
        w^+(\Phi_0^+(p,\rho),t),
        w^-(\Phi_0^-(p,\rho),t)
    \Bigr)
    \in\mathbb R^{2L}.
\end{equation}
For $\rho=\delta$ this is a boundary trace, while for
$\delta<\rho\le2\delta$ it is an interior hypersurface trace.
\end{definition}

\begin{figure}[htbp]
\centering
\resizebox{0.98\textwidth}{!}{%
\begin{tikzpicture}[
    x=1cm,y=1cm,>=Latex,
    line join=round,line cap=round,
    every node/.style={font=\small}
]
\tikzset{
    boundary/.style={thick},
    aux/.style={dashed,thick},
    label/.style={fill=white,inner sep=2pt,align=center},
    arrow/.style={->,thick,shorten >=2pt,shorten <=2pt}
}
\colorlet{bulkcol}{blue!8}
\colorlet{collarcol}{orange!18}
\colorlet{overlapcol}{orange!45}
\colorlet{reflcol}{gray!22}

\begin{scope}[xshift=-6.7cm]
  \node[font=\bfseries] at (0,3.25) {(a) Physical decomposition on $\mathbb T^d$};
  \fill[bulkcol] (-4,-2.35) rectangle (4,2.35);
  \draw[boundary] (-4,-2.35) rectangle (4,2.35);
  \node at (0,-2.78) {opposite sides are identified};
  \draw[boundary] plot[smooth cycle,tension=0.85] coordinates
    {(-1.8,0.2) (-1.2,1.35) (0.2,1.55) (1.75,0.7)
     (1.6,-0.65) (0.35,-1.45) (-1.25,-1.05)};
  \draw[aux] plot[smooth cycle,tension=0.85] coordinates
    {(-2.35,0.25) (-1.55,1.85) (0.25,2.05) (2.35,0.9)
     (2.15,-0.95) (0.45,-1.95) (-1.7,-1.45)};
  \draw[aux] plot[smooth cycle,tension=0.85] coordinates
    {(-1.25,0.15) (-0.85,0.9) (0.2,1.05) (1.15,0.5)
     (1.05,-0.35) (0.25,-0.95) (-0.85,-0.7)};
  \node[label] at (0,0) {$\Omega_t^-$};
  \node[label] at (3.0,1.55) {$\Omega_t^+$};
  \node[label] (g) at (3.25,0.05) {$\Gamma_t$};
  \draw[arrow] (g.west) -- (1.62,0.12);
  \node[label] (c) at (-3.15,1.65) {two-sided physical collar};
  \draw[arrow] (c.east) -- (-1.8,1.34);
\end{scope}

\begin{scope}[xshift=5.8cm]
  \node[font=\bfseries] at (0,3.25) {(b) Paired half-collar and auxiliary doubling};

  \draw[aux] (-3.4,-0.25)--(-3.4,2.55);
  \draw[aux] (-1.55,-0.25)--(-1.55,2.55);
  \draw[aux] (0.3,-0.25)--(0.3,2.55);
  \node at (-3.4,2.82) {$r=0$};
  \node at (-1.55,2.82) {$r=\delta$};
  \node at (0.3,2.82) {$r=2\delta$};

  \fill[collarcol] (-3.4,1.85) rectangle (0.3,2.35);
  \fill[overlapcol] (-1.55,1.85) rectangle (0.3,2.35);
  \draw[boundary] (-3.4,1.85) rectangle (0.3,2.35);
  \fill[bulkcol] (-1.55,1.08) rectangle (3.25,1.58);
  \draw[boundary] (-1.55,1.08) rectangle (3.25,1.58);
  \node[label] at (-2.45,2.10) {$v^+(p,r,t)$};
  \node[label] at (-0.62,2.10) {overlap};
  \node[label] at (1.55,1.33) {$\mathcal D^+$};
  \draw[arrow] (0.45,2.02) -- (1.05,1.58);

  \fill[collarcol] (-3.4,0.28) rectangle (0.3,0.78);
  \fill[overlapcol] (-1.55,0.28) rectangle (0.3,0.78);
  \draw[boundary] (-3.4,0.28) rectangle (0.3,0.78);
  \fill[bulkcol] (-1.55,-0.49) rectangle (3.25,0.01);
  \draw[boundary] (-1.55,-0.49) rectangle (3.25,0.01);
  \node[label] at (-2.45,0.53) {$v^-(p,r,t)$};
  \node[label] at (-0.62,0.53) {overlap};
  \node[label] at (1.55,-0.24) {$\mathcal D^-$};
  \draw[arrow] (0.45,0.45) -- (1.05,0.01);

  \node[align=center] at (0,-0.92)
    {both physical sides use the same parameter $r>0$};

  \fill[reflcol] (-3.4,-2.15) rectangle (0,-1.55);
  \fill[collarcol] (0,-2.15) rectangle (3.4,-1.55);
  \draw[boundary] (-3.4,-2.15) rectangle (3.4,-1.55);
  \draw[boundary] (0,-2.15)--(0,-1.55);
  \node[label] at (-1.7,-1.85)
    {$\mathcal R_\Lambda(\mathbf v(p,-r,t))$\\auxiliary copy};
  \node[label] at (1.7,-1.85)
    {$\mathbf v(p,r,t)=(v^+,v^-)$\\paired physical field};
  \node at (-3.4,-2.48) {$-2\delta$};
  \node at (0,-2.48) {$0$};
  \node at (3.4,-2.48) {$2\delta$};
\end{scope}
\end{tikzpicture}%
}
\caption{The first panel shows the physical torus.  In the analytic
construction, the two one-sided physical collars are folded onto the same
half-collar $r>0$ as the two components of $\mathbf v=(v^+,v^-)$.  Each
component overlaps its own bulk domain on $\delta<r<2\delta$.  The lower strip
is the auxiliary doubling of this paired field; in particular, the region
$r<0$ is not the physical phase $\Omega_t^-$.}
\label{fig:torus-domain-decomposition}
\end{figure}
\FloatBarrier

\subsection{Ambient extensions of the target nonlinearities}
\label{subsec:ambient-extensions}

For each $\sigma\in\{+,-\}$, choose a tubular neighborhood
$\operatorname{Tub}_{\rho_N}(N^\sigma)\subset\mathbb R^L$ and let
\[
    \Pi_{N^\sigma}:\operatorname{Tub}_{\rho_N}(N^\sigma)\longrightarrow N^\sigma
\]
be the nearest-point projection.  On a smaller tubular neighborhood define
\begin{equation}\label{eq:target-nonlinearity-extension}
    \widetilde{\mathrm{II}}^\sigma(z)(X,Y)
    :=-D^2\Pi_{N^\sigma}(z)(X,Y),
    \qquad
    z\in \operatorname{Tub}_{\rho_N/2}(N^\sigma),
    \quad X,Y\in\mathbb R^L.
\end{equation}
Then $\widetilde{\mathrm{II}}^\sigma=\mathrm{II}^\sigma$ on $N^\sigma$.  Extend
$\widetilde{\mathrm{II}}^\sigma$ smoothly with compact support to all of
$\mathbb R^L$.  Only its restriction to the fixed tubular neighborhood is used
below.

For a smooth time-dependent map $u$ taking values in $N^\sigma$,
differentiating $\Pi_{N^\sigma}(u)=u$ twice in the spatial variables and using
$D\Pi_{N^\sigma}(u)=P_{T_uN^\sigma}$ shows that
\[
    \partial_tu=P_{T_uN^\sigma}\Delta u
    \quad\Longleftrightarrow\quad
    \partial_tu
    =\Delta u
     +\widetilde{\mathrm{II}}^\sigma(u)(\nabla u,\nabla u).
\]

\subsection{The one-sided collar equations}

Define
\begin{equation}\label{eq:collar-pullback}
    v^\sigma(p,r,t)
    :=
    u^\sigma(\Phi_t^\sigma(p,r),t),
    \qquad 0<r<r_*.
\end{equation}
The initial collar data are
\[
    v_0^\sigma(p,r)
    :=
    u_0^\sigma(\Phi_0^\sigma(p,r)).
\]
With
\[
    \mathbf v:=(v^+,v^-),
    \qquad
    \mathbf v_0:=(v_0^+,v_0^-),
\]
we have
\[
    \partial_rv^+(p,0,t)=\partial_{\nu_t}u^+(X_t(p),t),
    \qquad
    \partial_rv^-(p,0,t)=-\partial_{\nu_t}u^-(X_t(p),t).
\]
Thus
\[
    \partial_r\mathbf v
    =
    \bigl(\partial_{\nu_t}u^+,-\partial_{\nu_t}u^-\bigr),
\]
which differs from the variational conormal vector by an overall minus sign.
Orthogonality is unchanged by this sign.  Hence the interface conditions become
\begin{equation}\label{eq:collar-interface-conditions}
    \mathbf v(p,0,t)\in\Lambda,
    \qquad
    \partial_r\mathbf v(p,0,t)
    \perp_{\mathbb R^{2L}}T_{\mathbf v(p,0,t)}\Lambda.
\end{equation}

Use local coordinates $x^d=r$ on $\Gamma_0\times(0,r_*)$.  Indices
$i,j,k,\ell$ run from $1$ to $d$, while $a,b$ are tangential.  Repeated
spatial indices are summed unless explicitly stated otherwise.  Let
\[
    a_{c,ij}^\sigma
    :=
    \partial_i\Phi_t^\sigma\cdot\partial_j\Phi_t^\sigma,
    \qquad
    \lvert a_c^\sigma\rvert:=\det(a_{c,ij}^\sigma),
\]
and let $(a_c^{\sigma,ij})$ be the inverse metric, which is the collar principal coefficient.  In Fermi coordinates,
\[
    a_{c,ar}^\sigma=0,
    \qquad
    a_{c,rr}^\sigma=1.
\]
For a scalar or Euclidean-vector-valued field $U$, the Euclidean Laplacian in
these coordinates is
\begin{equation}\label{eq:Fermi-Laplacian-formula}
    (\Delta_xU)\circ\Phi_t^\sigma
    =
    \lvert a_c^\sigma\rvert^{-1/2}
    \partial_i
    \bigl(\lvert a_c^\sigma\rvert^{1/2}a_c^{\sigma,ij}
    \partial_j(U\circ\Phi_t^\sigma)\bigr).
\end{equation}
If $v^\sigma= u^\sigma\circ\Phi_t^\sigma$, then
\begin{equation}\label{eq:Fermi-time-derivative-formula}
    \partial_tv^\sigma
    =
    (\partial_tu^\sigma)\circ\Phi_t^\sigma
    +
    \left\langle
       \partial_t\Phi_t^\sigma,
       \nabla_xu^\sigma\circ\Phi_t^\sigma
    \right\rangle.
\end{equation}
In collar coordinates, the transport term in
\eqref{eq:Fermi-time-derivative-formula} becomes
\[
    a_c^{\sigma,k\ell}
    \langle\partial_t\Phi_t^\sigma,
           \partial_\ell\Phi_t^\sigma\rangle
    \partial_kv^\sigma.
\]
Expanding the divergence in \eqref{eq:Fermi-Laplacian-formula} and using the
ambient form of harmonic map heat flow gives the coefficient formulas below.

The pulled-back equation is
\begin{equation}\label{eq:collar-equation-component}
    \partial_tv^\sigma
    -a_c^{\sigma,ij}\partial_{ij}v^\sigma
    -b_c^{\sigma,k}\partial_kv^\sigma
    =
    Q_c^\sigma(p,r,t,v^\sigma,\nabla v^\sigma),
\end{equation}
where
\begin{equation}\label{eq:collar-target-nonlinearity}
    Q_c^\sigma(p,r,t,z,\Xi)
    :=
    a_c^{\sigma,ij}(p,r,t)
    \widetilde{\mathrm{II}}^\sigma(z)(\Xi_i,\Xi_j),
\end{equation}
and
\begin{equation}\label{eq:collar-first-order-coefficient}
    b_c^{\sigma,k}
    =
    \lvert a_c^\sigma\rvert^{-1/2}
    \partial_i\bigl(\lvert a_c^\sigma\rvert^{1/2}a_c^{\sigma,ik}\bigr)
    +
    a_c^{\sigma,k\ell}
    \left\langle
        \partial_t\Phi_t^\sigma,
        \partial_\ell\Phi_t^\sigma
    \right\rangle.
\end{equation}

For $\mathbf z=(z^+,z^-)\in\mathbb R^{2L}$, define
\[
    \mathsf A_c^{ij}(p,r,t),\ \mathsf B_c^k(p,r,t)
    \in\mathcal L(\mathbb R^{2L})
\]
by
\begin{equation}\label{eq:pair-linear-coefficients}
    \bigl(\mathsf A_c^{ij}\mathbf z\bigr)^\sigma
    :=
    a_c^{\sigma,ij}z^\sigma,
    \qquad
    \bigl(\mathsf B_c^k\mathbf z\bigr)^\sigma
    :=b_c^{\sigma,k}z^\sigma.
\end{equation}
For
\[
    \boldsymbol\Xi=(\boldsymbol\Xi_1,\ldots,\boldsymbol\Xi_d)
    \in(\mathbb R^{2L})^d,
    \qquad
    \boldsymbol\Xi_i=(\Xi_i^+,\Xi_i^-),
\]
define
\begin{equation}\label{eq:pair-target-nonlinearity}
    \bigl(\mathbf Q_c(p,r,t,\mathbf z,\boldsymbol\Xi)\bigr)^\sigma
    :=
    Q_c^\sigma(p,r,t,z^\sigma,\Xi^\sigma).
\end{equation}
Then the physical pair equation on $\mathcal C$ is
\begin{equation}\label{eq:physical-collar-problem}
\begin{cases}
    \partial_t\mathbf v
    -
    \mathsf A_c^{ij}\partial_{ij}\mathbf v
    -\mathsf B_c^k\partial_k\mathbf v
    =
    \mathbf Q_c(p,r,t,\mathbf v,\nabla\mathbf v),
    & 0<r<2\delta,\\[0.3em]
    \mathbf v(p,0,t)\in\Lambda,
    & r=0,\\[0.3em]
    \partial_r\mathbf v(p,0,t)
    \perp_{\mathbb R^{2L}}T_{\mathbf v(p,0,t)}\Lambda,
    & r=0,\\[0.3em]
    \mathbf v(p,2\delta,t)=\mathbf h(p,t),
    & r=2\delta,\\[0.3em]
    \mathbf v(p,r,0)=\mathbf v_0(p,r).
\end{cases}
\end{equation}
The trace $\mathbf h$ is fixed in \Cref{sec:schwarz} by the matching
condition of the Schwarz map.

\subsection{Reflected and doubled coefficients}
\label{subsec:reflected-doubled-coefficients}

Set
\[
    \varepsilon_i
    :=
    \begin{cases}
        1,&1\le i\le d-1,\\
        -1,&i=d.
    \end{cases}
\]
For $r<0$, define the spatially reflected coefficients
\begin{equation}\label{eq:spatially-reflected-coefficients}
    \widetilde a_c^{\sigma,ij}(p,r,t)
    :=
    \varepsilon_i\varepsilon_ja_c^{\sigma,ij}(p,-r,t),
    \qquad
    \widetilde b_c^{\sigma,i}(p,r,t)
    :=
    \varepsilon_ib_c^{\sigma,i}(p,-r,t).
\end{equation}
Define
\[
    \widetilde{\mathsf A}_c^{ij}(p,r,t),\
    \widetilde{\mathsf B}_c^{\,k}(p,r,t)
    \in\mathcal L(\mathbb R^{2L})
\]
componentwise by
\begin{equation}\label{eq:spatially-reflected-linear-coefficients}
    \bigl(\widetilde{\mathsf A}_c^{ij}\mathbf z\bigr)^\sigma
    :=
    \widetilde a_c^{\sigma,ij}z^\sigma,
    \qquad
    \bigl(\widetilde{\mathsf B}_c^{\,k}\mathbf z\bigr)^\sigma
    :=\widetilde b_c^{\sigma,k}z^\sigma,
\end{equation}
and define
\begin{equation}\label{eq:spatially-reflected-nonlinearity}
    \bigl(
        \widetilde{\mathbf Q}_c
        (p,r,t,\mathbf z,\boldsymbol\Xi)
    \bigr)^\sigma
    :=
    \widetilde a_c^{\sigma,ij}
    \widetilde{\mathrm{II}}^\sigma(z^\sigma)
    (\Xi_i^\sigma,\Xi_j^\sigma).
\end{equation}

For $r<0$, conjugate these coefficients by the target reflection.  Set
$S_\Lambda(\mathbf z):=D\mathcal R_\Lambda(\mathbf z)$.  Define
\begin{equation}\label{eq:target-reflected-principal}
    \mathsf A_{\mathrm{ref}}^{ij}(p,r,t,\mathbf z)
    :=S_\Lambda(\mathbf z)^{-1}
      \circ\widetilde{\mathsf A}_c^{ij}(p,r,t)
      \circ S_\Lambda(\mathbf z).
\end{equation}
The first-order coefficient is defined analogously:
\begin{equation}\label{eq:target-reflected-first-order}
    \mathsf B_{\mathrm{ref}}^k(p,r,t,\mathbf z)
    :=S_\Lambda(\mathbf z)^{-1}
      \circ\widetilde{\mathsf B}_c^{\,k}(p,r,t)
      \circ S_\Lambda(\mathbf z).
\end{equation}
The reflected nonlinearity is
\begin{equation}\label{eq:target-reflected-nonlinearity}
\begin{aligned}
    &\mathbf Q_{\mathrm{ref}}
      (p,r,t,\mathbf z,\boldsymbol\Xi)\\
    &:=S_\Lambda(\mathbf z)^{-1}
      \Bigl[
        \widetilde{\mathbf Q}_c
        \bigl(p,r,t,\mathcal R_\Lambda(\mathbf z),
              S_\Lambda(\mathbf z)\boldsymbol\Xi\bigr)\\
    &\hspace{4em}
        +\widetilde{\mathsf A}_c^{ij}(p,r,t)
          D^2\mathcal R_\Lambda(\mathbf z)
          (\boldsymbol\Xi_i,\boldsymbol\Xi_j)
      \Bigr].
\end{aligned}
\end{equation}
Here $S_\Lambda(\mathbf z)$ acts componentwise on the $d$-tuple
$\boldsymbol\Xi$.

The doubled coefficients on $\mathcal C_{\mathrm{dbl}}$ are
\begin{equation}\label{eq:doubled-principal-coefficient}
    \mathsf A_{\mathrm{dbl}}^{ij}(p,r,t,\mathbf z)
    :=
    \begin{cases}
        \mathsf A_c^{ij}(p,r,t),&0\le r<2\delta,\\
        \mathsf A_{\mathrm{ref}}^{ij}(p,r,t,\mathbf z),
        &-2\delta<r<0,
    \end{cases}
\end{equation}
\begin{equation}\label{eq:doubled-first-order-coefficient}
    \mathsf B_{\mathrm{dbl}}^k(p,r,t,\mathbf z)
    :=
    \begin{cases}
        \mathsf B_c^k(p,r,t),&0\le r<2\delta,\\
        \mathsf B_{\mathrm{ref}}^k(p,r,t,\mathbf z),&-2\delta<r<0,
    \end{cases}
\end{equation}
and
\begin{equation}\label{eq:doubled-target-nonlinearity}
    \mathbf Q_{\mathrm{dbl}}(p,r,t,\mathbf z,\boldsymbol\Xi)
    :=
    \begin{cases}
        \mathbf Q_c(p,r,t,\mathbf z,\boldsymbol\Xi),
        &0\le r<2\delta,\\
        \mathbf Q_{\mathrm{ref}}(p,r,t,\mathbf z,\boldsymbol\Xi),
        &-2\delta<r<0.
    \end{cases}
\end{equation}
The term $\mathbf Q_{\mathrm{dbl}}$ is quadratic in $\boldsymbol\Xi$,
including the $D^2\mathcal R_\Lambda$ contribution on the reflected side.
For a state field $\mathbf z$ with values in the tubular neighborhood, write
\begin{equation}\label{eq:doubled-geometric-operator}
    \mathscr L_{\mathbf z}
    :=\partial_t
      -\mathsf A_{\mathrm{dbl}}^{ij}(\mathbf z)\partial_{ij}
      -\mathsf B_{\mathrm{dbl}}^k(\mathbf z)\partial_k,
\end{equation}
where the spatial and temporal coefficient arguments are suppressed.

Define the reflected initial datum by
\begin{equation}\label{eq:doubled-initial-data}
    \mathbf v_0^{\mathrm{dbl}}(p,r)
    :=
    \begin{cases}
        \mathbf v_0(p,r),&0\le r<2\delta,\\
        \mathcal R_\Lambda(\mathbf v_0(p,-r)),
        &-2\delta<r<0.
    \end{cases}
\end{equation}
At $r=0$, \eqref{eq:zeroth-compatibility} implies that the trace belongs to
$\Lambda$ and is therefore fixed by $\mathcal R_\Lambda$.  If $\tau$ is tangent
to $\Gamma_0$, then
$D_\tau\mathbf v_0(p,0)\in T_{\mathbf v_0(p,0)}\Lambda$, and hence
\[
    D\mathcal R_\Lambda(\mathbf v_0)D_\tau\mathbf v_0
    =D_\tau\mathbf v_0.
\]
For the normal derivative, the spatial reflection contributes a minus sign,
while the conormal compatibility gives
$\partial_r\mathbf v_0\in T^{\perp,\mathrm{amb}}\Lambda$.  Thus
\[
    -D\mathcal R_\Lambda(\mathbf v_0)\partial_r\mathbf v_0
    =\partial_r\mathbf v_0.
\]
The function and all first derivatives therefore have matching one-sided
traces.  Set $s_q=2-2/q$.  Since $q>d+2\ge3$, one has
\[
    s_q>1+\frac1q.
\]
Smooth composition by $\mathcal R_\Lambda$ preserves $W_q^{s_q}$ on each
half-collar.  The fractional Sobolev gluing theorem across a smooth
hypersurface then applies: piecewise $W_q^{s_q}$ fields glue globally when their
value and first-derivative traces agree.  Locally, this follows by flattening
the hypersurface and using
\[
    W_q^{s_q}
    =
    \{f\in W_q^1:\partial_jf\in W_q^{s_q-1}\}.
\]
The value match removes the hypersurface measure in the first distributional
derivatives, and the derivative match glues those derivatives in
$W_q^{s_q-1}$ because $s_q-1>1/q$.  Therefore
\begin{equation}\label{eq:doubled-initial-Wq-trace}
    \mathbf v_0^{\mathrm{dbl}}
    \in
    W_q^{2-2/q}(\mathcal C_{\mathrm{dbl}};\mathbb R^{2L}).
\end{equation}

The nonlinear doubled problem is
\begin{equation}\label{eq:nonlinear-doubled-problem}
\begin{cases}
    \mathscr L_{\mathbf v}\mathbf v
    =
    \mathbf Q_{\mathrm{dbl}}(p,r,t,\mathbf v,\nabla\mathbf v),
    & -2\delta<r<2\delta,\\[0.3em]
    \mathbf v(p,2\delta,t)=\mathbf h(p,t),
    & r=2\delta,\\[0.3em]
    \mathbf v(p,-2\delta,t)
    =
    \mathcal R_\Lambda(\mathbf h(p,t)),
    & r=-2\delta,\\[0.3em]
    \mathbf v(p,r,0)=\mathbf v_0^{\mathrm{dbl}}(p,r).
\end{cases}
\end{equation}

\begin{lemma}[Reflection covariance of the doubled equation]
\label{lem:reflection-covariance}
Let $\mathbf v$ be a sufficiently regular field with values in
$\operatorname{Tub}_{\rho_\Lambda}(\Lambda)$ and define
\[
    \mathbf v^R(p,r,t)
    :=\mathcal R_\Lambda(\mathbf v(p,-r,t)).
\]
Then $\mathbf v$ satisfies the doubled equation at $(p,-r,t)$ if and only if
$\mathbf v^R$ satisfies it at $(p,r,t)$.  Moreover, the boundary values at
$r=\pm2\delta$ and the reflected initial datum are interchanged by this
operation.
\end{lemma}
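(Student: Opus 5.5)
The plan is a direct chain-rule computation: we show that the doubled residual of $\mathbf v^R$ at $(p,r,t)$ equals $S_\Lambda$ applied to the doubled residual of $\mathbf v$ at $(p,-r,t)$. Since $S_\Lambda$ is invertible, this gives the equivalence. Because $\mathcal R_\Lambda$ is an involution (\Cref{lem:reflection-properties}), $(\mathbf v^R)^R=\mathbf v$. So it suffices to prove one implication, for points with $r>0$ in the image and $-r<0$ in the source, plus its mirror case. The hyperplane $r=0$ has measure zero, so it is irrelevant for a.e.\ equations; for classical solutions it follows by continuity.

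\emph{Step 1 (derivatives of $\mathbf v^R$).} Write $\mathbf z=\mathbf v(p,-r,t)$ and $S=S_\Lambda(\mathbf z)$. Differentiating $\mathbf v^R(p,r,t)=\mathcal R_\Lambda(\mathbf v(p,-r,t))$ gives, with everything on the right evaluated at $(p,-r,t)$,
\[
\partial_t\mathbf v^R=S\,\partial_t\mathbf v,\qquad
\partial_k\mathbf v^R=\varepsilon_kS\,\partial_k\mathbf v,\qquad
\partial_{ij}\mathbf v^R=\varepsilon_i\varepsilon_j\bigl[S\,\partial_{ij}\mathbf v+D^2\mathcal R_\Lambda(\mathbf z)(\partial_i\mathbf v,\partial_j\mathbf v)\bigr].
\]

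\emph{Step 2 (the case $r>0$).} At $(p,r,t)$ the doubled equation for $\mathbf v^R$ uses the physical coefficients $\mathsf A_c,\mathsf B_c,\mathbf Q_c$. By \eqref{eq:spatially-reflected-coefficients} we have $\varepsilon_i\varepsilon_j\mathsf A_c^{ij}(p,r,t)=\widetilde{\mathsf A}_c^{ij}(p,-r,t)$ and $\varepsilon_k\mathsf B_c^k(p,r,t)=\widetilde{\mathsf B}_c^{\,k}(p,-r,t)$. Substituting Step 1 then gives:
\begin{itemize}
\item $\mathsf A_c^{ij}\partial_{ij}\mathbf v^R=\widetilde{\mathsf A}_c^{ij}S\partial_{ij}\mathbf v+\widetilde{\mathsf A}_c^{ij}D^2\mathcal R_\Lambda(\partial_i\mathbf v,\partial_j\mathbf v)$;
\item $\mathsf B_c^k\partial_k\mathbf v^R=\widetilde{\mathsf B}_c^{\,k}S\partial_k\mathbf v$;
\item $\mathbf Q_c(p,r,t,\mathbf v^R,\nabla\mathbf v^R)=\widetilde{\mathbf Q}_c(p,-r,t,\mathcal R_\Lambda(\mathbf z),S\nabla\mathbf v)$, since the signs $\varepsilon_i\varepsilon_j$ are absorbed into $\widetilde a_c^{\sigma,ij}$ and $\widetilde{\mathrm{II}}^\sigma$ is bilinear.
\end{itemize}
Factoring out $S$ and using \eqref{eq:target-reflected-principal}--\eqref{eq:target-reflected-nonlinearity}, we get $\widetilde{\mathsf A}_c^{ij}S=S\mathsf A_{\mathrm{ref}}^{ij}(\mathbf z)$ and $\widetilde{\mathsf B}_c^{\,k}S=S\mathsf B_{\mathrm{ref}}^k(\mathbf z)$. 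The $D^2\mathcal R_\Lambda$ term combines with $\widetilde{\mathbf Q}_c$ into exactly $S\,\mathbf Q_{\mathrm{ref}}(p,-r,t,\mathbf z,\nabla\mathbf v)$. Hence the residual of $\mathbf v^R$ at $(p,r,t)$ is $S$ times the residual of $\mathbf v$ at $(p,-r,t)$, where the latter uses the reflected branch of \eqref{eq:doubled-principal-coefficient}--\eqref{eq:doubled-target-nonlinearity}, as required.

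\emph{Step 3 (the case $r<0$).} Now $\mathbf v^R$ is tested with reflected coefficients frozen at $\mathbf v^R=\mathcal R_\Lambda(\mathbf z)$, while $\mathbf v$ at $-r>0$ uses the physical coefficients. Rather than recomputing, we apply Step 2 to $\mathbf w:=\mathbf v^R$ at the point $-r>0$. Then $\mathbf w^R=\mathbf v$, and Step 2 says the residual of $\mathbf v$ at $(p,-r,t)$ equals $S_\Lambda(\mathbf w)$ times the residual of $\mathbf w$ at $(p,r,t)$. Since $S_\Lambda(\mathcal R_\Lambda(\mathbf z))=S_\Lambda(\mathbf z)^{-1}$ by \eqref{eq:reflection-involution}, this is the required equivalence. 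We also need that $\mathbf v^R$ stays in the tube where the coefficients are defined; this holds by \eqref{eq:reflection-invariant-subtube}.

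\emph{Step 4 (boundary and initial data).} The boundary statement is immediate: $\mathbf v^R(p,2\delta,t)=\mathcal R_\Lambda(\mathbf v(p,-2\delta,t))$, and by the involution identity the two Dirichlet conditions in \eqref{eq:nonlinear-doubled-problem} are swapped. For the initial datum, $\mathcal R_\Lambda(\mathbf v_0^{\mathrm{dbl}}(p,-r))=\mathbf v_0^{\mathrm{dbl}}(p,r)$ directly from \eqref{eq:doubled-initial-data}, again using $\mathcal R_\Lambda\circ\mathcal R_\Lambda=\operatorname{Id}$.

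The only delicate point is the bookkeeping in Step 2. One must check that the sign factors $\varepsilon_i$ land exactly where the tilded coefficients put them, including in the first-order transport term. One must also check that the second-derivative chain-rule term $D^2\mathcal R_\Lambda$ is precisely the extra term built into $\mathbf Q_{\mathrm{ref}}$, with $S_\Lambda$ evaluated at $\mathbf v(p,-r,t)$ rather than at $\mathbf v^R$.
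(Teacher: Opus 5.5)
Your proposal is correct and follows the paper's own proof. Both arguments use the chain-rule formulas for the derivatives of $\mathbf v^R$, substitute them into the physical equation on $r>0$ to recover $\mathsf A_{\mathrm{ref}}$, $\mathsf B_{\mathrm{ref}}$ and $\mathbf Q_{\mathrm{ref}}$, invoke the involution identity for the other half, and check the boundary and initial data directly from the definitions; your residual identity (residual of $\mathbf v^R$ at $(p,r,t)$ equals $S_\Lambda(\mathbf z)$ times the residual of $\mathbf v$ at $(p,-r,t)$) and the explicit $r<0$ case via $\mathbf w=\mathbf v^R$ simply spell out what the paper compresses into ``the involution identity gives the converse.''
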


\begin{proof}
Write $\rho(p,r,t)=(p,-r,t)$.  The spatial derivatives satisfy
\[
    \partial_i(\mathbf v\circ\rho)
    =\varepsilon_i(\partial_i\mathbf v)\circ\rho,
\]
\[
    \partial_{ij}(\mathbf v\circ\rho)
    =\varepsilon_i\varepsilon_j(\partial_{ij}\mathbf v)\circ\rho.
\]
For $\mathbf v^R=\mathcal R_\Lambda(\mathbf v\circ\rho)$, the chain rule gives
\begin{align*}
    \partial_t\mathbf v^R
    &=D\mathcal R_\Lambda(\mathbf v\circ\rho)
      (\partial_t\mathbf v)\circ\rho,\\
    \partial_k\mathbf v^R
    &=D\mathcal R_\Lambda(\mathbf v\circ\rho)
      \varepsilon_k(\partial_k\mathbf v)\circ\rho,\\
    \partial_{ij}\mathbf v^R
    &=D\mathcal R_\Lambda(\mathbf v\circ\rho)
      \varepsilon_i\varepsilon_j(\partial_{ij}\mathbf v)\circ\rho\\
    &\quad+
      D^2\mathcal R_\Lambda(\mathbf v\circ\rho)
      \bigl(\varepsilon_i(\partial_i\mathbf v)\circ\rho,
            \varepsilon_j(\partial_j\mathbf v)\circ\rho\bigr).
\end{align*}
Substitution into the equation on the physical half-collar $r>0$ produces
the principal, first-order, and nonlinear coefficients in
\eqref{eq:target-reflected-principal},
\eqref{eq:target-reflected-first-order}, and
\eqref{eq:target-reflected-nonlinearity}, respectively.  The involution identity
\eqref{eq:reflection-involution} gives the converse.  The statements about the
initial and boundary values follow directly from
\eqref{eq:doubled-initial-data} and the two Dirichlet conditions.
\end{proof}

\subsection{Bulk fixed-domain problems}
\label{subsec:bulk-problems}

Away from the interface, use the global pullback
\begin{equation}\label{eq:bulk-pullback}
    w^\sigma(y,t)
    :=u^\sigma(\Psi_t(y),t),
    \qquad y\in \mathcal D^\sigma.
\end{equation}
The initial data are
\[
    w_0^\sigma(y):=u_0^\sigma(y).
\]
Let
\[
    a_{b,ij}^\sigma(y,t)
    :=\partial_i\Psi_t(y)\cdot\partial_j\Psi_t(y),
    \qquad
    \lvert a_b^\sigma\rvert:=\det(a_{b,ij}^\sigma),
\]
and let $(a_b^{\sigma,ij})$ be its inverse matrix.  The pullback of the
Laplacian has divergence form
\[
    (\Delta_xu^\sigma)\circ\Psi_t
    =\lvert a_b^\sigma\rvert^{-1/2}
     \partial_i
     \bigl(\lvert a_b^\sigma\rvert^{1/2}
       a_b^{\sigma,ij}\partial_jw^\sigma\bigr).
\]
The time derivative contributes the transport term
\[
    a_b^{\sigma,k\ell}
    \left\langle
        \partial_t\Psi_t,
        \partial_\ell\Psi_t
    \right\rangle
    \partial_kw^\sigma.
\]
After expanding the divergence, the fixed bulk equation becomes
\begin{equation}\label{eq:bulk-equation}
    \partial_tw^\sigma
    -a_b^{\sigma,ij}(y,t)\partial_{ij}w^\sigma
    -b_b^{\sigma,k}(y,t)\partial_kw^\sigma
    =Q_b^\sigma(y,t,w^\sigma,\nabla w^\sigma),
\end{equation}
where
\begin{equation}\label{eq:bulk-coefficients}
\begin{aligned}
    b_b^{\sigma,k}(y,t)
    &:={}\lvert a_b^\sigma\rvert^{-1/2}
       \partial_i\bigl(\lvert a_b^\sigma\rvert^{1/2}
       a_b^{\sigma,ik}\bigr)
       +a_b^{\sigma,k\ell}
        \left\langle
            \partial_t\Psi_t,
            \partial_\ell\Psi_t
        \right\rangle,\\
    Q_b^\sigma(y,t,z,\Xi)
    &:={}
       a_b^{\sigma,ij}(y,t)
        \widetilde{\mathrm{II}}^\sigma(z)(\Xi_i,\Xi_j).
\end{aligned}
\end{equation}
Because $\Psi_t$ is defined on the whole torus, the formulas above also
define $a_b^{\sigma,ij}$ and $b_b^{\sigma,k}$ on all of $\Omega_0^\sigma$,
although the bulk Dirichlet problem uses them only on $\mathcal D^\sigma$.
The geometric coefficients are smooth, and $(a_b^{\sigma,ij})$ is
uniformly elliptic on the fixed compact cylinders.
The complete geometric operators are
\[
\begin{aligned}
    \mathscr L_c^\sigma
    &:=\partial_t-a_c^{\sigma,ij}\partial_{ij}
       -b_c^{\sigma,k}\partial_k,\\
    \mathscr L_b^\sigma
    &:=\partial_t-a_b^{\sigma,ij}\partial_{ij}
       -b_b^{\sigma,k}\partial_k.
\end{aligned}
\]

\begin{lemma}[Agreement of the geometric operators on the collar]
\label{lem:coefficient-agreement-collar}
Write $\phi^\sigma:=\Phi_0^\sigma$ on $0<r<2\delta$.  For every smooth
Euclidean-valued field $f(y,t)$,
\begin{equation}\label{eq:geometric-operator-collar}
    \mathscr L_c^\sigma(f\circ\phi^\sigma)
    =
    (\mathscr L_b^\sigma f)\circ\phi^\sigma.
\end{equation}
For every $C^1$ field $W$ with values in $\mathbb R^L$, set $V:=W\circ\phi^\sigma$.
Suppressing the spatial and temporal arguments,
\begin{equation}\label{eq:target-nonlinearity-collar}
    Q_b^\sigma(W,\nabla_yW)\circ\phi^\sigma
    =
    Q_c^\sigma(V,\nabla_{p,r}V).
\end{equation}
\end{lemma}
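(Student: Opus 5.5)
The plan is to prove both identities by interpreting each side as the pullback of one and the same Euclidean expression under a single change of variables, $(p,r,t)\mapsto(\Phi_t^\sigma(p,r),t)$. No coefficients will be compared term by term. The key input is the compatibility identity \eqref{eq:parametrization-compatibility} of \Cref{lem:fermi-compatible-parametrization}, namely $\Psi_t\circ\phi^\sigma=\Phi_t^\sigma$ on $0\le r\le r_*$, which holds because $2\delta<r_*$ by \eqref{eq:delta-choice}.

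First, the bulk operator. Set $u:=f\circ(\Psi_t^{-1}\times\mathrm{id})$, a smooth field on the moving domain with $f(y,t)=u(\Psi_t(y),t)$. The bulk coefficients were obtained by exactly this change of variables. Combining the divergence-form pullback of the Laplacian with the transport term, and solving the chain rule for $\partial_tu$, I expect
\[
\mathscr L_b^\sigma f=\bigl(\partial_tu-\Delta_xu\bigr)\circ\Psi_t .
\]
Here I must check the sign of the transport term against the definition of $b_b^{\sigma,k}$, since this sign convention is already built into both $b_b$ and $b_c$. Second, the collar operator. Because $b_c^{\sigma,k}$ in \eqref{eq:collar-first-order-coefficient} arises from \eqref{eq:Fermi-Laplacian-formula} and \eqref{eq:Fermi-time-derivative-formula} in the same way, the same computation with $\Phi_t^\sigma$ in place of $\Psi_t$ gives
\[
\mathscr L_c^\sigma V=\bigl(\partial_tu-\Delta_xu\bigr)\circ\Phi_t^\sigma
\]
whenever $V=u\circ\Phi_t^\sigma$. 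Now take $V=f\circ\phi^\sigma$. Then $V(p,r,t)=u(\Psi_t(\phi^\sigma(p,r)),t)=u(\Phi_t^\sigma(p,r),t)$, and evaluating both identities at corresponding points yields \eqref{eq:geometric-operator-collar}.

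For \eqref{eq:target-nonlinearity-collar}, the argument is the invariance of the contracted quadratic form $a^{ij}\widetilde{\mathrm{II}}^\sigma(z)(\partial_iU,\partial_jU)$. Both $a_b^{\sigma,ij}$ and $a_c^{\sigma,ij}$ are inverse metrics pulling back the flat metric. Since $\widetilde{\mathrm{II}}^\sigma(z)$ is bilinear, this expression equals $\sum_m\widetilde{\mathrm{II}}^\sigma(z)(\partial_{x_m}u,\partial_{x_m}u)$ evaluated at the image point, which is independent of the coordinates used. This can also be seen directly: with $J=D\phi^\sigma$, one has $\nabla_{p,r}V=J^\top(\nabla_yW)\circ\phi^\sigma$, and $a_c^{-1}=J^{-1}a_b^{-1}J^{-\top}$ because $a_c=J^\top a_bJ$ by the compatibility identity. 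Only $C^1$ regularity of $W$ is needed for this step, and $z=W\circ\phi^\sigma=V$ at matching points.

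The main obstacle is bookkeeping rather than analysis. The collar coordinates are only local charts on $\Gamma_0\times(0,2\delta)$. One must confirm that the coordinate expressions of $\mathscr L_c^\sigma$ transform correctly under the chain rule, i.e.\ that the operator is chart-independent. This holds because each side is the coordinate expression of a chart-free Euclidean operator. One must also check that the time derivatives are taken at fixed $(p,r)$ and fixed $y$ respectively, and that these correspond under $\phi^\sigma$ because $\phi^\sigma$ is time-independent.
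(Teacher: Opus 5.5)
Your proposal is correct and takes essentially the same route as the paper: the paper also sets $U(x,t):=f(\Psi_t^{-1}(x),t)$, uses $\Psi_t\circ\phi^\sigma=\Phi_t^\sigma$ to identify both $\mathscr L_c^\sigma(f\circ\phi^\sigma)$ and $(\mathscr L_b^\sigma f)\circ\phi^\sigma$ with $((\partial_t-\Delta_x)U)\circ\Phi_t^\sigma$, and gets the nonlinearity identity from the chain rule together with the transformation law of the inverse metrics, which you make explicit as $a_c=J^\top a_bJ$. The transport-term sign you flagged checks out against the definitions of $b_c^{\sigma,k}$ and $b_b^{\sigma,k}$, so no correction is needed.
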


\begin{proof}
Let $U(x,t):=f(\Psi_t^{-1}(x),t)$.  Since
$\Psi_t\circ\phi^\sigma=\Phi_t^\sigma$, the pullback formulas
\eqref{eq:Fermi-Laplacian-formula} and
\eqref{eq:Fermi-time-derivative-formula} give
\[
    \mathscr L_c^\sigma(f\circ\phi^\sigma)
    =((\partial_t-\Delta_x)U)\circ\Phi_t^\sigma
    =(\mathscr L_b^\sigma f)\circ\phi^\sigma.
\]
The identity includes the first-order terms coming from derivatives of the
metric and from the transport velocity.
The chain rule for first derivatives and the transformation law for the
inverse metrics give \eqref{eq:target-nonlinearity-collar}.
The operator identity extends to $W_q^{2,1}$ fields by density, and the
nonlinearity identity holds for $C^1$ fields.
\end{proof}

Given an intermediate trace
$\mathbf h_\delta=(h_\delta^+,h_\delta^-)$ on
$\Gamma_0\times[0,T]$, the two bulk Dirichlet problems are
\begin{equation}\label{eq:bulk-Dirichlet-problem}
\begin{cases}
    \mathscr L_b^\sigma w^\sigma
    =Q_b^\sigma(y,t,w^\sigma,\nabla w^\sigma),
    &y\in \mathcal D^\sigma,\\[0.3em]
    w^\sigma(\Phi_0^\sigma(p,\delta),t)
    =h_\delta^\sigma(p,t),
    &p\in\Gamma_0,\\[0.3em]
    w^\sigma(y,0)=w_0^\sigma(y),
    &y\in \mathcal D^\sigma.
\end{cases}
\end{equation}
\section{Schwarz trace construction}
\label{sec:schwarz}

\subsection{State and trace spaces}
\label{subsec:state-trace-spaces}

Fix
\begin{equation}\label{eq:q-alpha-choice}
    q>d+2,
    \qquad
    0<\alpha<\beta<1-\frac{d+2}{q}.
\end{equation}
For $0<T\le T_0$, write
\[
    \mathcal C_{\mathrm{dbl},T}:=\mathcal C_{\mathrm{dbl}}\times(0,T),
    \qquad
    \mathcal D_T^\sigma:=\mathcal D^\sigma\times(0,T).
\]
Define
\[
\begin{aligned}
    \mathcal X_T
    :=
    &\ C^{1+\alpha,\frac{1+\alpha}{2}}
       (\overline{\mathcal C_{\mathrm{dbl}}}\times[0,T];\mathbb R^{2L})\\
    &\times
       C^{1+\alpha,\frac{1+\alpha}{2}}
       (\overline{\mathcal D^+}\times[0,T];\mathbb R^L)\\
    &\times
       C^{1+\alpha,\frac{1+\alpha}{2}}
       (\overline{\mathcal D^-}\times[0,T];\mathbb R^L).
\end{aligned}
\]
An element is denoted by
\[
    \widehat{\mathcal U}
    =
    (\widehat{\mathbf v},\widehat w^+,\widehat w^-).
\]
Let $\mathcal X_T^{\mathrm{match}}$ be the closed subspace satisfying the overlap
condition
\begin{equation}\label{eq:frozen-overlap-condition}
    \widehat{\mathbf v}(p,r,t)
    =
    \mathcal T_r(\widehat w^+,\widehat w^-)(p,t),
    \qquad
    \delta<r<2\delta.
\end{equation}
Use the natural product norm $\|\cdot\|_{\mathcal X_T}$.

Set
\begin{equation}\label{eq:initial-state}
    \mathcal U_0
    :=
    (\mathbf v_0^{\mathrm{dbl}},w_0^+,w_0^-)
    \in\mathcal X_T^{\mathrm{match}}
\end{equation}
and define the admissible state set
\begin{equation}\label{eq:admissible-state-set}
\begin{aligned}
    \mathcal A_T
    :=
    \Bigl\{
        \widehat{\mathcal U}\in\mathcal X_T^{\mathrm{match}}:\;&
        \widehat{\mathcal U}(\cdot,0)=\mathcal U_0,\\
        &\|\widehat{\mathcal U}-\mathcal U_0\|_{\mathcal X_T}\le1
    \Bigr\}.
\end{aligned}
\end{equation}
The initial trace at $r=2\delta$ is
\begin{equation}\label{eq:initial-Schwarz-trace}
    \mathbf h_0(p):=\mathbf v_0(p,2\delta).
\end{equation}
It belongs to
$W_q^{2-3/q}(\Gamma_0;\mathbb R^{2L})$.  The initial-trace lifting in
\Cref{lem:uniform-short-time-estimates} provides a fixed field
\begin{equation}\label{eq:initial-Schwarz-lifting}
    \boldsymbol\ell
    \in\mathcal Y_{T_0}(\Gamma_0;\mathbb R^{2L}),
    \qquad
    \boldsymbol\ell(\cdot,0)=\mathbf h_0,
    \qquad
    \|\boldsymbol\ell\|_{\mathcal Y_{T_0}}
    \le C\|\mathbf h_0\|_{W_q^{2-3/q}}.
\end{equation}
Unlike the time-independent function $\mathbf h_0$, this lifting has the
mixed time--space regularity required of a parabolic Dirichlet trace.  Choose
a radius $R_{\mathrm{tr}}>0$, independent of $T$, as in
\Cref{prop:Schwarz-self-map}, and set
\begin{equation}\label{eq:admissible-trace-set}
\begin{aligned}
    \mathcal K_T
    :=
    \Bigl\{
        \mathbf h\in\mathcal Y_T(\Gamma_0;\mathbb R^{2L}):\;&
        \mathbf h(\cdot,0)=\mathbf h_0,\\
        &\|\mathbf h-\boldsymbol\ell\|_{\mathcal Y_T(\Gamma_0;\mathbb R^{2L})}
          \le R_{\mathrm{tr}}
    \Bigr\}.
\end{aligned}
\end{equation}

On the overlap cylinder, define the continuous linear map
\[
    (\mathbf v,w^+,w^-)
    \longmapsto
    \mathbf v-\mathcal T_r(w^+,w^-).
\]
The matching space $\mathcal X_T^{\mathrm{match}}$ is the kernel of this map,
so it is a closed linear subspace. The initial state $\mathcal U_0$ belongs to it because
the collar and bulk initial data are restrictions of the same physical maps, so
$\mathcal A_T$ is nonempty.  It is the intersection of a closed
affine subspace with a closed ball, hence closed, bounded, and convex. The same
argument applies to $\mathcal K_T$, centered at the lifting $\boldsymbol\ell$
restricted to $(0,T)$.

\begin{lemma}[Uniform tubular control]
\label{lem:tubular-control}
The parameters $\rho_\Lambda$ and $\delta$ may be chosen successively so that
\eqref{eq:initial-paired-collar-tube} holds and
\begin{equation}\label{eq:Lambda-tube-inside-target-tubes}
    \operatorname{pr}_\sigma
    \bigl(\overline{\operatorname{Tub}_{\rho_\Lambda}(\Lambda)}\bigr)
    \Subset \operatorname{Tub}_{\rho_N/2}(N^\sigma),
    \qquad \sigma=\pm.
\end{equation}
Moreover, after decreasing $T>0$ if necessary, every
$\widehat{\mathcal U}\in\mathcal A_T$ and $\mathbf h\in\mathcal K_T$ satisfy
\begin{align}
    \widehat{\mathbf v}
    (\overline{\mathcal C_{\mathrm{dbl}}}\times[0,T])
    &\subset\operatorname{Tub}_{\rho_\Lambda}(\Lambda),
    \label{eq:frozen-Lambda-tube}\\
    \mathbf h(\Gamma_0\times[0,T])
    &\subset\operatorname{Tub}_{\rho_\Lambda}(\Lambda),
    \label{eq:trace-Lambda-tube}
\end{align}
and, for $\sigma=\pm$,
\begin{equation}\label{eq:frozen-target-tubes}
\begin{aligned}
    \widehat v^\sigma
    (\Gamma_0\times[0,2\delta]\times[0,T])
    &\Subset\operatorname{Tub}_{\rho_N/2}(N^\sigma),\\
    \widehat w^\sigma(\overline{\mathcal D^\sigma}\times[0,T])
    &\Subset\operatorname{Tub}_{\rho_N/2}(N^\sigma).
\end{aligned}
\end{equation}
All derivatives of $\mathcal R_\Lambda$ and the inverse differentials required
below are uniformly bounded on
$\overline{\operatorname{Tub}_{\rho_\Lambda}(\Lambda)}$, and hence on the ranges
of the admissible fields and traces.
\end{lemma}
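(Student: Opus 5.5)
The plan is to choose the parameters in the order $\rho_\Lambda$, $\delta$, $T$. Then we combine the initial-time inclusions with a uniform-in-$T$ smallness of oscillation in time, which comes from the Hölder norms built into $\mathcal A_T$ and $\mathcal K_T$.

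\emph{Choice of $\rho_\Lambda$.} Since $\Lambda\subset\mathbf N=N^+\times N^-$, we have $\operatorname{pr}_\sigma(\Lambda)\subset N^\sigma$. For $\mathbf z\in\overline{\operatorname{Tub}_{\rho}(\Lambda)}$ there is $\mathbf p\in\Lambda$ with $|z^\sigma-p^\sigma|\le|\mathbf z-\mathbf p|\le\rho$, so $\operatorname{dist}(z^\sigma,N^\sigma)\le\rho$. Shrinking $\rho_\Lambda$ below the value in \Cref{def:reflection-map} and below $\rho_N/4$ gives \eqref{eq:Lambda-tube-inside-target-tubes}. Shrinking $\rho_\Lambda$ does not affect the diffeomorphism property of $\Theta_\Lambda$. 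Next choose $\delta$ as in \eqref{eq:delta-choice}--\eqref{eq:initial-paired-collar-tube} for this $\rho_\Lambda$, using continuity of $u_0^\sigma$ and the compatibility $(u_0^+,u_0^-)|_{\Gamma_0}\in\Lambda$. Compactness then gives a margin $\kappa>0$ with $\operatorname{dist}(\mathbf v_0(p,r),\Lambda)\le\rho_\Lambda-2\kappa$ for $0\le r\le 2\delta$. By \eqref{eq:reflection-preserves-distance}, the same bound holds for $\mathbf v_0^{\mathrm{dbl}}$ on all of $\overline{\mathcal C_{\mathrm{dbl}}}$. In particular $\mathbf h_0=\mathbf v_0(\cdot,2\delta)$ satisfies it. The bulk initial data take values in $N^\sigma$ itself, so they are at distance $0$ from the target.

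\emph{Short-time smallness.} For $\widehat{\mathcal U}\in\mathcal A_T$, the difference $\widehat{\mathcal U}-\mathcal U_0$ vanishes at $t=0$ and has $\mathcal X_T$-norm at most $1$. The time Hölder seminorm of order $(1+\alpha)/2\ge 1/2$ then gives
\[
    \sup_{t\le T}\|\widehat{\mathcal U}(t)-\mathcal U_0\|_{L^\infty}\le T^{1/2}.
\]
Choosing $T^{1/2}\le\kappa$ yields \eqref{eq:frozen-Lambda-tube}. The components $\widehat v^\sigma$ on $0\le r\le 2\delta$ are then within $\rho_\Lambda$ of $\Lambda$, so the first line of \eqref{eq:frozen-target-tubes} follows from \eqref{eq:Lambda-tube-inside-target-tubes}. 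The bulk fields satisfy $\operatorname{dist}(\widehat w^\sigma,N^\sigma)\le T^{1/2}<\rho_N/2$, which gives the second line.

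\emph{Traces.} For $\mathbf h\in\mathcal K_T$ write $\mathbf h-\mathbf h_0=(\mathbf h-\boldsymbol\ell)+(\boldsymbol\ell-\mathbf h_0)$.
\begin{itemize}
\item The second term is a fixed function that is continuous on $\Gamma_0\times[0,T_0]$, by the embedding of $\mathcal Y_{T_0}$ into Hölder spaces for $q>d+2$. It vanishes at $t=0$, so it is smaller than $\kappa/2$ for small $T$.
\item The first term vanishes at $t=0$ and is bounded by $R_{\mathrm{tr}}$ in $\mathcal Y_T$. I would invoke the uniform short-time estimates of \Cref{lem:uniform-short-time-estimates}, namely an embedding $\mathcal Y_T\hookrightarrow C^{\beta'}$ in time. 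Its constant must be independent of $T$ on the subspace of functions vanishing at $t=0$, which one obtains via extension by zero to $t<0$. This gives $\|\mathbf h-\boldsymbol\ell\|_{L^\infty}\le CR_{\mathrm{tr}}T^{\gamma}$ for some $\gamma>0$.
\end{itemize}
Since $R_{\mathrm{tr}}$ is independent of $T$, decreasing $T$ gives $|\mathbf h-\mathbf h_0|<\kappa$ and hence \eqref{eq:trace-Lambda-tube}. The last assertion is immediate: $\mathcal R_\Lambda$ is smooth on $\operatorname{Tub}_{2\rho_\Lambda}(\Lambda)$, so all its derivatives, and $S_\Lambda^{-1}=S_\Lambda\circ\mathcal R_\Lambda$ from \eqref{eq:reflection-involution}, are bounded on the compact set $\overline{\operatorname{Tub}_{\rho_\Lambda}(\Lambda)}$.

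The main obstacle is the trace step: one needs the $T$-uniform embedding constant for $\mathcal Y_T$ restricted to functions vanishing initially. This is where the zero-extension argument from the appendix is needed. Without it, the centering at $\boldsymbol\ell$ and the $T$-independent radius would not yield smallness.
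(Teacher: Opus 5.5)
Your proposal is correct and follows essentially the same route as the paper: you choose $\rho_\Lambda$ by compactness (made explicit via $\rho_\Lambda\le\rho_N/4$), then $\delta$, then use the temporal H\"older seminorm of $\widehat{\mathcal U}-\mathcal U_0$ for the frozen states, and for the traces the splitting $\mathbf h-\mathbf h_0=(\mathbf h-\boldsymbol\ell)+(\boldsymbol\ell-\mathbf h_0)$ with the zero-initial-trace bound \eqref{eq:trace-C0-smallness}. Your explicit margin $\kappa$ and your appeal to \eqref{eq:reflection-preserves-distance} for $\mathbf v_0^{\mathrm{dbl}}$ spell out details the paper leaves implicit, but they do not change the argument.
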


\begin{proof}
Compactness of $\Lambda\subset N^+\times N^-$ permits the choice of
$\rho_\Lambda$ in \eqref{eq:Lambda-tube-inside-target-tubes}.  After this
choice, decreasing $\delta$ gives \eqref{eq:initial-paired-collar-tube}, which
ensures that the reflected initial datum is well-defined and that
\[
    \mathbf v_0^{\mathrm{dbl}}(\overline{\mathcal C_{\mathrm{dbl}}})
    \Subset\operatorname{Tub}_{\rho_\Lambda}(\Lambda).
\]
Every $\widehat{\mathcal U}\in\mathcal A_T$ satisfies
$\widehat{\mathcal U}(\cdot,0)=\mathcal U_0$ and
$\|\widehat{\mathcal U}-\mathcal U_0\|_{\mathcal X_T}\le1$ by
\eqref{eq:admissible-state-set}; since $\widehat{\mathcal U}-\mathcal U_0$
vanishes at $t=0$, its temporal H\"older seminorm gives
$\|\widehat{\mathcal U}-\mathcal U_0\|_{C^0}\le T^{(1+\alpha)/2}$.  Since the
initial fields lie in a compact subset of the open tubes, and the admissible
states stay within $T^{(1+\alpha)/2}$ of them in $C^0$, decreasing $T$ gives the
assertions for the frozen state.  For the trace, write
\[
    \mathbf h-\mathbf h_0
    =(\mathbf h-\boldsymbol\ell)
      +(\boldsymbol\ell-\mathbf h_0).
\]
The first term has zero initial trace and is controlled by
\eqref{eq:trace-C0-smallness}; the second tends to zero in $C^1$ as
$T\downarrow0$ because the fixed lifting belongs to
$C^{1+\beta,(1+\beta)/2}$, by the embedding of $\mathcal Y_{T_0}$ recorded in
\Cref{subsec:function-spaces}.  After decreasing $T$, we obtain
\eqref{eq:frozen-Lambda-tube}, \eqref{eq:trace-Lambda-tube}, and
\eqref{eq:frozen-target-tubes}.  By
\eqref{eq:reflection-invariant-subtube}, normal reflection preserves this
fixed tubular neighborhood, and
\[
    \overline{\operatorname{Tub}_{\rho_\Lambda}(\Lambda)}
    \Subset
    \operatorname{Tub}_{2\rho_\Lambda}(\Lambda),
\]
where $\mathcal R_\Lambda$ is smooth and
$[D\mathcal R_\Lambda]^{-1}=D\mathcal R_\Lambda\circ\mathcal R_\Lambda$ by
\eqref{eq:reflection-involution}; this gives the last assertion.
\end{proof}

\subsection{Frozen doubled coefficients}

For $0<\phi<\pi$, write
\[
    \mathfrak S_\phi:=\{\lambda\in\mathbb C\setminus\{0\}:|\arg\lambda|<\phi\}.
\]

\begin{proposition}[Uniform coefficient regularity and parameter ellipticity]
\label{prop:frozen-coefficients}
After decreasing $T>0$ if necessary, the coefficients
\[
    (p,r,t)\longmapsto
    \mathsf A_{\mathrm{dbl}}^{ij}
    (p,r,t,\widehat{\mathbf v}(p,r,t))
\]
are uniformly Lipschitz in the spatial reference variables and uniformly
$1/2$-H\"older in time, with constants independent of
$\widehat{\mathcal U}\in\mathcal A_T$.  Define the principal symbol
\[
    \mathscr P_{\widehat{\mathbf v}}(p,r,t,\xi)
    :=\mathsf A_{\mathrm{dbl}}^{ij}
      (p,r,t,\widehat{\mathbf v})\xi_i\xi_j.
\]
There exist $\theta\in(0,\pi/2)$ and $C_{\mathrm{pe}}>0$, independent of the
frozen state, such that
\begin{equation}\label{eq:doubled-parameter-ellipticity}
    \left\|
       \bigl(\lambda I+\mathscr P_{\widehat{\mathbf v}}
       (p,r,t,\xi)\bigr)^{-1}
    \right\|_{\mathcal L(\mathbb C^{2L})}
    \le
    \frac{C_{\mathrm{pe}}}{|\lambda|+|\xi|^2}
\end{equation}
for every $\lambda\in\overline{\mathfrak S}_{\pi-\theta}$ and
$(\lambda,\xi)\ne(0,0)$.  At the two artificial boundary components
$r=\pm2\delta$, the frozen principal operator together with the Dirichlet
boundary operator satisfies the Lopatinskii--Shapiro condition uniformly in
$\widehat{\mathcal U}$.
\end{proposition}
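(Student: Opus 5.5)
The proof splits according to the two halves of the doubled collar.

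\emph{Regularity.} On $0\le r<2\delta$ the coefficients $\mathsf A_c^{ij}$ are smooth, bounded together with all derivatives on the closed cylinder, and independent of the state. On $-2\delta<r<0$ we have $\mathsf A_{\mathrm{ref}}^{ij}(p,r,t,\mathbf z)=S_\Lambda(\mathbf z)^{-1}\widetilde{\mathsf A}_c^{ij}(p,r,t)S_\Lambda(\mathbf z)$. This is a smooth function of $(p,r,t,\mathbf z)$ on $\overline{\mathcal C_{\mathrm{dbl}}}\times[0,T_0]\times\overline{\operatorname{Tub}_{\rho_\Lambda}(\Lambda)}$, and by \Cref{lem:tubular-control} all its derivatives are bounded there. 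We compose it with $\widehat{\mathbf v}$. By \Cref{lem:tubular-control}, $\widehat{\mathbf v}$ takes values in the fixed tube. Since $\|\widehat{\mathcal U}\|_{\mathcal X_T}\le\|\mathcal U_0\|+1$, the field $\widehat{\mathbf v}$ is Lipschitz in space with a uniform constant, because $\nabla\widehat{\mathbf v}$ is uniformly bounded. It is also uniformly $(1+\alpha)/2$-Hölder in time, hence $1/2$-Hölder for $T\le T_0$. The chain rule then gives uniform Lipschitz/$1/2$-Hölder bounds on each half-collar separately.

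\emph{Gluing across $r=0$.} To glue the two halves we need continuity across $r=0$. This is the step that requires care, since $\mathsf A_{\mathrm{ref}}$ at $r=0^-$ equals $S_\Lambda^{-1}\widetilde{\mathsf A}_cS_\Lambda$ evaluated at $\widehat{\mathbf v}(p,0,t)$. By the Fermi structure, $a_c^{\sigma,ar}=0$ and $a_c^{\sigma,rr}=1$. Hence $\widetilde a^{ij}_c(p,0,t)=a^{ij}_c(p,0,t)$: the signs $\varepsilon_i\varepsilon_j$ only flip the mixed $ar$ entries, which vanish. A jump can therefore come only from the conjugation by $S_\Lambda$, and conjugation by an invertible matrix commutes with the blocks only when $\widetilde{\mathsf A}_c$ is scalar. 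At $r=0$ the $(+)$ and $(-)$ tangential metrics are both the metric of $\Gamma_t$, because $\Phi_t^\pm(p,0)=X_t(p)$. So $\widetilde{\mathsf A}_c^{ij}(p,0,t)=a^{ij}(p,0,t)\,\mathrm{Id}_{2L}$ is a scalar multiple of the identity, and the conjugation has no effect there. Thus $\mathsf A_{\mathrm{dbl}}$ is continuous across $r=0$ with no requirement that $\widehat{\mathbf v}$ lie on $\Lambda$. A function that is Lipschitz on each closed half and continuous across the interface is Lipschitz on the whole collar. The time-Hölder bound holds pointwise in $(p,r)$, so it passes to the whole collar directly.

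\emph{Parameter ellipticity.} For $r\ge0$ the symbol is block diagonal, $\mathscr P=\mathrm{diag}(a_c^{+,ij}\xi_i\xi_j,\,a_c^{-,ij}\xi_i\xi_j)$. Its entries are real scalars $\ge c|\xi|^2$, so
\[
\|(\lambda+\mathscr P)^{-1}\|\le C/(|\lambda|+|\xi|^2)
\]
on any sector $\overline{\mathfrak S}_{\pi-\theta}$, using the elementary bound $|\lambda+s|\ge c_\theta(|\lambda|+s)$ for $s\ge0$. For $r<0$ we have $\mathscr P=S^{-1}\widetilde{\mathscr P}S$ with $\widetilde{\mathscr P}$ of the same diagonal form, and hence
\[
(\lambda+\mathscr P)^{-1}=S^{-1}(\lambda+\widetilde{\mathscr P})^{-1}S.
\]
The factors satisfy $\|S\|,\|S^{-1}\|\le C$ uniformly on the tube by \Cref{lem:tubular-control}. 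This gives \eqref{eq:doubled-parameter-ellipticity} with $C_{\mathrm{pe}}$ independent of the state. The same estimate holds for any $\theta\in(0,\pi/2)$, and we fix one.

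\emph{Lopatinskii--Shapiro at $r=\pm2\delta$.} Fix a boundary point, freeze the coefficients, and take tangential frequency $\xi'$ and $\lambda$ with $(\lambda,\xi')\ne0$. The half-line ODE is $\lambda u+\mathscr P(\xi',-i\partial_r)u=0$ with $u(0)=0$ and $u$ decaying. It must have only the trivial solution. At $r=2\delta$ the symbol is block diagonal scalar. Because $a^{ar}=0$ and $a^{rr}=1$, each component solves $u''=(\lambda+|\xi'|_a^2)u$, and the decaying solution with zero Dirichlet data is zero since $\operatorname{Re}\sqrt{\lambda+|\xi'|^2_a}>0$ on the sector. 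At $r=-2\delta$, set $u=S^{-1}\tilde u$. The conjugation is by the constant frozen matrix $S$, so it reduces the problem to the same diagonal problem for $\tilde u$, with Dirichlet data preserved. Uniformity follows because the roots depend continuously on compact parameter sets: the sector, the unit sphere in $(\lambda,\xi')$, and the bounded set of matrices $S$ and metrics.
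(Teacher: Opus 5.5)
Your proposal is correct and follows the paper's proof essentially step by step. It composes smooth maps with the uniformly $C^{1+\alpha,(1+\alpha)/2}$-bounded state on each half-collar, and it gets continuity at $r=0$ from the scalar common trace $a_{c,0}^{ij}\operatorname{Id}_{\mathbb R^{2L}}$ commuting with $D\mathcal R_\Lambda$, with no need for $\widehat{\mathbf v}\in\Lambda$; it uses the similarity $S_\Lambda^{-1}(\lambda I+\widetilde{\mathscr P})^{-1}S_\Lambda$ for the resolvent bound and reduces the Dirichlet Lopatinskii--Shapiro check at $r=-2\delta$ to the scalar one by a constant change of target variables, exactly as the paper does.
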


\begin{proof}
All geometric coefficients are fixed smooth functions of the reference
variables.  By \Cref{lem:tubular-control}, every frozen collar field takes
values in $\operatorname{Tub}_{\rho_\Lambda}(\Lambda)$, so the maps
$D\mathcal R_\Lambda$, $[D\mathcal R_\Lambda]^{-1}$, and all their derivatives
required below are uniformly bounded on the admissible ranges.
For $r>0$, the principal coefficient is independent of the frozen state.  For
$r<0$ it is
\begin{equation}\label{eq:negative-frozen-principal}
\begin{aligned}
    \mathsf A_{\mathrm{dbl}}^{ij}
    (p,r,t,\widehat{\mathbf v})
    ={}&
    [D\mathcal R_\Lambda(\widehat{\mathbf v})]^{-1}
    \circ\widetilde{\mathsf A}_c^{ij}(p,r,t)
    \circ D\mathcal R_\Lambda(\widehat{\mathbf v}).
\end{aligned}
\end{equation}
The product rule and the uniform
$C^{1+\alpha,(1+\alpha)/2}$ bound give the spatial Lipschitz and temporal $1/2$-H\"older bounds on each side of
$r=0$.

At $r=0$, both Fermi systems have the same tangential metric and satisfy
$a_c^{ar}=0$, $a_c^{rr}=1$.  Write
$a_{c,0}^{ij}(p,t):=a_c^{+,ij}(p,0,t)=a_c^{-,ij}(p,0,t)$; the subscript
$0$ denotes evaluation at $r=0$.  Hence their principal coefficients have the
common trace
\[
    a_{c,0}^{ij}(p,t)\operatorname{Id}_{\mathbb R^{2L}}.
\]
This scalar operator commutes with $D\mathcal R_\Lambda$, so the two one-sided
coefficients agree at $r=0$.  Piecewise Lipschitz functions with a common
trace on a smooth hypersurface are globally Lipschitz; this proves the first
assertion.

On the physical side, the symbol is the direct sum of the scalar Fermi
principal coefficients $a_c^{\sigma,ij}$ for the two phases.  Uniform
ellipticity gives constants
$0<c_0\le c_1$ such that every scalar eigenvalue $\mu$ lies in
$[c_0|\xi|^2,c_1|\xi|^2]$.  Therefore, after fixing
$\theta\in(0,\pi/2)$,
\[
    |\lambda+\mu|\ge c_\theta(|\lambda|+|\xi|^2),
    \qquad
    \lambda\in\overline{\mathfrak S}_{\pi-\theta}.
\]
On the reflected side, with
$S_\Lambda=D\mathcal R_\Lambda(\widehat{\mathbf v})$,
\[
    \mathscr P_{\widehat{\mathbf v}}(p,r,t,\xi)
    =S_\Lambda^{-1}\widetilde{\mathscr P}(p,r,t,\xi)S_\Lambda,
\]
where $\widetilde{\mathscr P}$ is the direct sum of the same scalar symbols
with $\xi_d$ replaced by $-\xi_d$.  The condition numbers of $S_\Lambda$ are uniformly bounded on
$\overline{\operatorname{Tub}_{\rho_\Lambda}(\Lambda)}$, and
\[
    (\lambda I+S_\Lambda^{-1}\widetilde{\mathscr P}S_\Lambda)^{-1}
    =S_\Lambda^{-1}(\lambda I+\widetilde{\mathscr P})^{-1}S_\Lambda.
\]
This proves \eqref{eq:doubled-parameter-ellipticity}.

It remains to verify the boundary symbol.  At $r=2\delta$, freezing the
coefficients gives a direct sum of scalar uniformly elliptic second-order
operators.  For a spectral parameter in
$\overline{\mathfrak S}_{\pi-\theta}$ and a tangential covector, the decaying
half-line solution of each scalar homogeneous symbol equation has one scalar
amplitude; homogeneous Dirichlet data force that amplitude to vanish.  At
$r=-2\delta$, a constant target change by $S_\Lambda$ reduces the frozen system to
the same direct sum, and homogeneous Dirichlet data are invariant under this
change.  This is the Dirichlet Lopatinskii--Shapiro condition, and uniformity
follows from compactness of the coefficient set and the uniform bounds on
$S_\Lambda$ and $S_\Lambda^{-1}$.
\end{proof}

The compact range and uniform first-derivative bounds of the frozen states give
uniform $L^\infty$ bounds for
$\mathsf B_{\mathrm{dbl}}^k(\widehat{\mathbf v})$,
$\mathbf Q_{\mathrm{dbl}}(\widehat{\mathbf v},
\nabla\widehat{\mathbf v})$, and
$Q_b^\sigma(\widehat w^\sigma,\nabla\widehat w^\sigma)$.
The first-order coefficients may jump at $r=0$; their boundedness is sufficient
for \Cref{lem:uniform-Wq-dirichlet}.  Under convergence in
$\mathcal X_T^{\mathrm{match}}$, the principal and first-order coefficients
converge in $L^\infty$, and the right-hand sides converge in $L^q$.

\subsection{Frozen problems and the Schwarz map}

Fix
\[
    \widehat{\mathcal U}
    =
    (\widehat{\mathbf v},\widehat w^+,\widehat w^-)
    \in\mathcal A_T.
\]
We use $\mathscr L_{\widehat{\mathbf v}}$ from
\eqref{eq:doubled-geometric-operator}: its coefficients are evaluated at
$\widehat{\mathbf v}$, while its derivatives act on the unknown.
For $\mathbf h\in\mathcal K_T$, solve the frozen doubled problem
\begin{equation}\label{eq:frozen-doubled-problem}
\begin{cases}
    \mathscr L_{\widehat{\mathbf v}}\mathbf v
    =
    \mathbf Q_{\mathrm{dbl}}
    (p,r,t,\widehat{\mathbf v},\nabla\widehat{\mathbf v}),
    & \mathcal C_{\mathrm{dbl},T},\\[0.3em]
    \mathbf v(p,2\delta,t)=\mathbf h(p,t),\\[0.3em]
    \mathbf v(p,-2\delta,t)
    =\mathcal R_\Lambda(\mathbf h(p,t)),\\[0.3em]
    \mathbf v(p,r,0)=\mathbf v_0^{\mathrm{dbl}}(p,r).
\end{cases}
\end{equation}
By \Cref{prop:frozen-coefficients,lem:uniform-Wq-dirichlet}, this problem has a
unique solution
\[
    \mathbf v^{\widehat{\mathcal U}}[\mathbf h]
    \in
    W_q^{2,1}(\mathcal C_{\mathrm{dbl},T};\mathbb R^{2L}).
\]
Define its intermediate trace
\begin{equation}\label{eq:intermediate-trace}
    \mathbf h_\delta^{\widehat{\mathcal U}}[\mathbf h]
    :=
    \mathbf v^{\widehat{\mathcal U}}[\mathbf h]
    \big|_{r=\delta}.
\end{equation}

Use this as boundary data for the frozen bulk problems
\begin{equation}\label{eq:frozen-bulk-problems}
\begin{cases}
    \mathscr L_b^\sigma w^\sigma
    =
    Q_b^\sigma
    (y,t,\widehat w^\sigma,\nabla\widehat w^\sigma),
    &\mathcal D_T^\sigma,\\[0.3em]
    w^\sigma(\Phi_0^\sigma(p,\delta),t)
    =
    \bigl(
        \mathbf h_\delta^{\widehat{\mathcal U}}[\mathbf h]
    \bigr)^\sigma(p,t),\\[0.3em]
    w^\sigma(y,0)=w_0^\sigma(y).
\end{cases}
\end{equation}
The solutions are denoted by
\[
    w^{\sigma,\widehat{\mathcal U}}[\mathbf h]
    \in
    W_q^{2,1}(\mathcal D_T^\sigma;\mathbb R^L).
\]

\begin{definition}[Schwarz trace map]
\label{def:Schwarz-map}
For fixed $\widehat{\mathcal U}\in\mathcal A_T$, define
\begin{equation}\label{eq:Schwarz-map}
\begin{aligned}
    \mathcal S_T^{\widehat{\mathcal U}}(\mathbf h)
    :=
    \mathcal T_{2\delta}
    \Bigl(
        w^{+,\widehat{\mathcal U}}[\mathbf h],
        w^{-,\widehat{\mathcal U}}[\mathbf h]
    \Bigr).
\end{aligned}
\end{equation}
Thus
\[
    \mathbf h
    \longmapsto
    \mathbf v^{\widehat{\mathcal U}}[\mathbf h]
    \longmapsto
    \mathbf h_\delta^{\widehat{\mathcal U}}[\mathbf h]
    \longmapsto
    (w^{+,\widehat{\mathcal U}}[\mathbf h],
     w^{-,\widehat{\mathcal U}}[\mathbf h])
    \longmapsto
    \mathcal S_T^{\widehat{\mathcal U}}(\mathbf h).
\]
\end{definition}

\subsection{An off-diagonal boundary-to-interior estimate}

\begin{lemma}[Boundary-to-interior short-time trace estimate]
\label{lem:off-diagonal-trace}
Let $D$ be a fixed compact smooth manifold with boundary, and let
$\Sigma\Subset D^\circ$ be a smooth compact hypersurface.  Let $E$ be a
finite-dimensional space and fix $q>d+2$.  Set
\[
    d_\Sigma:=\operatorname{dist}(\Sigma,\partial D)>0.
\]
Fix $T_0>0$, and suppose that the coefficients below are defined on
$\overline D\times[0,T_0]$.  Consider the componentwise scalar operator
\[
    \mathscr L u
    :=
    \partial_tu
    -a^{ij}(x,t)\partial_{ij}u
    -b^i(x,t)\partial_iu
    -c(x,t)u,
\]
where $a^{ij}=a^{ji}$, the matrix $(a^{ij})$ is uniformly elliptic, and the
coefficients satisfy the uniform regularity and boundedness assumptions of
\Cref{lem:uniform-Wq-dirichlet} on $\overline D\times[0,T_0]$.

For $0<T\le T_0$, let $u\in W_q^{2,1}(D_T;E)$ solve
\begin{equation}\label{eq:off-diagonal-problem}
\begin{cases}
    \mathscr L u=0,&D\times(0,T),\\
    u=h,&\partial D\times(0,T),\\
    u(\cdot,0)=0,&D,
\end{cases}
\end{equation}
where
\[
    h\in\mathcal Y_T(\partial D;E),
    \qquad
    h(\cdot,0)=0.
\]
Then
\begin{equation}\label{eq:off-diagonal-trace-estimate}
    \|u|_{\Sigma\times(0,T)}\|_{\mathcal Y_T(\Sigma;E)}
    \le
    CT^{1/2}
    \|h\|_{\mathcal Y_T(\partial D;E)}.
\end{equation}
Here $C$ is independent of $T\in(0,T_0]$.  It may depend on $D$, $\Sigma$,
$T_0$, $q$, the uniform coefficient bounds, the ellipticity constants, and,
through the cutoff below, on $d_\Sigma^{-1}$.
\end{lemma}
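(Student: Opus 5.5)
We localize away from the boundary with a cutoff and then use the uniform short-time estimates. The idea is that the boundary datum $h$ enters the equation for the localized field only through a lower-order commutator supported in an annulus. For that commutator we can afford to lose the factor $T^{1/2}$.

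Fix $\chi\in C^\infty(\overline D)$ with $\chi=1$ in the $d_\Sigma/2$-neighbourhood of $\Sigma$ and $\chi=0$ in the $d_\Sigma/4$-neighbourhood of $\partial D$. Its derivatives are bounded by powers of $d_\Sigma^{-1}$. Set $\tilde u:=\chi u$. Then $\tilde u$ vanishes near $\partial D$, has zero initial datum, and solves
\[
    \mathscr L\tilde u=f:=-2a^{ij}\partial_i\chi\,\partial_ju-\bigl(a^{ij}\partial_{ij}\chi+b^i\partial_i\chi\bigr)u .
\]
The source $f$ involves only $u$ and $\nabla u$, and these are multiplied by functions supported in the annulus $\operatorname{supp}\nabla\chi$. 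Since $\tilde u=u$ near $\Sigma$, it suffices to bound the trace of $\tilde u$ on $\Sigma$ in $\mathcal Y_T(\Sigma;E)$. By the uniform trace estimate for $\mathcal Y_T$ with zero initial data (the $T$-uniform trace constant for functions vanishing at $t=0$, recorded with \Cref{lem:uniform-short-time-estimates}), this trace is bounded by $C\|\tilde u\|_{W_q^{2,1}(D_T)}$. The zero-boundary, zero-initial-data case of \Cref{lem:uniform-Wq-dirichlet} then gives
\[
    \|\tilde u\|_{W_q^{2,1}(D_T)}\le C\|f\|_{L^q(D_T)}\le C\bigl(\|u\|_{L^q(D_T)}+\|\nabla u\|_{L^q(D_T)}\bigr),
\]
with $C$ independent of $T$.

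The gain of $T^{1/2}$ comes from estimating the lower-order norms of $u$ itself. Because $u(\cdot,0)=0$ and $h(\cdot,0)=0$, \Cref{lem:uniform-Wq-dirichlet} gives $\|u\|_{W_q^{2,1}(D_T)}\le C\|h\|_{\mathcal Y_T(\partial D;E)}$ with $C$ uniform in $T$. We then use the $T$-uniform interpolation and embedding available for functions vanishing at $t=0$. The first is $\|u\|_{L^\infty(0,T;W_q^{1})}\le C\|u\|_{W_q^{2,1}(D_T)}$, which comes from $u\in C([0,T];W_q^{2-2/q})$ with zero initial trace and a constant uniform in $T$ by extension by zero to negative times. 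Hölder's inequality in time then gives
\[
    \|u\|_{L^q(D_T)}+\|\nabla u\|_{L^q(D_T)}\le T^{1/q}\|u\|_{L^\infty(0,T;W_q^1)} .
\]
This yields only a factor $T^{1/q}$, which is weaker than $T^{1/2}$ when $q>2$. To obtain $T^{1/2}$, use instead the gradient bound $\|\nabla u\|_{L^q(D_T)}\le CT^{1/2}\|u\|_{W_q^{2,1}(D_T)}$. This holds for functions with zero initial datum: write $\nabla u(t)$ via the mixed-derivative embedding $W_q^{2,1}\hookrightarrow H^{1/2}_q(0,T;W^1_q)$, and use the zero initial value together with a Hardy/Poincaré inequality in time for the $H^{1/2}$ seminorm, giving $\|g\|_{L^q(0,T)}\le CT^{1/2}[g]_{H^{1/2}_q}$ uniformly. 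For the zeroth-order term, $\|u\|_{L^q(D_T)}\le T\|\partial_tu\|_{L^q(D_T)}$ is even better.

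Combining the three estimates gives \eqref{eq:off-diagonal-trace-estimate}. The dependence on $d_\Sigma^{-1}$ enters only through the derivatives of $\chi$.

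The main obstacle is keeping every constant uniform as $T\downarrow0$. This concerns the $W_q^{2,1}$ Dirichlet estimate, the trace into $\mathcal Y_T(\Sigma)$, and the fractional-time Hardy inequality. All three are handled uniformly by working with zero initial data and extending by zero to $(-\infty,0)$ (or to a fixed interval $(0,T_0)$), so that the relevant norms are those of the fixed interval. This is exactly the setting of \Cref{lem:uniform-short-time-estimates}, which I would cite for the uniform trace and extension constants.
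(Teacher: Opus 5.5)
Your proposal is correct and follows the paper's proof essentially step by step. Both arguments use the global bound $\|u\|_{W_q^{2,1}}\le C\|h\|_{\mathcal Y_T}$ from \Cref{lem:uniform-Wq-dirichlet}, the cutoff $\chi u$ with its lower-order commutator, the zero-initial smallness $\|u\|_{L^q}\le T\|u\|_{W_q^{2,1}}$ and $\|\nabla u\|_{L^q}\le CT^{1/2}\|u\|_{W_q^{2,1}}$, a second Dirichlet estimate for $\chi u$, and the uniform hypersurface trace \eqref{eq:uniform-hypersurface-trace}. The only difference is how you justify the $T^{1/2}$ gradient bound: you use the mixed-derivative embedding and a fractional Hardy inequality in time, whereas the paper cites \eqref{eq:zero-initial-grad-Lq}, proved by spatial interpolation from $\|u\|_{L^q}\le T\|\partial_tu\|_{L^q}$. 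Both routes are valid.
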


\begin{proof}
By the uniform inhomogeneous Dirichlet estimate from
\Cref{lem:uniform-Wq-dirichlet},
\begin{equation}\label{eq:off-diagonal-global-bound}
    \|u\|_{W_q^{2,1}(D_T;E)}
    \le
    C\|h\|_{\mathcal Y_T(\partial D;E)},
\end{equation}
where the constant is independent of $T\in(0,T_0]$.

Choose open sets
\[
    \Sigma\Subset V_1\Subset V_2\Subset D^\circ
\]
and a cutoff $\chi\in C_c^\infty(V_2)$ satisfying
$\chi\equiv1$ on $V_1$.  The sets and the cutoff are fixed independently of
$T$.  They may be chosen so that, with respect to a fixed finite atlas on $D$,
\[
    \|\nabla\chi\|_{L^\infty(D)}
    \le Cd_\Sigma^{-1},
    \qquad
    \|\nabla^2\chi\|_{L^\infty(D)}
    \le Cd_\Sigma^{-2}.
\]
Set $v:=\chi u$.  Since $\chi$ is compactly supported in $D^\circ$ and
$u(\cdot,0)=0$,
\[
    v|_{\partial D\times(0,T)}=0,
    \qquad
    v(\cdot,0)=0.
\]
Using $\mathscr Lu=0$, we obtain
\[
    \mathscr Lv=[\mathscr L,\chi]u,
    \qquad
    [\mathscr L,\chi]u
    :=\mathscr L(\chi u)-\chi\mathscr Lu.
\]
In any local coordinate chart, symmetry of $a^{ij}$ gives
\begin{equation}\label{eq:off-diagonal-commutator}
    [\mathscr L,\chi]u
    =-2a^{ij}(\partial_i\chi)\partial_ju
     -\bigl(a^{ij}\partial_{ij}\chi+b^i\partial_i\chi\bigr)u.
\end{equation}
The commutator therefore contains only first- and zeroth-order derivatives of
$u$, and a finite-atlas argument gives
\begin{equation}\label{eq:off-diagonal-commutator-first-bound}
    \|[\mathscr L,\chi]u\|_{L^q(D_T;E)}
    \le
    C_\chi\bigl(
        \|\nabla u\|_{L^q(D_T;E)}
        +\|u\|_{L^q(D_T;E)}
    \bigr),
\end{equation}
where $C_\chi$ is independent of $T$.

Because $u(\cdot,0)=0$, \eqref{eq:zero-initial-Lq} and
\eqref{eq:zero-initial-grad-Lq} imply
\[
    \|u\|_{L^q(D_T;E)}
    \le T\|u\|_{W_q^{2,1}(D_T;E)}
\]
and
\[
    \|\nabla u\|_{L^q(D_T;E)}
    \le CT^{1/2}\|u\|_{W_q^{2,1}(D_T;E)}.
\]
Substitution into \eqref{eq:off-diagonal-commutator-first-bound} yields
\begin{align}
    \|[\mathscr L,\chi]u\|_{L^q(D_T;E)}
    &\le
    C_\chi(T^{1/2}+T)\|u\|_{W_q^{2,1}(D_T;E)}
    \notag\\
    &\le
    C_\chi(1+T_0^{1/2})T^{1/2}
    \|u\|_{W_q^{2,1}(D_T;E)}.
    \label{eq:off-diagonal-commutator-smallness}
\end{align}
Together with \eqref{eq:off-diagonal-global-bound}, this gives
\begin{equation}\label{eq:off-diagonal-forcing-smallness}
    \|[\mathscr L,\chi]u\|_{L^q(D_T;E)}
    \le
    CT^{1/2}\|h\|_{\mathcal Y_T(\partial D;E)}.
\end{equation}

The function $v$ solves
\[
\begin{cases}
    \mathscr Lv=[\mathscr L,\chi]u,&D\times(0,T),\\
    v=0,&\partial D\times(0,T),\\
    v(\cdot,0)=0,&D.
\end{cases}
\]
A second application of \Cref{lem:uniform-Wq-dirichlet} therefore gives
\begin{equation}\label{eq:off-diagonal-cutoff-Wq}
    \|v\|_{W_q^{2,1}(D_T;E)}
    \le C\|[\mathscr L,\chi]u\|_{L^q(D_T;E)}
    \le CT^{1/2}\|h\|_{\mathcal Y_T(\partial D;E)}.
\end{equation}
Since $\chi\equiv1$ near $\Sigma$, one has $v=u$ in a neighborhood of
$\Sigma\times(0,T)$.  The uniform interior hypersurface trace estimate
\eqref{eq:uniform-hypersurface-trace} consequently yields
\[
\begin{aligned}
    \|u|_{\Sigma\times(0,T)}\|_{\mathcal Y_T(\Sigma;E)}
    &=\|v|_{\Sigma\times(0,T)}\|_{\mathcal Y_T(\Sigma;E)}\\
    &\le C\|v\|_{W_q^{2,1}(D_T;E)}\\
    &\le CT^{1/2}\|h\|_{\mathcal Y_T(\partial D;E)}.
\end{aligned}
\]
This proves \eqref{eq:off-diagonal-trace-estimate}.
\end{proof}

\subsection{Schwarz contraction and self-map property}

\begin{lemma}[Uniform Schwarz trace contraction]
\label{lem:Schwarz-contraction}
There exists a function $\kappa(T)\to0$ as $T\downarrow0$, independent of
$\widehat{\mathcal U}\in\mathcal A_T$, such that
\begin{equation}\label{eq:Schwarz-contraction}
    \|\mathcal S_T^{\widehat{\mathcal U}}(\mathbf h)
      -\mathcal S_T^{\widehat{\mathcal U}}(\widetilde{\mathbf h})
    \|_{\mathcal Y_T(\Gamma_0;\mathbb R^{2L})}
    \le
    \kappa(T)
    \|\mathbf h-\widetilde{\mathbf h}
    \|_{\mathcal Y_T(\Gamma_0;\mathbb R^{2L})}
\end{equation}
for all $\mathbf h,\widetilde{\mathbf h}\in\mathcal K_T$.
\end{lemma}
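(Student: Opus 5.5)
The plan is to exploit linearity of the frozen problems in the trace. Fix $\widehat{\mathcal U}\in\mathcal A_T$ and $\mathbf h,\widetilde{\mathbf h}\in\mathcal K_T$, and set $\mathbf k:=\mathbf h-\widetilde{\mathbf h}$, so that $\mathbf k(\cdot,0)=0$. Since the coefficients of $\mathscr L_{\widehat{\mathbf v}}$ and the right-hand side $\mathbf Q_{\mathrm{dbl}}(\widehat{\mathbf v},\nabla\widehat{\mathbf v})$ do not depend on $\mathbf h$, the difference $\mathbf V:=\mathbf v^{\widehat{\mathcal U}}[\mathbf h]-\mathbf v^{\widehat{\mathcal U}}[\widetilde{\mathbf h}]$ solves a homogeneous problem: $\mathscr L_{\widehat{\mathbf v}}\mathbf V=0$ on $\mathcal C_{\mathrm{dbl},T}$, with zero initial datum and boundary values $\mathbf k$ at $r=2\delta$ and $\mathcal R_\Lambda(\mathbf h)-\mathcal R_\Lambda(\widetilde{\mathbf h})$ at $r=-2\delta$. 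Likewise, the bulk differences $W^\sigma:=w^{\sigma,\widehat{\mathcal U}}[\mathbf h]-w^{\sigma,\widehat{\mathcal U}}[\widetilde{\mathbf h}]$ solve $\mathscr L_b^\sigma W^\sigma=0$ on $\mathcal D_T^\sigma$, with zero initial datum and Dirichlet data $(\mathbf V|_{r=\delta})^\sigma$ on $\Sigma_\delta^\sigma$. The difference of the Schwarz images is then $\mathcal T_{2\delta}(W^+,W^-)$.

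\emph{Step 1 (collar bound).} I would estimate the reflected boundary datum using
\[
\mathcal R_\Lambda(\mathbf h)-\mathcal R_\Lambda(\widetilde{\mathbf h})=\int_0^1 D\mathcal R_\Lambda\bigl(\widetilde{\mathbf h}+s\mathbf k\bigr)\,ds\;\mathbf k .
\]
The traces lie in the convex-enough tube by \Cref{lem:tubular-control} (for $T$ small the segment stays in $\operatorname{Tub}_{2\rho_\Lambda}$, since both traces are $C^0$-close to $\mathbf h_0$). Thus we need a multiplier estimate in $\mathcal Y_T$: multiplication by a matrix field $M(\mathbf h,\widetilde{\mathbf h})$ whose $\mathcal Y_T$ (or $C^{1+\beta,(1+\beta)/2}$) norm is bounded uniformly on $\mathcal K_T$, acting on $\mathbf k$ with $\mathbf k(\cdot,0)=0$. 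I expect this to follow from the algebra/multiplier property of $\mathcal Y_T$ for $q>d+2$ with $T$-uniform constants on zero-initial-trace functions (the appendix setting of \Cref{lem:uniform-short-time-estimates}), giving $\|\mathcal R_\Lambda(\mathbf h)-\mathcal R_\Lambda(\widetilde{\mathbf h})\|_{\mathcal Y_T}\le C\|\mathbf k\|_{\mathcal Y_T}$, with $C$ depending on $R_{\mathrm{tr}}$ and $\boldsymbol\ell$. Then \Cref{prop:frozen-coefficients} and \Cref{lem:uniform-Wq-dirichlet} give $\|\mathbf V\|_{W_q^{2,1}}\le C\|\mathbf k\|_{\mathcal Y_T}$, and the uniform hypersurface trace estimate \eqref{eq:uniform-hypersurface-trace} at $r=\delta$ gives $\|\mathbf V|_{r=\delta}\|_{\mathcal Y_T}\le C\|\mathbf k\|_{\mathcal Y_T}$ with $T$-independent $C$. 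The zero initial value of $\mathbf V$ is essential here: it is what makes the constants uniform.

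\emph{Step 2 (bulk gain).} Apply \Cref{lem:off-diagonal-trace} with $D=\overline{\mathcal D^\sigma}$, $\partial D=\Sigma_\delta^\sigma$, $\Sigma=\Sigma_{2\delta}^\sigma$, $d_\Sigma\approx\delta$, $c=0$, and the smooth coefficients $a_b^\sigma,b_b^\sigma$. The data $(\mathbf V|_{r=\delta})^\sigma$ vanish at $t=0$, so the lemma yields
\[
\|W^\sigma|_{\Sigma_{2\delta}^\sigma}\|_{\mathcal Y_T}\le CT^{1/2}\|\mathbf k\|_{\mathcal Y_T}.
\]
Summing over $\sigma$ gives \eqref{eq:Schwarz-contraction} with $\kappa(T)=CT^{1/2}$, where $C$ depends only on fixed geometry, on uniform coefficient bounds over $\mathcal A_T$, and on $R_{\mathrm{tr}}$.

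\emph{Main obstacle.} The delicate step is the Lipschitz bound for $\mathbf h\mapsto\mathcal R_\Lambda(\mathbf h)$ in $\mathcal Y_T$ with a $T$-independent constant. The space $\mathcal Y_T$ is a fractional anisotropic trace space, and product estimates there can degenerate as $T\to0$ unless one works with zero-initial-trace functions or with extension operators whose norms are uniform in $T$. My first attempt would be to extend $\mathbf k$ by zero to $t<0$, so that it vanishes at $t=0$; extend $\mathbf h,\widetilde{\mathbf h}$ by their $T_0$-liftings; prove the multiplier estimate on a fixed interval; and restrict back. Should that fail, I would instead absorb the reflected datum into the collar solution: lift $\mathbf k$ to a $W_q^{2,1}$ field on $\mathcal C_{\mathrm{dbl},T}$, compose with $\mathcal R_\Lambda$ there, and use the algebra property of $W_q^{2,1}$ (valid for $q>d+2$) before taking traces.
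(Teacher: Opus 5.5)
Your proposal is correct and follows the paper's proof step by step. The paper's steps are the collar difference with zero initial data, a $T$-uniform Lipschitz bound for $\mathcal R_\Lambda$ in $\mathcal Y_T$, uniform Dirichlet maximal regularity with the trace at $r=\delta$, and then the $T^{1/2}$ gain from \Cref{lem:off-diagonal-trace} in the bulk, giving $\kappa(T)=CT^{1/2}$. The step you flag as the main obstacle is exactly \Cref{lem:nemytskii-trace}, which the paper proves by your first suggested route (zero-initial extension about the fixed lifting $\boldsymbol\ell$ to $(0,T_0)$, a compactly supported modification of $\mathcal R_\Lambda$, restriction back); there the multiplier must be controlled through the $\mathcal Y$-norms of the traces, because a $C^{1+\beta,(1+\beta)/2}$ bound alone does not make it a multiplier on $L^q(0,T;W_q^{2-1/q})$, since $1+\beta<2-1/q$.
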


\begin{proof}
Let
\[
    \mathbf z_{\mathrm{dbl}}
    :=
    \mathbf v^{\widehat{\mathcal U}}[\mathbf h]
    -
    \mathbf v^{\widehat{\mathcal U}}[\widetilde{\mathbf h}].
\]
The frozen coefficients and frozen forcing are identical for the two inputs,
so $\mathbf z_{\mathrm{dbl}}$ solves
\begin{equation}\label{eq:inner-difference-Schwarz}
\begin{cases}
    \mathscr L_{\widehat{\mathbf v}}\mathbf z_{\mathrm{dbl}}=0,
    &\mathcal C_{\mathrm{dbl},T},\\[0.3em]
    \mathbf z_{\mathrm{dbl}}|_{r=2\delta}
    =\mathbf h-\widetilde{\mathbf h},\\[0.3em]
    \mathbf z_{\mathrm{dbl}}|_{r=-2\delta}
    =\mathcal R_\Lambda(\mathbf h)
     -\mathcal R_\Lambda(\widetilde{\mathbf h}),\\[0.3em]
    \mathbf z_{\mathrm{dbl}}(\cdot,0)=0.
\end{cases}
\end{equation}
The ranges of both traces lie in one compact subset of the tubular
neighborhood.  By \Cref{lem:nemytskii-trace},
\begin{equation}\label{eq:reflection-trace-Lipschitz}
    \|\mathcal R_\Lambda(\mathbf h)
      -\mathcal R_\Lambda(\widetilde{\mathbf h})
    \|_{\mathcal Y_T}
    \le C
    \|\mathbf h-\widetilde{\mathbf h}\|_{\mathcal Y_T}.
\end{equation}
Uniform Dirichlet maximal regularity applied to
\eqref{eq:inner-difference-Schwarz} gives
\[
    \|\mathbf z_{\mathrm{dbl}}\|_{W_q^{2,1}(\mathcal C_{\mathrm{dbl},T})}
    \le C
    \|\mathbf h-\widetilde{\mathbf h}\|_{\mathcal Y_T}.
\]
Taking the trace at $r=\delta$ yields
\begin{equation}\label{eq:inner-trace-Lipschitz}
    \|\mathbf z_{\mathrm{dbl}}|_{r=\delta}\|_{\mathcal Y_T(\Gamma_0;\mathbb R^{2L})}
    \le C
    \|\mathbf h-\widetilde{\mathbf h}\|_{\mathcal Y_T(\Gamma_0;\mathbb R^{2L})}.
\end{equation}

For each phase set
\[
    z^\sigma
    :=w^{\sigma,\widehat{\mathcal U}}[\mathbf h]
      -w^{\sigma,\widehat{\mathcal U}}[\widetilde{\mathbf h}].
\]
The frozen bulk forcing cancels.  Thus
\begin{equation}\label{eq:bulk-difference-Schwarz}
\begin{cases}
    \mathscr L_b^\sigma z^\sigma=0,
    &\mathcal D_T^\sigma,\\[0.3em]
    z^\sigma|_{\Sigma_\delta^\sigma}
    =(\mathbf z_{\mathrm{dbl}}|_{r=\delta})^\sigma,\\[0.3em]
    z^\sigma(\cdot,0)=0.
\end{cases}
\end{equation}
The output surface $\Sigma_{2\delta}^\sigma$ is compactly contained in
$\mathcal D^\sigma$.  Applying \Cref{lem:off-diagonal-trace} gives
\begin{equation}\label{eq:bulk-off-diagonal-smallness}
    \|z^\sigma|_{\Sigma_{2\delta}^\sigma}\|_{\mathcal Y_T}
    \le CT^{1/2}
    \|(\mathbf z_{\mathrm{dbl}}|_{r=\delta})^\sigma\|_{\mathcal Y_T}.
\end{equation}
Combining the two phases and using \eqref{eq:inner-trace-Lipschitz},
\[
    \|\mathcal S_T^{\widehat{\mathcal U}}(\mathbf h)
      -\mathcal S_T^{\widehat{\mathcal U}}(\widetilde{\mathbf h})
    \|_{\mathcal Y_T}
    \le CT^{1/2}
    \|\mathbf h-\widetilde{\mathbf h}\|_{\mathcal Y_T}.
\]
Hence one may take $\kappa(T)=CT^{1/2}$.
\end{proof}

\begin{proposition}[Self-map property of the Schwarz trace operator]
\label{prop:Schwarz-self-map}
For sufficiently small $T>0$,
\[
    \mathcal S_T^{\widehat{\mathcal U}}:
    \mathcal K_T\longrightarrow\mathcal K_T
\]
for every $\widehat{\mathcal U}\in\mathcal A_T$.  By
\Cref{lem:Schwarz-contraction}, after decreasing $T$ further this map is also a
contraction, with contraction constant $\kappa(T)<1$ uniform in the frozen
state.
\end{proposition}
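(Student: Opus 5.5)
The plan is to split $\mathcal S_T^{\widehat{\mathcal U}}(\mathbf h)-\boldsymbol\ell$ into a part with zero initial trace, which the off-diagonal estimate makes small, and a part that depends only on the fixed initial data and becomes small in $\mathcal Y_T$ as $T\downarrow0$. The contraction statement is then immediate from \Cref{lem:Schwarz-contraction}; the substance is the self-map property.

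First, the initial trace. At $t=0$, the bulk solutions $w^{\sigma,\widehat{\mathcal U}}[\mathbf h]$ equal $w_0^\sigma=u_0^\sigma$. Hence
\[
\mathcal S_T^{\widehat{\mathcal U}}(\mathbf h)(\cdot,0)=\mathcal T_{2\delta}(u_0^+,u_0^-)=\mathbf v_0(\cdot,2\delta)=\mathbf h_0 .
\]
By the parabolic trace theorem at the interior hypersurface $\Sigma^\sigma_{2\delta}$ (\eqref{eq:uniform-hypersurface-trace}), the output lies in $\mathcal Y_T$. Compatibility of the intermediate Dirichlet datum at $t=0$ holds because $\mathbf h_\delta(\cdot,0)=\mathbf v_0^{\mathrm{dbl}}(\cdot,\delta)=\mathcal T_\delta(w_0^+,w_0^-)$.

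Second, the decomposition. Fix the reference trace $\boldsymbol\ell|_{(0,T)}\in\mathcal K_T$ and write
\[
\mathcal S(\mathbf h)-\boldsymbol\ell=\bigl[\mathcal S(\mathbf h)-\mathcal S(\boldsymbol\ell)\bigr]+\bigl[\mathcal S(\boldsymbol\ell)-\boldsymbol\ell\bigr].
\]
By \Cref{lem:Schwarz-contraction}, the first bracket is bounded by $CT^{1/2}R_{\mathrm{tr}}$.

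The second bracket does not involve $R_{\mathrm{tr}}$, but it must be bounded uniformly in $\widehat{\mathcal U}$. I will handle it as follows.
\begin{itemize}
\item Introduce reference solutions $\mathbf v^*$ and $w^{\sigma,*}$ that solve the frozen problems with the frozen forcing replaced by the forcing evaluated at the initial state $\mathcal U_0$ extended constantly in time, or, more simply, by zero forcing.
\item The frozen forcings $\mathbf Q_{\mathrm{dbl}}(\widehat{\mathbf v},\nabla\widehat{\mathbf v})$ and $Q_b^\sigma(\widehat w^\sigma,\nabla\widehat w^\sigma)$ are bounded in $L^\infty$ uniformly over $\mathcal A_T$. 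Their $L^q$ norms on the time interval $(0,T)$ are therefore $O(T^{1/q})$.
\item By the uniform $W_q^{2,1}$ Dirichlet estimate (\Cref{lem:uniform-Wq-dirichlet}) with zero initial and boundary data, the forcing contribution to the output trace is $O(T^{1/q})$. This holds after composing the collar estimate with the bulk estimate, where the collar contribution passes through the trace at $r=\delta$ with a uniform constant.
\end{itemize}

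What remains is the solution driven by the data $\boldsymbol\ell$, $\mathcal R_\Lambda(\boldsymbol\ell)$, $\mathbf v_0^{\mathrm{dbl}}$, $w_0^\sigma$ with zero forcing. Its output minus $\boldsymbol\ell$ has zero initial trace, and I must show it is small in $\mathcal Y_T$ as $T\to0$. Its principal coefficients still depend on $\widehat{\mathbf v}$, but only through a uniformly controlled family. My first attempt is to subtract a fixed extension $\mathbf E\in W_q^{2,1}$ on $(0,T_0)$ of the initial data and of $\boldsymbol\ell$, defined on the collar and both bulks and matching on the overlap. The difference then solves a problem with zero initial and boundary data, plus a forcing $\mathscr L_{\widehat{\mathbf v}}\mathbf E$ restricted to $(0,T)$. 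For fixed $\mathbf E$, this forcing tends to zero in $L^q$ as $T\to0$ uniformly in $\widehat{\mathbf v}$, by absolute continuity of the integral and the uniform coefficient bounds. The trace of $\mathbf E$ at $\Sigma_{2\delta}$ differs from $\boldsymbol\ell$ by a field that vanishes at $t=0$ and has small $\mathcal Y_T$ norm. Alternatively, I could take $\boldsymbol\ell$ to be exactly that trace when constructing the lifting.

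Finally, choose $R_{\mathrm{tr}}$ as any fixed positive number, for instance $1$, and then shrink $T$ so that
\[
CT^{1/2}R_{\mathrm{tr}}+o_T(1)\le R_{\mathrm{tr}} .
\]

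The main obstacle is the uniformity of the $o_T(1)$ term. The $\mathcal Y_T$ norm of a zero-initial-trace field must be computed with constants independent of $T$; I plan to use the extension-by-zero conventions of the appendix estimates. The vanishing must also be uniform over the frozen principal coefficients. Absolute continuity of $\|\mathscr L_{\widehat{\mathbf v}}\mathbf E\|_{L^q(0,T)}$ suffices here, since $|\mathscr L_{\widehat{\mathbf v}}\mathbf E|\le C(|\partial_t\mathbf E|+|\nabla^2\mathbf E|+|\nabla\mathbf E|)$ pointwise, with $C$ independent of $\widehat{\mathbf v}$.
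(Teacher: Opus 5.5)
Your proof is correct, but it handles the reference term differently from the paper. Both arguments use the splitting $\mathcal S(\mathbf h)-\boldsymbol\ell=[\mathcal S(\mathbf h)-\mathcal S(\boldsymbol\ell)]+[\mathcal S(\boldsymbol\ell)-\boldsymbol\ell]$ and bound the first bracket by \Cref{lem:Schwarz-contraction}.

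The paper never shows that the second bracket is small. It only uses uniform maximal regularity (collar, then bulk, then the interior trace) to bound $\|\mathcal S_T^{\widehat{\mathcal U}}(\boldsymbol\ell)-\boldsymbol\ell\|_{\mathcal Y_T}$ by a constant $C_*$ independent of $T$ and of the frozen state. It then uses the freedom in the radius: it fixes $R_{\mathrm{tr}}:=2C_*+1$ and only afterwards shrinks $T$ so that $\kappa(T)R_{\mathrm{tr}}+C_*\le R_{\mathrm{tr}}$.

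You instead prove $\mathcal S_T^{\widehat{\mathcal U}}(\boldsymbol\ell)-\boldsymbol\ell=o_T(1)$ uniformly in the frozen state. The forcing part is $O(T^{1/q})$, and the data part is handled by subtracting a fixed lifting and using absolute continuity. As a result, any $R_{\mathrm{tr}}$ works, as long as it is fixed before $T$ is shrunk; this ordering is needed because the constant in $\kappa(T)$ depends on $R_{\mathrm{tr}}$ through \Cref{lem:nemytskii-trace}. The paper's route is shorter and needs only boundedness plus the ordering $C_*\to R_{\mathrm{tr}}\to T$. Yours costs an extra lifting construction but gives more: for small $T$ the trace fixed point lies in an arbitrarily small $\mathcal Y_T$-ball around $\boldsymbol\ell$.

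Two points in your data step need tightening.

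\emph{Bulk boundary data.} The collar and bulk problems are solved in sequence, so the bulk difference $w^{\sigma,*}-E_b^\sigma$ does not have zero boundary data. Its datum on $\Sigma_\delta^\sigma$ is the trace at $r=\delta$ of the collar difference $\mathbf v^*-\mathbf E_c$, plus any mismatch of $\mathbf E$ there. This datum is small and has zero initial trace, so the uniform Dirichlet estimate still applies.

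\emph{No exact matching is needed.} You do not need $\mathbf E$ to match exactly on the overlap or to reproduce $\boldsymbol\ell$. The only fact used is that for any fixed $\mathbf g\in\mathcal Y_{T_1}$, the intrinsic norm $\|\mathbf g|_{(0,T)}\|_{\mathcal Y_T}$ tends to zero as $T\downarrow0$, because it consists of integrals of fixed integrable functions over $(0,T)$ and $(0,T)^2$. Together with zero initial trace, every such mismatch therefore enters the uniform estimates as an $o_T(1)$ term. Also take the lifting interval $T_1$ small enough that $\boldsymbol\ell$ stays in $\operatorname{Tub}_{\rho_\Lambda}(\Lambda)$, so that $\mathcal R_\Lambda(\boldsymbol\ell)$ is defined there.
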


\begin{proof}
Use the parabolic lifting $\boldsymbol\ell$ from
\eqref{eq:initial-Schwarz-lifting} as the comparison trace.  Its restrictions
to $(0,T)$ have uniformly bounded $\mathcal Y_T$ norms.  The data in the
frozen doubled problem with boundary trace $\boldsymbol\ell$ therefore have
uniformly bounded maximal-regularity norms: the initial datum is measured in
$W_q^{2-2/q}$, not in $W_q^2$.  The same estimate for the two bulk problems,
followed by the interior trace theorem, yields a constant $C_*$ independent
of $T\in(0,T_0]$ and of the frozen state such that
\begin{equation}\label{eq:reference-Schwarz-bound}
    \sup_{\widehat{\mathcal U}\in\mathcal A_T}
    \|\mathcal S_T^{\widehat{\mathcal U}}(\boldsymbol\ell)
      -\boldsymbol\ell\|_{\mathcal Y_T}
    \le C_*.
\end{equation}
This estimate depends only on the fixed lifting and the admissible state set,
so $C_*$ can be chosen before the radius of $\mathcal K_T$.

Fix, once and for all,
\[
    R_{\mathrm{tr}}:=2C_*+1.
\]
For an arbitrary $\mathbf h\in\mathcal K_T$, the contraction estimate gives
\begin{align*}
    \|\mathcal S_T^{\widehat{\mathcal U}}(\mathbf h)
      -\boldsymbol\ell\|_{\mathcal Y_T}
    &\le
    \|\mathcal S_T^{\widehat{\mathcal U}}(\mathbf h)
      -\mathcal S_T^{\widehat{\mathcal U}}(\boldsymbol\ell)\|_{\mathcal Y_T}\\
    &\quad+
    \|\mathcal S_T^{\widehat{\mathcal U}}(\boldsymbol\ell)
      -\boldsymbol\ell\|_{\mathcal Y_T}\\
    &\le
    \kappa(T)R_{\mathrm{tr}}+C_*.
\end{align*}
Choose $T$ so that
$\kappa(T)R_{\mathrm{tr}}+C_*\le R_{\mathrm{tr}}$.  At $t=0$, the
output equals
$\mathcal T_{2\delta}(w_0^+,w_0^-)=\mathbf v_0(\cdot,2\delta)=\mathbf h_0$,
so the output belongs to $\mathcal K_T$.  Taking $T$ smaller if necessary gives
$\kappa(T)<1$ and proves the contraction assertion.
\end{proof}

\subsection{Trace fixed point and overlap gluing}

\begin{proposition}[Trace fixed point and gluing]
\label{prop:trace-fixed-gluing}
For every fixed $\widehat{\mathcal U}\in\mathcal A_T$, there exists a unique
trace
\[
    \mathbf h^{\widehat{\mathcal U}}\in\mathcal K_T
\]
such that
\begin{equation}\label{eq:trace-fixed-point}
    \mathbf h^{\widehat{\mathcal U}}
    =
    \mathcal S_T^{\widehat{\mathcal U}}
    (\mathbf h^{\widehat{\mathcal U}}).
\end{equation}
The collar and bulk solutions generated by this trace agree throughout the
positive overlap $\delta<r<2\delta$.
\end{proposition}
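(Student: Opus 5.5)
Existence and uniqueness of the fixed point come from Banach's contraction principle applied to $\mathcal S_T^{\widehat{\mathcal U}}$ on $\mathcal K_T$. By \Cref{prop:Schwarz-self-map}, for $T$ small (uniformly in $\widehat{\mathcal U}\in\mathcal A_T$) the map sends $\mathcal K_T$ into itself and is a contraction with constant $\kappa(T)<1$. The set $\mathcal K_T$ is a nonempty closed subset of the Banach space $\mathcal Y_T(\Gamma_0;\mathbb R^{2L})$: it is the intersection of the closed affine subspace $\{\mathbf h(\cdot,0)=\mathbf h_0\}$ with a closed ball, and it contains $\boldsymbol\ell$. Closedness of the affine constraint uses continuity of the time-zero trace $\mathcal Y_T\to W_q^{2-3/q}$. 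Banach's theorem then gives a unique $\mathbf h^{\widehat{\mathcal U}}\in\mathcal K_T$ with \eqref{eq:trace-fixed-point}.

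For the gluing, write $\mathbf v:=\mathbf v^{\widehat{\mathcal U}}[\mathbf h^{\widehat{\mathcal U}}]$ and $w^\sigma:=w^{\sigma,\widehat{\mathcal U}}[\mathbf h^{\widehat{\mathcal U}}]$. On the overlap $\Gamma_0\times(\delta,2\delta)\times(0,T)$, set $\mathbf z:=\mathbf v-\mathcal T_r(w^+,w^-)$, that is, $z^\sigma(p,r,t)=v^\sigma(p,r,t)-w^\sigma(\Phi_0^\sigma(p,r),t)$. Three facts identify the problem $\mathbf z$ solves.
\begin{enumerate}[label=(\roman*)]
\item On $r>0$ the doubled principal and first-order coefficients are the state-independent collar coefficients $\mathsf A_c,\mathsf B_c$. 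By \Cref{lem:coefficient-agreement-collar}, $\mathscr L_c^\sigma(w^\sigma\circ\phi^\sigma)=(\mathscr L_b^\sigma w^\sigma)\circ\phi^\sigma$.
\item The frozen forcings agree: $\mathbf Q_c(\widehat{\mathbf v},\nabla\widehat{\mathbf v})=Q_b^\sigma(\widehat w^\sigma,\nabla\widehat w^\sigma)\circ\phi^\sigma$. This follows from \eqref{eq:target-nonlinearity-collar} and the overlap condition \eqref{eq:frozen-overlap-condition} built into $\mathcal X_T^{\mathrm{match}}$. This is exactly why the state space was matched.
\item The initial data coincide, since $\mathbf v_0$ and $w_0^\sigma$ restrict the same $u_0^\sigma$.
\end{enumerate}
Hence $\mathbf z$ satisfies $\mathscr L_c\mathbf z=0$ on the overlap cylinder with $\mathbf z(\cdot,0)=0$. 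At $r=\delta$ we have $\mathbf z=\mathbf h_\delta-\mathbf h_\delta=0$ by construction of the bulk data \eqref{eq:frozen-bulk-problems}. At $r=2\delta$ we have $\mathbf z=\mathbf h^{\widehat{\mathcal U}}-\mathcal S_T^{\widehat{\mathcal U}}(\mathbf h^{\widehat{\mathcal U}})=0$ by the fixed-point identity.

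The conclusion then follows from uniqueness in $W_q^{2,1}$ for this homogeneous Dirichlet problem on the cylinder $\Gamma_0\times(\delta,2\delta)\times(0,T)$, which \Cref{lem:uniform-Wq-dirichlet} supplies: it is a decoupled scalar uniformly parabolic system with smooth coefficients. So $\mathbf z\equiv0$, and the collar and bulk solutions agree on the positive overlap. The only point requiring care, and the main potential obstacle, is the forcing identification in (ii). The forcing is frozen rather than evaluated at the unknown, so agreement relies entirely on $\widehat{\mathcal U}$ satisfying \eqref{eq:frozen-overlap-condition} on the open overlap. One must also check that $\nabla\widehat{\mathbf v}$ there is the collar-coordinate gradient of $\widehat w^\sigma\circ\phi^\sigma$, which holds because the identity holds as functions on the open set. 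We also need $w^\sigma\circ\phi^\sigma\in W_q^{2,1}$ on the overlap, which holds because $\phi^\sigma$ is a fixed smooth diffeomorphism onto a subset of $\overline{\mathcal D^\sigma}$.
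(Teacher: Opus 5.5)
Your proposal is correct and follows the paper's proof essentially step for step. You obtain the trace by Banach's contraction principle on $\mathcal K_T$, using \Cref{prop:Schwarz-self-map}. On the overlap, the difference $v^\sigma-w^\sigma\circ\Phi_0^\sigma$ solves the homogeneous collar equation, with the forcings identified via \Cref{lem:coefficient-agreement-collar} and the matched frozen state. It has zero initial data and zero Dirichlet traces at $r=\delta$ and $r=2\delta$, so it vanishes by the uniqueness part of \Cref{lem:uniform-Wq-dirichlet}. Your additional checks (closedness and nonemptiness of $\mathcal K_T$, and agreement of the frozen gradients on the open overlap) are details the paper leaves implicit.
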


\begin{proof}
The existence and uniqueness of the trace follow from Banach's contraction
theorem.  Set
\[
    \mathbf v^{\widehat{\mathcal U}}
    :=\mathbf v^{\widehat{\mathcal U}}
      [\mathbf h^{\widehat{\mathcal U}}],
\]
\[
    \mathbf h_\delta^{\widehat{\mathcal U}}
    :=\mathbf v^{\widehat{\mathcal U}}|_{r=\delta},
\]
and
\[
    w^{\sigma,\widehat{\mathcal U}}
    :=w^{\sigma,\widehat{\mathcal U}}
      [\mathbf h^{\widehat{\mathcal U}}].
\]
The definition of the bulk boundary condition gives matching at the inner
edge of the overlap:
\begin{equation}\label{eq:matching-at-delta}
    \mathbf v^{\widehat{\mathcal U}}|_{r=\delta}
    =\mathcal T_\delta
      (w^{+,\widehat{\mathcal U}},
       w^{-,\widehat{\mathcal U}}).
\end{equation}
The fixed-point identity gives matching at the outer edge:
\begin{equation}\label{eq:matching-at-2delta}
    \mathbf v^{\widehat{\mathcal U}}|_{r=2\delta}
    =\mathcal T_{2\delta}
      (w^{+,\widehat{\mathcal U}},
       w^{-,\widehat{\mathcal U}}).
\end{equation}

On the overlap define the two-phase bulk field
\[
    \mathbf v_{\mathrm b}^{\widehat{\mathcal U}}(p,r,t)
    :=\mathcal T_r
      (w^{+,\widehat{\mathcal U}},
       w^{-,\widehat{\mathcal U}})(p,t).
\]
By the overlap condition defining $\mathcal X_T^{\mathrm{match}}$,
\[
    \widehat v^\sigma
    =\widehat w^\sigma\circ\Phi_0^\sigma
    \qquad(\delta<r<2\delta).
\]
Restricting \eqref{eq:frozen-doubled-problem} to this overlap, its
$\sigma$-component satisfies
$\mathscr L_c^\sigma v^{\sigma,\widehat{\mathcal U}}
=Q_c^\sigma(\widehat v^\sigma,\nabla\widehat v^\sigma)$.
Applying \eqref{eq:geometric-operator-collar} and
\eqref{eq:target-nonlinearity-collar} to the frozen bulk equation gives
\[
\begin{aligned}
    \mathscr L_c^\sigma
       (\mathbf v_{\mathrm b}^{\widehat{\mathcal U}})^\sigma
    &=
    Q_b^\sigma(\widehat w^\sigma,
                      \nabla\widehat w^\sigma)\circ\Phi_0^\sigma\\
    &=Q_c^\sigma(\widehat v^\sigma,\nabla\widehat v^\sigma).
\end{aligned}
\]
Thus the difference
$e^\sigma:=v^{\sigma,\widehat{\mathcal U}}
-(\mathbf v_{\mathrm b}^{\widehat{\mathcal U}})^\sigma$
satisfies
\[
    \partial_te^\sigma-a_c^{\sigma,ij}\partial_{ij}e^\sigma
       -b_c^{\sigma,k}\partial_ke^\sigma=0
    \quad\text{on }\Gamma_0\times(\delta,2\delta)\times(0,T).
\]
Its initial data vanish because both fields are obtained from the same
initial physical maps.  Its Dirichlet traces vanish at $r=\delta$ and
$r=2\delta$ by
\eqref{eq:matching-at-delta} and \eqref{eq:matching-at-2delta}.

Uniqueness from \Cref{lem:uniform-Wq-dirichlet} gives
\begin{equation}\label{eq:overlap-gluing}
    \mathbf v^{\widehat{\mathcal U}}
    =\mathbf v_{\mathrm b}^{\widehat{\mathcal U}}
    \qquad
    \text{on }\Gamma_0\times(\delta,2\delta)\times(0,T).
    \qedhere
\end{equation}
\end{proof}

Define the state map by
\begin{equation}\label{eq:state-map}
    \mathcal F_T(\widehat{\mathcal U})
    :=
    \bigl(
       \mathbf v^{\widehat{\mathcal U}},
       w^{+,\widehat{\mathcal U}},
       w^{-,\widehat{\mathcal U}}
    \bigr).
\end{equation}
By \eqref{eq:overlap-gluing}, its range belongs to
$\mathcal X_T^{\mathrm{match}}$.

\section{Schauder fixed point and recovery}
\label{sec:schauder}

\subsection{Self-map property of \texorpdfstring{$\mathcal F_T$}{FT}}

\begin{proposition}[State self-map]
\label{prop:state-self-map}
After decreasing $T>0$,
\[
    \mathcal F_T(\mathcal A_T)\subset\mathcal A_T.
\]
\end{proposition}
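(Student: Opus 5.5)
We must show three things for $\widehat{\mathcal U}\in\mathcal A_T$: that $\mathcal F_T(\widehat{\mathcal U})$ lies in $\mathcal X_T^{\mathrm{match}}$, that it equals $\mathcal U_0$ at $t=0$, and that $\|\mathcal F_T(\widehat{\mathcal U})-\mathcal U_0\|_{\mathcal X_T}\le1$. Membership in $\mathcal X_T^{\mathrm{match}}$ is exactly \eqref{eq:overlap-gluing} in \Cref{prop:trace-fixed-gluing}. The initial identity holds because the frozen doubled problem \eqref{eq:frozen-doubled-problem} and the bulk problems \eqref{eq:frozen-bulk-problems} carry initial data $\mathbf v_0^{\mathrm{dbl}}$ and $w_0^\sigma$, which are the components of $\mathcal U_0$. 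The embedding $W_q^{2,1}\hookrightarrow C^{1+\beta,(1+\beta)/2}$ for $q>d+2$ gives continuity up to $t=0$. So only the norm bound needs real work, and it is the main obstacle, because $\mathcal U_0$ is time-independent and only $W_q^{2-2/q}$ in space. Consequently $\mathcal F_T(\widehat{\mathcal U})-\mathcal U_0$ does not obviously vanish to high order at $t=0$ in the $C^{1+\alpha}$ norm.

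\emph{Step 1: uniform $W_q^{2,1}$ bound.} The trace $\mathbf h^{\widehat{\mathcal U}}\in\mathcal K_T$ satisfies $\|\mathbf h^{\widehat{\mathcal U}}\|_{\mathcal Y_T}\le\|\boldsymbol\ell\|_{\mathcal Y_{T_0}}+R_{\mathrm{tr}}$. By \Cref{lem:nemytskii-trace}, the same type of bound holds for $\mathcal R_\Lambda(\mathbf h^{\widehat{\mathcal U}})$. The frozen forcings $\mathbf Q_{\mathrm{dbl}}(\widehat{\mathbf v},\nabla\widehat{\mathbf v})$ and $Q_b^\sigma(\widehat w^\sigma,\nabla\widehat w^\sigma)$ are bounded in $L^\infty$, hence in $L^q$, uniformly on $\mathcal A_T$; this is the remark after \Cref{prop:frozen-coefficients}. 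The initial data lie in $W_q^{2-2/q}$ by \eqref{eq:doubled-initial-Wq-trace}. Uniform Dirichlet maximal regularity (\Cref{lem:uniform-Wq-dirichlet}) then gives
\[
\|\mathbf v^{\widehat{\mathcal U}}\|_{W_q^{2,1}}+\|w^{\pm,\widehat{\mathcal U}}\|_{W_q^{2,1}}\le M,
\]
with $M$ independent of $T$ and $\widehat{\mathcal U}$. For the bulk problem, the boundary datum is the $r=\delta$ trace of the collar solution, which the hypersurface trace estimate controls.

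\emph{Step 2: subtracting a reference solution.} To extract smallness, subtract a fixed reference field rather than $\mathcal U_0$. Let $\mathcal U^{\mathrm{ref}}=(\mathbf v^{\mathrm{ref}},w^{\pm,\mathrm{ref}})$ solve the same linear problems with fixed smooth frozen coefficients, for example those obtained by freezing at $\mathcal U_0$ regularized, with zero forcing and boundary data $\boldsymbol\ell$, $\mathcal R_\Lambda(\boldsymbol\ell)$, on $[0,T_0]$. This field is one fixed element of $C^{1+\beta,(1+\beta)/2}$ with initial value $\mathcal U_0$. Therefore $\|\mathcal U^{\mathrm{ref}}-\mathcal U_0\|_{C^{1+\alpha,(1+\alpha)/2}([0,T])}\to0$ as $T\downarrow0$, since $\alpha<\beta$ and the difference vanishes at $t=0$. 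The difference $\mathcal F_T(\widehat{\mathcal U})-\mathcal U^{\mathrm{ref}}$ has zero initial data, and by Step 1 its $W_q^{2,1}$ norm is bounded uniformly. The uniform embedding of \Cref{subsec:function-spaces} for functions vanishing at $t=0$ gives a $C^{1+\beta,(1+\beta)/2}$ bound independent of $T$. Interpolating between $\beta$ and $\alpha$ with zero initial trace then gives
\[
\|f\|_{C^{1+\alpha,(1+\alpha)/2}}\le CT^{(\beta-\alpha)/2}\|f\|_{C^{1+\beta,(1+\beta)/2}}.
\]

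\emph{Step 3: conclusion.} Combining the two pieces,
\[
\|\mathcal F_T(\widehat{\mathcal U})-\mathcal U_0\|_{\mathcal X_T}\le CMT^{(\beta-\alpha)/2}+o(1)\le1
\]
for $T$ small. Neither term depends on $\widehat{\mathcal U}$, so one $T$ works for all of $\mathcal A_T$.

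The delicate point is where the reference field sits: the reference field itself need not solve the frozen problem. If the difference must be controlled in $W_q^{2,1}$ with zero initial data, note that this is automatic. Both fields are bounded in $W_q^{2,1}$ independently of $T$, and their difference vanishes at $t=0$, so no equation for the difference is needed.
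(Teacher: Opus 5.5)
Your proof is correct and follows the paper's route. You first get a $T$-independent $W_q^{2,1}$ bound from uniform Dirichlet maximal regularity, using the trace bound from $\mathcal K_T$ and the Nemytskii estimate for $\mathcal R_\Lambda(\mathbf h)$, and then pass to short-time H\"older smallness. Your Step~2 is an inline proof of the estimate \eqref{eq:initial-trace-holder-smallness} that the paper cites; there a plain lifting of the initial data serves as the reference field, which is simpler than your suggested reference solution, since that one would also need $\mathcal R_\Lambda(\boldsymbol\ell)$ to be defined on all of $[0,T_0]$.
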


\begin{proof}
Fix a frozen state $\widehat{\mathcal U}\in\mathcal A_T$, and let
$\mathbf h^{\widehat{\mathcal U}}$, $\mathbf v^{\widehat{\mathcal U}}$, and
$w^{\sigma,\widehat{\mathcal U}}$ be generated by the trace fixed point.  The
output has the prescribed initial value and satisfies the overlap condition by
\Cref{prop:trace-fixed-gluing}.  It remains to estimate its distance from the
initial state in $\mathcal X_T$.

The fixed trace satisfies
\[
    \|\mathbf h^{\widehat{\mathcal U}}\|_{\mathcal Y_T}
    \le
    \|\boldsymbol\ell\|_{\mathcal Y_{T_0}}+R_{\mathrm{tr}}.
\]
The frozen nonlinearities are uniformly bounded on the admissible set, and
\Cref{lem:nemytskii-trace} gives a uniform $\mathcal Y_T$ bound for the
reflected boundary value
$\mathcal R_\Lambda(\mathbf h^{\widehat{\mathcal U}})$.
Uniform Dirichlet maximal regularity, first on the doubled collar and then on
the two bulk domains, consequently gives
\begin{equation}\label{eq:uniform-output-maximal-regularity}
\begin{aligned}
    \|\mathbf v^{\widehat{\mathcal U}}\|_{W_q^{2,1}}
    +\sum_{\sigma=\pm}
      \|w^{\sigma,\widehat{\mathcal U}}\|_{W_q^{2,1}}
    \le M_*.
\end{aligned}
\end{equation}
Here $M_*$ is independent of $T$ and of
$\widehat{\mathcal U}\in\mathcal A_T$.

Every output component has the corresponding component of $\mathcal U_0$ as
its initial trace.  Apply the initial-trace short-time estimate
\eqref{eq:initial-trace-holder-smallness}, with the exponents $\alpha<\beta$
chosen in \eqref{eq:q-alpha-choice}, to
\eqref{eq:uniform-output-maximal-regularity}.  We obtain
\begin{equation}\label{eq:state-output-holder-smallness}
    \|\mathcal F_T(\widehat{\mathcal U})-\mathcal U_0\|_{\mathcal X_T}
    \le C T^\eta
       \left(M_*+\|\mathcal U_0\|_{W_q^{2-2/q}}\right)
    =:\omega_*(T),
\end{equation}
for some $\eta>0$, uniformly in the frozen state.  Since
$\omega_*(T)\to0$, choose $T$ so that $\omega_*(T)\le1$.  The initial
condition and overlap matching have already been verified; hence
$\mathcal F_T(\widehat{\mathcal U})\in\mathcal A_T$.
\end{proof}

\subsection{Compactness, continuity, and the Schauder fixed point}

\begin{proposition}[Compactness and continuity of the state map]
\label{prop:state-map-compact-continuous}
For sufficiently small $T$, the map
\[
    \mathcal F_T:\mathcal A_T\longrightarrow\mathcal A_T
\]
is compact and continuous.
\end{proposition}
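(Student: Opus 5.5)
Compactness and continuity are two separate claims, and I would handle them in that order.

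\emph{Compactness.} This comes from the gap between the $W_q^{2,1}$ bound and the weaker H\"older norm of $\mathcal X_T$. By \eqref{eq:uniform-output-maximal-regularity}, every output $\mathcal F_T(\widehat{\mathcal U})$, $\widehat{\mathcal U}\in\mathcal A_T$, has components bounded in $W_q^{2,1}$ by $M_*$. The parabolic Sobolev--Morrey embedding of \Cref{subsec:function-spaces} maps $W_q^{2,1}$ boundedly into $C^{1+\beta,(1+\beta)/2}$ on the fixed compact cylinders $\overline{\mathcal C_{\mathrm{dbl}}}\times[0,T]$ and $\overline{\mathcal D^\sigma}\times[0,T]$, since $\beta<1-(d+2)/q$. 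Because $\alpha<\beta$, the embedding $C^{1+\beta,(1+\beta)/2}\hookrightarrow C^{1+\alpha,(1+\alpha)/2}$ is compact by Arzel\`a--Ascoli together with interpolation of H\"older seminorms. Hence $\mathcal F_T(\mathcal A_T)$ is relatively compact in $\mathcal X_T$. Since $\mathcal A_T$ is closed, the limit points stay in $\mathcal A_T$.

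\emph{Continuity.} Let $\widehat{\mathcal U}_n\to\widehat{\mathcal U}$ in $\mathcal A_T$. By the remark after \Cref{prop:frozen-coefficients}:
\begin{itemize}
\item the frozen principal and first-order coefficients $\mathsf A_{\mathrm{dbl}}(\widehat{\mathbf v}_n)$ and $\mathsf B_{\mathrm{dbl}}(\widehat{\mathbf v}_n)$ converge in $L^\infty$;
\item the forcings $\mathbf Q_{\mathrm{dbl}}(\widehat{\mathbf v}_n,\nabla\widehat{\mathbf v}_n)$ and $Q_b^\sigma(\widehat w_n^\sigma,\nabla\widehat w_n^\sigma)$ converge in $L^q$.
\end{itemize}
The bulk operators $\mathscr L_b^\sigma$ do not depend on the state.

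First I would show that the trace fixed points $\mathbf h_n:=\mathbf h^{\widehat{\mathcal U}_n}$ converge to $\mathbf h:=\mathbf h^{\widehat{\mathcal U}}$ in $\mathcal Y_T$. Write
\[
    \mathbf h_n-\mathbf h
    =\bigl(\mathcal S^{\widehat{\mathcal U}_n}_T(\mathbf h_n)-\mathcal S^{\widehat{\mathcal U}_n}_T(\mathbf h)\bigr)
     +\bigl(\mathcal S^{\widehat{\mathcal U}_n}_T(\mathbf h)-\mathcal S^{\widehat{\mathcal U}}_T(\mathbf h)\bigr).
\]
The first term is bounded by $\kappa(T)\|\mathbf h_n-\mathbf h\|$ by the uniform contraction of \Cref{lem:Schwarz-contraction}. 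Absorbing it gives
\[
    \|\mathbf h_n-\mathbf h\|\le(1-\kappa(T))^{-1}\|\mathcal S^{\widehat{\mathcal U}_n}_T(\mathbf h)-\mathcal S^{\widehat{\mathcal U}}_T(\mathbf h)\|.
\]

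It then suffices to prove continuity of the Schwarz map in the frozen state at fixed trace $\mathbf h$. Let $\mathbf v_n,\mathbf v$ be the doubled solutions with data $\mathbf h$, and set $\mathbf e=\mathbf v_n-\mathbf v$. It has zero boundary and initial data and solves
\[
    \mathscr L_{\widehat{\mathbf v}_n}\mathbf e
    =\bigl(\mathsf A_{\mathrm{dbl}}(\widehat{\mathbf v}_n)-\mathsf A_{\mathrm{dbl}}(\widehat{\mathbf v})\bigr)\partial^2\mathbf v
     +\bigl(\mathsf B_{\mathrm{dbl}}(\widehat{\mathbf v}_n)-\mathsf B_{\mathrm{dbl}}(\widehat{\mathbf v})\bigr)\partial\mathbf v
     +(\text{forcing difference}).
\]
The right side tends to zero in $L^q$, because $L^\infty\cdot L^q\to0$ and $\mathbf v\in W_q^{2,1}$ is fixed. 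Uniform maximal regularity (\Cref{lem:uniform-Wq-dirichlet}) then gives $\mathbf e\to0$ in $W_q^{2,1}$. The trace at $r=\delta$ therefore converges in $\mathcal Y_T$. The bulk problems then converge as well: the boundary data converge by maximal regularity, and the forcing differences tend to zero in $L^q$. Taking traces at $r=2\delta$ gives the required continuity of $\mathcal S_T$.

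With $\mathbf h_n\to\mathbf h$ established, the same difference argument, now including the boundary data difference
\[
    \mathcal R_\Lambda(\mathbf h_n)-\mathcal R_\Lambda(\mathbf h),
\]
which is controlled by \Cref{lem:nemytskii-trace}, gives $\mathcal F_T(\widehat{\mathcal U}_n)\to\mathcal F_T(\widehat{\mathcal U})$ in $W_q^{2,1}$. By the embedding, the convergence then holds in $\mathcal X_T$.

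The main obstacle I expect is the state-dependence of the principal coefficient on the reflected side. Its convergence is only in $L^\infty$, so the perturbation term $(\mathsf A_n-\mathsf A)\partial^2\mathbf v$ can only be handled for a \emph{fixed} $W_q^{2,1}$ function $\mathbf v$; I cannot use a uniform operator-norm bound. The argument must therefore be arranged so that the second derivatives always fall on the limit solution, as above.

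Alternatively, continuity follows from compactness plus uniqueness. Any subsequential $\mathcal X_T$-limit of $\mathcal F_T(\widehat{\mathcal U}_n)$ converges weakly in $W_q^{2,1}$, and one can pass to the limit in the frozen linear problems. Uniqueness of the trace fixed point and of the Dirichlet problems then identifies the limit as $\mathcal F_T(\widehat{\mathcal U})$. I would use this as a fallback if the direct estimate becomes cumbersome.
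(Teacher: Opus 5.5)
Your proposal is correct and follows the paper's proof essentially step for step. Compactness comes from the uniform $W_q^{2,1}$ bound $M_*$ and the compact embedding into $C^{1+\alpha,(1+\alpha)/2}$; your factorization through $C^{1+\beta,(1+\beta)/2}$ just makes the paper's compact embedding step explicit. Continuity is proved in three stages, as in the paper: continuity of the Schwarz map in the frozen state at a fixed trace, with the coefficient differences placed on the limit solution; transfer to the fixed traces via the uniform contraction factor $(1-\kappa(T))^{-1}$; and comparison of the outputs using the Nemytskii estimate for $\mathcal R_\Lambda$.
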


\begin{proof}
The state self-map estimates provide a uniform $W_q^{2,1}$ bound for the
doubled collar component and for both bulk components.  Since
$q>d+2$ and
\[
    0<\alpha<1-\frac{d+2}{q},
\]
the compact parabolic embedding on each fixed compact cylinder gives
\[
    W_q^{2,1}
    \Subset
    C^{1+\alpha,(1+\alpha)/2}.
\]
Hence $\mathcal F_T(\mathcal A_T)$ is relatively compact in
$\mathcal X_T$.  The overlap condition defining $\mathcal X_T^{\mathrm{match}}$ is
closed, so the range is relatively compact in $\mathcal X_T^{\mathrm{match}}$.

It remains to prove continuity.  Let
\[
    \widehat{\mathcal U}_m
    =
    (\widehat{\mathbf v}_m,
     \widehat w_m^+,
     \widehat w_m^-)
    \longrightarrow
    \widehat{\mathcal U}
    =
    (\widehat{\mathbf v},
     \widehat w^+,
     \widehat w^-)
\]
in $\mathcal X_T^{\mathrm{match}}$. We first establish continuous dependence of
the frozen solutions for a fixed Schwarz trace.
In the coefficient expressions below, the spatial and temporal arguments
are suppressed as in \eqref{eq:doubled-geometric-operator}.
Write $\mathscr L_m:=\mathscr L_{\widehat{\mathbf v}_m}$
and
\[
    \mathbf f_m
    :=
    \mathbf Q_{\mathrm{dbl}}
    (p,r,t,\widehat{\mathbf v}_m,
     \nabla\widehat{\mathbf v}_m),
    \qquad
    \mathbf f
    :=
    \mathbf Q_{\mathrm{dbl}}
    (p,r,t,\widehat{\mathbf v},
     \nabla\widehat{\mathbf v}).
\]
All frozen collar fields take values in a common compact subset of the tubular
neighborhood of $\Lambda$.  The smooth dependence of the coefficients on the
state on each half-collar, together with convergence in
$C^{1+\alpha,(1+\alpha)/2}$, implies
\begin{equation}\label{eq:frozen-coefficient-convergence}
    \|\mathsf A_{\mathrm{dbl}}^{ij}(\widehat{\mathbf v}_m)
      -\mathsf A_{\mathrm{dbl}}^{ij}(\widehat{\mathbf v})\|_{L^\infty}
    +
    \|\mathsf B_{\mathrm{dbl}}^k(\widehat{\mathbf v}_m)
      -\mathsf B_{\mathrm{dbl}}^k(\widehat{\mathbf v})\|_{L^\infty}
    +
    \|\mathbf f_m-\mathbf f\|_{L^q}
    \longrightarrow0.
\end{equation}

Fix $\mathbf h\in\mathcal K_T$.  Let
\[
    \mathbf v_m^{\mathbf h}
    :=\mathbf v^{\widehat{\mathcal U}_m}[\mathbf h],
    \qquad
    \mathbf v^{\mathbf h}
    :=\mathbf v^{\widehat{\mathcal U}}[\mathbf h].
\]
Their initial and boundary data coincide.  The difference
$\mathbf z_m^{\mathbf h}:=\mathbf v_m^{\mathbf h}-\mathbf v^{\mathbf h}$
has zero initial and boundary data and satisfies
\[
\begin{aligned}
    \mathscr L_m\mathbf z_m^{\mathbf h}
    =
    {}&\bigl(\mathsf A_{\mathrm{dbl}}^{ij}(\widehat{\mathbf v}_m)
       -\mathsf A_{\mathrm{dbl}}^{ij}(\widehat{\mathbf v})\bigr)
       \partial_{ij}\mathbf v^{\mathbf h}\\
    &+\bigl(\mathsf B_{\mathrm{dbl}}^k(\widehat{\mathbf v}_m)
       -\mathsf B_{\mathrm{dbl}}^k(\widehat{\mathbf v})\bigr)
       \partial_k\mathbf v^{\mathbf h}
     +(\mathbf f_m-\mathbf f).
\end{aligned}
\]
Uniform Dirichlet maximal regularity gives
\begin{align*}
    \|\mathbf z_m^{\mathbf h}\|_{W_q^{2,1}}
    \le C\Bigl(&
      \|\mathsf A_{\mathrm{dbl}}^{ij}(\widehat{\mathbf v}_m)
        -\mathsf A_{\mathrm{dbl}}^{ij}(\widehat{\mathbf v})\|_{L^\infty}
      \|\partial_{ij}\mathbf v^{\mathbf h}\|_{L^q}\\
      &+\|\mathsf B_{\mathrm{dbl}}^k(\widehat{\mathbf v}_m)
        -\mathsf B_{\mathrm{dbl}}^k(\widehat{\mathbf v})\|_{L^\infty}
      \|\partial_k\mathbf v^{\mathbf h}\|_{L^q}
      +\|\mathbf f_m-\mathbf f\|_{L^q}
    \Bigr).
\end{align*}
Thus
\begin{equation}\label{eq:fixed-trace-collar-continuity}
    \mathbf v_m^{\mathbf h}
    \longrightarrow
    \mathbf v^{\mathbf h}
    \quad\text{in }W_q^{2,1}(\mathcal C_{\mathrm{dbl},T};\mathbb R^{2L}).
\end{equation}
Taking the trace at $r=\delta$ yields
\begin{equation}\label{eq:intermediate-trace-continuity}
    \mathbf h_{\delta,m}^{\mathbf h}
    \longrightarrow
    \mathbf h_\delta^{\mathbf h}
    \quad\text{in }\mathcal Y_T(\Gamma_0;\mathbb R^{2L}).
\end{equation}

For the bulk equations set
\[
    f_m^{\sigma,\mathrm b}
    :=
    Q_b^\sigma
    (y,t,\widehat w_m^\sigma,
     \nabla\widehat w_m^\sigma),
    \qquad
    f^{\sigma,\mathrm b}
    :=
    Q_b^\sigma
    (y,t,\widehat w^\sigma,
     \nabla\widehat w^\sigma).
\]
Then
\[
    f_m^{\sigma,\mathrm b}
    \longrightarrow f^{\sigma,\mathrm b}
    \quad\text{in }L^q(\mathcal D_T^\sigma).
\]
Let
\[
    w_m^{\sigma,\mathbf h}
    :=w^{\sigma,\widehat{\mathcal U}_m}[\mathbf h],
    \qquad
    w^{\sigma,\mathbf h}
    :=w^{\sigma,\widehat{\mathcal U}}[\mathbf h].
\]
Their difference solves the equation with the fixed operator
$\mathscr L_b^\sigma$, zero initial data, forcing
$f_m^{\sigma,\mathrm b}-f^{\sigma,\mathrm b}$, and boundary datum equal to
the $\sigma$-component of
$\mathbf h_{\delta,m}^{\mathbf h}-\mathbf h_\delta^{\mathbf h}$.  Hence
\begin{align*}
    \|w_m^{\sigma,\mathbf h}-w^{\sigma,\mathbf h}\|_{W_q^{2,1}(\mathcal D_T^\sigma)}
    \le C\Bigl(&
       \|f_m^{\sigma,\mathrm b}-f^{\sigma,\mathrm b}\|_{L^q}
       +
       \|\mathbf h_{\delta,m}^{\mathbf h}-\mathbf h_\delta^{\mathbf h}
       \|_{\mathcal Y_T}
    \Bigr).
\end{align*}
Using \eqref{eq:intermediate-trace-continuity},
\begin{equation}\label{eq:fixed-trace-bulk-continuity}
    w_m^{\sigma,\mathbf h}
    \longrightarrow
    w^{\sigma,\mathbf h}
    \quad\text{in }W_q^{2,1}(\mathcal D_T^\sigma).
\end{equation}
Taking the two traces at $\Sigma_{2\delta}^\sigma$ gives
\begin{equation}\label{eq:Schwarz-parameter-continuity}
    \mathcal S_T^{\widehat{\mathcal U}_m}(\mathbf h)
    \longrightarrow
    \mathcal S_T^{\widehat{\mathcal U}}(\mathbf h)
    \quad\text{in }\mathcal Y_T(\Gamma_0;\mathbb R^{2L}).
\end{equation}

Let
\[
    \mathbf h_m
    :=\mathbf h^{\widehat{\mathcal U}_m},
    \qquad
    \mathbf h
    :=\mathbf h^{\widehat{\mathcal U}}
\]
be the fixed traces.  The Schwarz maps have a common contraction constant
$\kappa(T)<1$.  Therefore
\begin{align*}
    \|\mathbf h_m-\mathbf h\|_{\mathcal Y_T}
    &\le
    \|\mathcal S_T^{\widehat{\mathcal U}_m}(\mathbf h_m)
      -\mathcal S_T^{\widehat{\mathcal U}_m}(\mathbf h)\|_{\mathcal Y_T}\\
    &\quad+
    \|\mathcal S_T^{\widehat{\mathcal U}_m}(\mathbf h)
      -\mathcal S_T^{\widehat{\mathcal U}}(\mathbf h)\|_{\mathcal Y_T}\\
    &\le
    \kappa(T)\|\mathbf h_m-\mathbf h\|_{\mathcal Y_T}
    +
    \|\mathcal S_T^{\widehat{\mathcal U}_m}(\mathbf h)
      -\mathcal S_T^{\widehat{\mathcal U}}(\mathbf h)\|_{\mathcal Y_T}.
\end{align*}
Hence
\begin{equation}\label{eq:fixed-trace-parameter-continuity}
    \|\mathbf h_m-\mathbf h\|_{\mathcal Y_T}
    \le
    \frac{1}{1-\kappa(T)}
    \|\mathcal S_T^{\widehat{\mathcal U}_m}(\mathbf h)
      -\mathcal S_T^{\widehat{\mathcal U}}(\mathbf h)\|_{\mathcal Y_T}
    \longrightarrow0.
\end{equation}

We finally compare the outputs.  Write
\[
    \mathcal F_T(\widehat{\mathcal U}_m)
    =
    (\mathbf v_m,w_m^+,w_m^-),
    \qquad
    \mathcal F_T(\widehat{\mathcal U})
    =
    (\mathbf v,w^+,w^-).
\]
The collar difference satisfies
\[
\begin{cases}
    \begin{aligned}
    \mathscr L_m(\mathbf v_m-\mathbf v)
    ={}&\bigl(\mathsf A_{\mathrm{dbl}}^{ij}(\widehat{\mathbf v}_m)
          -\mathsf A_{\mathrm{dbl}}^{ij}(\widehat{\mathbf v})\bigr)
          \partial_{ij}\mathbf v\\
       &+\bigl(\mathsf B_{\mathrm{dbl}}^k(\widehat{\mathbf v}_m)
          -\mathsf B_{\mathrm{dbl}}^k(\widehat{\mathbf v})\bigr)
          \partial_k\mathbf v
        +(\mathbf f_m-\mathbf f),
    \end{aligned}\\[0.3em]
    (\mathbf v_m-\mathbf v)|_{r=2\delta}
    =\mathbf h_m-\mathbf h,\\
    (\mathbf v_m-\mathbf v)|_{r=-2\delta}
    =\mathcal R_\Lambda(\mathbf h_m)
     -\mathcal R_\Lambda(\mathbf h),\\
    (\mathbf v_m-\mathbf v)(\cdot,0)=0.
\end{cases}
\]
The trace-space Nemytskii estimate for $\mathcal R_\Lambda$, together with
\eqref{eq:frozen-coefficient-convergence} and
\eqref{eq:fixed-trace-parameter-continuity}, gives
\[
    \mathbf v_m\longrightarrow\mathbf v
    \quad\text{in }W_q^{2,1}(\mathcal C_{\mathrm{dbl},T};\mathbb R^{2L}).
\]
Their traces at $r=\delta$ therefore converge in $\mathcal Y_T$.  The
bulk differences satisfy linear Dirichlet problems with these convergent
boundary data and with right-hand sides converging in $L^q$, so
\[
    w_m^\sigma\longrightarrow w^\sigma
    \quad\text{in }W_q^{2,1}(\mathcal D_T^\sigma;\mathbb R^L).
\]
The continuous embedding into
$C^{1+\alpha,(1+\alpha)/2}$ now gives
\[
    \mathcal F_T(\widehat{\mathcal U}_m)
    \longrightarrow
    \mathcal F_T(\widehat{\mathcal U})
    \quad\text{in }\mathcal X_T^{\mathrm{match}}.
\]
\end{proof}

Since $\mathcal A_T$ is nonempty, closed, bounded, and convex in
$\mathcal X_T^{\mathrm{match}}$, Schauder's theorem gives
\begin{equation}\label{eq:Schauder-fixed-point}
    \mathcal U^*
    =
    \mathcal F_T(\mathcal U^*)
    \quad\text{for some }\mathcal U^*\in\mathcal A_T.
\end{equation}
At this fixed state the frozen equations become
\eqref{eq:nonlinear-doubled-problem} and \eqref{eq:bulk-Dirichlet-problem}.
The positive collar and
bulk components agree on the overlap.  It remains to recover the reflection
symmetry and the target constraints.

\subsection{Short-time uniqueness for the nonlinear doubled problem}

\begin{lemma}[Short-time nonlinear doubled uniqueness]
\label{lem:nonlinear-doubled-uniqueness}
Let $q>d+2$ and fix
$0<\alpha<1-(d+2)/q$.  For every $M_0>0$ there exists
$T_{\mathrm{uniq}}>0$, depending only on the fixed geometry, $M_0$, the
initial field, and the structural coefficient bounds, with the following
property.  For $0<T\le T_{\mathrm{uniq}}$ and a fixed admissible Schwarz
trace $\mathbf h$, the nonlinear doubled problem
\eqref{eq:nonlinear-doubled-problem} has at most one solution in
\begin{equation}\label{eq:nonlinear-uniqueness-class}
\begin{aligned}
    \mathcal B_T(M_0)
    :=
    \Bigl\{
      \mathbf v\in W_q^{2,1}(\mathcal C_{\mathrm{dbl},T};\mathbb R^{2L}):\;&
      \mathbf v(\overline{\mathcal C_{\mathrm{dbl}}}\times[0,T])
      \subset \operatorname{Tub}_{\rho_\Lambda}(\Lambda),\\
      &\|\mathbf v\|_{W_q^{2,1}}\le M_0
    \Bigr\}.
\end{aligned}
\end{equation}
\end{lemma}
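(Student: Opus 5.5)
Take two solutions $\mathbf v_1,\mathbf v_2\in\mathcal B_T(M_0)$ of \eqref{eq:nonlinear-doubled-problem} with the same trace $\mathbf h$, and set $\mathbf z:=\mathbf v_1-\mathbf v_2$. The plan is to derive a linear problem for $\mathbf z$ with the frozen operator $\mathscr L_{\mathbf v_1}$, in which every term involving the difference of coefficients is of lower order, and then absorb it by uniform Dirichlet maximal regularity (\Cref{lem:uniform-Wq-dirichlet}) combined with the short-time smallness of lower-order norms for functions with zero initial trace. The function $\mathbf z$ has zero initial data. It also has zero Dirichlet data at $r=\pm2\delta$, since both solutions carry $\mathbf h$ and $\mathcal R_\Lambda(\mathbf h)$.

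\textbf{Step 1: the difference equation.} Writing $\mathscr L_{\mathbf v_1}\mathbf v_1-\mathscr L_{\mathbf v_2}\mathbf v_2$ gives
\[
\mathscr L_{\mathbf v_1}\mathbf z
=\bigl(\mathsf A^{ij}_{\mathrm{dbl}}(\mathbf v_1)-\mathsf A^{ij}_{\mathrm{dbl}}(\mathbf v_2)\bigr)\partial_{ij}\mathbf v_2
+\bigl(\mathsf B^{k}_{\mathrm{dbl}}(\mathbf v_1)-\mathsf B^{k}_{\mathrm{dbl}}(\mathbf v_2)\bigr)\partial_{k}\mathbf v_2
+\mathbf Q_{\mathrm{dbl}}(\mathbf v_1,\nabla\mathbf v_1)-\mathbf Q_{\mathrm{dbl}}(\mathbf v_2,\nabla\mathbf v_2).
\]
On each half-collar the coefficients depend smoothly on the state, and the ranges lie in $\operatorname{Tub}_{\rho_\Lambda}(\Lambda)$, where all derivatives of $\mathcal R_\Lambda$ are bounded (\Cref{lem:tubular-control}). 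Hence pointwise
\[
|\mathsf A(\mathbf v_1)-\mathsf A(\mathbf v_2)|+|\mathsf B(\mathbf v_1)-\mathsf B(\mathbf v_2)|\le C|\mathbf z|.
\]
Since $\mathbf Q_{\mathrm{dbl}}$ is quadratic in $\boldsymbol\Xi$, its difference is bounded by $C(|\nabla\mathbf v_1|+|\nabla\mathbf v_2|)|\nabla\mathbf z|+C|\nabla\mathbf v_2|^2|\mathbf z|$. By the embedding $W_q^{2,1}\hookrightarrow C^{1+\alpha,(1+\alpha)/2}$ (with constant uniform in $T$ for zero-initial or extended functions, as in \Cref{subsec:function-spaces}), $\|\nabla\mathbf v_i\|_{L^\infty}\le C(M_0)$. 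The coefficients of $\mathscr L_{\mathbf v_1}$ satisfy the hypotheses of \Cref{prop:frozen-coefficients} uniformly. Strictly, that proposition is stated for $\mathcal A_T$, but its proof only uses the range in the tube together with a $C^{1+\alpha,(1+\alpha)/2}$ bound, which $M_0$ provides. Thus \Cref{lem:uniform-Wq-dirichlet} applies with a constant depending on $M_0$ and independent of $T$.

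\textbf{Step 2: the estimate.} Maximal regularity gives
\[
\|\mathbf z\|_{W_q^{2,1}}\le C(M_0)\bigl(\|\mathbf z\|_{L^\infty}\|\partial^2\mathbf v_2\|_{L^q}+\|\mathbf z\|_{L^q}+\|\nabla\mathbf z\|_{L^q}\bigr).
\]
The lower-order terms are handled by \eqref{eq:zero-initial-Lq} and \eqref{eq:zero-initial-grad-Lq}, which give $\|\mathbf z\|_{L^q}+\|\nabla\mathbf z\|_{L^q}\le CT^{1/2}\|\mathbf z\|_{W_q^{2,1}}$. The main obstacle is the first term: $\partial^2\mathbf v_2$ is only bounded in $L^q$ by $M_0$, not small, so $\|\mathbf z\|_{L^\infty}$ itself must be small relative to $\|\mathbf z\|_{W_q^{2,1}}$. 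Because $\mathbf z(\cdot,0)=0$, the initial-trace short-time Hölder estimate \eqref{eq:initial-trace-holder-smallness} (or \eqref{eq:trace-C0-smallness}) gives
\[
\|\mathbf z\|_{L^\infty}\le T^{(1+\alpha)/2}[\mathbf z]_{C^{1+\alpha,(1+\alpha)/2}}\le CT^{\eta}\|\mathbf z\|_{W_q^{2,1}}
\]
for some $\eta>0$, with $C$ uniform in $T$ because the function has zero initial data. If this uniform-constant embedding is not directly available, I would first extend $\mathbf z$ by zero for $t<0$ and then apply the fixed embedding on $(-1,T)$.

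Combining the two steps gives $\|\mathbf z\|_{W_q^{2,1}}\le C(M_0)(M_0T^\eta+T^{1/2})\|\mathbf z\|_{W_q^{2,1}}$. Choose $T_{\mathrm{uniq}}$ so that the prefactor is below $1/2$; then $\mathbf z=0$. The constants depend only on the geometry, $M_0$, and the structural bounds, as claimed. A minor point to check is the jump of the coefficients at $r=0$. The first-order coefficients are only bounded there, which \Cref{lem:uniform-Wq-dirichlet} permits. The principal coefficients are continuous across $r=0$ for any state, by the argument in \Cref{prop:frozen-coefficients}, since the common trace is scalar.
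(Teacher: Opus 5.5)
Your proposal is correct and follows the paper's proof essentially step for step: the same difference equation with $\mathscr L_{\mathbf v_1}$ on the left, pointwise Lipschitz bounds for the coefficient differences and the quadratic nonlinearity, the zero-initial-trace smallness of $\|\mathbf z\|_{L^q}$, $\|\nabla\mathbf z\|_{L^q}$ and $\|\mathbf z\|_{L^\infty}$, and absorption via \Cref{lem:uniform-Wq-dirichlet}. One point needs tightening: $\mathbf v_1,\mathbf v_2$ do not have zero initial trace, so the $T$-uniform bounds on $\|\nabla\mathbf v_i\|_{L^\infty}$ and on the H\"older norms of the coefficients of $\mathscr L_{\mathbf v_1}$ cannot come from $M_0$ alone (a time-independent field has $W_q^{2,1}$ norm of order $T^{1/q}$); as in the paper, apply \eqref{eq:initial-trace-holder-smallness} with the common initial datum to get the constant $C(M_0+\|\mathbf v_0^{\mathrm{dbl}}\|_{W_q^{2-2/q}})$, which is why $T_{\mathrm{uniq}}$ must depend on the initial field.
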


\begin{proof}
Let $\mathbf v^1$ and $\mathbf v^2$ be two solutions with the same initial and
boundary data, and set
\[
    \mathbf z:=\mathbf v^1-\mathbf v^2.
\]
Then $\mathbf z(\cdot,0)=0$ and
$\mathbf z|_{r=\pm2\delta}=0$.  Subtraction gives
\begin{equation}\label{eq:nonlinear-uniqueness-difference}
\begin{aligned}
    \mathscr L_{\mathbf v^1}\mathbf z
    ={}&
    \bigl[
      \mathsf A_{\mathrm{dbl}}^{ij}(\mathbf v^1)
      -\mathsf A_{\mathrm{dbl}}^{ij}(\mathbf v^2)
    \bigr]\partial_{ij}\mathbf v^2\\
    &+
    \bigl[
      \mathsf B_{\mathrm{dbl}}^k(\mathbf v^1)
      -\mathsf B_{\mathrm{dbl}}^k(\mathbf v^2)
    \bigr]\partial_k\mathbf v^2\\
    &+
    \mathbf Q_{\mathrm{dbl}}(\mathbf v^1,\nabla\mathbf v^1)
    -\mathbf Q_{\mathrm{dbl}}(\mathbf v^2,\nabla\mathbf v^2).
\end{aligned}
\end{equation}
First obtain a uniform bound for the first derivatives of the two solutions.
The initial-trace estimate
\eqref{eq:initial-trace-holder-smallness}, the embedding
$W_q^{2-2/q}\hookrightarrow C^{1+\alpha}$, and the assumed
$W_q^{2,1}$ bounds imply
\begin{equation}\label{eq:nonlinear-uniqueness-gradient-bound}
    \|\nabla\mathbf v^a\|_{L^\infty}
    \le C\left(
       M_0+
       \|\mathbf v_0^{\mathrm{dbl}}\|_{W_q^{2-2/q}}
    \right)
    =:C_{M_0},
    \qquad a=1,2,
\end{equation}
for $T\le\min\{1,T_0\}$.  This step uses only the natural initial trace norm
and does not require $\mathbf v_0^{\mathrm{dbl}}\in W_q^2$.

Since the candidate solutions take values in
$\operatorname{Tub}_{\rho_\Lambda}(\Lambda)$, the derivatives of the coefficient maps
that appear below are uniformly bounded along them by
\Cref{lem:tubular-control}.  Hence
\[
    |\mathsf A_{\mathrm{dbl}}^{ij}(\mathbf v^1)
      -\mathsf A_{\mathrm{dbl}}^{ij}(\mathbf v^2)|
    +
    |\mathsf B_{\mathrm{dbl}}^k(\mathbf v^1)
      -\mathsf B_{\mathrm{dbl}}^k(\mathbf v^2)|
    \le C|\mathbf z|,
\]
and, using \eqref{eq:nonlinear-uniqueness-gradient-bound},
\[
    |\mathbf Q_{\mathrm{dbl}}(\mathbf v^1,\nabla\mathbf v^1)
      -\mathbf Q_{\mathrm{dbl}}(\mathbf v^2,\nabla\mathbf v^2)|
    \le
    C_{M_0}
    (|\mathbf z|+|\nabla\mathbf z|).
\]
Therefore the right-hand side $\mathbf f$ of
\eqref{eq:nonlinear-uniqueness-difference} satisfies
\begin{equation}\label{eq:nonlinear-uniqueness-forcing}
    \|\mathbf f\|_{L^q}
    \le
    C\|\mathbf z\|_{L^\infty}
       \|\nabla^2\mathbf v^2\|_{L^q}
    +C_{M_0}
       \bigl(
          \|\mathbf z\|_{L^q}
          +\|\nabla\mathbf z\|_{L^q}
       \bigr).
\end{equation}
Because $\mathbf z$ has zero initial trace,
\begin{align*}
    \|\mathbf z\|_{L^q}
    &\le T\|\mathbf z\|_{W_q^{2,1}},\\
    \|\nabla\mathbf z\|_{L^q}
    &\le CT^{1/2}\|\mathbf z\|_{W_q^{2,1}},\\
    \|\mathbf z\|_{L^\infty}
    &\le CT^{(1+\alpha)/2}\|\mathbf z\|_{W_q^{2,1}}.
\end{align*}
Also
$\|\nabla^2\mathbf v^2\|_{L^q}\le M_0$, and substitution into
\eqref{eq:nonlinear-uniqueness-forcing} yields
\begin{equation}\label{eq:nonlinear-uniqueness-small-forcing}
    \|\mathbf f\|_{L^q}
    \le C_{M_0}T^{1/2}
       \|\mathbf z\|_{W_q^{2,1}}.
\end{equation}
By the parabolic embedding and the bound $\|\mathbf v^1\|_{W_q^{2,1}}\le M_0$,
the coefficients
$\mathsf A_{\mathrm{dbl}}^{ij}(\mathbf v^1)$ are uniformly Lipschitz in the
spatial variables and uniformly $1/2$-H\"older in time.  The uniform bounds
on $D\mathcal R_\Lambda$ and its inverse over
$\overline{\operatorname{Tub}_{\rho_\Lambda}(\Lambda)}$, together with the
similarity representation of the reflected principal symbol, give uniform
parameter ellipticity and the Dirichlet Lopatinskii--Shapiro condition exactly as in
\Cref{prop:frozen-coefficients}.  The first-order coefficients
$\mathsf B_{\mathrm{dbl}}^k(\mathbf v^1)$ are uniformly bounded.
The left-hand operator satisfies the
hypotheses of \Cref{lem:uniform-Wq-dirichlet}, with constants depending only
on the quantities listed in the statement.  Since the initial and boundary
values of $\mathbf z$ vanish,
\[
    \|\mathbf z\|_{W_q^{2,1}}
    \le C_{\mathrm{lin}}\|\mathbf f\|_{L^q}
    \le C_{\mathrm{lin}}C_{M_0}T^{1/2}
       \|\mathbf z\|_{W_q^{2,1}}.
\]
Choose $T_{\mathrm{uniq}}$ so that the coefficient on the right is less than
one.  Then $\mathbf z\equiv0$.
\end{proof}

\subsection{Recovery of the interface conditions}

\begin{lemma}[Reflection symmetry and interface recovery]
\label{lem:recover-interface-conditions}
Let $\mathbf v$ be the doubled collar component of the Schauder fixed point.
After decreasing $T$,
\begin{equation}\label{eq:recovered-reflection-symmetry}
    \mathbf v(p,-r,t)
    =
    \mathcal R_\Lambda(\mathbf v(p,r,t)),
    \qquad 0<r<2\delta.
\end{equation}
Consequently,
\begin{equation}\label{eq:recovered-interface-conditions}
    \mathbf v(p,0,t)\in\Lambda,
    \qquad
    \partial_r\mathbf v(p,0,t)
    \perp_{\mathbb R^{2L}}T_{\mathbf v(p,0,t)}\Lambda.
\end{equation}
\end{lemma}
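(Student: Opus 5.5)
The plan is to compare the fixed-point collar field $\mathbf v$ with its reflected copy $\mathbf v^R(p,r,t):=\mathcal R_\Lambda(\mathbf v(p,-r,t))$ and show that both solve the same nonlinear doubled problem, so that \Cref{lem:nonlinear-doubled-uniqueness} forces them to coincide.

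\emph{Step 1: the fixed point solves the nonlinear doubled problem.} At the Schauder fixed point $\mathcal U^*=\mathcal F_T(\mathcal U^*)$ the frozen state equals the output. Hence $\mathbf v$ solves \eqref{eq:nonlinear-doubled-problem} with trace $\mathbf h=\mathbf h^{\mathcal U^*}\in\mathcal K_T$. Moreover $\mathbf v$ takes values in $\operatorname{Tub}_{\rho_\Lambda}(\Lambda)$ by \Cref{lem:tubular-control}, and $\|\mathbf v\|_{W_q^{2,1}}\le M_*$ by \eqref{eq:uniform-output-maximal-regularity}.

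\emph{Step 2: the reflected copy solves the same problem.} By \Cref{lem:reflection-covariance}, $\mathbf v^R$ satisfies the doubled equation pointwise almost everywhere on both half-collars. For $r\neq0$ this is just the chain rule applied to $W_q^{2,1}$ fields composed with the smooth map $\mathcal R_\Lambda$, which is valid because $\mathbf v\in C^{1+\alpha,(1+\alpha)/2}$. The boundary and initial data transform correctly:
\begin{itemize}
\item at $r=2\delta$ one gets $\mathcal R_\Lambda(\mathcal R_\Lambda\mathbf h)=\mathbf h$ by the involution property;
\item at $r=-2\delta$ one gets $\mathcal R_\Lambda(\mathbf h)$;
\item at $t=0$ one gets $\mathcal R_\Lambda(\mathbf v_0^{\mathrm{dbl}}(p,-r))=\mathbf v_0^{\mathrm{dbl}}(p,r)$ by \eqref{eq:doubled-initial-data}.
\end{itemize}
Since $\mathcal R_\Lambda$ preserves the tube by \eqref{eq:reflection-invariant-subtube}, $\mathbf v^R$ also takes values in $\operatorname{Tub}_{\rho_\Lambda}(\Lambda)$.

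\emph{Step 3 (main obstacle): $\mathbf v^R\in W_q^{2,1}$ across $r=0$.} We must show $\mathbf v^R$ belongs to $W_q^{2,1}(\mathcal C_{\mathrm{dbl},T})$ with a controlled norm. On each open half-collar, $\mathbf v^R$ is a smooth composition of a $W_q^{2,1}$ field with bounded first derivatives, so it lies in $W_q^{2,1}$ there with norm at most $C(1+M_*)^2$. For membership in $W_q^{2,1}$ across $r=0$, it suffices that the value traces and the first normal-derivative traces of the two sides agree there (the same gluing principle used for \eqref{eq:doubled-initial-Wq-trace}). Since $\mathbf v$ is $C^1$ across $r=0$, the two sides of $\mathbf v^R$ at $r=0$ are computed from the same one-sided limits of $\mathbf v$.
\begin{itemize}
\item \emph{Values.} Both sides of $\mathbf v^R$ at $r=0$ equal $\mathcal R_\Lambda(\mathbf v(p,0,t))$, so the value traces match automatically.
\item \emph{Normal derivatives.} One side is $-D\mathcal R_\Lambda(\mathbf v(p,0,t))\,\partial_r\mathbf v(p,0,t)$, while the $W_q^{2,1}$ field $\mathbf v$ itself has normal derivative $\partial_r\mathbf v(p,0,t)$.
\end{itemize}
I must be careful here. $\mathbf v^R$ is the composition of $\mathcal R_\Lambda$ with $\mathbf v\circ\rho$, and $\mathbf v\circ\rho$ is $W_q^{2,1}$ across $r=0$ because $\rho$ is a smooth diffeomorphism of the doubled collar. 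Hence $\mathbf v^R$ is globally a composition of a smooth map with a $W_q^{2,1}\cap C^{1}$ field, and so lies in $W_q^{2,1}$ directly, with no trace matching needed. This settles the regularity.

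Where the matching truly matters is that the equation, and not only the regularity, holds for $\mathbf v^R$ across $r=0$. This is automatic: $W_q^{2,1}$ functions satisfy the equation almost everywhere, and $\{r=0\}$ has measure zero. I would nonetheless double-check that the doubled coefficients, which are discontinuous in the lower-order terms, are handled by the almost-everywhere formulation, which they are.

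\emph{Step 4: uniqueness and interface recovery.} With $M_0:=C(1+M_*)^2$, decrease $T\le T_{\mathrm{uniq}}(M_0)$. Then \Cref{lem:nonlinear-doubled-uniqueness} gives $\mathbf v=\mathbf v^R$, which is \eqref{eq:recovered-reflection-symmetry}. Evaluating \eqref{eq:recovered-reflection-symmetry} at $r=0$ by continuity gives $\mathbf v(p,0,t)=\mathcal R_\Lambda(\mathbf v(p,0,t))$, so $\mathbf v(p,0,t)\in\Lambda$ by the fixed-point statement of \Cref{lem:reflection-properties}. Differentiating in $r$ at $r=0$, which is legitimate since $\mathbf v\in C^1$, gives $-\partial_r\mathbf v=D\mathcal R_\Lambda(\mathbf p)\,\partial_r\mathbf v$ with $\mathbf p=\mathbf v(p,0,t)$. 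By \eqref{eq:reflection-differential}, the tangential component of $\partial_r\mathbf v$ then equals its own negative, so it vanishes. This is exactly the conormal condition in \eqref{eq:recovered-interface-conditions}.
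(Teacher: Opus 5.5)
Your proposal is correct and follows essentially the paper's own argument. You get $\mathbf v^R\in W_q^{2,1}$ because it is the smooth map $\mathcal R_\Lambda$ composed with $\mathbf v\circ\rho$, it solves the same nonlinear doubled Dirichlet problem by \Cref{lem:reflection-covariance}, and the $T$-independent bound $M_*$ lets you apply \Cref{lem:nonlinear-doubled-uniqueness} after shrinking $T$; the interface conditions are then read off at $r=0$ exactly as in the paper (the trace-matching detour at the start of your Step 3 is unnecessary, as you yourself conclude).
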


\begin{proof}
Define
\[
    \mathbf v^R(p,r,t)
    :=
    \mathcal R_\Lambda(\mathbf v(p,-r,t)).
\]
Since the fixed state belongs to $\mathcal A_T$, the uniform tubular control
\eqref{eq:frozen-Lambda-tube} gives
\[
    \mathbf v(\overline{\mathcal C_{\mathrm{dbl}}}\times[0,T])
    \subset\operatorname{Tub}_{\rho_\Lambda}(\Lambda).
\]
Normal reflection preserves this tubular neighborhood by
\eqref{eq:reflection-invariant-subtube}; hence $\mathbf v^R$ is well-defined
and takes values in the same fixed tubular neighborhood.  Since
$\mathcal R_\Lambda$ is
smooth on the larger neighborhood
$\operatorname{Tub}_{2\rho_\Lambda}(\Lambda)$, the Sobolev composition
theorem gives
\[
    \mathbf v^R\in
    W_q^{2,1}(\mathcal C_{\mathrm{dbl},T};\mathbb R^{2L}),
    \qquad
    \|\mathbf v^R\|_{W_q^{2,1}}
    \le C_{\Lambda,\rho_\Lambda}
       \bigl(1+\|\mathbf v\|_{W_q^{2,1}}^2\bigr).
\]
By the reflection-covariant definitions
\eqref{eq:target-reflected-principal}--\eqref{eq:doubled-target-nonlinearity}, the
reflected initial datum, and the reflected boundary values, $\mathbf v^R$
solves the same nonlinear doubled Dirichlet problem as $\mathbf v$.  By
\eqref{eq:uniform-output-maximal-regularity}, $\|\mathbf v\|_{W_q^{2,1}}\le M_*$,
where $M_*$ is independent of $T$.  Fix
$M_1:=M_*+C_{\Lambda,\rho_\Lambda}(1+M_*^2)$, so both fields belong to
$\mathcal B_T(M_1)$.  After replacing $T$ by
$\min\{T,T_{\mathrm{uniq}}(M_1)\}$,
\Cref{lem:nonlinear-doubled-uniqueness} gives $\mathbf v^R=\mathbf v$,
proving \eqref{eq:recovered-reflection-symmetry}.  (The restriction of
$\mathbf v$ to the shorter interval still solves
\eqref{eq:nonlinear-doubled-problem} there, with the trace $\mathbf h$
restricted to it, and the bounds on $\|\mathbf v\|_{W_q^{2,1}}$ and
$\|\mathbf v^R\|_{W_q^{2,1}}$ do not increase.)

At $r=0$, the symmetry gives
$\mathbf v=\mathcal R_\Lambda(\mathbf v)$.  The fixed-point set of the normal
reflection is exactly $\Lambda$.  Differentiating the symmetry at $r=0$ gives
\[
    D\mathcal R_\Lambda(\mathbf v)\partial_r\mathbf v
    =
    -\partial_r\mathbf v.
\]
By \eqref{eq:reflection-differential}, the tangential component of
$\partial_r\mathbf v$ vanishes.  This proves
\eqref{eq:recovered-interface-conditions}.
\end{proof}

\subsection{Preservation of the target manifolds}

\begin{proposition}[Preservation of a smooth target]
\label{prop:target-preservation}
Let $N\subset\mathbb R^L$ be a compact smooth embedded submanifold, let $D$ be
a compact smooth manifold of dimension $d$ with boundary, let $q>d+2$, and let
\[
    \mathscr L:=\partial_t-a^{ij}(x,t)\partial_{ij}-b^k(x,t)\partial_k,
\]
where $a^{ij}=a^{ji}$, the coefficients $a^{ij}$ and $b^k$ are bounded, and
\[
    a^{ij}(x,t)\xi_i\xi_j\ge\lambda_0|\xi|^2
\]
for some $\lambda_0>0$. Choose $\rho_N>0$ so that the nearest-point
projection
\[
    \Pi_N:\operatorname{Tub}_{\rho_N}(N)\longrightarrow N
\]
is smooth, and define
\[
    \widetilde{\mathrm{II}}:=-D^2\Pi_N
\]
on this tubular neighborhood. Suppose that
\[
    U\in W_q^{2,1}(D_T;\mathbb R^L)
\]
solves
\begin{equation}\label{eq:ambient-target-equation}
    \mathscr L U
    =a^{ij}\widetilde{\mathrm{II}}(U)(\partial_iU,\partial_jU),
\end{equation}
with
\[
    U(\overline D\times[0,T])
    \Subset\operatorname{Tub}_{\rho_N}(N),
\]
and that its initial and lateral values lie in $N$:
\[
    U(\cdot,0)\in N,
    \qquad
    U|_{\partial D\times(0,T)}\in N.
\]
Then
\[
    U(x,t)\in N
    \qquad\text{throughout }D\times[0,T].
\]
\end{proposition}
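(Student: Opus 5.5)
The approach is the classical Chen--Struwe argument: show that the squared distance to $N$ satisfies a linear parabolic inequality and vanishes on the parabolic boundary. Set
\[
    \rho(x,t):=U(x,t)-\Pi_N(U(x,t)),\qquad \varphi:=|\rho|^2 .
\]
Since $\Pi_N$ is smooth on the tube and $q>d+2$, we have $U\in C^{1+\alpha,(1+\alpha)/2}$, and $\rho\in W_q^{2,1}$ by the chain rule, with $\nabla U$ bounded. By hypothesis $\rho=0$ at $t=0$ and on $\partial D\times(0,T)$.

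\emph{Step 1 (equation for $\rho$).} Write $P(z):=D\Pi_N(z)$. The chain rule gives
\[
    \mathscr L(\Pi_N(U))
    = P(U)\mathscr LU - a^{ij}D^2\Pi_N(U)(\partial_iU,\partial_jU)
    = P(U)\mathscr LU + a^{ij}\widetilde{\mathrm{II}}(U)(\partial_iU,\partial_jU).
\]
Subtracting this from \eqref{eq:ambient-target-equation} yields
\[
    \mathscr L\rho=(I-P(U))\,a^{ij}\widetilde{\mathrm{II}}(U)(\partial_iU,\partial_jU).
\]
The right-hand side has the form $(I-P(U))F$ with $F$ bounded, since $\nabla U\in L^\infty$.

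\emph{Step 2 (key geometric identity).} For $z$ in the tube, $z-\Pi_N(z)$ is normal to $N$ at $\Pi_N(z)$. We need $\rho\cdot(I-P(U))F$ to be $O(|\rho|^2)$, or better $\rho\cdot(I-P(U))F=\rho\cdot F$ up to $O(|\rho|^2)$. The plan is to use two facts. First, on $N$ the value $\widetilde{\mathrm{II}}=\mathrm{II}$ is normal, so $P\,\widetilde{\mathrm{II}}=0$ there. Second, $\rho\cdot\widetilde{\mathrm{II}}(U)(X,Y)$ is Lipschitz in $U$ and, evaluated at $\Pi_N(U)$, equals $\rho\cdot\mathrm{II}_{\Pi_N U}(X,Y)$, which does not vanish in general. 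So the naive cancellation fails, and this is the step I expect to be the main obstacle.

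The way around it is not to require the right-hand side of the $\rho$-equation to vanish. Instead we only need
\[
    \mathscr L\varphi\le C(|\varphi|+|\nabla\varphi|)
\]
up to a nonpositive term. Compute
\[
    \mathscr L\varphi = 2\rho\cdot\mathscr L\rho-2a^{ij}\partial_i\rho\cdot\partial_j\rho .
\]
For the cross term, I would use the standard identity that for $z$ near $N$ the vector $z-\Pi_N(z)$ lies in the range of $I-P(z)$ only approximately; more useful is the observation that $(I-P(z))$ annihilates tangent directions, so $|(I-P(U))F|\le C|F|$ and
\[
    2\rho\cdot(I-P(U))F = 2\rho\cdot F_\perp .
\]
If this cannot be made $O(|\rho|^2)$, the fallback is to modify the argument and instead test the $\rho$-equation directly. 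The key fact I will aim to prove is that $(I-P(z))\widetilde{\mathrm{II}}(z)(X,Y)=O(\operatorname{dist}(z,N))|X||Y|$. One verifies this by differentiating the identity $\Pi_N\circ\Pi_N=\Pi_N$ twice, which gives
\[
    P(\Pi z)D^2\Pi(z)(X,Y)+D^2\Pi(\Pi z)(PX,PY)=D^2\Pi(z)(X,Y).
\]
Hence $(I-P(\Pi z))D^2\Pi(z)=D^2\Pi(\Pi z)(P\cdot,P\cdot)$, whose normal part is the second fundamental form, and comparing $z$ with $\Pi z$ costs $O(|\rho|)$. If the leading term survives, I would use $(I-P(z))$ versus $(I-P(\Pi z))$ together with $D\Pi(z)\rho=0+O(|\rho|^2)$. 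The precise bookkeeping is the delicate part.

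\emph{Step 3 (energy/maximum argument).} Granted $|\mathscr L\rho|\le C|\rho|$, multiply by $|\rho|^{q'-2}\rho$, or simply by $\rho$, and integrate over $D$. Since $\rho$ vanishes on $\partial D$, integrating by parts in the principal term is legitimate, with $a^{ij}$ Lipschitz (our applications) or handled via the nondivergence maximum principle otherwise. For bounded measurable coefficients I would instead apply the Aleksandrov--Krylov--Tso parabolic maximum principle to $\varphi e^{-Kt}$, which is in $W_q^{2,1}\subset W_{d+1}^{2,1}$. This gives
\[
    \mathscr L(e^{-Kt}\varphi)\le -a^{ij}\partial_i\rho\cdot\partial_j\rho\,e^{-Kt}+C\varphi e^{-Kt}-K\varphi e^{-Kt}\le0
\]
for $K>C$, together with zero parabolic-boundary values. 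Therefore $\varphi\le0$, so $\rho\equiv0$ and $U\in N$ throughout $D\times[0,T]$.
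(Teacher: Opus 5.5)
There is a genuine gap, and it starts with an algebra slip in Step~1. Your chain-rule identity $\mathscr L(\Pi_N(U))=P(U)\mathscr LU+a^{ij}\widetilde{\mathrm{II}}(U)(\partial_iU,\partial_jU)$ is correct. But the equation says $\mathscr LU=a^{ij}\widetilde{\mathrm{II}}(U)(\partial_iU,\partial_jU)$, so subtraction gives
\[
    \mathscr L\rho
    =\mathscr LU-P(U)\mathscr LU-\mathscr LU
    =-P(U)\,a^{ij}\widetilde{\mathrm{II}}(U)(\partial_iU,\partial_jU)
    =a^{ij}D\Pi_N(U)\,D^2\Pi_N(U)(\partial_iU,\partial_jU),
\]
not $(I-P(U))\,a^{ij}\widetilde{\mathrm{II}}(U)(\partial_iU,\partial_jU)$; you dropped the final $-\mathscr LU$.

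Step~2 then tries to control a term that is not there, and it cannot succeed. Your ``key fact'' $(I-P(z))\widetilde{\mathrm{II}}(z)(X,Y)=O(\operatorname{dist}(z,N))|X||Y|$ is false: for $z\in N$ and tangent $X,Y$ the left side is $\mathrm{II}_z(X,Y)$, exactly as your own identity from $\Pi_N\circ\Pi_N=\Pi_N$ shows. With your formula one would only get $\rho\cdot\mathscr L\rho=O(|\rho|)$, i.e.\ $\mathscr L\varphi\le C\sqrt{\varphi}$, which the maximum principle cannot close. The premise of Step~3, $|\mathscr L\rho|\le C|\rho|$, is also never established. It does not follow from the structure even with the corrected formula, since the mixed tangent--normal part of $D^2\Pi_N$, fed with the normal part of $\nabla U$, is only $O(|\nabla\rho|)$.

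The missing observation is that the corrected equation needs no estimate at all. Because $\Pi_N$ maps the tube into $N$, the range of $D\Pi_N(U)$ lies in $T_{\Pi_N(U)}N$, so $\mathscr L\rho$ is tangent to $N$ at $\Pi_N(U)$. On the other hand, $\rho=U-\Pi_N(U)$ is normal to $N$ there. Hence $\rho\cdot\mathscr L\rho=0$ almost everywhere, and
\[
    \mathscr L\varphi
    =2\rho\cdot\mathscr L\rho-2a^{ij}\partial_i\rho\cdot\partial_j\rho
    \le-2\lambda_0|\nabla\rho|^2\le0 .
\]
Since $\varphi\in W_q^{2,1}\subset W_{d+1}^{2,1}$ is continuous and vanishes on the parabolic boundary, the Aleksandrov--Krylov--Tso maximum principle you invoke gives $\varphi\equiv0$, now without any $e^{-Kt}$ weight. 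This is precisely the paper's proof. Your framework (squared distance, chain rule, nondivergence maximum principle for strong subsolutions) is the right one, but as written the decisive step rests on a miscomputed equation and a false lemma.
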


\begin{proof}
Write $\Pi:=\Pi_N$ and set
\[
    W:=U-\Pi(U),
    \qquad
    \rho:=\frac12|W|^2.
\]
Since $q>d+2$, the parabolic Sobolev embedding gives
$\nabla U\in L^\infty$.  The compact range assumption and the Sobolev chain
and product rules imply
\[
    \Pi(U),\ W,\ \rho
    \in W_q^{2,1}(D_T)\cap C(\overline D\times[0,T]),
\]
and all identities below hold almost everywhere.  The scalar-principal chain
rule gives
\begin{equation}\label{eq:projection-chain-rule}
    \mathscr L\Pi(U)
    =D\Pi(U)\mathscr L U
     -a^{ij}D^2\Pi(U)(\partial_iU,\partial_jU).
\end{equation}
Using \eqref{eq:ambient-target-equation} and
$\widetilde{\mathrm{II}}=-D^2\Pi$ therefore yields
\begin{equation}\label{eq:normal-displacement-preliminary}
    \mathscr L W
    =a^{ij}D\Pi(U)D^2\Pi(U)
      (\partial_iU,\partial_jU).
\end{equation}
For every point in the tubular neighborhood, $W=U-\Pi(U)$ is normal to $N$ at
$\Pi(U)$, whereas the range of $D\Pi(U)$ is contained in
$T_{\Pi(U)}N$.  Consequently,
\[
    W\cdot\mathscr L W=0.
\]
A second application of the scalar chain rule gives, almost everywhere,
\begin{equation}\label{eq:normal-distance-subsolution}
\begin{aligned}
    \mathscr L\rho
    &=W\cdot\mathscr L W-a^{ij}\partial_iW\cdot\partial_jW\\
    &=-a^{ij}\partial_iW\cdot\partial_jW\\
    &\le-\lambda_0|\nabla W|^2
    \le0.
\end{aligned}
\end{equation}
The initial and lateral boundary assumptions imply that $\rho=0$ on the
parabolic boundary.  Since $q>d+2$,
$\rho\in W_q^{2,1}(D_T)\subset
W_{d+1,\mathrm{loc}}^{2,1}(D_T)$ and
$\rho\in C(\overline D\times[0,T])$.  Together with
$\mathscr L\rho\le0$ almost everywhere, the parabolic
Aleksandrov maximum principle for strong Sobolev subsolutions
\cite[Chapter~VII, Theorem~7.1]{Lieberman1996} gives
\[
    \sup_{\overline D\times[0,T]}\rho
    \le\sup_{\partial_pD_T}\rho=0.
\]
Since $\rho\ge0$, one has $\rho\equiv0$, hence $U=\Pi(U)\in N$.
\end{proof}

For each $\sigma\in\{+,-\}$, gluing the collar and bulk fields across their
overlap defines a single fixed-domain field $U^\sigma$ on $\Omega_0^\sigma$.
In the collar it is the corresponding component of $\mathbf v$, and away
from the interface it is $w^\sigma$. By construction, it is the pullback of
the physical field by the ambient isotopy $\Psi_t$ from
\Cref{lem:fermi-compatible-parametrization}:
\[
    U^\sigma(y,t)=u^\sigma(\Psi_t(y),t).
\]
It satisfies on $\Omega_0^\sigma$ an equation of the form
\begin{equation}\label{eq:global-fixed-target-equation}
    \partial_tU^\sigma
    -a_b^{\sigma,ij}\partial_{ij}U^\sigma
    -b_b^{\sigma,k}\partial_kU^\sigma
    =a_b^{\sigma,ij}\widetilde{\mathrm{II}}^\sigma(U^\sigma)
      (\partial_iU^\sigma,\partial_jU^\sigma),
\end{equation}
where $(a_b^{\sigma,ij})$ is the symmetric inverse pullback metric and is
uniformly elliptic.  On the collar, $\Psi_t\circ\Phi_0^\sigma=\Phi_t^\sigma$;
the resulting equation agrees with the one-sided collar equation on the whole
collar $0<r<2\delta$ by \Cref{lem:coefficient-agreement-collar}.  By
\Cref{lem:recover-interface-conditions}, the boundary trace
on $\Gamma_0$ lies in $N^\sigma$, and the initial value lies in $N^\sigma$.
Since the fixed state belongs to $\mathcal A_T$,
\eqref{eq:frozen-target-tubes} shows that the whole range of $U^\sigma$ is
contained in $\operatorname{Tub}_{\rho_N/2}(N^\sigma)$, on which
\eqref{eq:target-nonlinearity-extension} agrees with
$-D^2\Pi_{N^\sigma}$.  Applying
\Cref{prop:target-preservation} to \eqref{eq:global-fixed-target-equation}
gives
\begin{equation}\label{eq:target-preserved}
    u^\sigma(x,t)\in N^\sigma,
    \qquad
    x\in\Omega_t^\sigma,
    \quad 0\le t\le T.
\end{equation}

\subsection{Local regularity on the physical side}

\begin{lemma}[Adapted boundary normal form]
\label{lem:adapted-boundary-normal-form}
Let
\[
    m:=\dim\mathbf N,
    \qquad
    k:=\dim\Lambda.
\]
Near each value attained by the interface trace, choose a relative Fermi chart
\[
    \Theta:V\subset\mathbb R^k\times\mathbb R^{m-k}
    \longrightarrow\mathbf N,
    \qquad
    y=(\vartheta,\zeta),
\]
for which $\Lambda=\{\zeta=0\}$.  Wherever
$\mathbf v=\Theta(y)$, the physical collar equation has the coordinate form
\begin{equation}\label{eq:adapted-target-coordinate-system}
    \partial_ty
    -\mathcal A^{ij}(p,r,t,y)\partial_{ij}y
    -\mathcal C^k(p,r,t,y)\partial_ky
    =\mathcal B(p,r,t,y,\nabla y),
\end{equation}
where $\mathcal A^{ij}$, $\mathcal C^k$, and $\mathcal B$ are smooth in their
displayed arguments, $\mathcal B$ is quadratic in $\nabla y$, and the principal
symbol is uniformly parameter elliptic on compact
subsets of $V$.  The Fermi structure gives
\begin{equation}\label{eq:adapted-normal-principal-matrices}
    \mathcal A^{ar}=0,
    \qquad
    \mathcal A^{rr}=I_m
\end{equation}
throughout the charted collar.  The interface conditions become
\begin{equation}\label{eq:adapted-mixed-boundary-condition}
    \zeta|_{r=0}=0,
    \qquad
    \partial_r\vartheta|_{r=0}=0.
\end{equation}
At $r=0$, the remaining principal matrices satisfy
\begin{equation}\label{eq:adapted-boundary-principal-matrices}
    \mathcal A^{ab}=a_{c,0}^{ab}(p,t)I_m,
\end{equation}
where $(a_{c,0}^{ab})$ is the inverse metric induced on $\Gamma_t$ and pulled back
to $\Gamma_0$.  Consequently, the mixed boundary operator in
\eqref{eq:adapted-mixed-boundary-condition} satisfies the uniform
Lopatinskii--Shapiro condition.
\end{lemma}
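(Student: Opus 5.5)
The plan is to build the chart $\Theta$ from the normal exponential map of $\Lambda$ inside $\mathbf N$ and then express the physical collar system in these target coordinates. Concretely, near $\mathbf p_0\in\Lambda$ I take coordinates $\vartheta$ on $\Lambda$ and an orthonormal frame $e_\beta(\vartheta)$ of $T^{\perp,\mathbf N}_{\Theta_\Lambda(\vartheta)}\Lambda$, and set $\Theta(\vartheta,\zeta):=\exp^{\mathbf N}_{\Theta_\Lambda(\vartheta)}(\zeta^\beta e_\beta(\vartheta))$, using the product metric on $\mathbf N=N^+\times N^-$. Then $\Lambda=\{\zeta=0\}$, and along $\zeta=0$ the pulled-back metric $g_{\alpha\beta}(y):=\partial_\alpha\Theta\cdot\partial_\beta\Theta$ is block diagonal, with $g_{\vartheta\zeta}=0$. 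Since $\mathbf v$ takes values in $\mathbf N$ by \eqref{eq:target-preserved}, and $\mathbf v(p,0,t)\in\Lambda$ by \Cref{lem:recover-interface-conditions}, continuity lets $y=\Theta^{-1}(\mathbf v)$ be defined on a small neighbourhood of each interface point, with $y\in W_q^{2,1}$ there.

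Next I derive \eqref{eq:adapted-target-coordinate-system}. For each phase, the collar equation \eqref{eq:collar-equation-component} with target values in $N^\sigma$ is the harmonic map heat flow written intrinsically. In target coordinates it becomes
\[
    \partial_t y^\gamma-a_c^{\sigma,ij}\partial_{ij}y^\gamma-b_c^{\sigma,k}\partial_k y^\gamma
    =a_c^{\sigma,ij}\Gamma^\gamma_{\alpha\beta}(y)\partial_iy^\alpha\partial_jy^\beta,
\]
with $\Gamma$ the Christoffel symbols of $\mathbf N$. The phase splitting needs care: the two phases carry different scalar operators $a_c^{+,ij}$ and $a_c^{-,ij}$. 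I therefore write $\mathbf v=\Theta(y)$, apply the componentwise operator $\mathsf A_c^{ij}$, and invert using $D\Theta(y)^{-1}$ composed with the tangent projection. This gives $\mathcal A^{ij}(p,r,t,y)=D\Theta(y)^{+}\mathsf A_c^{ij}D\Theta(y)$, which is smooth in $y$ and a similarity transform of a direct sum of scalar elliptic symbols restricted to the invariant subspace $T\mathbf N=TN^+\oplus TN^-$. The invariance holds because $\mathsf A_c^{ij}$ acts as a scalar on each factor, and each $T N^\sigma$ sits in its own factor. Parameter ellipticity on compact subsets of $V$ then follows exactly as in \Cref{prop:frozen-coefficients}. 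The term $\mathcal C^k$ comes the same way from $\mathsf B_c^k$, and $\mathcal B$ collects the second-fundamental-form and $D^2\Theta$ terms, which are quadratic in $\nabla y$. The Fermi relations $a_c^{\sigma,ar}=0$ and $a_c^{\sigma,rr}=1$ give $\mathsf A_c^{ar}=0$ and $\mathsf A_c^{rr}=\mathrm{Id}$, hence \eqref{eq:adapted-normal-principal-matrices}. At $r=0$ the two tangential metrics coincide, as noted in the proof of \Cref{prop:frozen-coefficients}, so $\mathsf A_c^{ab}=a_{c,0}^{ab}\mathrm{Id}$ and this gives \eqref{eq:adapted-boundary-principal-matrices}.

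The boundary conditions follow directly. The condition $\mathbf v\in\Lambda$ is $\zeta=0$. Given this, $\partial_r\mathbf v=\partial_\vartheta\Theta\,\partial_r\vartheta+\partial_\zeta\Theta\,\partial_r\zeta$. At $\zeta=0$, the vectors $\partial_\zeta\Theta=e_\beta$ are normal to $\Lambda$ and the vectors $\partial_\vartheta\Theta$ span $T\Lambda$. Since $\partial_r\mathbf v\in T\mathbf N$, the ambient orthogonality is the same as relative orthogonality, and it forces $\partial_r\vartheta=0$ because $D\Theta_\Lambda$ is injective.

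For the Lopatinskii--Shapiro condition I freeze at a boundary point. By \eqref{eq:adapted-normal-principal-matrices} and \eqref{eq:adapted-boundary-principal-matrices}, the frozen operator is the scalar operator $\partial_t-\partial_r^2-a_{c,0}^{ab}\partial_a\partial_b$ acting on $\mathbb R^m$. It therefore decouples into $m$ scalar equations: Dirichlet for the $\zeta$ components and Neumann for the $\vartheta$ components. Each decaying half-line solution is $c\,e^{-\sqrt{\lambda+a_{c,0}^{ab}\xi_a\xi_b}\,r}$. Homogeneous Dirichlet or Neumann data force $c=0$, and the real part of the root is bounded below uniformly for $\lambda\in\overline{\mathfrak S}_{\pi-\theta}$. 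Uniformity over $(p,t)$ follows by compactness.

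I expect the main obstacle to be the identity $\mathcal A^{ab}=a_{c,0}^{ab}I_m$ at $r=0$. This holds because the phase metrics agree there, so the non-scalar mixing introduced by $D\Theta$ disappears only on the interface. Away from $r=0$, $\mathcal A^{ab}$ is not scalar, so the argument must freeze exactly at $r=0$.
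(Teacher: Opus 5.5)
Your proposal is correct and follows the paper's proof essentially step for step. The shared steps are: conjugating $\mathsf A_c^{ij}$ and $\mathsf B_c^k$ by $D\Theta(y)$ on the invariant subspace $T\mathbf N$; reading off $\mathcal A^{ar}=0$, $\mathcal A^{rr}=I_m$ and the scalar tangential block at $r=0$ from the Fermi structure; using the orthogonality of $D_\vartheta\Theta$ and $D_\zeta\Theta$ along $\zeta=0$ for the mixed conditions; and checking Lopatinskii--Shapiro componentwise on the decaying modes $c\,e^{-\mu r}$. The only blemish is your intermediate display with Christoffel symbols of $\mathbf N$ and a single scalar $a_c^{\sigma,ij}$, which does not hold for a relative Fermi chart that mixes the two factors; you replace it by the conjugation formula, so nothing later depends on it.
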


\begin{proof}
Set $J(y):=D\Theta(y)$, viewed as an isomorphism from $\mathbb R^m$ onto
$T_{\Theta(y)}\mathbf N$.  The pair coefficient $\mathsf A_c^{ij}$ preserves
$T\mathbf N$, because it acts by the scalar $a_c^{+,ij}$ on the first phase
block and by the scalar $a_c^{-,ij}$ on the second.  Substituting
$\mathbf v=\Theta(y)$ into \eqref{eq:physical-collar-problem} and applying the
inverse of $J(y)$ on its tangent image gives
\[
\begin{aligned}
    \mathcal A^{ij}(p,r,t,y)
    &:=J(y)^{-1}\circ
      \mathsf A_c^{ij}(p,r,t)|_{T_{\Theta(y)}\mathbf N}
      \circ J(y),\\
    \mathcal C^k(p,r,t,y)
    &:=J(y)^{-1}\circ
      \mathsf B_c^k(p,r,t)|_{T_{\Theta(y)}\mathbf N}
      \circ J(y).
\end{aligned}
\]
Here $\mathsf B_c^k$ also preserves $T\mathbf N$, by its phasewise scalar form.
The terms involving $D^2\Theta$ and $\mathbf Q_c$ form the smooth quadratic
function $\mathcal B$.  For a covector $\xi$, the matrix
$\mathcal A^{ij}\xi_i\xi_j$ is similar to the restriction to $T\mathbf N$ of
\[
    \operatorname{diag}
    \bigl(a_c^{+,ij}\xi_i\xi_j I_L,
          a_c^{-,ij}\xi_i\xi_j I_L\bigr).
\]
Uniform ellipticity of the two Fermi metrics and bounded condition numbers for
$J(y)$ on compact coordinate subsets give the parameter-elliptic
resolvent estimate for these matrices.  Notice that these matrices need not be scalar when
$r>0$.

Since $a_c^{\sigma,ar}=0$ and $a_c^{\sigma,rr}=1$ throughout the spatial Fermi
collar, similarity by $J(y)$ gives
\eqref{eq:adapted-normal-principal-matrices}.  At $r=0$, the two one-sided
Fermi metrics have the same tangential trace, so
$\mathsf A_c^{ab}=a_{c,0}^{ab}I_{2L}$; this proves
\eqref{eq:adapted-boundary-principal-matrices}.  The relative Fermi chart also
makes $D_\vartheta\Theta(\vartheta,0)$ orthogonal to
$D_\zeta\Theta(\vartheta,0)$, so the trace condition and
$\partial_r\mathbf v\in T^{\perp,\mathbf N}\Lambda$ are exactly
\eqref{eq:adapted-mixed-boundary-condition}.

Finally freeze the principal coefficients at a boundary point and take the
Laplace--Fourier transform in $(t,p)$.  For
$\lambda\in\overline{\mathfrak S}_{\pi-\theta}$ and
$(\lambda,\xi)\ne(0,0)$, every target component has a unique stable normal
mode $c\exp(-\mu r)$, where
\[
    \mu^2=\lambda+a_{c,0}^{ab}\xi_a\xi_b,
    \qquad
    \operatorname{Re}\mu>0.
\]
The Dirichlet condition annihilates the amplitudes in the $\zeta$ components,
whereas the Neumann condition gives $-\mu c=0$ in the $\vartheta$ components.
The only stable solution with homogeneous boundary data is therefore zero, and
uniformity follows from compactness of the boundary coefficient set and the
uniform ellipticity of $(a_{c,0}^{ab})$.
\end{proof}

\begin{lemma}[Compatibility in adapted target coordinates]
\label{lem:adapted-compatibility}
In the relative Fermi coordinates of
\Cref{lem:adapted-boundary-normal-form}, let
$y=(\vartheta,\zeta)$ and generate the formal time jets
$y_j=\partial_t^jy|_{t=0}$ recursively from
\eqref{eq:adapted-target-coordinate-system}.  Then the geometric
compatibility conditions in \Cref{subsec:compatibility-conditions} are equivalent
to the standard mixed-boundary compatibility conditions
\begin{equation}\label{eq:adapted-compatibility-hierarchy}
    (y_j)_\zeta|_{r=0}=0,
    \qquad
    \partial_r(y_j)_\vartheta|_{r=0}=0,
    \qquad j=0,1,2,\ldots,
\end{equation}
in every adapted chart.  The all-order compatibility assumption is thus
exactly the corner compatibility required for repeated mixed
Dirichlet--Neumann parabolic regularity.
\end{lemma}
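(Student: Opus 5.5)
The plan is to show that the geometric compatibility conditions of \Cref{subsec:compatibility-conditions} and the adapted mixed conditions \eqref{eq:adapted-compatibility-hierarchy} are the same statements written in two coordinate systems. The geometric conditions are, order by order in time, the two interface conditions \eqref{eq:collar-interface-conditions} applied to the formal time jets of the collar field $\mathbf v$. Precisely, $\partial_t^j$ of the identities $\mathbf v(p,0,t)\in\Lambda$ and $\partial_r\mathbf v(p,0,t)\perp T_{\mathbf v(p,0,t)}\Lambda$ is evaluated at $t=0$, with $\mathbf v_j:=\partial_t^j\mathbf v|_{t=0}$ generated recursively from \eqref{eq:physical-collar-problem}. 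On the chart side, I will use that $\Lambda=\{\zeta=0\}$ in the relative Fermi chart $\Theta$. The constraint $\mathbf v|_{r=0}\in\Lambda$ on a time interval is then equivalent to $\zeta|_{r=0}\equiv0$. The conormal condition is equivalent to $\partial_r\vartheta|_{r=0}\equiv0$, by the orthogonality of $D_\vartheta\Theta(\vartheta,0)$ and $D_\zeta\Theta(\vartheta,0)$ noted in the proof of \Cref{lem:adapted-boundary-normal-form}.

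First I would check that the jets correspond. If $\mathbf v=\Theta(y)$ is smooth, the two jet recursions are related by Faà di Bruno. Each $\mathbf v_j$ is a universal polynomial in $y_0,\dots,y_j$ and their spatial derivatives, with coefficients given by derivatives of $\Theta$ at $y_0$. Its top term is $D\Theta(y_0)\,y_j$, and the remaining terms involve only $y_0,\ldots,y_{j-1}$. Because \eqref{eq:adapted-target-coordinate-system} was obtained from \eqref{eq:physical-collar-problem} by the substitution $\mathbf v=\Theta(y)$ followed by applying $J(y)^{-1}$, the $y_j$ defined by the adapted recursion are exactly the Taylor coefficients of $y=\Theta^{-1}(\mathbf v)$. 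These jets are formal; I would realize them by a smooth local function with the prescribed time-Taylor jet via Borel's lemma, so that the jet computations become honest derivative computations.

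Next I would proceed by induction on the order $j$, assuming equivalence up to order $j-1$. Expanding $\partial_t^j\bigl[\zeta(\Theta^{-1}(\mathbf v))\bigr]|_{r=0,t=0}$, the top term is $D\zeta\,D\Theta^{-1}\mathbf v_j$. All lower terms contain a factor that vanishes when the lower-order adapted conditions hold, in particular factors $(y_i)_\zeta|_{r=0}=0$ and their tangential derivatives. Hence, under the inductive hypothesis, the geometric Dirichlet condition of order $j$, namely that the $j$-th jet of the curve $t\mapsto\mathbf v(p,0,t)$ is tangent to $\Lambda$ in the sense of jets, is equivalent to $(y_j)_\zeta|_{r=0}=0$.

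For the conormal part I would differentiate $\partial_r\mathbf v=D\Theta(y)\partial_ry$ $j$ times in $t$ and pair it with the tangent frame $D_\vartheta\Theta(\vartheta,\zeta)$ along $r=0$. The lower-order terms are products of lower jets, and each contains either a factor $\zeta$-jet at $r=0$ or a factor $\partial_r\vartheta$-jet at $r=0$; both vanish by the induction hypothesis together with the order-$j$ Dirichlet statement just proved. The surviving term is $D_\vartheta\Theta^\top D_\vartheta\Theta\,\partial_r(y_j)_\vartheta$. Since the Gram matrix $D_\vartheta\Theta^\top D_\vartheta\Theta$ is invertible, this gives the Neumann condition of order $j$.

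I expect the main obstacle to be the bookkeeping in this inductive step. One has to verify that every cross term in the Faà di Bruno expansion really carries a vanishing factor. The delicate terms are those containing $D^2\Theta(y)(\partial_ry,\partial_ty)$, together with the variation of the normal space $T^{\perp}\Lambda$ along the jet of the base point. To control them, I would use that the relative Fermi chart has $\partial_\zeta\partial_\vartheta\Theta(\vartheta,0)$ tangent to $\Lambda$ up to terms normal to $\mathbf N$. The invariance of the conditions under change of adapted chart then follows from the invariant geometric formulation, and the final sentence of the statement is \Cref{lem:adapted-boundary-normal-form} combined with the standard corner theory for mixed Dirichlet--Neumann parabolic systems.
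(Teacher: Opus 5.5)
Your skeleton matches the paper's: $\Lambda=\{\zeta=0\}$, orthogonality of $D_\vartheta\Theta$ and $D_\zeta\Theta$ on $\{\zeta=0\}$, an invertible Gram matrix, induction on the jet order, and identification of the actual jets with the formal $y_j$. The rest of your plan (Borel realization of the jets, chart invariance) is fine. But two steps fail as you have written them.

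\textbf{The Dirichlet step.} You expand $\partial_t^j[\zeta(\Theta^{-1}(\mathbf v))]$ in the $\mathbf v_i$ and claim the non-top terms carry vanishing factors. They do not. If $f$ vanishes on $\Lambda$ and $\gamma\subset\Lambda$, then $D^2f(\gamma',\gamma')=-Df(\gamma'')$, which is nonzero in general when $\Lambda$ is curved. So already at $j=2$ the term $D^2(\zeta\circ\Theta^{-1})(\mathbf v_1,\mathbf v_1)$ survives even with $\mathbf v_1\in T\Lambda$. Correspondingly, the order-two geometric condition is not tangency of the second jet; it reads $(I-D\Pi_\Lambda)\ddot{\mathbf u}^\Gamma=D^2\Pi_\Lambda(\dot{\mathbf u}^\Gamma,\dot{\mathbf u}^\Gamma)$ at $t=0$. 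The expansion is superfluous anyway, since $\zeta\circ\Theta^{-1}(\mathbf v)$ is just $\zeta$.

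What is actually missing is the link between the paper's defect $\mathbf u^\Gamma-\Pi_\Lambda(\mathbf u^\Gamma)$ and $\zeta$, which you replace by the undefined phrase ``tangent in the sense of jets''. The paper supplies it as follows. In the chart the defect is $F(\vartheta,\zeta)$ with $F(\vartheta,0)=0$ and $D_\zeta F(\vartheta,0)=D_\zeta\Theta(\vartheta,0)$ an isomorphism onto $T^{\perp,\mathbf N}\Lambda$. Hence $F=G(\vartheta,\zeta)\zeta$ with $G(\vartheta,0)$ injective. A Leibniz induction then shows that the $t$-jets of $F$ at $r=0$ vanish through order $m$ if and only if those of $\zeta$ do.

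\textbf{The conormal step.} The tool you propose for the delicate terms is false in general. $\partial_\vartheta\partial_\zeta\Theta(\vartheta,0)$ is the derivative along $\Lambda$ of the normal frame $D_\zeta\Theta(\vartheta,0)$. Its component in $T^{\perp,\mathbf N}\Lambda$ is the normal connection of $\Lambda\subset\mathbf N$, which cannot vanish identically on an open set unless that connection is flat; the theorem allows $\Lambda$ of any codimension.

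No second-order chart geometry is needed. Once $\zeta|_{r=0}$ has vanishing $t$-jets through order $m$, so does $h(\vartheta,\zeta)|_{r=0}$ for every smooth $h$ with $h(\vartheta,0)=0$. Apply this to:
\begin{itemize}
\item $D\Pi_\Lambda(\Theta(\vartheta,\zeta))-D\Pi_\Lambda(\Theta(\vartheta,0))$;
\item $D_\vartheta\Theta(\vartheta,\zeta)-D_\vartheta\Theta(\vartheta,0)$;
\item $D\Pi_\Lambda(\Theta(\vartheta,0))\,D_\zeta\Theta(\vartheta,\zeta)$, which vanishes on $\{\zeta=0\}$ because $D_\zeta\Theta(\vartheta,0)$ is normal.
\end{itemize}
It follows that the conormal defect $D\Pi_\Lambda(\mathbf v)\partial_r\mathbf v|_{r=0}$ (up to sign) has the same $m$-jet as $D_\vartheta\Theta(\vartheta,0)\,\partial_r\vartheta|_{r=0}$, an injective matrix times $\partial_r\vartheta$. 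Leibniz induction then gives $\partial_r(y_j)_\vartheta|_{r=0}=0$ for $j\le m$. This is exactly the paper's argument.

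In this form the $D^2\Theta(\partial_ry,\partial_ty)$ terms you worry about never appear separately. If instead you differentiate $\partial_r\mathbf v$ first and pair with a frame frozen at the base point, you get the wrong condition. The reason is that $\partial_t^k[D\Pi_\Lambda(\mathbf v)]$ acts nontrivially, through the shape operator of $\Lambda$, on the normal vector $\partial_r\mathbf v$.
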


\begin{proof}
At $r=0$, the relative Fermi chart identifies the trace constraint with
$\zeta=0$.  More invariantly, the trace defect
$\mathbf v-\Pi_\Lambda(\mathbf v)$ is, in these coordinates, a smooth function
of $(\vartheta,\zeta)$ whose derivative in the $\zeta$ variables at
$\zeta=0$ is an isomorphism onto the relative normal space
$T^{\perp,\mathbf N}\Lambda$.
By induction on the order of the jet, vanishing of all time jets of the
geometric trace defect is equivalent to
$\partial_t^j\zeta|_{r=0,t=0}=0$.

At $\zeta=0$, the two coordinate subspaces are orthogonal and
\[
    \partial_r\mathbf v
    =D_\vartheta\Theta(\vartheta,0)\partial_r\vartheta
     +D_\zeta\Theta(\vartheta,0)\partial_r\zeta.
\]
The tangential projection of the conormal vector is therefore an invertible
smooth matrix, depending on $\vartheta$, times $\partial_r\vartheta$.
Vanishing of all time jets of the geometric conormal defect is thus equivalent,
again inductively, to
$\partial_r\partial_t^j\vartheta|_{r=0,t=0}=0$.
Finally, substituting the equation recursively for the time derivatives shows
that these actual time jets are precisely the formal jets $y_j$.  This proves
\eqref{eq:adapted-compatibility-hierarchy}.
\end{proof}

\begin{proposition}[Instantaneous smoothing on the physical phases]
\label{prop:physical-smoothness}
Under the hypotheses of \Cref{thm:main-short-time-solution}, for every
$0<\tau<T'<T$ the glued physical phase
fields are smooth up to the physical interface on the closed time strip
$\tau\le t\le T'$.
\end{proposition}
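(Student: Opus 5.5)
The plan is to bootstrap regularity in two regimes. Away from the interface we use interior parabolic Schauder theory for the bulk equation \eqref{eq:global-fixed-target-equation}. Near the interface we use the mixed Dirichlet--Neumann system of \Cref{lem:adapted-boundary-normal-form}. Time cutoffs remove the initial corner, so no compatibility conditions are needed.

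\textbf{Interior step.} Fix $0<\tau<T'<T$ and choose $\tau'\in(0,\tau)$. The fields $U^\sigma$ lie in $W_q^{2,1}\subset C^{1+\alpha,(1+\alpha)/2}$, so the right-hand side of \eqref{eq:global-fixed-target-equation} belongs to $C^{\alpha,\alpha/2}$. The coefficients are smooth. Multiply by a cutoff $\chi(t)$ with $\chi=0$ near $t\le\tau'$; then $\chi U^\sigma$ has zero initial data. Interior Schauder estimates give $C^{2+\alpha,1+\alpha/2}$ on compact subsets of $\Omega_0^\sigma\times(\tau',T]$. Differentiating the equation and iterating, over a nested sequence of times $\tau'<\tau_1<\dots<\tau$, gives $C^\infty$ away from $\Gamma_0$.

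\textbf{Near the interface.} Work in the collar $0\le r<2\delta$. For $t\in[\tau',T]$ the trace $\mathbf v(p,0,t)$ lies in $\Lambda$ by \Cref{lem:recover-interface-conditions}. By continuity of $\mathbf v$ and compactness of $\Gamma_0\times[\tau',T]$, we cover a small half-ball neighborhood of each boundary point by a relative Fermi chart $\Theta$. Since $\mathbf v$ takes values in $\mathbf N$ by \eqref{eq:target-preserved}, we may write $\mathbf v=\Theta(y)$. Here $y=(\vartheta,\zeta)\in W_q^{2,1}\cap C^{1+\alpha,(1+\alpha)/2}$ solves \eqref{eq:adapted-target-coordinate-system} with the boundary conditions \eqref{eq:adapted-mixed-boundary-condition}.

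We freeze $y$ inside the coefficients $\mathcal A^{ij}(y)$, $\mathcal C^k(y)$. This gives a linear system with $C^{\alpha,\alpha/2}$ coefficients and $C^{\alpha,\alpha/2}$ right-hand side $\mathcal B(y,\nabla y)$. The key structural point is \eqref{eq:adapted-normal-principal-matrices}--\eqref{eq:adapted-boundary-principal-matrices}: the principal part is uniformly parameter elliptic, and the Dirichlet condition on $\zeta$ together with the Neumann condition on $\vartheta$ satisfies the Lopatinskii--Shapiro condition. The system is therefore covered by Schauder theory for general parabolic boundary problems (Solonnikov / Ladyzhenskaya--Solonnikov--Ural'tseva, in the non-scalar form).

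Localize with a cutoff $\psi(p,r,t)$ vanishing near $t=\tau'$ and near the inner edge of the chart. The commutator produces lower-order terms. The boundary data of $\psi y$ remain homogeneous: $\psi\zeta=0$, and $\partial_r(\psi\vartheta)=(\partial_r\psi)\vartheta$ is a known $C^{1+\alpha}$ term, or it vanishes if $\partial_r\psi=0$ near $r=0$. Because the data vanish near the initial time, no corner compatibility is required. This yields $y\in C^{2+\alpha,1+\alpha/2}$ up to $r=0$.

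\textbf{Higher regularity.} Tangential derivatives $\partial_{p_a}$ and $\partial_t$ preserve the boundary conditions, since they commute with the trace on $r=0$. We differentiate the equation tangentially, or use difference quotients, to obtain equations of the same mixed type for $\partial_p y$ and $\partial_t y$ with forcing one order better. Schauder estimates then give one more order in tangential and time directions. Normal derivatives are recovered from the equation using $\mathcal A^{rr}=I_m$: $\partial_{rr}y$ is expressed through the other derivatives. Induction on the order, shrinking the time window slightly at each step (finitely many steps per order, with all of them contained in $(\tau',\tau]$), gives $C^\infty$ on $\Gamma_0\times[0,\delta]\times[\tau,T']$. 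Pulling back via $\Theta$ and the Fermi maps $\Phi_t^\sigma$ gives smoothness of each phase up to $\Gamma_t$.

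\textbf{Expected main obstacle.} The hardest part will be justifying the \emph{linear} Schauder theory for this non-scalar, non-diagonal mixed boundary system with merely H\"older coefficients. For $r>0$ the matrices $\mathcal A^{ij}$ are not scalar, so scalar theory does not apply directly. The first attempt is to cite the general Solonnikov Schauder theory for parabolic systems satisfying parameter ellipticity and the Lopatinskii--Shapiro condition, which \Cref{lem:adapted-boundary-normal-form} verifies uniformly. A second, smaller point is the first bootstrap from $W_q^{2,1}$ to $C^{2+\alpha}$: the quasilinear coefficients are only $C^{\alpha}$ in space at that stage, which is still enough for Schauder theory.
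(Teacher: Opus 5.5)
Your proposal is correct and follows essentially the paper's argument: relative Fermi charts on the physical half-collar, Solonnikov's mixed Dirichlet--Neumann Schauder estimate with a time cutoff so that no corner compatibility is needed, differentiation in $t$ and in the tangential variables, and recovery of normal derivatives from the equation using $\mathcal A^{rr}=I_m$, $\mathcal A^{ar}=0$. The one difference is the region away from $\Gamma_0$: you use interior Schauder estimates for the glued field on $\Omega_0^\sigma$, which is possible because the torus has no outer boundary, whereas the paper uses the smooth collar trace at $r=\delta$ as Dirichlet data for the bulk problems. Both routes work, and your small boundary half-ball charts also automatically stay away from the artificial boundary $r=2\delta$.
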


\begin{proof}
Only the physical half-collar $r\ge0$ is used.  By
\Cref{lem:recover-interface-conditions,prop:target-preservation},
\[
    \mathbf v(p,0,t)\in\Lambda,
    \qquad
    \mathbf v(p,r,t)\in\mathbf N
    \quad(r\ge0),
\]
and
\[
    \partial_r\mathbf v(p,0,t)
    \in T_{\mathbf v(p,0,t)}^{\perp,\mathbf N}\Lambda.
\]
On the compact strip $\tau/2\le t\le T'$, the boundary trace lies in finitely
many target charts from
\Cref{lem:adapted-boundary-normal-form}.  After localization in $(p,t)$, the
coordinate field $y=(\vartheta,\zeta)$ satisfies
\begin{equation}\label{eq:adapted-system-expanded}
    \partial_ty
    -\mathcal A^{ab}(p,r,t,y)\partial_{ab}y
    -\partial_{rr}y
    -\mathcal C^k(p,r,t,y)\partial_ky
    =\mathcal B(p,r,t,y,\nabla y),
\end{equation}
with
\[
    \zeta|_{r=0}=0,
    \qquad
    \partial_r\vartheta|_{r=0}=0.
\]
For the corresponding reduction on a fixed domain, see Chen--Lin
\cite[Remark~2.11]{ChenLin1998}.
Here we used
$\mathcal A^{ar}=0$ and $\mathcal A^{rr}=I_m$ from
\eqref{eq:adapted-normal-principal-matrices}.  The principal symbol is normally
elliptic, and the mixed boundary operator satisfies the uniform
Lopatinskii--Shapiro condition.

The artificial boundary data at $r=2\delta$ have only the trace regularity
supplied by the fixed-point construction. We therefore localize the initial
Schauder bootstrap to a half-collar bounded away from $r=2\delta$.
Choose a spatial cutoff supported in $0\le r<7\delta/4$ and equal to one on
$0\le r\le3\delta/2$, and a time cutoff that vanishes for
$t\le\tau/2$ and equals one on $[\tau,T']$.  The established regularity
\[
    y\in W_q^{2,1}
      \cap C^{1+\alpha,(1+\alpha)/2}
\]
implies that the localized coefficients and right-hand side in
\eqref{eq:adapted-system-expanded} belong to
$C^{\alpha,\alpha/2}$. The local mixed-boundary Schauder estimate for normally
parabolic systems therefore upgrades the regularity to
\[
    y\in C^{2+\alpha,1+\alpha/2}
\]
on a smaller half-collar and time strip; see Solonnikov
\cite{Solonnikov1965}.  The time cutoff removes the initial corner, so this
step requires no higher compatibility condition.

We now iterate.  Suppose that the localized field has been controlled through
some parabolic H\"older order.  Differentiate
\eqref{eq:adapted-system-expanded} in time and in the tangential variables.
Every differentiated field satisfies a linear normally parabolic system with
the same principal part and the same homogeneous block boundary operators:
Dirichlet on the $\zeta$ components and Neumann on the $\vartheta$ components.
All commutator terms involve only already controlled derivatives.  The same
interior-in-time mixed-boundary Schauder estimate therefore raises the
tangential and time regularity by one parabolic order.

To recover higher normal derivatives, solve
\eqref{eq:adapted-system-expanded} for $\partial_{rr}y$:
\begin{equation}\label{eq:recover-second-normal-derivative}
    \partial_{rr}y
    =\partial_ty
     -\mathcal A^{ab}(p,r,t,y)\partial_{ab}y
     -\mathcal C^k(p,r,t,y)\partial_ky
     -\mathcal B(p,r,t,y,\nabla y).
\end{equation}
Differentiating this identity in $t$, in the tangential variables, and $k$ times
in $r$ expresses $\partial_r^{k+2}y$ through derivatives of $y$ with at most
$k+1$ normal derivatives.  The preceding iteration controls all tangential and
time derivatives of $y$ together with up to two normal derivatives, so
induction on $k$ shows that all derivatives of $y$, and hence of $\mathbf v$,
are continuous up to $r=0$; that is,
\[
    \mathbf v\in
    C^\infty\bigl(\Gamma_0\times[0,3\delta/2]\times[\tau,T']\bigr).
\]
Since the cutoff vanishes before $r=2\delta$, this argument never uses more
regularity of the artificial Dirichlet datum than is supplied by the
fixed-point construction.

The trace at $r=\delta$ is therefore smooth, and it is the boundary datum for
each bulk problem.  The bulk equations have smooth coefficients and scalar
principal part.  Standard Dirichlet parabolic bootstrapping away from the
initial time gives
\[
    w^\sigma\in C^\infty(\overline{\mathcal D^\sigma}\times[\tau,T']).
\]
On $\delta<r<2\delta$, the bulk and collar fields coincide, so the fixed
phase field may be defined using the smooth collar for $r<3\delta/2$ and the
smooth bulk field for $r>\delta$; the definitions agree on their open overlap.
This produces a single smooth map on
$\overline{\Omega_0^\sigma}\times[\tau,T']$.  Composing with
$\Psi_t^{-1}$ gives smoothness on the corresponding moving time strip.
\end{proof}

\begin{proposition}[Regularity at the initial corner]
\label{prop:initial-corner-smoothness}
If, in addition, $u_0^\pm$ and the prescribed geometry are smooth and the
compatibility conditions of every order from
\Cref{subsec:compatibility-conditions} hold, then the strong solution is
smooth on the closed moving phase cylinders, including at $t=0$.
\end{proposition}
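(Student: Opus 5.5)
The plan is to run the bootstrap of \Cref{prop:physical-smoothness} without the time cutoff. The time cutoff is then replaced by the all-order compatibility conditions in the form of \Cref{lem:adapted-compatibility}. We work on the physical half-collar $0\le r\le 3\delta/2$ in the adapted target coordinates $y=(\vartheta,\zeta)$ of \Cref{lem:adapted-boundary-normal-form}. We localize with a spatial cutoff that vanishes near $r=2\delta$, so the artificial Dirichlet datum is never used. Near $t=0$ the boundary trace lies in finitely many relative Fermi charts by continuity, and on each chart $y$ solves \eqref{eq:adapted-system-expanded} with the mixed conditions $\zeta|_{r=0}=0$, $\partial_r\vartheta|_{r=0}=0$. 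The initial datum $y(\cdot,0)$ is smooth, and by \Cref{lem:adapted-compatibility} its formal time jets $y_j$ satisfy \eqref{eq:adapted-compatibility-hierarchy} for every $j$.

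First, we upgrade $y\in W_q^{2,1}\cap C^{1+\alpha,(1+\alpha)/2}$ to $C^{2+\alpha,1+\alpha/2}$ up to the corner. We freeze $y$ in the coefficients and in $\mathcal B$, which makes them $C^{\alpha,\alpha/2}$. We then apply Solonnikov's global-in-time mixed-boundary Schauder estimate for the localized linear problem. Its data are smooth, and the first-order compatibility (the $j=0,1$ conditions) is exactly the corner condition needed for $C^{2+\alpha,1+\alpha/2}$.

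Next, we iterate in time derivatives. Set $y^{(k)}:=\partial_t^ky$. Differentiating \eqref{eq:adapted-system-expanded} $k$ times in $t$ gives a linear system with the same principal part and the same block boundary operators (Dirichlet on $\zeta$, Neumann on $\vartheta$). Its lower-order forcing involves only derivatives of $y$ controlled at the previous stage. The initial value is the formal jet $y_k$, since the actual time jets coincide with the formal ones by the recursive substitution used in \Cref{lem:adapted-compatibility}. The compatibility conditions of $y^{(k)}$ at the corner are the conditions \eqref{eq:adapted-compatibility-hierarchy} for $j=k,k+1$. The Schauder estimate therefore yields $\partial_t^ky\in C^{2+\alpha,1+\alpha/2}$ for all $k$. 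Tangential derivatives are handled the same way, using difference quotients in $p$ or differentiating the equation directly. Tangential differentiation commutes with both boundary operators and preserves the compatibility hierarchy, because the initial data are smooth in $p$. Higher normal derivatives then follow from \eqref{eq:recover-second-normal-derivative} by induction on the number of $r$-derivatives, exactly as in \Cref{prop:physical-smoothness}. This gives $\mathbf v\in C^\infty(\Gamma_0\times[0,3\delta/2]\times[0,T'])$.

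The main obstacle is justifying the differentiated problems at the corner with only finite a priori regularity. Formally differentiating in $t$ requires that $\partial_t^ky$ already lie in a space where the differentiated equation holds with initial value $y_k$. I would handle this as the standard theory does: at each stage, solve the linear differentiated problem for an unknown $Y$ with initial value $y_k$, using the compatibility conditions. By integrating in time and using uniqueness for the linear mixed problem, show that $y^{(k-1)}(t)=y_{k-1}+\int_0^tY$, which identifies $Y$ with $\partial_t^ky$. Uniqueness for the quasilinear system in the $W_q^{2,1}$ class (\Cref{lem:nonlinear-doubled-uniqueness}) prevents ambiguity.

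Finally, we transfer smoothness to the bulk. The trace at $r=\delta$ is smooth up to $t=0$. The bulk Dirichlet problems \eqref{eq:bulk-Dirichlet-problem} have smooth coefficients and smooth initial data, and their corner compatibility on $\Sigma_\delta^\sigma$ holds automatically, because the trace is the restriction of the smooth collar solution, which satisfies the same equation in the overlap by \Cref{lem:coefficient-agreement-collar}. Standard scalar Dirichlet bootstrapping therefore gives $w^\sigma\in C^\infty(\overline{\mathcal D^\sigma}\times[0,T'])$. We glue on the overlap as in \Cref{prop:physical-smoothness} and compose with $\Psi_t^{-1}$. The strip $[T',T]$ is covered by \Cref{prop:physical-smoothness} after shrinking $T$ slightly if necessary, which completes the proof.
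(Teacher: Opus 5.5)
Your proposal is correct and follows the paper's own argument. You rerun the localized mixed Dirichlet--Neumann bootstrap of \Cref{prop:physical-smoothness} without the time cutoff, take the corner conditions from \Cref{lem:adapted-compatibility}, recover the normal derivatives from \eqref{eq:recover-second-normal-derivative}, and finish the bulk by Dirichlet corner regularity, with compatibility coming from the agreement of the collar and bulk equations on the overlap. Your extra paragraph identifying $\partial_t^k y$ with the solution of the differentiated problem fills in a step the paper leaves implicit. The uniqueness that step needs is for the mixed problem on the localized physical half-collar, not \Cref{lem:nonlinear-doubled-uniqueness}, which concerns the doubled Dirichlet problem on $\mathcal C_{\mathrm{dbl}}$.
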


\begin{proof}
Repeat the preceding localized mixed-boundary bootstrap without the time
cutoff.  In adapted target coordinates, the corner conditions required after
successive differentiations are exactly
\eqref{eq:adapted-compatibility-hierarchy} by
\Cref{lem:adapted-compatibility}.  Formula
\eqref{eq:recover-second-normal-derivative} again recovers the higher normal
derivatives.  This gives smoothness of the physical collar down to $t=0$.
Its smooth trace at $r=\delta$ gives boundary data for the bulk Dirichlet
problems. Corner compatibility holds because the collar and bulk
equations agree on the overlap. Standard Dirichlet corner regularity then
yields smoothness up to $t=0$, and gluing completes the proof.
\end{proof}

\subsection{Uniqueness and completion of the proof}
\label{subsec:physical-uniqueness}

\begin{proposition}[Uniqueness of strong prescribed-interface solutions]
\label{prop:physical-solution-uniqueness}
Let $q>d+2$, and let $(u_1^+,u_1^-)$ and $(u_2^+,u_2^-)$ be two strong
solutions of \eqref{eq:original-moving-interface-problem} on a common interval
$[0,T]$, in the sense of \Cref{def:strong-solution}, for the same prescribed
interface family and the same initial data.  Then
\[
    u_1^\sigma=u_2^\sigma
    \qquad\text{on }\overline{Q_T^\sigma},
    \quad \sigma=\pm.
\]
No smallness condition on $T$ is required.
\end{proposition}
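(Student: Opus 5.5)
We prove uniqueness by an energy estimate on the moving domains, using the chord--normal estimate announced in the introduction. Set $w^\sigma:=u_1^\sigma-u_2^\sigma$ on $Q_T^\sigma$ and
\[
    E(t):=\tfrac12\sum_{\sigma=\pm}\int_{\Omega_t^\sigma}|w^\sigma|^2\,dx .
\]
Because $q>d+2$, \Cref{def:strong-solution} gives $u_a^\sigma\in C^{1+\alpha,(1+\alpha)/2}$. Hence $\nabla u_a^\sigma$ is bounded, say by $K$, uniformly on $\overline{Q_T^\sigma}$, and the traces on $\Gamma_t$ are continuous. The Reynolds transport theorem on $\Omega_t^\sigma$ (applied after pulling back by $\Psi_t$, where $w^\sigma\in W_q^{2,1}$ justifies the differentiation) gives
\[
    E'(t)=\sum_\sigma\int_{\Omega_t^\sigma}w^\sigma\cdot\partial_tw^\sigma
    \pm\tfrac12\int_{\Gamma_t}\mathcal V_t\,\bigl(|w^+|^2-|w^-|^2\bigr)\,d\mathcal H^{d-1}.
\]
We use the ambient form \eqref{eq:ambient-harmonic-map-heat-flow} and integrate by parts. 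The outward normals of $\Omega_t^\pm$ are $\mp\nu_t$. The bulk terms produce $-\sum_\sigma\int|\nabla w^\sigma|^2$. The nonlinear difference satisfies $|\mathrm{II}(u_1)(\nabla u_1,\nabla u_1)-\mathrm{II}(u_2)(\nabla u_2,\nabla u_2)|\le C_K(|w|+|\nabla w|)$, so its contribution is at most $\tfrac12\|\nabla w\|_{L^2}^2+C E$ after Young's inequality. The boundary term of the Laplacian is
\[
    I:=\int_{\Gamma_t}\Bigl(w^+\cdot(-\partial_{\nu_t}w^+)+w^-\cdot\partial_{\nu_t}w^-\Bigr)
    =\int_{\Gamma_t}(\mathbf u_1-\mathbf u_2)\cdot(\boldsymbol\eta_1-\boldsymbol\eta_2),
\]
where $\mathbf u_a=(u_a^+,u_a^-)|_{\Gamma_t}\in\Lambda$ and $\boldsymbol\eta_a:=(-\partial_{\nu_t}u_a^+,\partial_{\nu_t}u_a^-)$ is normal to $\Lambda$ at $\mathbf u_a$.

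The key step is to control $I$ by the chord--normal estimate. For $\mathbf p,\mathbf q\in\Lambda$ and $\boldsymbol\eta\in T^{\perp,\mathrm{amb}}_{\mathbf p}\Lambda$,
\[
    |(\mathbf p-\mathbf q)\cdot\boldsymbol\eta|\le C|\mathbf p-\mathbf q|^2|\boldsymbol\eta| .
\]
We prove it by compactness. Near the diagonal, Taylor expansion along $\Lambda$ shows $\mathbf q-\mathbf p=\tau+O(|\mathbf p-\mathbf q|^2)$ with $\tau\in T_{\mathbf p}\Lambda$. Away from the diagonal the estimate is trivial, since $|\mathbf p-\mathbf q|$ is bounded below and $|\mathbf p-\mathbf q|\le \operatorname{diam}\Lambda$. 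Applying it twice, with the roles of the two solutions swapped, gives
\[
    |I|\le 2CK\int_{\Gamma_t}\bigl(|w^+|^2+|w^-|^2\bigr).
\]
The interface-velocity term has the same form, with coefficient $\sup|\mathcal V_t|$.

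It remains to absorb these boundary $L^2$ norms. We use the trace interpolation inequality on each smooth domain $\Omega_t^\sigma$, with constants uniform in $t$ because the family $\Gamma_t$ is smooth and compact:
\[
    \int_{\Gamma_t}|w^\sigma|^2\le \varepsilon\int_{\Omega_t^\sigma}|\nabla w^\sigma|^2+C_\varepsilon\int_{\Omega_t^\sigma}|w^\sigma|^2 .
\]
Choosing $\varepsilon$ small, the gradient contribution is absorbed into the dissipation, and we obtain $E'\le CE$ with $E(0)=0$. Grönwall's inequality then gives $E\equiv0$ on $[0,T]$. No smallness of $T$ enters at any point.

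The step I expect to be the main obstacle is the rigorous justification of the transport and integration-by-parts identities at the $W_q^{2,1}$ regularity on a moving domain. I would carry them out on the fixed cylinder after pulling back by $\Psi_t$, where the metric and transport coefficients of \Cref{subsec:bulk-problems} are smooth. The extra lower-order terms from these coefficients are again bounded by $C(E+\|w\|_{L^2}\|\nabla w\|_{L^2})$. The boundary flux is evaluated in the original variables using the continuous traces from \Cref{def:strong-solution}.
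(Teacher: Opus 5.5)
Your proposal is correct and follows the paper's proof essentially step for step. Both arguments use the moving-domain energy identity via Reynolds transport, write the interface flux as $\int_{\Gamma_t}(\mathbf u_1-\mathbf u_2)\cdot(\boldsymbol\eta_1-\boldsymbol\eta_2)$ and bound it by the chord--normal estimate applied at both traces, absorb the boundary $L^2$ norms with a $t$-uniform trace interpolation inequality, and close with Gronwall; leaving the sign of the Reynolds boundary term unspecified is harmless, since that term is only estimated in absolute value.
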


\begin{proof}
For harmonic map heat flow with a target free-boundary condition on a fixed
spatial domain, Chen and Lin \cite[\S2.2]{ChenLin1998} proved uniqueness by
applying the scalar parabolic maximum principle to the squared difference of
two solutions.  In the present moving two-phase problem, the interface
condition controls only the sum of the two boundary fluxes, so a phasewise
maximum-principle argument is not directly available.  We instead use a
coupled moving-domain energy estimate.

Set
\[
    z^\sigma:=u_1^\sigma-u_2^\sigma,
    \qquad
    \mathbf z:=(z^+,z^-).
\]
Let
\[
    \mathbf p_a:=(u_a^+,u_a^-)|_{\Gamma_t}\in\Lambda,
    \qquad
    \mathbf c_a
    :=\bigl(-\partial_{\nu_t}u_a^+,
             \partial_{\nu_t}u_a^-\bigr)
    \in T_{\mathbf p_a}^{\perp,\mathrm{amb}}\Lambda,
    \quad a=1,2.
\]
Compactness and a local graph representation of $\Lambda$ give the uniform
chord--normal estimate
\begin{equation}\label{eq:physical-uniqueness-chord-normal}
    |(\mathbf p-\mathbf q)\cdot\boldsymbol\eta|
    \le
    C_\Lambda|\mathbf p-\mathbf q|^2|\boldsymbol\eta|,
    \qquad
    \mathbf p,\mathbf q\in\Lambda,
    \quad
    \boldsymbol\eta\in T_{\mathbf p}^{\perp,\mathrm{amb}}\Lambda.
\end{equation}
Indeed, near the diagonal this follows from the quadratic normal deviation of
a smooth submanifold from its tangent space; away from the diagonal it follows
after increasing $C_\Lambda$.

By the parabolic Sobolev embedding on the smooth moving cylinders,
$u_a^\sigma\in C^{1+\alpha,(1+\alpha)/2}(\overline{Q_T^\sigma})$ for some
$0<\alpha<1-(d+2)/q$.  All first spatial derivatives of the two
solutions and both conormal fields are therefore bounded on the compact moving
cylinders.  Let $M$ denote a common bound.
Using the fixed smooth ambient extensions of the second fundamental forms,
and the compactness of the targets, the difference of the two interior
nonlinearities satisfies
\begin{equation}\label{eq:physical-uniqueness-nonlinearity}
\begin{aligned}
    &\bigl|
      \widetilde{\mathrm{II}}^\sigma(u_1^\sigma)
       (\nabla u_1^\sigma,\nabla u_1^\sigma)
      -\widetilde{\mathrm{II}}^\sigma(u_2^\sigma)
       (\nabla u_2^\sigma,\nabla u_2^\sigma)
     \bigr|\\
    &\qquad\le C_M\bigl(|z^\sigma|+|\nabla z^\sigma|\bigr).
\end{aligned}
\end{equation}

Since $q>d+2>2$, the following energy identities are justified by pulling back
to the fixed phase domains and using the standard density argument.  Let
$\mathcal V_t$ be the normal velocity of $\Gamma_t$ in the direction
$\nu_t$, and define
\[
    E(t):=\frac12\sum_{\sigma=\pm}
      \int_{\Omega_t^\sigma}|z^\sigma|^2\,dx,
    \qquad
    D(t):=\sum_{\sigma=\pm}
      \int_{\Omega_t^\sigma}|\nabla z^\sigma|^2\,dx.
\]
The Reynolds transport formula, with outward normal velocities
$-\mathcal V_t$ on $\Omega_t^+$ and $\mathcal V_t$ on $\Omega_t^-$, gives
\begin{equation}\label{eq:physical-uniqueness-Reynolds}
\begin{aligned}
    E'(t)
    ={}&\sum_{\sigma=\pm}
       \int_{\Omega_t^\sigma}
          z^\sigma\cdot\partial_tz^\sigma\,dx\\
       &+\frac12\int_{\Gamma_t}\mathcal V_t
          \bigl(|z^-|^2-|z^+|^2\bigr)\,dS.
\end{aligned}
\end{equation}
Subtracting the two phase equations and integrating the Laplacian by parts
on both moving phases yields
\begin{equation}\label{eq:physical-uniqueness-energy-identity}
\begin{aligned}
    E'(t)+D(t)
    ={}&\int_{\Gamma_t}
       \mathbf z\cdot(\mathbf c_1-\mathbf c_2)\,dS\\
       &+\sum_{\sigma=\pm}
       \int_{\Omega_t^\sigma}z^\sigma\cdot\mathcal N^\sigma\,dx\\
       &+\frac12\int_{\Gamma_t}\mathcal V_t
          \bigl(|z^-|^2-|z^+|^2\bigr)\,dS,
\end{aligned}
\end{equation}
where $\mathcal N^\sigma$ denotes the nonlinear difference in
\eqref{eq:physical-uniqueness-nonlinearity}.

Applying \eqref{eq:physical-uniqueness-chord-normal} at $\mathbf p_1$ and
$\mathbf p_2$ gives
\[
    |\mathbf z\cdot(\mathbf c_1-\mathbf c_2)|
    \le
    C_\Lambda|\mathbf z|^2
       (|\mathbf c_1|+|\mathbf c_2|)
    \le C_M|\mathbf z|^2
    \qquad\text{on }\Gamma_t.
\]
The smooth family of phase domains has a uniform trace inequality: for every
$\varepsilon>0$,
\begin{equation}\label{eq:uniform-moving-trace-inequality}
    \sum_{\sigma=\pm}
    \|z^\sigma\|_{L^2(\Gamma_t)}^2
    \le
    \varepsilon D(t)+C_\varepsilon E(t),
\end{equation}
with constants independent of $t\in[0,T]$.  This follows, for example, by
pulling the standard trace inequality on $\Omega_0^\sigma$ forward through
the common smooth isotopy $\Psi_t$.

Using \eqref{eq:physical-uniqueness-nonlinearity}, Young's inequality, the
bound on $\mathcal V_t$, and the trace estimate
\eqref{eq:uniform-moving-trace-inequality} in
\eqref{eq:physical-uniqueness-energy-identity}, and choosing
$\varepsilon$ sufficiently small, we obtain
\[
    E'(t)+\frac12D(t)\le C_{M,\Gamma}E(t)
\]
for almost every $t\in(0,T)$.  Since the two solutions have the same initial data, $E(0)=0$, and Gronwall's
inequality gives $E(t)=0$ for all $t\in[0,T]$.  Hence $z^+=z^-=0$.
\end{proof}

\begin{proof}[Proof of \Cref{thm:main-short-time-solution}]
For each frozen state the Schwarz trace map is a contraction, so it has a
unique fixed trace, and the associated doubled collar and bulk solutions agree
on the whole overlap.  The state map is a compact continuous self-map of the
closed convex set $\mathcal A_T$, so Schauder's theorem gives a fixed state,
whose positive collar and bulk components solve the nonlinear ambient
equations.  By short-time uniqueness for the nonlinear doubled problem, this
solution is reflection-symmetric; evaluating the symmetry and its derivative
at $r=0$ gives the trace constraint and the conormal condition.  The
target-preservation proposition shows that the fixed point takes values in
$N^\pm$, so the ambient equations reduce to harmonic map heat flow.  Undoing
the Fermi and global coordinate changes gives a strong $W_q^{2,1}$ solution of
\eqref{eq:original-moving-interface-problem} on $[0,T]$ with the initial and
interface conditions in the sense of traces of \Cref{def:strong-solution}.  Continuity into
$W_q^{2-2/q}$ follows from the initial-trace theorem, smoothing for $t>0$ from
\Cref{prop:physical-smoothness}, and uniqueness in the $W_q^{2,1}$
strong-solution class from \Cref{prop:physical-solution-uniqueness}.
\end{proof}

\begin{proof}[Proof of \Cref{cor:smooth-short-time-solution}]
The smooth data satisfy the hypotheses of
\Cref{thm:main-short-time-solution}, which gives the unique strong solution.
Under the all-order compatibility conditions,
\Cref{prop:initial-corner-smoothness} carries the bootstrap through the initial
corner, so this solution is smooth on the closed moving phase cylinders.
\end{proof}

\section{Continuation at a finite terminal time}
\label{sec:continuation}

Continuation concerns the bulk problem only; it does not depend on how the
interface is generated.

\begin{proposition}[Continuation under a smoothly extendible interface]
\label{prop:prescribed-interface-continuation}
Fix $q>d+2$ and $T_*<\infty$.  Suppose that the prescribed smooth separating
interface family is defined on $[0,T_*+\varepsilon_0]$ for some
$\varepsilon_0>0$.  Let $(u^+,u^-)$ be a strong solution of
\eqref{eq:original-moving-interface-problem} on $[0,T_*)$ such that
\[
    u^\sigma\in W_q^{2,1}(Q_S^\sigma;\mathbb R^L)
    \qquad
    \text{for every }S<T_*,\quad \sigma=\pm.
\]
If
\begin{equation}\label{eq:prescribed-continuation-bound}
    \sup_{0\le t<T_*}
    \left(
       \|\nabla u^+(t)\|_{L^\infty(\Omega_t^+)}
       +\|\nabla u^-(t)\|_{L^\infty(\Omega_t^-)}
    \right)<\infty,
\end{equation}
then $(u^+,u^-)$ extends uniquely as a strong solution to
$[0,T_*+\varepsilon)$ for some $\varepsilon>0$.
\end{proposition}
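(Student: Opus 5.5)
The plan is to restart \Cref{thm:main-short-time-solution} from a time $t_1<T_*$ close to $T_*$, with an existence time bounded below independently of $t_1$. Since the theorem itself gives no lower bound on its existence time, the real task is to show that the terminal traces $u^\sigma(t)$, $t<T_*$, stay in a set on which the construction of \Cref{sec:schwarz,sec:schauder} runs with uniform constants. Concretely, I will prove a uniform bound
\[
    \sup_{t<T_*}\|u^\sigma(t)\|_{W_q^{2-2/q}(\Omega_t^\sigma)}<\infty,
\]
together with a uniform tubular parameter $\delta$ in \eqref{eq:initial-paired-collar-tube} and a uniform modulus of continuity for the initial data. The interface family on $[t_1,T_*+\varepsilon_0]$ is a time translate of a fixed smooth family, so all geometric constants are uniform in $t_1$.

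\textbf{Step 1: uniform bounds from the gradient bound.} Pull back by $\Psi_t$ and use \eqref{eq:prescribed-continuation-bound}. In the bulk, the nonlinearity $a_b^{\sigma,ij}\widetilde{\mathrm{II}}^\sigma(u)(\partial_iu,\partial_ju)$ is bounded in $L^\infty$. Interior parabolic $L^q$ estimates, localized on windows $[t_0-1,t_0]$ (or started from $0$), then bound $\|w^\sigma\|_{W_q^{2,1}}$ away from the collar uniformly in the window position.

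Near the interface, I work in the adapted target coordinates of \Cref{lem:adapted-boundary-normal-form}. Since $\nabla u$ is bounded, $u$ is uniformly Lipschitz in $x$, so the boundary trace stays in a chart on parabolic cylinders of uniform size. The coefficients $\mathcal A^{ij}(p,r,t,y)$ then have only continuity in $(x,t)$ with a uniform modulus, which comes from $|\nabla u|\le M$ and the equation. Time H\"older continuity of order $1/2$ follows from the gradient bound and the equation, via a standard Campanato argument or from $\partial_t u\in L^q$ after localization. That modulus is enough for localized $W_q^{2,1}$ estimates for the mixed Dirichlet--Neumann problem, because the boundary operator satisfies the uniform Lopatinskii--Shapiro condition. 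The right side $\mathcal B(y,\nabla y)$ is bounded. This yields
\[
    \sup_{t_0<T_*}\|u^\sigma\|_{W_q^{2,1}(Q^\sigma\cap\{t_0-\tau_0<t<t_0\})}\le K,
\]
and hence the uniform $W_q^{2-2/q}$ bound on the traces $u^\sigma(t)$.

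\textbf{Step 2: uniform restart.} Track the dependence of $T$ in \Cref{sec:schwarz,sec:schauder}. The existence time depends on the initial data only through three things:
\begin{enumerate}[label=(\roman*)]
\item the norm $\|\mathcal U_0\|_{W_q^{2-2/q}}$, which enters $M_*$, $C_*$, $R_{\mathrm{tr}}$ and $\omega_*$;
\item the collar width $\delta$ in \eqref{eq:initial-paired-collar-tube};
\item the time needed to stay inside the tubes in \Cref{lem:tubular-control}.
\end{enumerate}
Item (i) is covered by Step 1. Item (ii) follows from the uniform Lipschitz bound, since $\operatorname{dist}(\mathbf v_0(p,r),\Lambda)\le CMr$; so one $\delta$ works for all $t_1$. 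Item (iii) depends on $\delta$ and the norms only, because the lifting $\boldsymbol\ell$ satisfies $\|\boldsymbol\ell-\mathbf h_0\|_{C^1}\le CT^{\eta}\|\boldsymbol\ell\|_{\mathcal Y}$ via \eqref{eq:initial-trace-holder-smallness}. The compatibility conditions \eqref{eq:zeroth-compatibility} at $t_1$ hold because the solution satisfies the interface conditions for positive times and is smooth there by \Cref{prop:physical-smoothness}. Hence there is $T_1>0$ such that, for every $t_1<T_*$, a strong solution exists on $[t_1,t_1+T_1]$. Choose $t_1>T_*-T_1/2$, with $t_1+T_1\le T_*+\varepsilon_0$.

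\textbf{Step 3: gluing and uniqueness.} By \Cref{prop:physical-solution-uniqueness}, applied on $[t_1,S]$ for $S<T_*$, the restarted solution agrees with $u$ on $[t_1,T_*)$. Concatenating them gives a $W_q^{2,1}$ solution on $[0,t_1+T_1)$: the $W_q^{2,1}$ norms add over overlapping intervals, and the time derivative matches because the two solutions coincide on the overlap. Uniqueness of the extension follows again from \Cref{prop:physical-solution-uniqueness}.

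The main obstacle is Step 1 near the interface. There the principal matrices $\mathcal A^{ij}$ depend on $y$, which is only known to be continuous. The localized $L^q$ theory for the mixed problem with VMO-type coefficients then has to be invoked with care, including the chart change whenever the trace moves between charts of $\Lambda$. If this proves delicate, I would fall back on the doubled collar. Using \Cref{lem:recover-interface-conditions}, the doubled field is reflection-symmetric and solves the Dirichlet system \eqref{eq:nonlinear-doubled-problem}, whose principal part is uniformly parameter elliptic with Lipschitz-in-$\mathbf v$ coefficients. That turns the boundary estimate into an interior estimate across $r=0$ on the doubled collar.
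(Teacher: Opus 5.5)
Your plan is sound but longer than the paper's in two respects.

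\textbf{Restart time.} The paper never needs a lower bound on the existence time. From the terminal-strip bound $\sum_\sigma\|U^\sigma\|_{W_q^{2,1}(\Omega_*^\sigma\times(T_*-\tau,T_*))}\le C$, the parabolic trace theorem gives a limit $U_*^\sigma$ of $U^\sigma(t)$ in $W_q^{2-2/q}$ as $t\uparrow T_*$. The paper passes to the limit in the target, value and conormal conditions through the $C^{1+\alpha}$ embedding. It then restarts \Cref{thm:main-short-time-solution} at $T_*$ itself and glues in time through the common $W_q^{2-2/q}$ trace.

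Your Step 1 already gives exactly this strip bound once you let $t_0\uparrow T_*$, so Step 2 can be dropped. As far as I can see, your audit of \Cref{sec:schwarz,sec:schauder} is correct: every choice of $T$ there is an explicit inequality in the data norm, the collar width and the tube margins. It buys a quantitative existence time and a gluing over an open overlap, but only by re-checking the whole construction.

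\textbf{Estimate near $\Gamma_*$.} Here the paper avoids target charts and $L^q$ theory with merely continuous coefficients. It doubles the given solution across $\Lambda$. The two interface conditions give matching value and first-derivative traces at $r=0$, so $\mathbf V\in W_{q,\mathrm{loc}}^{2,1}$ solves the doubled equation across $r=0$. This symmetry holds by construction, not by \Cref{lem:recover-interface-conditions}, which concerns the fixed point of the existence scheme.

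The paper then uses that $\mathsf A_{\mathrm{dbl}}^{rr}=I$ and $\mathsf A_{\mathrm{dbl}}^{ar}=0$. It also uses $\|\mathsf A_{\mathrm{dbl}}^{ab}-a_{c,*}^{ab}I\|\le C_M|r|$, which holds because the common scalar trace at $r=0$ commutes with $D\mathcal R_\Lambda$. This $L^\infty$-small perturbation of the fixed operator $\partial_t-a_{c,*}^{ab}\partial_{ab}-\partial_{rr}$ is absorbed on a collar of width $\delta$ with $C_0C_M\delta\le\frac12$.

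Only boundedness of the lower-order terms, from the gradient bound, is used. So no temporal H\"older modulus, VMO-type mixed-problem theory, or patching across target charts is needed. Your fallback is close to this but still leans on coefficient regularity instead of the thin-collar absorption.
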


To prove \Cref{prop:prescribed-interface-continuation}, fix the smooth
extension of the interface given in its statement and set
\[
    \Gamma_*:=\Gamma_{T_*},
    \qquad
    \Omega_*^\sigma:=\Omega_{T_*}^\sigma.
\]
Applying \Cref{lem:fermi-compatible-parametrization} after translating the
time variable gives, on a neighborhood of $T_*$, smooth one-sided Fermi maps
\[
    \Phi_{*,t}^\sigma:
    \Gamma_*\times[0,r_*)\longrightarrow\overline{\Omega_t^\sigma}
\]
and diffeomorphisms
\[
    \Psi_{*,t}:\mathbb T^d\longrightarrow\mathbb T^d,
    \qquad
    \Psi_{*,t}(\Omega_*^\sigma)=\Omega_t^\sigma,
    \qquad
    \Psi_{*,T_*}=\operatorname{Id}.
\]
All their derivatives and inverse derivatives are uniformly bounded on a
fixed terminal interval.  Write
\[
    U^\sigma(y,t):=u^\sigma(\Psi_{*,t}(y),t),
    \qquad y\in\Omega_*^\sigma,
\]
and denote the bound in \eqref{eq:prescribed-continuation-bound} by $M$.

\subsection{The terminal-strip estimate}

On the one-sided collar define
\[
    \mathbf v(p,r,t)
    :=
    \bigl(
       u^+(\Phi_{*,t}^+(p,r),t),
       u^-(\Phi_{*,t}^-(p,r),t)
    \bigr).
\]
Uniform control of the Fermi maps gives
\begin{equation}\label{eq:terminal-collar-gradient-bound}
    |\nabla_{p,r}\mathbf v(p,r,t)|\le C M.
\end{equation}
Since $\mathbf v(p,0,t)\in\Lambda$, after decreasing
$\delta_0\in(0,r_*/4)$ we have
\begin{equation}\label{eq:terminal-collar-tube}
    \mathbf v(p,r,t)
    \in\operatorname{Tub}_{\rho_\Lambda/2}(\Lambda),
    \qquad
    0\le r\le2\delta_0,
\end{equation}
uniformly for $t$ in the terminal interval.  Define the reflected field
\begin{equation}\label{eq:terminal-reflected-field}
    \mathbf V(p,r,t)
    :=
    \begin{cases}
       \mathbf v(p,r,t),&r\ge0,\\
       \mathcal R_\Lambda(\mathbf v(p,-r,t)),&r<0.
    \end{cases}
\end{equation}
The two conditions in \eqref{eq:collar-interface-conditions} and
\eqref{eq:reflection-differential} show that $\mathbf V$ and all its first
spatial derivatives have matching one-sided traces at $r=0$.  Since the two
pieces are $W_q^{2,1}$ up to $r=0$ on every interval ending before $T_*$, the
Sobolev gluing theorem gives
\[
    \mathbf V\in W_{q,\mathrm{loc}}^{2,1}
    \bigl(\Gamma_*\times(-2\delta_0,2\delta_0)\times(0,T_*)\bigr).
\]
No distribution supported on $r=0$ therefore occurs in its second spatial
derivatives, and $\mathbf V$ solves the doubled equation determined by
\eqref{eq:doubled-principal-coefficient},
\eqref{eq:doubled-first-order-coefficient}, and
\eqref{eq:doubled-target-nonlinearity} weakly across $r=0$.

\begin{proposition}[Uniform terminal-strip estimate]
\label{prop:terminal-strip-estimate}
There are $\tau>0$ and $C<\infty$ such that
\begin{equation}\label{eq:global-terminal-Wq-estimate}
    \sum_{\sigma=\pm}
    \|U^\sigma\|_{W_q^{2,1}
       (\Omega_*^\sigma\times(T_*-\tau,T_*);\mathbb R^L)}
    \le C.
\end{equation}
\end{proposition}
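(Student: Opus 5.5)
The plan is to split $\Omega_*^\sigma$ into the doubled collar and a bulk region, as in the existence proof, and to prove a uniform $W_q^{2,1}$ bound on each piece over a terminal window $(T_*-\tau,T_*)$. The key point is that the gradient bound $M$ makes every nonlinearity a bounded forcing, so only linear maximal regularity with frozen coefficients is needed.

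First choose $\tau$ small and a time cutoff $\eta(t)$ that vanishes for $t\le T_*-2\tau$ and equals one on $[T_*-\tau,T_*)$. On $[0,T_*-2\tau]$ the solution is controlled by hypothesis, since $u^\sigma\in W_q^{2,1}(Q_S^\sigma)$ for each $S<T_*$. In particular, on the window $[T_*-2\tau,T_*-\tau]$ all norms are finite, and only the behaviour as $t\uparrow T_*$ needs a uniform estimate. Because $\eta$ kills the initial data, the start of the window requires no initial-trace argument.

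\textbf{Collar estimate.} Consider the reflected field $\mathbf V$ from \eqref{eq:terminal-reflected-field}. Take a spatial cutoff $\chi(r)$ supported in $|r|<2\delta_0$ and equal to one on $|r|\le 3\delta_0/2$, and set $\mathbf Y:=\eta\chi\mathbf V$. Then $\mathbf Y$ satisfies
\[
    \mathscr L_{\mathbf V}\mathbf Y=\eta\chi\,\mathbf Q_{\mathrm{dbl}}(\mathbf V,\nabla\mathbf V)+\text{commutator terms}+\eta'\chi\mathbf V ,
\]
with zero Dirichlet data at $r=\pm2\delta_0$ and zero initial data. The right-hand side is bounded in $L^\infty$ by $C(1+M^2)$. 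This uses \eqref{eq:terminal-collar-gradient-bound}, the quadratic structure of $\mathbf Q_{\mathrm{dbl}}$, and the tube bound \eqref{eq:terminal-collar-tube}. It is therefore bounded in $L^q$ uniformly.

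The delicate point is applying \Cref{lem:uniform-Wq-dirichlet}, which requires the principal coefficients $\mathsf A_{\mathrm{dbl}}^{ij}(\mathbf V)$ to be uniformly continuous, say Lipschitz in space and H\"older in time. For spatial regularity, Lipschitz continuity in space follows directly from $|\nabla\mathbf V|\le CM$ and the smoothness of $D\mathcal R_\Lambda$, exactly as in \Cref{prop:frozen-coefficients}. The coefficients still match at $r=0$ because the trace of the principal part there is scalar. Time regularity is not given by $M$ alone. I would first try to avoid needing it: since the principal part is only a conjugation $S_\Lambda^{-1}\widetilde{\mathsf A}_cS_\Lambda$ of a state-independent scalar operator, I would write $\widetilde{\mathbf Y}:=S_\Lambda(\mathbf V)\mathbf Y$ on $r<0$. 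Equivalently, I would note that on $r<0$ the reflected field is $\mathcal R_\Lambda$ of the physical field, so the physical estimate transfers to it. This shows it suffices to estimate the physical half-collar equation, whose principal part is scalar and has smooth, state-independent coefficients. That problem has the mixed nonlinear boundary condition, and the doubling was introduced precisely to avoid treating it directly.

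If that route fails, the fallback is a two-step bootstrap. First, a uniform $C^{\gamma,\gamma/2}$ bound on $\mathbf V$ in time follows from the equation itself. The $L^\infty$ gradient bound makes the equation a linear parabolic system with bounded forcing, so $\partial_t\mathbf V$ is bounded in $L^q_tW^{-1,q}_x$, which together with $\nabla\mathbf V\in L^\infty$ gives H\"older continuity in time by interpolation. Then the coefficients $\mathsf A_{\mathrm{dbl}}(\mathbf V)$ are uniformly $C^{\gamma,\gamma/2}$. That is, they are continuous with a uniform modulus, and this suffices for the $W_q^{2,1}$ Dirichlet estimate. Applying the lemma on each interval $(T_*-2\tau,S)$ with constants independent of $S$ gives $\|\mathbf Y\|_{W_q^{2,1}}\le C$, and letting $S\uparrow T_*$ gives a uniform bound. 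I expect this time-continuity issue for the state-dependent principal coefficients to be the main obstacle.

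\textbf{Bulk estimate.} On $\Omega_*^\sigma$ away from $r<\delta_0$, the equation \eqref{eq:global-fixed-target-equation} pulled back by $\Psi_{*,t}$ has smooth, state-independent, scalar principal coefficients and forcing bounded by $C(1+M^2)$. I would localize with $\eta(1-\chi_1)$, where $\chi_1$ is a collar cutoff equal to one for $r\le\delta_0$ and supported in $r<3\delta_0/2$. The localized field then has commutator terms supported in the collar, where they are already controlled. Since the torus has no outer boundary, standard interior $L^q$ estimates apply uniformly.

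Adding the collar bound on $r\le3\delta_0/2$ and the bulk bound gives \eqref{eq:global-terminal-Wq-estimate}.
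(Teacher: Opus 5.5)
Your fallback argument is correct, but it handles the state-dependent principal part differently from the paper. The paper never needs time regularity of $\mathsf A_{\mathrm{dbl}}^{ij}(\mathbf V)$. It compares with the fixed scalar operator $\mathscr L_*=\partial_t-a_{c,*}^{ab}(p,t)\partial_{ab}-\partial_{rr}$ and uses the structure in \eqref{eq:terminal-principal-smallness}. There $\mathsf A_{\mathrm{dbl}}^{rr}=I$ and $\mathsf A_{\mathrm{dbl}}^{ar}=0$ hold exactly. The tangential part $\mathsf A_{\mathrm{dbl}}^{ab}$ differs from the scalar $a_{c,*}^{ab}I$ only by $O(|r|)$, because conjugation by $S_\Lambda$ fixes scalar matrices. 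The paper then shrinks the collar to width $\delta$ with $C_0C_M\delta\le\frac12$, so the error term $E^{ab}\partial_{ab}Z$ is absorbed by the Dirichlet estimate for $\mathscr L_*$. The rest of $\Omega_*^\sigma$ is handled by interior estimates, essentially as in your bulk step.

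You instead keep a collar of fixed width and upgrade the coefficients. The upgrade works, but you should state it precisely, since ``bounded forcing'' alone does not control the second-order term. Write $\mathsf A^{ij}\partial_{ij}\mathbf V=\partial_i(\mathsf A^{ij}\partial_j\mathbf V)-(\partial_i\mathsf A^{ij})\partial_j\mathbf V$. This is legitimate across $r=0$ because $\mathsf A_{\mathrm{dbl}}(\mathbf V)$ is globally Lipschitz in space and $\mathbf V\in W^{2,1}_{q,\mathrm{loc}}$ there. It gives $\partial_t\mathbf V\in L^\infty_tW^{-1,\infty}_x$. Mollifying at spatial scale $|t-s|^{1/2}$ then yields $|\mathbf V(t)-\mathbf V(s)|\le C_M|t-s|^{1/2}$. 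This is exactly the Lipschitz-in-space, $\tfrac12$-H\"older-in-time hypothesis of \Cref{lem:uniform-Wq-dirichlet}, so you do not need any theory for merely uniformly continuous coefficients.

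The two routes trade off as follows. The paper's absorption needs only $L^\infty$ smallness of the coefficient error and the estimate for one fixed smooth operator, but it relies on the scalar structure at $r=0$ and on a collar width that depends on $M$. Your route needs an extra time-regularity step and constants depending on the H\"older modulus. In return it uses only continuity of the coefficients across $r=0$, not their near-scalar form.

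Your first route, conjugating by $S_\Lambda(\mathbf V)$, is inconclusive, as you note yourself, and it is not needed.
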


\begin{proof}
Set
\[
    D_0:=\Gamma_*\times(-2\delta_0,2\delta_0).
\]
This domain remains fixed throughout the proof.  Let $a_{c,*}^{ab}(p,t)$ be the
inverse metric induced on $\Gamma_t$ and pulled
back to $\Gamma_*$.  At $r=0$, the two one-sided tangential Fermi metrics both
equal $a_{c,*}^{ab}$, while $a_c^{\sigma,rr}=1$ and
$a_c^{\sigma,ar}=0$.

We first record the coefficient estimate used below.  On the physical
half-collar, smoothness of the Fermi metrics gives
\[
    |a_c^{\sigma,ab}(p,r,t)-a_{c,*}^{ab}(p,t)|\le C|r|.
\]
Moreover, \eqref{eq:terminal-collar-gradient-bound} and the fact that
$\mathcal R_\Lambda$ fixes $\Lambda$ imply
\[
    |\mathbf V(p,r,t)-\mathbf V(p,0,t)|\le C_M|r|.
\]
On the reflected half-collar the principal coefficient is obtained by the
similarity transformation in \eqref{eq:target-reflected-principal}.  At
$r=0$ its tangential part is the scalar matrix
$a_{c,*}^{ab}(p,t)I_{\mathbb R^{2L}}$, which commutes with
$D\mathcal R_\Lambda$.  The preceding two estimates and the bounded
derivatives of $\mathcal R_\Lambda$ on the fixed tube therefore give
\begin{equation}\label{eq:terminal-principal-smallness}
\begin{aligned}
    &\mathsf A_{\mathrm{dbl}}^{rr}=I_{\mathbb R^{2L}},
    \qquad
    \mathsf A_{\mathrm{dbl}}^{ar}=0,\\
    &\left\|
       \mathsf A_{\mathrm{dbl}}^{ab}(p,r,t,\mathbf V)
       -a_{c,*}^{ab}(p,t)I_{\mathbb R^{2L}}
     \right\|
     \le C_M|r|.
\end{aligned}
\end{equation}
The compactness of the targets,
\eqref{eq:terminal-collar-gradient-bound}, and the same bounds for the
reflection also give
\begin{equation}\label{eq:terminal-lower-order-bound}
\begin{aligned}
    \bigl\|\mathsf B_{\mathrm{dbl}}^k(p,r,t,\mathbf V)\bigr\|
    &\le C_M,\\
    \left|
       \mathbf Q_{\mathrm{dbl}}
       (p,r,t,\mathbf V,\nabla\mathbf V)
    \right|
    &\le C_M|\nabla\mathbf V|^2.
\end{aligned}
\end{equation}

Choose $\tau>0$ so that the preceding coordinates are defined on
$[T_*-2\tau,T_*]$, and choose a time cutoff $\eta$ which vanishes near
$T_*-2\tau$ and equals one on $[T_*-\tau,T_*]$.  On the fixed cylinder
$D_0\times(T_*-2\tau,s)$ consider the scalar principal operator
\[
    \mathscr L_*
    :=\partial_t-a_{c,*}^{ab}(p,t)\partial_{ab}-\partial_{rr}.
\]
By \Cref{lem:uniform-Wq-dirichlet}, after translating the initial time, its
homogeneous Dirichlet estimate has a constant $C_0$ independent of
$s\in(T_*-\tau/2,T_*)$.

Now choose $0<\delta<\delta_0/4$ so small that
\begin{equation}\label{eq:terminal-absorption-choice}
    C_0C_M\delta\le\frac12,
\end{equation}
and choose $\chi\in C_c^\infty((-2\delta,2\delta))$ with $\chi=1$ on
$[-\delta,\delta]$.  Define $Z:=\eta\chi\mathbf V$ on
$|r|<2\delta$ and extend it by zero to $D_0$.  Because $\chi$ is compactly
supported, this zero extension belongs to $W_q^{2,1}$ and has homogeneous
initial and boundary data.  Writing
\[
    E^{ab}
    :=\mathsf A_{\mathrm{dbl}}^{ab}
       -a_{c,*}^{ab}I_{\mathbb R^{2L}},
\]
the doubled equation \eqref{eq:nonlinear-doubled-problem}, written in the
terminal Fermi coordinates, and \eqref{eq:terminal-principal-smallness} yield
\begin{equation}\label{eq:terminal-cutoff-equation}
    \mathscr L_*Z=E^{ab}\partial_{ab}Z+F,
    \qquad
    \|E\|_{L^\infty(\operatorname{supp}Z)}\le C_M\delta.
\end{equation}
Here, since $\eta\chi$ is independent of $p$, the remainder is explicitly
\[
\begin{aligned}
    F
    ={}&\eta\chi\bigl(
         \mathsf B_{\mathrm{dbl}}^k(\mathbf V)\partial_k\mathbf V
         +\mathbf Q_{\mathrm{dbl}}(\mathbf V,\nabla\mathbf V)
       \bigr)\\
      &+\eta'\chi\mathbf V
       -\eta\bigl(\chi''\mathbf V
                   +2\chi'\partial_r\mathbf V\bigr).
\end{aligned}
\]
Thus \eqref{eq:terminal-collar-gradient-bound} and
\eqref{eq:terminal-lower-order-bound} give
\begin{equation}\label{eq:terminal-cutoff-remainder-bound}
    \|F\|_{L^q(D_0\times(T_*-2\tau,s))}
    \le C_{M,\delta,\tau}
\end{equation}
with a constant independent of $s$.

Applying the fixed-domain estimate to
\eqref{eq:terminal-cutoff-equation} and using
\eqref{eq:terminal-absorption-choice} gives
\[
\begin{aligned}
    \|Z\|_{W_q^{2,1}}
    &\le C_0C_M\delta\|Z\|_{W_q^{2,1}}
       +C_{M,\delta,\tau},\\
    \|Z\|_{W_q^{2,1}}
    &\le 2C_{M,\delta,\tau}.
\end{aligned}
\]
Since $Z=\mathbf V$ on
$\Gamma_*\times[-\delta,\delta]\times[T_*-\tau,s]$, this is a terminal-strip
bound for the physical collar, uniform in $s$.  The coordinate changes
between the terminal Fermi collar and the fixed domains $\Omega_*^\sigma$
have uniformly bounded derivatives, so the same bound holds for $U^\sigma$
on a fixed collar of $\Gamma_*$.

It remains to cover the regions separated from the interface.  Since $N^\sigma$
is compact and $|\nabla u^\sigma|\le M$, we have $|U^\sigma|\le C$ and
$|\nabla U^\sigma|\le CM$ on $\Omega_*^\sigma\times(T_*-2\tau,T_*)$.  Choose
finitely many coordinate patches $P_k$ and open sets $P_k'\Subset P_k$ with
$\operatorname{dist}(P_k,\Gamma_*)>\delta/2$ such that the $P_k'$ cover
$\{x\in\Omega_*^\sigma:\operatorname{dist}(x,\Gamma_*)\ge\delta\}$; together
with the collar $\{0\le r<\delta\}$ they cover $\Omega_*^\sigma$.  In each
$P_k$ the field $U^\sigma$ solves
\[
    \partial_tU^\sigma-a^{ij}\partial_{ij}U^\sigma=F^\sigma,
\]
where $a^{ij}$ are smooth, uniformly elliptic coefficients constructed from
$\Psi_{*,t}$ as in \Cref{subsec:bulk-problems}, and $F^\sigma$ collects the
first-order terms and the target term, so that $|F^\sigma|\le C(1+M^2)$.
Multiplying by a spatial cutoff supported in $P_k$ and equal to one on $P_k'$,
and by the time cutoff $\eta$, the local interior $W_q^{2,1}$ estimate bounds
$\|U^\sigma\|_{W_q^{2,1}(P_k'\times(T_*-\tau,s))}$ independently of $s$; the
commutators contain only $U^\sigma$ and $\nabla U^\sigma$, which are bounded.
No boundary data are needed at the inner edge, since these estimates are
interior.  Combining the finitely many estimates with the collar bound by a
fixed partition of unity and then letting $s\uparrow T_*$ proves
\eqref{eq:global-terminal-Wq-estimate}.
\end{proof}

\subsection{Terminal trace and restart}

\begin{lemma}[Terminal trace]
\label{lem:continuation-terminal-trace}
There exist terminal traces
\[
    U_*^\sigma
    \in W_q^{2-2/q}(\Omega_*^\sigma;N^\sigma)
\]
such that
\begin{equation}\label{eq:terminal-trace-convergence}
    U^\sigma(t)\longrightarrow U_*^\sigma
    \quad\text{in }W_q^{2-2/q}(\Omega_*^\sigma;\mathbb R^L)
    \quad\text{as }t\uparrow T_*.
\end{equation}
The corresponding physical maps $u_*^\sigma$ satisfy the value and conormal
conditions on $\Gamma_{T_*}$ in the trace sense.
\end{lemma}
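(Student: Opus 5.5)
The plan is to combine the uniform terminal-strip bound of \Cref{prop:terminal-strip-estimate} with the parabolic trace theorem, and then pass the interface conditions to the limit. By \eqref{eq:global-terminal-Wq-estimate}, $U^\sigma$ is bounded in $W_q^{2,1}(\Omega_*^\sigma\times(T_*-\tau,s))$ uniformly in $s<T_*$. By monotone convergence, $U^\sigma\in W_q^{2,1}(\Omega_*^\sigma\times(T_*-\tau,T_*))$ with the same bound: the $L^q$ norms of $U^\sigma$, $\partial_tU^\sigma$, $\nabla U^\sigma$ and $\nabla^2U^\sigma$ over $(T_*-\tau,s)$ increase in $s$ and stay bounded. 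Since $\Omega_*^\sigma$ is a fixed compact smooth domain, the parabolic trace theorem gives
\[
    W_q^{2,1}\bigl(\Omega_*^\sigma\times(T_*-\tau,T_*)\bigr)
    \hookrightarrow
    C\bigl([T_*-\tau,T_*];W_q^{2-2/q}(\Omega_*^\sigma)\bigr).
\]
Hence $U^\sigma(t)$ has a limit $U_*^\sigma$ in $W_q^{2-2/q}$ as $t\uparrow T_*$, which proves \eqref{eq:terminal-trace-convergence}.

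Next I would check the target constraint. Since $q>d+2$, we have $W_q^{2-2/q}\hookrightarrow C^{1+\alpha}(\overline{\Omega_*^\sigma})$, so the convergence also holds in $C^1$. Each $U^\sigma(t)$ takes values in the closed set $N^\sigma$, so $U_*^\sigma$ does too, and therefore $U_*^\sigma\in W_q^{2-2/q}(\Omega_*^\sigma;N^\sigma)$. Because $\Psi_{*,T_*}=\operatorname{Id}$, the physical maps are $u_*^\sigma=U_*^\sigma$ on $\Omega_{T_*}^\sigma$.

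For the interface conditions I would use the trace bounds
\[
    \|f|_{\Gamma_*}\|_{C^{1+\alpha}}
    +\|\partial_{\nu}f|_{\Gamma_*}\|_{C^\alpha}
    \le C\|f\|_{W_q^{2-2/q}(\Omega_*^\sigma)}.
\]
Together with $C^1$ convergence of the pulled-back fields, these give uniform convergence on $\Gamma_*$ of the traces of $U^\sigma(t)$ and of their derivatives. The conormal condition is stated in terms of $\partial_{\nu_t}u^\sigma$. Writing $u^\sigma(\cdot,t)=U^\sigma(\Psi_{*,t}^{-1}(\cdot),t)$, the derivatives $\nabla u^\sigma(t)\circ\Psi_{*,t}$ are smooth $t$-dependent linear transformations of $\nabla U^\sigma(t)$. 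The frames $\nu_t\circ\Psi_{*,t}$ also converge smoothly to $\nu_{T_*}$, and $\Psi_{*,t}\to\operatorname{Id}$ smoothly. So the paired trace $\mathbf p(t)$ and the conormal vector $\mathbf c(t)$, both pulled back to $\Gamma_*$, converge uniformly to the corresponding quantities $\mathbf p_*$ and $\mathbf c_*$ built from $u_*^\pm$. The value condition $\mathbf p(t)\in\Lambda$ holds pointwise for $t<T_*$. Since $\Lambda$ is closed, it passes to the limit and gives $\mathbf p_*\in\Lambda$.

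For the conormal condition I would use the continuity of the tangent projection. The projection $P_{T_{\mathbf p}\Lambda}$ depends smoothly on $\mathbf p\in\Lambda$, and for $t<T_*$ the solution satisfies $P_{T_{\mathbf p(t)}\Lambda}\mathbf c(t)=0$. Passing to the uniform limit gives $P_{T_{\mathbf p_*}\Lambda}\mathbf c_*=0$, so both conditions hold in the trace sense. The only mildly delicate step is the first one: the bound must hold up to $T_*$ rather than merely on $(T_*-\tau,s)$ for each $s<T_*$. Uniformity in $s$ in \Cref{prop:terminal-strip-estimate} is exactly what handles this, together with the fact that the trace-embedding constant on a fixed interval of length $\tau$ does not depend on $s$.
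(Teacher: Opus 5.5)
Your proposal is correct and follows essentially the same route as the paper. The uniform terminal-strip bound and the parabolic trace theorem give $U^\sigma\in C([T_*-\tau,T_*];W_q^{2-2/q})$, and the embedding into $C^{1+\alpha}$ yields uniform convergence of values and first derivatives. Closedness of $N^\sigma$ and $\Lambda$, together with convergence of the normals, then passes the target, value, and conormal constraints to the limit. Your monotone-convergence remark and the explicit use of the continuity of $\mathbf p\mapsto P_{T_{\mathbf p}\Lambda}$ spell out details the paper leaves implicit.
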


\begin{proof}
The parabolic trace theorem applied to
\eqref{eq:global-terminal-Wq-estimate} gives
\[
    U^\sigma
    \in C\bigl([T_*-\tau,T_*];
       W_q^{2-2/q}(\Omega_*^\sigma;\mathbb R^L)\bigr),
\]
where the value at $T_*$ is the terminal trace.  Since $q>d+2$, this trace
space embeds into $C^{1+\alpha}$ for every
$0<\alpha<1-(d+2)/q$, so the maps and their first spatial derivatives
converge uniformly.  By compactness of $N^\pm$, closedness of $\Lambda$, and
smooth convergence of the interface normals, we can pass to the limit in the
target, value, and conormal constraints. This proves the lemma.
\end{proof}

\begin{proof}[Proof of \Cref{prop:prescribed-interface-continuation}]
Apply \Cref{thm:main-short-time-solution}, after translating time by $T_*$,
to the terminal data from \Cref{lem:continuation-terminal-trace} and the given
smooth continuation of the interface.  This produces a strong solution on
$[T_*,T_*+\varepsilon]$.  In the common fixed coordinates, the old and new
solutions have the same $W_q^{2-2/q}$ trace at $T_*$.  The standard temporal
gluing property of $W_q^{2,1}$ therefore makes their union a strong solution
on an interval containing $T_*$.  Uniqueness follows from
\Cref{prop:physical-solution-uniqueness}.

The local mixed-boundary regularity argument of
\Cref{prop:physical-smoothness}, applied with $T_*$ as an interior time,
shows that the glued solution is smooth across $T_*$ whenever the old
solution and interface are smooth.  No all-order compatibility condition at
$T_*$ is needed for the strong restart.
\end{proof}

\begin{proof}[Proof of \Cref{cor:mean-curvature-continuation}]
For a compact mean curvature flow in the flat torus, boundedness of
$\|A_{\Gamma_t}\|_{L^\infty}$ up to a finite time gives bounds for all
curvature derivatives on every terminal subinterval and a smooth extension of
the flow past that time; see
\cite{Ecker2004,HuiskenPolden1999}.  The limit hypersurface $\Gamma_{T_*}$ is
again embedded and separating.  Indeed, if two sheets of $\Gamma_{T_*}$
touched at a point, they would have a common tangent plane, since a
transversal intersection would persist for $t<T_*$.  For $t$ close to $T_*$
the two sheets would be graphs over this plane near the point, and the
difference of the graph functions would be positive for $t<T_*$, solve a linear
uniformly parabolic equation, and vanish at $t=T_*$, contradicting the strong
maximum principle.  Separation is preserved because the mod~$2$ homology class
of $\Gamma_t$ is constant along the flow.  Under
\eqref{eq:mean-curvature-continuation-bound}, the resulting extended interface
and the phase-gradient bound satisfy the hypotheses of
\Cref{prop:prescribed-interface-continuation}.  The phase fields therefore
extend strongly.  Since $T_*>0$, the preceding interior-time regularity makes
them smooth on every closed positive-time strip.  Finally, if
$T_{\max}<\infty$ and the $\limsup$ in the statement were finite, then
\eqref{eq:mean-curvature-continuation-bound} would hold with
$T_*=T_{\max}$, and the extension just obtained would contradict the
maximality of $T_{\max}$.  This proves the blow-up
alternative.
\end{proof}

\appendix
\crefalias{section}{appendix}
\section{Compatibility conditions at the initial time}
\label{subsec:compatibility-conditions}

Let
\[
    X_t:\Gamma_0\longrightarrow\Gamma_t
\]
be a fixed smooth parametrization of the prescribed moving interface.  For a
candidate solution define the pulled-back trace and conormal fields
\begin{equation}\label{eq:interface-trace-fields}
    \mathbf u^\Gamma(p,t)
    :=
    \bigl(
       u^+(X_t(p),t),
       u^-(X_t(p),t)
    \bigr),
\end{equation}
and
\begin{equation}\label{eq:interface-conormal-fields}
    \mathbf c^\Gamma(p,t)
    :=
    \bigl(
       -\partial_{\nu_t}u^+(X_t(p),t),
       \partial_{\nu_t}u^-(X_t(p),t)
    \bigr).
\end{equation}
All time derivatives below are taken at fixed $p\in\Gamma_0$.  At points of
$\Gamma_0$, time derivatives of $u^\sigma$ at $t=0$ are understood as limits
from $\overline{Q_T^\sigma}$, or formally through the phase equation.  Let
$\Pi_\Lambda$ be the nearest-point projection in a fixed tubular neighborhood
of $\Lambda$.  The trace constraint is equivalent to
\[
    \mathbf u^\Gamma-\Pi_\Lambda(\mathbf u^\Gamma)=0,
\]
and, given the trace constraint, the conormal condition is equivalent to
\[
    D\Pi_\Lambda(\mathbf u^\Gamma)\mathbf c^\Gamma=0.
\]
Accordingly, the compatibility conditions of order $m\ge0$ are
\begin{equation}\label{eq:Lambda-compatibility-order-m}
    \left.
    \frac{d^m}{dt^m}
    \left[
       \mathbf u^\Gamma-\Pi_\Lambda(\mathbf u^\Gamma)
    \right]
    \right|_{t=0}=0,
\end{equation}
and
\begin{equation}\label{eq:conormal-compatibility-order-m}
    \left.
    \frac{d^m}{dt^m}
    \left[
       D\Pi_\Lambda(\mathbf u^\Gamma)\mathbf c^\Gamma
    \right]
    \right|_{t=0}=0.
\end{equation}
For $m=0$, these are precisely the two conditions in
\eqref{eq:zeroth-compatibility}.

To make the higher-order conditions intrinsic to the initial data, define the
interface velocity field by
\[
    V_X(X_t(p),t):=\partial_tX_t(p).
\]
For every bulk field $G$,
\begin{equation}\label{eq:moving-trace-time-derivative}
    \frac{d}{dt}G(X_t(p),t)
    =
    \bigl(\partial_tG+V_X\cdot\nabla G\bigr)(X_t(p),t).
\end{equation}
Using the fixed ambient extension $\widetilde{\mathrm{II}}^\sigma$ specified in
\eqref{eq:target-nonlinearity-extension}, write the phase equation as
\[
    \partial_tu^\sigma
    =\Delta u^\sigma
     +\widetilde{\mathrm{II}}^\sigma(u^\sigma)
      (\nabla u^\sigma,\nabla u^\sigma).
\]
Then the first trace jet is
\begin{equation}\label{eq:first-interface-trace-jet}
\begin{aligned}
    \mathbf u_1^\Gamma
    :=
    \left.\frac{d}{dt}\mathbf u^\Gamma\right|_{t=0}
    =
    \Bigl(&
      \Delta u_0^+
      +\mathrm{II}^+(u_0^+)(\nabla u_0^+,\nabla u_0^+)
      +V_X(\cdot,0)\cdot\nabla u_0^+,\\
      &
      \Delta u_0^-
      +\mathrm{II}^-(u_0^-)(\nabla u_0^-,\nabla u_0^-)
      +V_X(\cdot,0)\cdot\nabla u_0^-
    \Bigr)\Big|_{\Gamma_0}.
\end{aligned}
\end{equation}
The first differentiated trace constraint therefore reads
\[
    \bigl(I-D\Pi_\Lambda(\mathbf u_0^\Gamma)\bigr)
    \mathbf u_1^\Gamma=0,
\]
which says that the first moving trace jet is tangent to $\Lambda$.

The derivative of the conormal field must include both the moving evaluation
point and the material derivative of the normal.  Set
\[
    \dot\nu_X(p,t)
    :=\frac{d}{dt}\bigl[\nu_t(X_t(p))\bigr].
\]
Then
\begin{align*}
    \frac{d}{dt}
    \bigl[\partial_{\nu_t}u^\sigma(X_t(p),t)\bigr]
    ={}&
    \partial_{\nu_t}\partial_tu^\sigma
    +\partial_{\dot\nu_X}u^\sigma
    +D^2u^\sigma[V_X,\nu_t]
    \quad\text{at }X_t(p).
\end{align*}
After substituting the phase equation, this expression depends only on the
initial map, its spatial derivatives, and the prescribed jets of $X_t$ and
$\nu_t$.  Repeated differentiation gives all higher jets recursively.
Both \eqref{eq:Lambda-compatibility-order-m} and
\eqref{eq:conormal-compatibility-order-m} are therefore conditions entirely on the
initial data and the prescribed moving geometry; they do not assume that a
solution has already been constructed.

These conditions do not depend on the tangential parametrization of the
interface.  Indeed, another parametrization has the form
$\widetilde X_t=X_t\circ\phi_t$ for a smooth family of diffeomorphisms of
$\Gamma_0$.  The two constraint fields are then composed with $\phi_t$.
If all time derivatives up to order $m$ of a constraint field vanish at
$t=0$, the chain rule shows inductively that the same is true after this
composition.

The $W_q^{2,1}$ existence construction requires only the conditions of
order $m=0$.  Indeed, the Dirichlet trace space $\mathcal Y_T$ has time
regularity $1-1/(2q)$, so the time derivative of the trace has no trace at
$t=0$, whereas the Neumann-type data space
$W_q^{1/2-1/(2q)}(0,T;L^q)\cap L^q(0,T;W_q^{1-1/q})$ has a time trace at
$t=0$ precisely because $q>3$.  In the construction, the zeroth-order
conormal condition enters through \eqref{eq:doubled-initial-Wq-trace}.
Compatibility conditions of every order are used only in the final bootstrap
to obtain smoothness up to the initial corner $\Gamma_0\times\{0\}$.

\section{Uniform short-time parabolic estimates}
\label{app:uniform-estimates}

\subsection{Function spaces on compact manifolds with boundary}
\label{subsec:function-spaces}

Let $D$ be a compact $C^2$ manifold of dimension $d$ with smooth boundary,
let $S$ be either a
boundary component of $\partial D$ or a compact $C^2$ hypersurface contained
in the interior, and let $E$ be a finite-dimensional Euclidean space.  For
$0<T\le T_0$, write
\[
    D_T:=D\times(0,T),
    \qquad
    S_T:=S\times(0,T).
\]
All function spaces below are $E$-valued componentwise. Since $E$ is
finite-dimensional, different Euclidean norms give equivalent function-space
norms. For noninteger $s>0$, $W_q^s$ denotes the Sobolev--Slobodeckii space.
More precisely, write $s=k+\theta$ with $k\in\mathbb N_0$ and
$0<\theta<1$, and set $W_q^0=L^q$.  On a coordinate domain
$U\subset\mathbb R^m$, where
$m=d$ or $m=d-1$, we use the norm
\begin{equation}\label{eq:Slobodeckii-spatial-norm}
\begin{aligned}
    \|v\|_{W_q^s(U;E)}
    :=\ &\|v\|_{W_q^k(U;E)}\\
    &+\left(
       \sum_{|\gamma|=k}
       \int_U\!\int_U
       \frac{|\partial^\gamma v(x)-\partial^\gamma v(y)|_E^q}
            {|x-y|^{m+\theta q}}\,dx\,dy
       \right)^{1/q}.
\end{aligned}
\end{equation}
For a Banach space $X$, the time-fractional space appearing below is defined,
for $0<\theta<1$, by
\begin{equation}\label{eq:Slobodeckii-time-norm}
\begin{aligned}
    \|h\|_{W_q^\theta(0,T;X)}
    :=\ &\|h\|_{L^q(0,T;X)}\\
    &+\left(
       \int_0^T\!\int_0^T
       \frac{\|h(t)-h(s)\|_X^q}{|t-s|^{1+\theta q}}\,dt\,ds
       \right)^{1/q}.
\end{aligned}
\end{equation}
Sobolev and Bochner-valued spaces on compact manifolds are defined through a
fixed finite smooth atlas and partition of unity; different choices give
equivalent norms.

The parabolic Sobolev space is
\begin{equation}\label{eq:parabolic-Sobolev-space}
    W_q^{2,1}(D_T;E)
    :=
    H_q^1(0,T;L^q(D;E))
    \cap
    L^q(0,T;W_q^2(D;E)),
\end{equation}

The Dirichlet trace space on $S_T$ is
\begin{equation}\label{eq:parabolic-trace-space}
\begin{aligned}
    \mathcal Y_T(S;E)
    :=
    &\ W_q^{1-\frac1{2q}}(0,T;L^q(S;E))\\
    &\cap L^q(0,T;W_q^{2-\frac1q}(S;E)).
\end{aligned}
\end{equation}
The initial trace space is
\[
    (L^q(D;E),W_q^2(D;E))_{1-1/q,q}
    =:W_q^{2-2/q}(D;E).
\]
In particular, the spatial trace theorem gives
\[
    u_0|_S\in W_q^{2-3/q}(S;E),
    \qquad
    \partial_\nu u_0|_S\in W_q^{1-3/q}(S;E)
    \quad\text{for }u_0\in W_q^{2-2/q}(D;E),
\]
where $\nu$ is any locally defined $C^1$ unit normal along $S$.
For the range $q>d+2$ considered below, the exponent $2-2/q$ is noninteger,
so the fractional-space convention above applies.
Moreover, $\dim S=d-1$ and $2-\frac1q-\frac{d+1}q=2-\frac{d+2}q>1+\beta$ for
$0<\beta<1-\frac{d+2}q$, so the parabolic Sobolev embedding gives, for every
fixed $T$,
\[
    \mathcal Y_T(S;E)\hookrightarrow C^{1+\beta,(1+\beta)/2}(S\times[0,T];E).
\]
Throughout the appendix, constants may depend on the fixed manifolds,
$T_0$, $q$, and the coefficient bounds, but never on
$T\in(0,T_0]$.

\subsection{Uniform Dirichlet maximal regularity}

\begin{lemma}[Uniform $W_q^{2,1}$ estimate for Dirichlet systems]
\label{lem:uniform-Wq-dirichlet}
Let $D$ be a fixed compact $C^2$ manifold with smooth boundary, let $E$ be
finite-dimensional, and let $q>d+2$.  In local coordinates consider
\[
    \mathscr L u
    :=\partial_tu
      -A^{ij}(x,t)\partial_{ij}u
      -A^i(x,t)\partial_iu
      -A^0(x,t)u.
\]
Assume that the leading coefficients are uniformly Lipschitz in $x$ and
uniformly $1/2$-H\"older in $t$, and that $A^i,A^0$ are uniformly bounded.
Write
\[
    \mathscr A(x,t,\xi):=A^{ij}(x,t)\xi_i\xi_j.
\]
Assume that for some $\theta\in(0,\pi/2)$ and $C_{\mathrm{pe}}>0$,
\begin{equation}\label{eq:uniform-parameter-ellipticity}
    \|\bigl(\lambda I+\mathscr A(x,t,\xi)\bigr)^{-1}\|
    \le\frac{C_{\mathrm{pe}}}{|\lambda|+|\xi|^2}
\end{equation}
for $\lambda\in\overline{\mathfrak S}_{\pi-\theta}$ and
$(\lambda,\xi)\ne(0,0)$, and assume the uniform Dirichlet
Lopatinskii--Shapiro condition.

For every $0<T\le T_0$, every
\[
    f\in L^q(D_T;E),
    \qquad
    u_0\in W_q^{2-2/q}(D;E),
    \qquad
    g\in\mathcal Y_T(\partial D;E),
\]
with $g(\cdot,0)=u_0|_{\partial D}$, the problem
\begin{equation}\label{eq:linear-Dirichlet-problem}
\begin{cases}
    \mathscr L u=f,&D_T,\\
    u=g,&\partial D\times(0,T),\\
    u(\cdot,0)=u_0,&D,
\end{cases}
\end{equation}
has a unique solution in $W_q^{2,1}(D_T;E)$, and
\begin{equation}\label{eq:uniform-Wq-estimate}
\begin{aligned}
    \|u\|_{W_q^{2,1}(D_T;E)}
    \le C\Bigl(
        &\|f\|_{L^q(D_T;E)}
        +\|u_0\|_{W_q^{2-2/q}(D;E)}\\
        &+\|g\|_{\mathcal Y_T(\partial D;E)}
    \Bigr),
\end{aligned}
\end{equation}
where $C$ depends only on the fixed geometry and the uniform structural
bounds, not on $T$.
\end{lemma}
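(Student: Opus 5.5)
The plan is to reduce to the classical maximal $L^q$-regularity theory for parabolic systems satisfying parameter ellipticity and the Lopatinskii--Shapiro condition, in the form of Denk--Hieber--Pr\"uss. The work lies in making the constant independent of $T\in(0,T_0]$.

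\emph{Step 1: existence and uniqueness on the full interval.} On the fixed interval $(0,T_0)$, solvability and the estimate with a constant $C(T_0)$ follow from the localization argument. We freeze coefficients at points, use the half-space and whole-space model results provided by \eqref{eq:uniform-parameter-ellipticity} and the Dirichlet Lopatinskii--Shapiro condition, and patch with a finite partition of unity subordinate to a fixed atlas. Lipschitz-in-$x$ and $1/2$-H\"older-in-$t$ leading coefficients are more than enough for the small-oscillation perturbation on small parabolic cylinders. The lower-order terms $A^i,A^0$ are absorbed in the usual way: we first prove the estimate for $e^{-\omega t}u$ with $\omega$ large, then return, which costs only a factor $e^{\omega T_0}$. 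Every constant depends only on the structural bounds, so it is uniform over the coefficient families used in the paper.

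\emph{Step 2: uniformity in $T$ by extension.} For $T<T_0$ we do not redo the theory. We extend the data to $(0,T_0)$ with bounds independent of $T$. For $f$ this is trivial: extend by zero. For $u_0$ nothing is needed. The delicate datum is $g\in\mathcal Y_T(\partial D;E)$ with $g(\cdot,0)=u_0|_{\partial D}$. We subtract a fixed lifting $g_0\in\mathcal Y_{T_0}$ of $u_0|_{\partial D}$, taken for instance as the trace of the solution of the heat problem on $\partial D$ (or of a caloric extension), with $\|g_0\|_{\mathcal Y_{T_0}}\le C\|u_0\|_{W_q^{2-2/q}}$. The difference $\tilde g=g-g_0$ then has zero initial trace. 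A zero-initial-trace function on $(0,T)$ is extended by even reflection across $t=T$ to $(0,2T)$, then by zero beyond $2T$ after multiplying by a cutoff. The reflection is bounded on $W_q^{1-1/(2q)}$ in time with constant independent of $T$. This holds because $1-1/(2q)>1/q$, so zero initial trace gives the Hardy-type bound $\|t^{-s}\tilde g\|_{L^q}\le C\|\tilde g\|_{W^s_q}$ with $s=1-1/(2q)$, with constant independent of the interval length. Extension by zero of such functions from $(0,2T)$ to $(0,\infty)$ is therefore bounded uniformly. Time dilation invariance of the homogeneous seminorms justifies that the constants are $T$-free; the cutoff is unnecessary if we simply extend by zero after reflection.

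\emph{Step 3: conclusion.} Next we solve on $(0,T_0)$ with the extended data (coefficients are already defined there) and restrict to $(0,T)$. Uniqueness on $(0,T)$ follows from uniqueness on $(0,T)$ directly. One route is a Gronwall/energy argument. The cleaner route: a solution of the homogeneous problem on $(0,T)$ extends by the solution of the problem started at time $T$ with its $W_q^{2-2/q}$ trace, giving a homogeneous solution on $(0,T_0)$, which must vanish by Step 1.

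The main obstacle is Step 2, namely checking that the trace-space norm of $\mathcal Y_T$, especially the time-Slobodeckii part, is extended with a $T$-independent constant. The subtraction of the fixed lifting is what makes the zero-initial-trace Hardy inequality available. I would verify this first for the scalar function $t\mapsto\|\tilde g(t)\|$ and then apply it componentwise.
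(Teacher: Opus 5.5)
Your proposal is correct and follows the paper's proof: Denk--Hieber--Pr\"uss theory on the fixed interval, subtraction of a fixed lifting of the initial boundary trace, a $T$-uniform extension of the zero-initial-trace remainder, zero extension of $f$, and solving on $(0,T_0)$ followed by restriction. You build the extension by hand (reflection at $t=T$, zero extension, and the scale-invariant Hardy inequality for $sq>1$), whereas the paper cites the Meyries--Schnaubelt zero-initial extension lemma.

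The one step you skip is extending the coefficients. In the paper's applications, namely the frozen states in $\mathcal A_T$ and the nonlinear uniqueness lemma, the coefficients exist only on $[0,T]$, so your remark that they are ``already defined there'' does not hold. You should extend them to $(0,T_0)$, e.g.\ constantly in time from $t=T$ for the principal part and by zero for the lower-order part, as the paper does. This preserves the Lipschitz/H\"older bounds, parameter ellipticity and the Lopatinskii--Shapiro condition, and your continuation-based uniqueness argument needs it as well.
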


\begin{proof}
After boundary flattening and localization, the assertion on a fixed time
interval is the inhomogeneous-boundary maximal-regularity theorem of
Denk--Hieber--Pr\"uss \cite[Theorem~2.1]{DenkHieberPruss2007}, specialized to
second order and equal space and time exponents.  In finite target dimension,
\eqref{eq:uniform-parameter-ellipticity} and the Lopatinskii--Shapiro condition are
the relevant uniform parameter-elliptic hypotheses.  A finite atlas and the
usual commutator localization give the estimate on the compact manifold.

To obtain a constant independent of $T$, extend the coefficients to the fixed
interval $(0,T_0)$. With respect to a fixed background connection on $D$,
extend the principal coefficients constantly beyond $t=T$ from their terminal
values, and extend the bounded lower-order remainder by zero. Temporal
$1/2$-H\"older continuity ensures that this extension preserves the
coefficient bounds, parameter ellipticity, and the Dirichlet
Lopatinskii--Shapiro condition uniformly in $T$.
Choose a fixed lifting $u^0$ of the initial value on
$D\times(0,T_0)$ with
\[
    u^0(\cdot,0)=u_0,
    \qquad
    \|u^0\|_{W_q^{2,1}(D\times(0,T_0))}
    \le C\|u_0\|_{W_q^{2-2/q}(D)}.
\]
Then
$g_1:=g-u^0|_{\partial D\times(0,T)}$ has zero initial trace.  The
zero-initial extension theorem
\cite[Lemma~2.5]{MeyriesSchnaubelt2012}, with $\mu=1$, gives an extension
$\widetilde g_1\in\mathcal Y_{T_0}$ satisfying
\[
    \|\widetilde g_1\|_{\mathcal Y_{T_0}}
    \le C\|g_1\|_{\mathcal Y_T},
\]
with $C$ independent of $T$.  Extend $f$ by zero and use
$u^0|_{\partial D}+\widetilde g_1$ as boundary datum on the fixed interval.
Solving there and restricting to $(0,T)$ gives
\eqref{eq:uniform-Wq-estimate}; uniqueness follows by the same fixed-cylinder
theorem.
\end{proof}

\subsection{Embeddings, traces, and short-time smallness}

\begin{lemma}[Uniform short-time embeddings and traces]
\label{lem:uniform-short-time-estimates}
Let $D$ be a fixed compact $C^2$ manifold with boundary, let $q>d+2$, and fix
\[
    0<\alpha<1-\frac{d+2}{q}.
\]
Let $S$ be a boundary component or a compact interior hypersurface.  There
exists $C$, independent of $T\in(0,T_0]$, such that:

\begin{enumerate}[label=\emph{(\roman*)},leftmargin=2.3em]
\item If $u\in W_q^{2,1}(D_T;E)$ and $u(\cdot,0)=0$, then
\begin{equation}\label{eq:zero-initial-holder-embedding}
    \|u\|_{C^{1+\alpha,(1+\alpha)/2}(\overline D\times[0,T])}
    \le C\|u\|_{W_q^{2,1}(D_T;E)}.
\end{equation}

\item If $u\in W_q^{2,1}(D_T;E)$ and $u(\cdot,0)=0$, then
\begin{align}
    \|u\|_{L^q(D_T)}
    &\le T\|u\|_{W_q^{2,1}(D_T)},
    \label{eq:zero-initial-Lq}\\
    \|\nabla u\|_{L^q(D_T)}
    &\le CT^{1/2}\|u\|_{W_q^{2,1}(D_T)},
    \label{eq:zero-initial-grad-Lq}\\
    \|u\|_{L^\infty(D_T)}
    &\le CT^{(1+\alpha)/2}\|u\|_{W_q^{2,1}(D_T)},
    \label{eq:zero-initial-Linfty}\\
    \|\nabla u\|_{L^\infty(D_T)}
    &\le CT^{\alpha/2}\|u\|_{W_q^{2,1}(D_T)}.
    \label{eq:zero-initial-grad-Linfty}
\end{align}

\item If $u\in W_q^{2,1}(D_T;E)$, then its boundary or interior hypersurface
trace satisfies
\begin{equation}\label{eq:uniform-hypersurface-trace}
    \|u|_{S_T}\|_{\mathcal Y_T(S;E)}
    \le C\|u\|_{W_q^{2,1}(D_T;E)}.
\end{equation}

\item If $h\in\mathcal Y_T(S;E)$ and $h(\cdot,0)=0$, then
\begin{align}
    \|h\|_{C^0(S\times[0,T])}
    &\le CT^{(1+\alpha)/2}\|h\|_{\mathcal Y_T(S;E)},
    \label{eq:trace-C0-smallness}\\
    \|\nabla_Sh\|_{C^0(S\times[0,T])}
    &\le CT^{\alpha/2}\|h\|_{\mathcal Y_T(S;E)}.
    \label{eq:trace-gradient-smallness}
\end{align}

\item If $S$ is a boundary component and $h$ has zero initial trace, then
there exists a lifting $H\in W_q^{2,1}(D_T;E)$ such that
\[
    H=h\text{ on }S_T,
    \qquad
    H=0\text{ on }(\partial D\setminus S)_T,
    \qquad
    H(\cdot,0)=0,
\]
and
\begin{equation}\label{eq:uniform-boundary-lifting}
    \|H\|_{W_q^{2,1}(D_T;E)}
    \le C\|h\|_{\mathcal Y_T(S;E)}.
\end{equation}

\item The time trace
\[
    \operatorname{tr}_{t=0}:\mathcal Y_{T_0}(S;E)
    \longrightarrow W_q^{2-3/q}(S;E)
\]
is onto and has a bounded linear right inverse $\mathfrak L_S$.  Thus
\begin{equation}\label{eq:mixed-trace-initial-lifting}
    (\mathfrak L_Sh_0)(\cdot,0)=h_0,
    \qquad
    \|\mathfrak L_Sh_0\|_{\mathcal Y_{T_0}(S;E)}
    \le C\|h_0\|_{W_q^{2-3/q}(S;E)}.
\end{equation}

\item Let
\[
    0<\alpha<\beta<1-\frac{d+2}{q}.
\]
If $u\in W_q^{2,1}(D_T;E)$ has initial trace
$u_0\in W_q^{2-2/q}(D;E)$, then there are $C>0$ and $\eta>0$,
independent of $T\in(0,T_0]$, such that
\begin{equation}\label{eq:initial-trace-holder-smallness}
\begin{aligned}
    \|u-u_0\|_{C^{1+\alpha,(1+\alpha)/2}
       (\overline D\times[0,T];E)}
    \le CT^\eta
    \bigl(
       \|u\|_{W_q^{2,1}(D_T;E)}
       +\|u_0\|_{W_q^{2-2/q}(D;E)}
    \bigr),
\end{aligned}
\end{equation}
where $u_0$ on the left is regarded as time independent.
\end{enumerate}
\end{lemma}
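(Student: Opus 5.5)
The statement is a list of standard facts, so I will prove each item by reducing to a fixed time interval $(0,T_0)$ with an extension operator whose norm does not depend on $T$. Constants then come from fixed-interval theorems.

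\textbf{The zero-initial extension.} For $u\in W_q^{2,1}(D_T;E)$ with $u(\cdot,0)=0$, I extend $u$ by zero to negative times. On $(T,2T)$ I reflect in time, $u(x,2T-t)$, and then cut off smoothly. The reflection gives an extension $\widetilde u$ on $(-\infty,T_0)$ with $\|\widetilde u\|_{W_q^{2,1}}\le C\|u\|_{W_q^{2,1}(D_T)}$ and $C$ independent of $T$. The cutoff $\phi(t/T)$ must be avoided, since it costs $T^{-1}$. When $2T>T_0$ I instead use a fixed cutoff. This is the extension behind Meyries--Schnaubelt's lemma, already cited. The same zero extension works for $\mathcal Y_T$ with $h(\cdot,0)=0$. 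The time-fractional seminorm of order $1-\frac1{2q}>\frac1q$ is controlled by a Hardy inequality, because the zero extension of a function of order $s>1/q$ with vanishing trace is bounded.

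\textbf{Items (i), (iii), (iv), (v).} Apply the fixed-interval embedding $W_q^{2,1}(D\times(-T_0,T_0))\hookrightarrow C^{1+\alpha,(1+\alpha)/2}$ to the extension. This gives (i). The spatial trace $W_q^{2,1}\to\mathcal Y$ on the fixed interval gives (iii). Without a zero initial trace, I first subtract a fixed lifting of $u(\cdot,0)$ built by (vi)-type arguments; more simply, I use the reflection extension across $t=T$ alone, which preserves norms uniformly. For (iv), I embed $\mathcal Y$ into $C^{1+\beta,(1+\beta)/2}$ via the extension and then use vanishing at $t=0$:
\[
|h(t)|\le t^{(1+\alpha)/2}[h]_{C^{(1+\alpha)/2}_t},\qquad |\nabla_S h(t)|\le t^{\alpha/2}[\nabla_S h]_{C^{\alpha/2}_t}.
\]
For (v), I lift the extended $h$ on the fixed interval with the standard right inverse of the Dirichlet trace, multiply by a spatial cutoff near $S$, and restrict to $(0,T)$. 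Zero initial data come from the causal extension by zero.

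\textbf{Item (ii).} The Poincar\'e-type bounds follow from $u(t)=\int_0^t\partial_tu$. Hölder's inequality gives
\[
\|u(t)\|_{L^q}\le t^{1-1/q}\|\partial_tu\|_{L^q(D_t)},
\]
and integrating in $t$ yields $\|u\|_{L^q}\le T\|u\|_{W_q^{2,1}}$. For the gradient, I interpolate:
\[
\|\nabla u\|_{L^q}\le C\|u\|_{L^q}^{1/2}\|u\|_{L^q W^2}^{1/2}+C\|u\|_{L^q}.
\]
Here the interpolation constant on fixed $D$ is time-independent, which gives the factor $T^{1/2}$. The $L^\infty$ bounds follow from (i) by the same vanishing-at-zero argument as in (iv).

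\textbf{Item (vi).} The surjectivity of the time trace $\mathcal Y\to W_q^{2-3/q}(S)$ with a right inverse is standard. One option is to take $(\mathfrak L_S h_0)(t)=e^{-t(1-\Delta_S)}h_0$ cut off in time, and apply the trace method of real interpolation to the pair $L^q$, $W_q^{2-1/q}$ with Sobolev index $1-\frac1{2q}$. The exponents match because
\[
(2-1/q)\Bigl(1-\frac{1}{(1-\frac1{2q})q}\Bigr)=2-\frac3q.
\]

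\textbf{Item (vii), the main obstacle.} The constant must be uniform in $T$ while only a rate $T^\eta$ is claimed. I split $u-u_0=(u-U_0)+(U_0-u_0)$, where $U_0$ is a fixed lifting of $u_0$ in $W_q^{2,1}(D\times(0,T_0))$ with $\|U_0\|\le C\|u_0\|_{W_q^{2-2/q}}$.

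The first piece vanishes at $t=0$. I embed it into $C^{1+\beta,(1+\beta)/2}$ via (i) with $\beta$, and then trade the excess Hölder order $\beta-\alpha$ for $T^{(\beta-\alpha)/2}$ using the zero initial trace.

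The second piece is fixed and lies in $C^{1+\beta,(1+\beta)/2}$ on $[0,T_0]$, with value $0$ at $t=0$. Its $C^{1+\alpha,(1+\alpha)/2}$ norm on $[0,T]$ is bounded by the same gain, $CT^{(\beta-\alpha)/2}\|u_0\|$. The sup-norm parts use (iv)-type vanishing.

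Finally, the interpolation $[f]_\alpha\le [f]_\beta\,\mathrm{osc}^{\,1-\alpha/\beta}$ needs care near the temporal direction of the gradient. I would verify it separately for the spatial seminorm $[\nabla u]_{\alpha}$, using a time-difference bound, which gives $\eta=(\beta-\alpha)/2$ or a smaller positive exponent.
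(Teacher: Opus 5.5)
Most of your plan matches the paper. You use the zero-initial extension with $T$-independent norm plus fixed-cylinder embeddings for (i) and the $L^\infty$ parts of (ii). You use time integration and spatial interpolation for the first two bounds in (ii). For (vii) you take a fixed lifting $U_0$ of $u_0$ and trade $\beta-\alpha$ H\"older orders for $T^{(\beta-\alpha)/2}$ using vanishing at $t=0$. Your routes to (iv), (v) and (vi) differ only cosmetically from the paper's and are fine: the direct embedding $\mathcal Y_{T_0}\hookrightarrow C^{1+\beta,(1+\beta)/2}$ applied to the extended trace, a causal right inverse, and the heat semigroup with real interpolation (your exponent check is right).

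\textbf{The gap is item (iii) when $u(\cdot,0)\neq0$.} There the lemma claims a $T$-independent constant, and your reduction principle cannot deliver it. No extension $W_q^{2,1}(D_T)\to W_q^{2,1}(D_{T_0})$ is bounded uniformly in $T$. Indeed, $u(x,t)=f(x)$ has $\|u\|_{W_q^{2,1}(D_T)}=T^{1/q}\|f\|_{W_q^2}$, while any extension has norm at least $c\|f\|_{W_q^{2-2/q}}$. Both of your workarounds fail for this reason.
\begin{itemize}
\item Subtracting a fixed lifting of $u(\cdot,0)$ leaves the term $C\|u(\cdot,0)\|_{W_q^{2-2/q}(D)}$. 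The time trace $W_q^{2,1}(D_T)\to W_q^{2-2/q}(D)$ has norm of order $T^{-1/q}$ (same example). So you only obtain $C\|u\|_{W_q^{2,1}(D_T)}+C\|u(\cdot,0)\|_{W_q^{2-2/q}}$, not \eqref{eq:uniform-hypersurface-trace}.
\item A single reflection across $t=T$ only produces a function on $(0,2T)$. That is again a short interval, so no fixed-interval theorem applies with a uniform constant.
\end{itemize}

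The paper closes this by citing Weidemaier's trace theorem, which is uniform in $T$ without any initial condition. A self-contained fix, for $T\le T_0$, goes as follows.
\begin{itemize}
\item Reflect evenly in time $N:=\lceil T_0/T\rceil$ times to get $\widetilde u$ on $(0,NT)\supset(0,T_0)$. Then $\|\widetilde u\|_{W_q^{2,1}(D_{T_0})}^q\le N\|u\|_{W_q^{2,1}(D_T)}^q$.
\item Apply the trace theorem on the fixed cylinder $D\times(0,T_0)$.
\item Bound the $q$-th power of the $\mathcal Y_{T_0}$ norm of the trace from below, up to fixed constants. Sum over the $\lfloor T_0/T\rfloor\ge N/2$ full reflected copies inside $(0,T_0)$. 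The $L^q$ parts add, and the Slobodeckii double integral over $(0,T_0)^2$ contains the diagonal blocks. Each copy contributes $\|u|_{S_T}\|_{\mathcal Y_T}^q$, since that norm is invariant under time translation and reflection.
\end{itemize}
The factor $N$ then cancels on both sides, which gives \eqref{eq:uniform-hypersurface-trace} uniformly in $T$.
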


\begin{proof}
The zero-initial extension theorem of Meyries--Schnaubelt
\cite[Lemma~2.5]{MeyriesSchnaubelt2012}, with $\mu=1$, gives extensions from
$(0,T)$ to $(0,T_0)$ in both $W_q^{2,1}$ and $\mathcal Y_T$, with operator
norms independent of $T$; the same temporal operator acts on both factors
of each intersection.  The parabolic Sobolev--Morrey embedding on the fixed
cylinder then proves \emph{(i)}, since $1+\alpha<2-(d+2)/q$.

For \emph{(ii)}, time integration gives
$\|u\|_{L^q(D_T)}\le T\|\partial_tu\|_{L^q(D_T)}$.
Spatial interpolation consequently yields
\[
    \|\nabla u\|_{L^q(D_T)}
    \le C(T^{1/2}+T)\|u\|_{W_q^{2,1}(D_T)}
    \le CT^{1/2}\|u\|_{W_q^{2,1}(D_T)}.
\]
The remaining two bounds follow from the temporal H\"older seminorms in
\emph{(i)}, since $u(\cdot,0)=\nabla u(\cdot,0)=0$.

Part \emph{(iii)} follows from Weidemaier's uniform trace theorem
\cite[Theorem~1 and Remark~2]{Weidemaier1994}, with $m=0$ and equal space
and time exponents.  A fixed finite spatial localization gives the estimate
for both boundary and interior hypersurfaces, without any initial trace
condition.

For \emph{(v)}, extend $h$ by zero to the other boundary components and
apply \Cref{lem:uniform-Wq-dirichlet} componentwise to a fixed heat operator
with zero forcing and zero initial value.  To obtain \emph{(iv)}, apply
this lifting on the auxiliary cylinder $S\times(0,1)$, prescribing $h$ on
$S\times\{0\}$ and zero on $S\times\{1\}$, and restrict the last two
estimates in \emph{(ii)} to $S\times\{0\}$.  This also covers an interior
hypersurface $S$.

For \emph{(vi)}, apply the temporal trace and right-inverse results of
Denk--Saal--Seiler \cite[Lemma~4.4 and Theorem~4.5]{DenkSaalSeiler2008},
with integrability exponent $q$, zero exponential weight, and time--space
regularity pairs $(0,2-1/q)$ and $(1-1/(2q),0)$.
The resulting initial trace has order $(2-1/q)-2/q=2-3/q$.
A fixed partition of unity transfers the trace and its right inverse to $S$.

Finally, choose a bounded lifting $U^0$ of $u_0$ to
$W_q^{2,1}(D\times(0,T_0);E)$, as in
\cite[Lemma~4.3]{MeyriesSchnaubelt2012}, with $\mu=1$ and $m=1$.
Apply \emph{(i)} with exponent $\beta$ to $u-U^0$ and use the
fixed-cylinder embedding for $U^0$, together with
$W_q^{2-2/q}(D;E)\hookrightarrow C^{1+\beta}(\overline D;E)$, to obtain
\[
    \|u-u_0\|_{C^{1+\beta,(1+\beta)/2}}
    \le C\bigl(\|u\|_{W_q^{2,1}}+\|u_0\|_{W_q^{2-2/q}}\bigr).
\]
Set $w:=u-u_0$.  Since $w(\cdot,0)=\nabla w(\cdot,0)=0$, its temporal
H\"older seminorms give $C^0$ and gradient bounds with factors
$T^{(1+\beta)/2}$ and $T^{\beta/2}$, respectively.  H\"older interpolation
therefore gives
\[
    \|w\|_{C^{1+\alpha,(1+\alpha)/2}}
    \le CT^{(\beta-\alpha)/2}\|w\|_{C^{1+\beta,(1+\beta)/2}},
\]
which proves \emph{(vii)}.
\end{proof}

\subsection{Nemytskii estimate}

\begin{lemma}[Uniform smooth composition on a fixed-initial affine class]
\label{lem:nemytskii-trace}
Let $U\subset\mathbb R^m$ be open, $K\Subset U$, and
$G\in C^\infty(U;\mathbb R^\ell)$.  Fix
$h_*\in\mathcal Y_{T_0}(S;\mathbb R^m)$.  For every $M>0$
there is $C$, independent of $T\in(0,T_0]$, such that
\begin{equation}\label{eq:nemytskii-trace-estimate}
    \|G(h_1)-G(h_2)\|_{\mathcal Y_T(S;\mathbb R^\ell)}
    \le C
    \|h_1-h_2\|_{\mathcal Y_T(S;\mathbb R^m)}
\end{equation}
whenever
\[
    h_j(S\times[0,T])\subset K,
    \qquad
    h_j(\cdot,0)=h_*(\cdot,0),
    \qquad
    \|h_j-h_*|_{(0,T)}\|_{\mathcal Y_T}\le M.
\]
\end{lemma}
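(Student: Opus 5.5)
The plan is to reduce \eqref{eq:nemytskii-trace-estimate} to a multiplier estimate on the fixed interval $(0,T_0)$. Uniformity in $T$ will come from the zero-initial extension theorem of Meyries--Schnaubelt, used exactly as in the proof of \Cref{lem:uniform-short-time-estimates}.

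First, remove the restriction to $U$. Choose $\chi\in C_c^\infty(U)$ with $\chi=1$ on a neighborhood of $K$, and set $\widetilde G:=\chi G$, extended by zero to all of $\mathbb R^m$. Then $\widetilde G$ is smooth with all derivatives globally bounded, and $\widetilde G(h_j)=G(h_j)$ on $S\times[0,T]$. Next, move to $T_0$. The differences $h_j-h_*|_{(0,T)}$ have zero initial trace, so the extension operator $\mathcal E_T$ of \cite[Lemma~2.5]{MeyriesSchnaubelt2012} extends them to $\mathcal Y_{T_0}$ with norm at most $CM$, with $C$ independent of $T$. Put $H_j:=h_*+\mathcal E_T(h_j-h_*)$. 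Because this operator is linear, $H_1-H_2=\mathcal E_T(h_1-h_2)$ and
\[
\|H_1-H_2\|_{\mathcal Y_{T_0}}\le C\|h_1-h_2\|_{\mathcal Y_T}.
\]
After time $T$ the extended fields may leave $K$, but this is harmless because $\widetilde G$ is global. It therefore suffices to prove
\[
\|\widetilde G(H_1)-\widetilde G(H_2)\|_{\mathcal Y_{T_0}}\le C(M)\|H_1-H_2\|_{\mathcal Y_{T_0}}
\]
on the fixed cylinder $S\times(0,T_0)$ and then restrict to $(0,T)$.

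On the fixed cylinder, write
\[
\widetilde G(H_1)-\widetilde G(H_2)=B\,k,\qquad B:=\int_0^1 D\widetilde G\bigl(sH_1+(1-s)H_2\bigr)\,ds,\qquad k:=H_1-H_2.
\]
By the embedding $\mathcal Y_{T_0}\hookrightarrow C^{1+\beta,(1+\beta)/2}$ of \Cref{subsec:function-spaces}, each $H_j$ is bounded in $C^1$ by $C(M)$. The chain rule, applied with the bounded derivatives of $\widetilde G$, then gives
\[
\|B\|_{\mathcal Y_{T_0}}+\|B\|_{C^1}\le C(M).
\]
For the temporal Slobodeckii part this uses $|B(t)-B(s)|\le C\sum_j|H_j(t)-H_j(s)|$. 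For the spatial $W_q^{2-1/q}$ part, one writes $\nabla B$ as $D^2\widetilde G(\cdot)\nabla H_j$, uses the $L^\infty$ bounds on $\nabla H_j$ to control the $W_q^{1-1/q}$ seminorm of a product, and integrates in $s$.

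The main step is the multiplier estimate
\[
\|Bk\|_{\mathcal Y_{T_0}}\le C\bigl(\|B\|_{\mathcal Y_{T_0}}+\|B\|_{C^1}\bigr)\bigl(\|k\|_{\mathcal Y_{T_0}}+\|k\|_{C^1}\bigr),
\]
after which $\|k\|_{C^1}\le C\|k\|_{\mathcal Y_{T_0}}$ again by the embedding. Note that H\"older continuity of $B$ in time alone cannot control the temporal seminorm of order $1-\frac1{2q}$, since $(1+\beta)/2<1-\frac1{2q}$. Instead I split
\[
(Bk)(t)-(Bk)(s)=\bigl(B(t)-B(s)\bigr)k(t)+B(s)\bigl(k(t)-k(s)\bigr),
\]
and estimate the first term in $L^q(S)$ by $\|B(t)-B(s)\|_{L^q}\|k\|_{L^\infty}$ and the second by $\|B\|_{L^\infty}\|k(t)-k(s)\|_{L^q}$. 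Each term is then controlled by the corresponding $W_q^{1-\frac1{2q}}(0,T_0;L^q)$ seminorm times an $L^\infty$ norm. For the spatial part, use $\nabla(Bk)=(\nabla B)k+B\nabla k$. In each product one factor lies in $W_q^{1-1/q}(S)$ and the other is bounded and Lipschitz, so the same Slobodeckii splitting applies pointwise in $t$, followed by integration in time. Combining these estimates and restricting to $(0,T)$ yields \eqref{eq:nemytskii-trace-estimate} with a constant depending only on $K$, $G$, $h_*$, $M$, $T_0$ and $q$.
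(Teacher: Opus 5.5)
Your proposal is correct and follows essentially the same route as the paper: you extend $h_j-h_*$ to $(0,T_0)$ by the uniformly bounded zero-initial extension, replace $G$ by a globally defined $\widetilde G$, write $\widetilde G(H_1)-\widetilde G(H_2)$ as an averaged derivative times $H_1-H_2$, bound the temporal seminorm by the same two-term splitting, and bound the spatial part by a product estimate based on the $C^{1+\beta,(1+\beta)/2}$ embedding. The only difference is minor: you prove the $W_q^{1-1/q}$ product bound directly, by splitting off a bounded Lipschitz factor, whereas the paper cites the Runst--Sickel product estimate together with a composition bound.
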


\begin{proof}
Let $\mathcal E_T^0$ be a uniformly bounded extension from the zero-initial
subspace of $\mathcal Y_T$ to the corresponding subspace of
$\mathcal Y_{T_0}$.  Set
\[
    H_j:=h_*+\mathcal E_T^0(h_j-h_*|_{(0,T)}).
\]
The fields $H_j$ lie in a fixed bounded subset of $\mathcal Y_{T_0}$, hence, by
the embedding recorded in \Cref{subsec:function-spaces}, in a fixed bounded
subset of $C^{1+\beta,(1+\beta)/2}$.  Choose
$\widetilde G\in C_c^\infty(\mathbb R^m;\mathbb R^\ell)$ that agrees with
$G$ on a neighborhood of $K$.  On the fixed cylinder $S\times(0,T_0)$ we claim
\[
    \|\widetilde G(H_1)-\widetilde G(H_2)\|_{\mathcal Y_{T_0}}
    \le C\|H_1-H_2\|_{\mathcal Y_{T_0}}.
\]
To see this, write $\widetilde G(H_1)-\widetilde G(H_2)=g\,(H_1-H_2)$ with
$g:=\int_0^1\widetilde G'(\tau H_1+(1-\tau)H_2)\,d\tau$.  By the bounds above,
$g$ and $\nabla g$ are bounded, and
$\|H_1-H_2\|_{L^\infty}+\|\nabla(H_1-H_2)\|_{L^\infty}\le
C\|H_1-H_2\|_{\mathcal Y_{T_0}}$.  For the temporal part of the
$\mathcal Y_{T_0}$-norm we use
\[
    (g(H_1-H_2))(t)-(g(H_1-H_2))(s)
    =g(t)\bigl[(H_1-H_2)(t)-(H_1-H_2)(s)\bigr]
     +\bigl[g(t)-g(s)\bigr](H_1-H_2)(s)
\]
together with
$|g(t)-g(s)|\le C\bigl(|H_1(t)-H_1(s)|+|H_2(t)-H_2(s)|\bigr)$.  For the
spatial part we use
$\nabla\bigl(g(H_1-H_2)\bigr)=g\,\nabla(H_1-H_2)+(\nabla g)(H_1-H_2)$ and the
product estimate
\[
    \|fh\|_{W_q^{\sigma}}
    \le C\bigl(\|f\|_{L^\infty}\|h\|_{W_q^{\sigma}}
      +\|h\|_{L^\infty}\|f\|_{W_q^{\sigma}}\bigr)
\]
in $W_q^{\sigma}(S)\cap L^\infty(S)$, $\sigma=1-1/q$, see e.g.\
\cite{RunstSickel1996}, together with
$\|\widetilde G^{(k)}(H)\|_{W_q^{\sigma}}\le C(1+\|H\|_{W_q^{\sigma}})$.
Restricting to $(0,T)$ and using the uniform extension bound proves
\eqref{eq:nemytskii-trace-estimate}.
\end{proof}

\bigskip
\noindent\textbf{Data availability.} Data sharing is not applicable to this article as no datasets were generated or analysed.

\medskip
\noindent\textbf{Conflict of interest.} The author declares that there is no conflict of interest.


\end{document}